\begin{document}

%\selectlanguage{french}

%\xyoption{all}

\setlength{\parindent}{5mm}
\renewcommand{\leq}{\leqslant}
\renewcommand{\geq}{\geqslant}

\newcommand{\N}{\mathbb{N}}
\newcommand{\sph}{\mathbb{S}}
\newcommand{\Z}{\mathbb{Z}}
\newcommand{\R}{\mathbb{R}}
\newcommand{\D}{\mathbb{D}}
\newcommand{\C}{\mathbb{C}}
\newcommand{\F}{\mathbb{F}}
\newcommand{\g}{\mathfrak{g}}
\newcommand{\h}{\mathfrak{h}}
\newcommand{\K}{\mathbb{K}}
\renewcommand{\S}{\mathbb{S}}
\renewcommand{\H}{\mathbb{H}}

\newcommand{\cA}{\mathcal{A}}
\newcommand{\cN}{\mathcal{N}}
\newcommand{\cD}{\mathcal{D}}
\newcommand{\cO}{\mathcal{O}}
\newcommand{\cF}{\mathcal{F}}

\newcommand{\RN}{\mathbb{R}^{2n}}

\newcommand{\eps}{\varepsilon}
\newcommand{\chzrel}{c_{\mathrm{LR}}}
\newcommand{\ci}{c^{\infty}}
\newcommand{\derive}[2]{\frac{\partial{#1}}{\partial{#2}}}
\newcommand{\CZ}{\mu_{CZ}}

\theoremstyle{plain}
\newtheorem{theo}{Theorem}%[subsection]
\newtheorem{prop}[theo]{Proposition}
\newtheorem{lemma}[theo]{Lemma}
\newtheorem{definition}[theo]{Definition}
\newtheorem*{notation*}{Notation}
\newtheorem*{notations*}{Notations}
\newtheorem{corol}[theo]{Corollary}
\newtheorem{conj}[theo]{Conjecture}
\newtheorem{claim}[theo]{Claim}

\newenvironment{demo}[1][]{\addvspace{8mm} \emph{Proof #1.
    ~~}}{~~~$\Box$\bigskip}

\newlength{\espaceavantspecialthm}
\newlength{\espaceapresspecialthm}
\setlength{\espaceavantspecialthm}{\topsep} \setlength{\espaceapresspecialthm}{\topsep}

\newenvironment{example}[1][]{ \refstepcounter{theo} %ref=> ca labelise bien!
\vskip \espaceavantspecialthm \noindent \textsc{Example~\thetheo
#1.} }
{\vskip \espaceapresspecialthm}

\newenvironment{question}[1][]{% \refstepcounter{theo} %ref=> ca labelise bien!
\vskip \espaceavantspecialthm \noindent \textsc{Question%~\thetheo
#1.} }%
{\vskip \espaceapresspecialthm}

\newenvironment{remark}[1][]{\refstepcounter{theo} %ref=> ca labelise bien!
\vskip \espaceavantspecialthm \noindent \textsc{Remark~\thetheo
#1.} }%
{\vskip \espaceapresspecialthm}

\newenvironment{defi}[1][]{\refstepcounter{theo} %ref=> ca labelise bien!
\vskip \espaceavantspecialthm \noindent \textsc{Definition~\thetheo
#1.} }%
{\vskip \espaceapresspecialthm}

\newenvironment{notation}[1][]{ \refstepcounter{theo} %ref=> ca labelise bien!
\vskip \espaceavantspecialthm \noindent \textsc{Notation~\thetheo
#1.} }
{\vskip \espaceapresspecialthm}

\def\bb#1{\mathbb{#1}} \def\m#1{\mathcal{#1}}

\def\del{\partial}
\def\co{\colon\thinspace}
\def\Homeo{\mathrm{Homeo}}
\def\Hameo{\mathrm{Hameo}}
\def\Diffeo{\mathrm{Diffeo}}
\def\Symp{\mathrm{Symp}}
\def\Sympeo{\mathrm{Sympeo}}
\def\id{\mathrm{Id}}
\newcommand{\norm}[1]{||#1||}
\def\Ham{\mathrm{Ham}}
\def\lagham#1{\mathcal{L}^\mathrm{Ham}({#1})}
\def\Hamtilde{\widetilde{\mathrm{Ham}}}
\def\cOlag#1{\mathrm{Sympeo}({#1})}
\def\Crit{\mathrm{Crit}}
\def\Spec{\mathrm{Spec}}
\def\osc{\mathrm{osc}}
\def\Cal{\mathrm{Cal}}

\def\Ham{\mathrm{Ham}}
\def\Fix{\mathrm{Fix}}
\def\Crit{\mathrm{Crit}}
\def\Per{\mathrm{Per}}
\def\spec{\mathrm{spec}}
\def\supp{\mathrm{supp}}

\definecolor{sobhan}{rgb}{0,.6,0}

\title{
Towards a dynamical interpretation of Hamiltonian spectral invariants on surfaces}
\author{Vincent Humili\`ere, Fr\'ed\'eric Le Roux, Sobhan Seyfaddini}
\date{\today}

% \address{VH: Institut de Math\'ematiques de Jussieu, Universit\'e Pierre et Marie Curie, 4 place Jussieu, 75005 Paris, France}
% \email{vincent.humiliere@imj-prg.fr}

% \address{FL: Institut de Math\'ematiques de Jussieu, Universit\'e Pierre et Marie Curie, 4 place Jussieu, 75005 Paris, France}
% \email{frederic.le-roux@imj-prg.fr}

% \address{SS: D\'epartement de Math\'ematiques et Applications de l'\'Ecole Normale Sup\'erieure, 45 rue d'Ulm, F 75230 Paris cedex 05}
% \email{sobhan.seyfaddini@ens.fr}

% \subjclass[2010]{??}
% \keywords{??}

\maketitle

\begin{abstract}
Inspired by Le Calvez' theory of transverse foliations for dynamical systems of surfaces \cite{PLC1, lecalvez2005}, we introduce a dynamical invariant, denoted by $\m N$, for Hamiltonians of any surface other than the sphere.  When the surface is the plane or is closed and aspherical, we prove that on the set of autonomous Hamiltonians  this invariant coincides with the spectral invariants constructed by Viterbo on the plane and Schwarz on closed and aspherical surfaces.
  
  Along the way, we obtain several results of independent interest: We show that a \emph{formal} spectral invariant, satisfying a minimal set of axioms, must coincide with $\m N$ on autonomous Hamiltonians thus establishing a certain uniqueness result for spectral invariants, we obtain a ``Max Formula'' for spectral invariants on aspherical manifolds, give a very simple description of the Entov-Polterovich quasi-state on aspherical surfaces  and characterize the heavy and super-heavy subsets of such surfaces.
\end{abstract}

%%%%%%%%%%%%%%%%%%%%%%%%%%%%%%%%%%%%%%%%%%%%%%%%%%%%%%%%%%%%%%%%%
%%%%%%%%%%%%%%%%%%%%%%%%%%%%%%%%%%%%%%%%%%%%%%%%%%%%%%%%%%%%%%%
%%%%%%%%%%%%%%%%%%%%%%%%%%%%%%%%%%%%%%%%%%%%%%%%%%%%%%%%%%%%

\tableofcontents

\section{Introduction}
Let $(M, \omega)$ denote an aspherical symplectic manifold. Recall that being aspherical means $\omega|_{\pi_2} = c_1|_{\pi_2} =0$, where $c_1$ is the first Chern class of $M$.  We allow $M$ to be either the Euclidean space $\R^{2n}$ with its standard symplectic structure or a closed and connected symplectic manifold.  As a consequence of the theory of spectral invariants, one can associate to every smooth Hamiltonian $H$ a real number $c(H)$ referred to as the spectral invariant of $H$.  This number is, roughly speaking, the action level at which the fundamental class $[M]$ appears in the Floer homology of the Hamiltonian $H$.\footnote{Similarly, one can associate spectral invariants to other homology classes of $M$ as well.  The focus of this article is on the invariant associated to the fundamental class.} These invariants were introduced by Viterbo \cite{viterbo} for $M= \R^{2n}$ using generating function theory and by Schwarz \cite{schwarz} for closed aspherical symplectic manifolds using Hamiltonian Floer theory.\footnote{In \cite{Oh05b}, Oh extended Schwarz's work to arbitrary closed symplectic manifolds.  See the papers \cite{FrSc} and \cite{lanzat} for extensions to other types of symplectic manifolds.}  Spectral invariants have had many important and interesting applications in symplectic topology and dynamical systems; see for example \cite{entov-polterovich03, entov-polterovich06, ginzburg}.  A recently discovered application which has largely motivated this article is a simple solution to the displaced disks problem of B\'eguin, Crovisier and Le Roux: using the spectral invariant $c$ one can show that arbitrarily $C^0$-small area preserving homeomorphisms of a closed surface can not displace disks of a given area; see \cite{sey, dore-hanlon}.

One drawback of the spectral invariant $c$ is the complexity of its construction which relies on the difficult machinery of Floer theory.  As a consequence, despite its widespread use, $c$ can only be computed in a handful of scenarios where the Floer theoretic picture is simple enough.

Motivated by the resolution of the displaced disks problem, we introduce a new invariant $\m N$ on aspherical surfaces which, like $c,$ associates a real number to every Hamiltonian.  The construction of $\m N$ is purely dynamical and is far more elementary than that of $c$.  We then prove that $\m N$ and $c$ coincide on autonomous Hamiltonians. 

An intriguing aspect of this work is that, beyond spectrality, the obvious properties of $\m N$ are quite different from the known properties of $c$. Indeed, $\m N$ is computable in practice for autonomous Hamiltonians.  Furthermore, one can easily see that it satisfies a certain maximum formula which was not known for $c$. On the other hand, $\m N$ does not \emph{a priori} seem to share the continuity properties of $c$ (see Definition~\ref{def:formal_spec} below). 
Proving that $c$ and $\m N$ coincide consists of two main components which are perhaps of their own independent interest: First, we prove that $c$ satisfies the same max formula as $\m N$.  Second, we show that a ``formal'' spectral invariant satisfying a minimal set of axioms must coincide with $\m N$ on autonomous Hamiltonians. This establishes a certain uniqueness result for spectral invariants which would be interesting to pursue in more general settings. See Theorem \ref{theo.axiomatic-c=n}.

As a by product of our work, we obtain a very simple description of the Entov-Polterovich (partial) quasi-state on closed aspherical surfaces using which we characterize heavy and super-heavy subsets of these surfaces.

An inspirational factor in writing this article has been our hopes of better understanding the link between Hamiltonian Floer theory and Le Calvez's theory of transverse foliations for dynamical systems on surfaces \cite{PLC1, lecalvez2005}.  In a sense, as far as surfaces are concerned, the two theories appear to be equivalent: much of what can be done via one theory can also be achieved via the other.  As examples of this phenomenon, one could point to proofs of the Arnol'd conjecture and recent articles by Bramham \cite{bramham2, bramham1} and Le Calvez \cite{PLC3}.  A prominent missing link from this hypothetical equivalence is the spectral invariant $c$ which to this date has had no analogue in Le Calvez's theory.  The introduction of $\m N$ in this article is an attempt to recover spectral invariants, and the solution to the displaced disks problem, via the techniques of transverse foliations.   Of course, whether $\m N$ coincides with $c$ on all Hamiltonians, and not just the autonomous ones, is a glaring open question which we hope to answer in the future.

 \paragraph{Aknowledgments} This work began after the crucial insight by Patrice Le Calvez, following a talk by the third author on the solution of the displaced disks problem, that the spectral norm $\gamma$ could be equal to  the quantity
 $$
 \inf \left\{ \left( \sup_{x \in X} \cA_H(x) - \inf_{x \in X} \cA_H(x) \right) , X  \mbox{ maximal unlinked set for } \phi_{H}^1\right\}
   $$
(see below for the definitions). This formula is still a conjecture. In addition to this seminal proposal, Patrice's theory of equivariant Brouwer foliations is both a powerful tool and an exciting motivation to understand the link between unlinked sets and spectral invariants. We also owe him the suggestion that the existence of maximal unlinked sets for diffeomorphisms could be proved using Handel's lemma. We warmly thank him for all this!

We would like to thank Sylvain Crovisier, Michael Entov, R\'emi Leclercq, Alex Oancea, Leonid Polterovich, Claude Viterbo and  Frol Zapolsky for helpful comments and conversations.

SS: Most of the research leading to this article was carried out while I was a postdoctoral member of the  D\'epartement de Math\'ematiques et Applications at Ecole Normale Sup\'erieure in Paris.  I wish to express my deepest gratitude to members of DMA for their warm hospitality during my two year stay there.

  The first and second author were  partially supported by the ANR Grant ANR-11-JS01-010-01.  The third author was partially supported by the NSF Postdoctoral Fellowship Grant No. DMS-1401569 and the European Research Council under the European Union's Seventh Framework Programme (FP/2007-2013) / ERC Grant Agreement  307062.

\subsection{The invariant $\m N$}
We work with a symplectic surface $\Sigma$ which is either the plane $\R^2$ or a closed surface other than the sphere.  We denote the set of compactly supported Hamiltonians on $\Sigma$ by $C^{\infty}([0,1] \times \Sigma)$, and the set of compactly supported autonomous Hamiltonians by $C^{\infty}(\Sigma)$.
Let $H \in C^{\infty}([0,1] \times \Sigma)$. Our sign convention is that the Hamiltonian vector field $X_H$ is induced by $H$ via $\omega(X_H^t, \cdot\,)=-dH_t$ for all $t$. Integrating the time-dependent vector field $X_{H}^t$ yields a Hamiltonian isotopy $(\phi_{H}^{t})_{t \in [0,1]}$.
%,  and we consider the Hamiltonian diffeomorphism $\phi = \phi_{H}^1$.

The main ingredient in the definition of $\m N$ is the notion of \emph{unlinked sets}. 
%Let $H \in C^{\infty}([0,1] \times \Sigma)$, and consider the time one map $\phi_{H}^1$ of the Hamiltonian flow associated to $H$. 
We consider fixed points of $\phi_{H}^1$ whose trajectories under the Hamiltonian isotopy are contractible in $\Sigma$; these are referred to as contractible fixed points.
An \emph{unlinked set} is a set $X$ of contractible fixed points of $\phi_{H}^1$ for which there exists an isotopy $(f_{t})_{t\in [0,1]}$, $f_{0}= \mathrm{Id}$, $f_{1} = \phi_{H}^1$, such that every point of $X$ is fixed by every $f_{t}$.
Unlinked sets play a crucial role in Le Calvez's theory of transverse foliations for dynamical systems on surfaces. We will say an unlinked set $X$ is  \emph{negative} if for every $x$ in $X$, the direction of every tangent vector at $x$ is either fixed or turns in the negative direction by the isotopy $(f_{t})_{t\in [0,1]}$. We denote by $\mathrm{mnus}(H)$ the family of negative unlinked sets that are maximal for the inclusion among negative unlinked sets. Finally, the invariant $\m N$ is defined by the formula
$$
\cN(H) = \inf_{X \in \mathrm{mnus}(H)} \sup_{x \in X} \cA_H(x)
$$
where $\cA_{H}(x)$ denotes the symplectic action  (see Section~\ref{sec:preliminary-N} for details).

An interesting aspect of the invariant $\m N$ is that it is defined directly for all smooth Hamiltonians while spectral invariants are first constructed  for non-degenerate Hamiltonians and then extended to all Hamiltonians by a limiting process. Regarding computational issues, note that for a generic Hamiltonian function $H$, the map $\phi_{H}^1$ has a finite number of fixed points. For a finite set $X$ of contractible fixed points, the unlinkedness is equivalent to the triviality of the braid $(\phi_{H}^t X)_{t \in [0,1]}$ (see Section~\ref{sec:preliminary-N}). Then the value of $\cN$ depends only on the total braid associated to the set of all contractible fixed points, colored with the value of the action at each contractible fixed point. In particular, the types of braids generated by autonomous Hamiltonian functions are very constrained,  and we provide a recursive formula that makes explicit computations easy (see Propositions~\ref{prop:recurs-form-disk} and~\ref{prop:form_N_general} below).

In the following theorem the function $c: C^{\infty}([0,1] \times \Sigma) \rightarrow \R$ denotes either the spectral invariant defined via generating function theory (when $\Sigma= \R^2$) or the spectral invariant defined via Hamiltonian Floer theory; these spectral invariants are defined in Sections \ref{sec:def_c_R2n} and \ref{sec:def_c_aspherical}, respectively. Here is the main result of this article.
\begin{theo}\label{theo:main}
$c(H) = \m N (H)$ for every autonomous $H \in C^{\infty}(\Sigma).$
\end{theo}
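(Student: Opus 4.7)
The plan is to follow the two-step strategy signaled in the introduction: establish an axiomatic characterization of spectral invariants on autonomous Hamiltonians, and then verify that the Viterbo/Schwarz invariant $c$ satisfies those axioms.

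First, I would formalize the notion of a ``formal spectral invariant'' as a map $\sigma : C^{\infty}([0,1]\times \Sigma) \to \R$ satisfying a short list: (i) spectrality, i.e.\ $\sigma(H) = \cA_H(x)$ for some contractible fixed point $x$ of $\phi_H^1$; (ii) Hofer (or $L^{(1,\infty)}$) continuity; (iii) monotonicity; (iv) a support/locality axiom; and crucially (v) a Max Formula, $\sigma(\max(H_1,H_2)) = \max(\sigma(H_1), \sigma(H_2))$ when $H_1,H_2$ have disjoint supports. This is Definition~\ref{def:formal_spec}, and the target uniqueness statement is Theorem~\ref{theo.axiomatic-c=n}: any such $\sigma$ must coincide with $\cN$ on autonomous Hamiltonians.

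Second, I would verify that $\cN$ itself satisfies these axioms. Spectrality is immediate from the definition. The Max Formula for $\cN$ is the easy side: if $H_1, H_2$ have disjoint supports, a subset of contractible fixed points of $\phi^1_{H_1 \# H_2}$ is unlinked (resp.\ negative) exactly when its intersection with each support is, so the maximal negative unlinked sets decompose as disjoint unions, and the inf--sup formula reduces to the maximum of the two individual values. With this in hand, the recursive formulas of Propositions~\ref{prop:recurs-form-disk}--\ref{prop:form_N_general} let me reduce any autonomous $H$, by induction on the combinatorics of its Reeb (level-set) graph, to elementary pieces supported in disks with a single extremum, on which both $\cN$ and any formal spectral invariant are pinned down by spectrality and monotonicity. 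Running this induction using the axioms alone proves Theorem~\ref{theo.axiomatic-c=n}.

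Third, and hardest, I would prove that $c$ itself satisfies the Max Formula. On $\R^2$, using generating functions, I would construct a generating function for $\max(H_1,H_2)$ essentially as a disjoint union of generating functions for the $H_i$, and verify that Viterbo's minmax for the fundamental class on the combined function equals the maximum of the two individual minmaxes; the reverse inequality is monotonicity. On closed aspherical surfaces, I would work in Hamiltonian Floer theory: after a small non-degenerate perturbation of $\max(H_1,H_2)$ the $1$-periodic orbits split into those in $\supp(H_1)$, those in $\supp(H_2)$, and constant orbits in the complement, and an energy/isoperimetric estimate prevents Floer trajectories from crossing between $\supp(H_1)$ and $\supp(H_2)$; this decomposes the Floer complex and lets a PSS cycle representing $[\Sigma]$ be chosen inside one of the two supports, realizing $\max(c(H_1), c(H_2))$. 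I expect the main technical obstacle to be precisely this crossing argument, together with the limiting process from non-degenerate perturbations to autonomous Hamiltonians (where degeneracy is extreme), which must be handled by combining Floer continuation estimates with the continuity axiom. Once the Max Formula for $c$ is proved, Theorem~\ref{theo.axiomatic-c=n} applies to $\sigma = c$ and the result follows.
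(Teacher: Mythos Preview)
Your outline is essentially the paper's own strategy, but a few points need correction. First, the Max Formula concerns $H_1+H_2$ (disjoint supports), not $\max(H_1,H_2)$; these differ when one function is negative. More importantly, you write ``verify that $\cN$ itself satisfies these axioms'': the paper explicitly does \emph{not} know whether $\cN$ is continuous (see the paragraph after Definition~\ref{def:formal_spec}), so you cannot treat $\cN$ as a formal spectral invariant. The correct logic is asymmetric: establish the recursive formulas (Propositions~\ref{prop:recurs-form-disk}, \ref{prop:form_N_general}) for $\cN$ directly from its definition and the easy Max Formula for $\cN$, then separately prove the \emph{same} recursion for any formal $c$ using the axioms (this is the hard Proposition~\ref{prop:recurs-form-c-disk}, which relies on continuity, monotonicity, and the energy-capacity inequality derived from the axioms). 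The passage from Morse to general autonomous $H$ (Section~\ref{sec:c=N-nonMorse}) is nontrivial precisely because $\cN$'s continuity is unknown: one must construct a specific Morse perturbation $H'$ for which $\cN(H')$ is provably close to $\cN(H)$.

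On the Floer side, your crossing argument is incomplete as stated. An energy/monotonicity estimate gives a fixed lower bound $\epsilon>0$ on the energy of any trajectory crossing a shell between the supports (Lemma~\ref{lem:energy_est}), but the action differences in $\spec(H_1+H_2)$ may well exceed $\epsilon$, so crossings are not a priori excluded. The paper's key device is \emph{symplectic contraction} (Section~\ref{sec:symp-cont}): flowing each $H_i$ backward along a Liouville field scales $\spec(H_i)$ by $e^s$ while leaving the shell (and hence $\epsilon$) fixed; for $s\ll 0$ the spectra are squeezed into $(-\epsilon/4,\epsilon/4)$ and only then do the energy bounds rule out crossing trajectories (Lemmas~\ref{lem:no_cont_traj}, \ref{lem:no_bdry_traj}). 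This step also requires incompressibility of the Liouville domains to control contractibility of orbits, and it is exactly what fails on $\S^2$ (Section~\ref{sec:countr_exmpl}).
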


This theorem immediately gives rise to the following question:
\begin{question}
Is it true that $c(H) = \m N(H)$ for all $H \in C^{\infty}([0,1] \times \Sigma)$?
\end{question}

\subsection{Max Formulas and formal spectral invariants}\label{sec:max-formal}
We now outline the two main components of the proof of Theorem \ref{theo:main}.   

\medskip

\noindent \textbf{Max Formula:}  In Section \ref{sec:max_form}, we prove max formulas for the spectral invariant $c$  which hold on higher dimensional symplectic manifolds as well as surfaces.  Here, we will state a simplified version of these max formulas and refer to Theorems \ref{theo:max-formula-R2n} and \ref{theo:max-formula-aspherical} in Section \ref{sec:max_form_aspherical} for the more general statements.

Below,  the function $c: C^{\infty}([0,1] \times M) \rightarrow \R$ denotes either the spectral invariant  defined via generating function theory, when $M= \R^{2n}$, or the one defined via Hamiltonian Floer theory on closed aspherical manifolds.  We denote by $U_1, \ldots, U_N$ disjoint open subsets of $M$ each of which is symplectomorphic to a Euclidean ball.  
 \begin{theo}\label{theo:max-formula-balls}
   Suppose that $H_1,\ldots, H_N$ are  Hamiltonians whose supports are contained, respectively, in the symplectic balls $U_1 ,\ldots, U_N$. Then,
 $$c(H_1 +\ldots + H_N) =  \max\{c(H_1), \ldots, c(H_N)\}.$$
  \end{theo}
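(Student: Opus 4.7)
The plan is to proceed by induction on $N$, reducing to the case $N = 2$: show $c(H + K) = \max(c(H), c(K))$ for two Hamiltonians with disjoint supports contained in symplectic balls $U_1, U_2$. For the induction step I would actually prove the asymmetric version in which one summand is an arbitrary already-treated sum of ball-supported pieces and the other is a single new ball-supported summand disjoint from it, so the iteration closes cleanly.

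The first step, common to both inequalities, identifies the action spectrum. Since the supports are disjoint, the flows commute and $\phi_{H + K}^t = \phi_H^t \circ \phi_K^t$; every contractible $1$-periodic orbit of $H + K$ therefore falls into exactly one of three classes: a constant in the complement of $\supp(H) \cup \supp(K)$ (action $0$), a $1$-periodic orbit of $H$ lying in $U_1$, or a $1$-periodic orbit of $K$ lying in $U_2$. Because each $U_i$ is a ball, a capping disk for such an orbit can be taken inside its ball, where the other Hamiltonian vanishes, so the action with respect to $H + K$ coincides with the action with respect to the unique nonzero summand. Hence $\Spec(H + K) \subseteq \{0\} \cup \Spec(H) \cup \Spec(K)$, and by spectrality $c(H + K)$ is a value in this discrete set.

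For the upper bound $c(H + K) \leq \max(c(H), c(K))$, I would deform along $H_s = H + sK$, $s \in [0, 1]$, and analyse the continuous function $F(s) = c(H_s)$. The spectrum of $H_s$ equals $\{0\} \cup \Spec(H) \cup s \cdot \Spec(K)$, so the graph of $F$ lies in the union of horizontal lines at levels in $\{0\} \cup \Spec(H)$ and rays through the origin with slopes in $\Spec(K)$; continuity forces $F$ to be a concatenation of such arcs meeting only at spectrum collisions. The key Floer input is that the continuation map $HF^*(H_s) \to HF^*(H_{s'})$ preserves the fundamental class and respects the action filtration up to the Hofer distance between $H_s$ and $H_{s'}$; tracking the representative of $[M]$ through each collision rules out upward branch-jumps past $\max(c(H), c(K))$. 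The matching lower bound follows by a parallel Floer-theoretic analysis: with an almost complex structure adapted to the ball decomposition, the Floer complex $CF^*(H + K)$ admits a filtered splitting whose graded pieces involve $CF^*(H)$ and $CF^*(K)$, and checking that $[M]$ cannot be represented strictly below $\max(c(H), c(K))$ in this decomposition yields $c(H + K) \geq \max(c(H), c(K))$.

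The main technical obstacle is the branch-tracking at spectrum collisions: spectrality and continuity alone admit wild behavior of $F$ within the union of lines and rays, so pinning its branch requires a genuine Floer-theoretic analysis of the continuation map on the action-filtered complex together with the rigidity of the fundamental class. The ball hypothesis is essential twice over: for the spectrum identification via capping disks living inside the balls, and for providing the locality (and adapted almost complex structures) that unifies the $\R^{2n}$ generating-function proof with the closed aspherical Floer-theoretic proof in a single scheme.
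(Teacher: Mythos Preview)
Your plan identifies the right skeleton---reduce to $N=2$, compute the spectrum, and analyze the Floer complex---but the two places where you write ``Floer-theoretic analysis'' are exactly where the real work lies, and your sketch does not supply the mechanism that makes either step go through.

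The decisive gap is your claim that, with a suitable almost complex structure, $CF_*(H+K)$ admits a filtered splitting with pieces involving $CF_*(H)$ and $CF_*(K)$. This is false without further input: nothing prevents a Floer cylinder from leaving $U_1$, crossing the region between the balls, and ending at an orbit in $U_2$. Such trajectories would mix the two subcomplexes and destroy any splitting. The paper resolves this with two coupled ingredients you are missing. First, an \emph{energy barrier}: for a Hamiltonian equal to a fixed autonomous function on a shell around $\partial U_i$, any Floer trajectory crossing that shell must have energy at least some $\varepsilon>0$ depending only on the shell data (this is Hein's estimate). Second, the \emph{symplectic contraction} trick: since each $U_i$ is a Liouville domain, one can rescale $H_i$ by the Liouville flow so that $\spec(H_i)\subset(-\varepsilon/4,\varepsilon/4)$ while $c$ scales by the same factor; after this, any trajectory between balls would have energy $\geq 4\varepsilon$ but action drop $<\varepsilon$, a contradiction. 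Only then does the Floer complex localize and the representatives of $[M]$ decompose as in the paper's Lemma~\ref{lem:main_lemma}.

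Your upper-bound deformation along $H_s=H+sK$ has a parallel gap. You correctly note that the bifurcation diagram consists of horizontal lines and rays through the origin, and you correctly flag branch-tracking at collisions as the obstacle---but invoking ``the continuation map respects the action filtration up to the Hofer distance'' gives only the Lipschitz estimate, which is exactly what fails to decide which branch $c$ follows at a crossing. The paper does not attempt this deformation argument at all; instead, after contraction, it reads off both inequalities simultaneously from the explicit description of cycles representing $[M]$. For $M=\R^{2n}$ the paper takes a completely different route (Mayer--Vietoris on sublevel sets of a glued generating function), which your proposal does not address.

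In short: the missing idea is the contraction-plus-energy-barrier mechanism that forces Floer trajectories to stay inside individual balls. Without it, neither your splitting claim nor your branch-tracking can be justified.
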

  
 The assumption that $M$ is aspherical is crucial.  In Section \ref{sec:countr_exmpl}, we give a counter example to the above max formula on the $2$--sphere.  

 The above theorem and its more general version, Theorem \ref{theo:max-formula-aspherical}, relate to questions which arise from the recent work of Polterovich on Poisson bracket invariants of coverings \cite{polterovich}; see also Question 1 in \cite{sey_killer}.  We will not delve into this topic as it goes beyond the intended scope of this article.
 
 \medskip

\noindent \textbf{Formal Spectral Invariants:}  Although the following definition makes sense on any symplectic manifold we will restrict our attention here to the case of a surface $\Sigma$ which is either the plane $\R^2$ or  is closed and aspherical.  

\begin{defi} \label{def:formal_spec}  A function $c: C^{\infty}([0,1] \times \Sigma) \rightarrow \R$ is a \emph{formal} spectral invariant if it satisfies the following four axioms:
\begin{enumerate}
\item (Spectrality) $c(H) \in \spec(H)$ for all $H \in  C^{\infty}([0,1] \times \Sigma)$, where $\spec(H)$, the \emph{spectrum} of $H$, is the set of critical values of the Hamiltonian action, that is, the set of actions of fixed points of $\phi_{H}^1$.
\item (Non-triviality) There exists a topological disk $D \subset \Sigma$ and $H$ supported in $D$ such that $c(H) \neq 0$.
\item (Continuity) $c$ is continuous with respect to the $C^{\infty}$ topology on $C^{\infty}([0,1] \times M)$.
\item (Max formula) $c(H_1 +\ldots + H_N) =  \max\{c(H_1), \ldots, c(H_N)\}$ if  $H_i \in C^{\infty}([0,1] \times M)$ are supported in pairwise disjoint disks. 
 \end{enumerate}

\end{defi}

The fact that the invariant $\m N$ satisfies the spectrality and non-triviality axioms is an immediate consequence of its definition.  It is also not difficult to check that $\m N$ satisfies the max formula.  However, we do not know if $\m N$ satisfies the continuity axiom and thus we do not know if $\m N$ is a \emph{formal} spectral invariant.   The two spectral invariants constructed by Viterbo and Schwarz satisfy a long list of well known properties which include the above spectrality, non-triviality and continuity axioms.  It is a consequence of Theorem \ref{theo:max-formula-balls} that these two spectral invariants are indeed \emph{formal}.  Theorem \ref{theo:main} is now an immediate consequence of the following theorem.

\begin{theo}\label{theo.axiomatic-c=n}  Let  $c : C^{\infty}([0,1] \times \Sigma) \to \R$ denote a formal spectral invariant.
Then, $c (H)= \cN(H)$ for every $H \in C^{\infty}(\Sigma)$.
\end{theo}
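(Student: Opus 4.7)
The plan is to use the four axioms to pin down $c$ on autonomous Hamiltonians and to show the resulting value coincides with the explicit expression for $\cN$. The strategy proceeds in three stages: establish $c = \cN$ on ``model'' autonomous bumps; propagate via the max formula to Hamiltonians supported in disjoint disks; and extend to all autonomous $H \in C^{\infty}(\Sigma)$ via continuity together with a level-set decomposition, matching the recursive formula for $\cN$ stated in Propositions~\ref{prop:recurs-form-disk} and~\ref{prop:form_N_general}.

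The first step is to compute $c$ on a \emph{model bump}: an autonomous $H \in C^{\infty}(\Sigma)$ supported in an embedded disk $D$, with a single non-degenerate maximum $p$ of value $M > 0$ and no other fixed point of $\phi_H^1$ inside $D$. Spectrality forces $c(H) \in \{0, M\}$, so the task is to rule out $c(H) = 0$. Using non-triviality, pick some $H_0$ with $c(H_0) \neq 0$. Combining the max formula applied to $H_0$ and suitable copies supported in disjoint disks, the continuity axiom, and the spectrality dichotomy, one transfers the non-vanishing property onto the model bump $H$, concluding $c(H) = M$. A parallel direct computation shows $\cN(H) = M$: the unique maximal negative unlinked set for $\phi_H^1$ is $\{p\} \cup (\Sigma \setminus \supp H)$, so $\sup_x \cA_H(x) = H(p) = M$.

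Next I would bootstrap from model bumps to general autonomous Hamiltonians. For $H = H_1 + \cdots + H_N$ with each $H_i$ a model bump supported in pairwise disjoint disks, the max formula gives $c(H) = \max_i M_i$ immediately. To reach an arbitrary autonomous $H$, I would exploit the level-set structure: since $\Sigma \neq S^2$, for every regular value $a$ one can choose a connected component of a sub- or super-level set of $H$ carrying the non-simply-connected topology of $\Sigma$, while all other components are disks. This enables an iterative decomposition of $H$ along well-chosen regular values into pieces supported in disjoint disks, modulo $C^{\infty}$-small error. Applying the max formula at each step and using continuity to pass to limits produces a recursive formula for $c(H)$ that matches, term by term, the recursive formula for $\cN(H)$ in Propositions~\ref{prop:recurs-form-disk} and~\ref{prop:form_N_general}.

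The main obstacle is the decomposition step: the max formula requires pairwise disjoint supports, whereas a connected autonomous $H$ typically has connected support. The argument relies crucially on the hypothesis $\Sigma \neq S^2$, which guarantees a non-simply-connected complement at each level-set cut; on $S^2$ this fails, consistent with the counter-example of Section~\ref{sec:countr_exmpl}. A secondary delicacy is that symplectic invariance is not among the listed axioms, so the ``transport'' argument in the first step has to be carried out using only continuity, spectrality, and the max formula. Finally, verifying that the critical point whose action survives the recursion is precisely the one prescribed by the ``negative rotation'' criterion in the definition of $\cN$ requires careful bookkeeping on the Reeb tree of $H$.
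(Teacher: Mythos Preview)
Your proposal has a genuine gap in the second and third stages that cannot be repaired within the framework you describe.

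The model-bump computation in Stage~1 is correct, but only because your hypothesis ``no other fixed point of $\phi_H^1$ inside $D$'' forces the bump to be $C^2$-small. For a \emph{tall} single-maximum bump---the ``single mountain'' of Section~\ref{sec:exampl-radi-hamilt}---the time-one map has many non-trivial $1$-periodic orbits (circles around the peak), and the spectrum is \emph{not} $\{0,M\}$; in fact $\cN(H)$ equals the minimum of the actions of those non-trivial orbits (equation~\eqref{eq:example-single-mountain-N}), a quantity strictly between $0$ and $M$ which depends on the shape of $H$, not just its maximum. Your max-formula-over-model-bumps paradigm has no mechanism to produce such a value: the max formula only applies to Hamiltonians with \emph{disjoint} supports, whereas any level-set decomposition of a single mountain yields pieces supported in \emph{nested} disks. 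There is no way to write a tall bump as $\sum H_i$ with the $H_i$ disjointly supported, even up to $C^\infty$-small error.

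The paper's argument is accordingly quite different. After deriving standard properties (monotonicity, Lipschitz continuity, energy-capacity inequality) from the four axioms in Section~\ref{sec:formal-spec}, it computes $c$ on a single tall bump not via a max formula but by sandwiching: the energy-capacity inequality gives an upper bound (Lemma~\ref{lemma:c-for-simple-bumps}, Claim~\ref{claim:upper1}), and non-degeneracy gives the lower bound. The genuinely hard step is the recursive formula for $c$ in the presence of saddles (Proposition~\ref{prop:recurs-form-c-disk}): the lower bound there requires a carefully engineered deformation (Claim~\ref{claim:special-deformation}) whose spectrum one tracks through a bifurcation diagram, showing that $c$ stays on a specific branch. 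None of this is visible from the max formula alone. Finally, note that you invoke continuity of $\cN$ to pass to limits in Stage~3, but continuity of $\cN$ is \emph{not known} (see the paragraph after Definition~\ref{def:formal_spec}); the paper circumvents this with the targeted perturbation of Lemma~\ref{lemma:perturbation}.
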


 See Section \ref{sec:idea_of_proof} for an overview of the proof of the above theorem. An interesting feature of Theorem \ref{theo.axiomatic-c=n} is that it establishes a partial uniqueness result for spectral invariants which relies only on the above four axioms. As mentioned earlier the spectral invariants constructed via Floer and generating functions theories satisfy many properties.  We will prove in Section \ref{sec:formal-spec} that \emph{formal} spectral invariants share some of the  same properties such as Lipschitz continuity, monotonicity, conjugation invariance, and the energy-capacity inequality.  We do not know if formal spectral invariants satisfy the triangle inequality, or the property that $c(H)$ is attained by an orbit of Conley--Zehnder index $2n$.  However, it is a consequence of Theorems \ref{theo:main} and \ref{theo.axiomatic-c=n} that at the level of autonomous Hamiltonians the triangle inequality and the index property are satisfied by \emph{formal} spectral invariants.  It would be interesting to see if this can be extended to non-autonomous Hamiltonians or higher dimensional manifolds.

In light of the counterexample of Section \ref{sec:countr_exmpl}, we see that the spectral invariant constructed by Oh on the $2$--sphere is not a \emph{formal} spectral invariant.
  
\subsection{Further Consequences}
We now describe some consequences of the work carried out in this article.  

\subsubsection*{A simple description of the Entov-Polterovich quasi-state.} In this portion of the paper, $\Sigma$ denotes a closed surface other than $\S^2$.   Take $c$ to be any \emph{formal} spectral invariant on $C^{\infty}([0,1] \times \Sigma)$ and define
 \begin{equation}\label{eq:def-quasi-state}
 \zeta(H)=\lim_{k\to\infty}\frac1k c(kH),
 \end{equation} 
 for any autonomous function $H$.  The functional $\zeta$ was introduced by Entov and Polterovich in \cite{entov-polterovich06} and in their terminology it is referred to as a (partial) symplectic quasi-state.  Partial and genuine symplectic quasi states have been constructed on a large class of symplectic manifolds; see \cite{entov} for a survey of the subject.   It is well-known that the quasi-state on $\S^2$ admits a very simple description \cite{entov-polterovich03}. We will now give a simple description of $\zeta$ on aspherical surfaces.

Let $H$ be a Morse function on $\Sigma$ and suppose that $s \in \Sigma$ is a saddle point of $H$. Note that the connected component of $s$ in $H^{-1}(H(s))$ is a ``pinched'' loop.  We will call $s$ an \emph{essential} saddle if this pinched loop is not contractible in $\Sigma$.  In Section \ref{sec:quasi-states}, we will prove the following theorem.

\begin{theo}\label{theo:quasi-state}
For any Morse function $H$ on $\Sigma$, $\zeta(H)$ is the maximum of $H$ over all of its essential saddles.
 More generally, for any continuous function $H: \Sigma \to \R$,
\begin{equation}\label{for:quasi-state}
\zeta(H) = \inf\left\{h_{0}: H^{-1}(h_{0}, +\infty) \mbox{ is contractible in } \Sigma\right\}.\end{equation} 
 \end{theo}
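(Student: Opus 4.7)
The plan is to use $C^{0}$-continuity on both sides to reduce to Morse $H$, then establish $\zeta(H)=h^{\ast}(H)$ in this case via the max formula (Axiom~4) for the upper bound and the identification $c=\cN$ of Theorem~\ref{theo.axiomatic-c=n} for the lower bound. Writing $h^{\ast}(H):=\inf\{h_{0}:H^{-1}(h_{0},+\infty)\text{ contractible in }\Sigma\}$, both $\zeta$ and $h^{\ast}$ are $C^{0}$-continuous — the continuity of $\zeta$ follows from the Lipschitz property of formal spectral invariants established in Section~\ref{sec:formal-spec}, and the continuity of $h^{\ast}$ from elementary topology. Since Morse functions are dense, it is enough to treat smooth Morse $H$, for which the equivalent reformulation $h^{\ast}(H)=\max\{H(s):s\text{ essential saddle}\}$ follows from a standard Morse-theoretic tracking of the superlevel sets: decreasing $h_{0}$ past a local maximum or inessential saddle attaches a $1$-handle lying in a disk of $\Sigma$, hence keeps the superlevel set contractible in $\Sigma$, while crossing an essential saddle is exactly where a non-contractible loop first appears.

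For the upper bound I fix $h_{0}>h^{\ast}(H)$ and exploit the contractibility of $H^{-1}(h_{0},+\infty)$ to write $(H-h_{0})_{+}=G_{1}+\cdots+G_{N}$, with each $G_{i}$ supported in one of pairwise disjoint open disks $D_{1},\ldots,D_{N}\subset\Sigma$ chosen around the components of $H^{-1}(h_{0},+\infty)$. Axiom~4 together with monotonicity then gives
\[
c(kH)\leq kh_{0}+c\bigl(k\textstyle\sum_{i}G_{i}\bigr)=kh_{0}+\max_{i}c(kG_{i}),
\]
so $\zeta(H)\leq h_{0}+\max_{i}\zeta(G_{i})$, and everything reduces to the Key Lemma: $\zeta(G)=0$ for every $G\geq 0$ supported in a topological open disk of $\Sigma$. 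For a displaceable disk this is immediate from the energy-capacity inequality for formal spectral invariants. The harder case of a non-displaceable disk (possible on $T^{2}$ when the disk carries more than half the area of $\Sigma$) calls for $c=\cN$: after a Morse perturbation, I aim to construct, for every large $k$, a maximal negative unlinked set of $\phi_{kG}^{1}$ containing a resonant periodic orbit of $\phi_{kG}^{t}$ at a level close to $\partial D$; including such an orbit should topologically obstruct including any maximum of $G$ enclosed by it (the winding of $\phi_{kG}^{t}$ at an interior critical point being incompatible with an isotopy that pointwise fixes both the orbit and the critical point). With this construction the action supremum over the set stays $O(1)$ uniformly in $k$, yielding $\zeta(G)=0$ and hence $\zeta(H)\leq h^{\ast}(H)$ after $h_{0}\searrow h^{\ast}(H)$.

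For the lower bound $\zeta(H)\geq h^{\ast}(H)$ I again use $c=\cN$. Let $s^{\ast}$ be an essential saddle realising $h^{\ast}$; the pinched level curve through $s^{\ast}$ carries a loop non-contractible in $\Sigma$. Combining braid triviality of a negative unlinked set of $\phi_{kH}^{1}$ with the non-contractibility of this pinched loop, I expect the set to be forced to contain either a contractible fixed point close to $s^{\ast}$, or a resonant periodic orbit at a level just above $h^{\ast}$ — in either case a point of action $\geq kh^{\ast}-o(k)$. Taking the infimum over maximal negative unlinked sets and dividing by $k$ yields $\zeta(H)\geq h^{\ast}(H)$. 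The principal obstacle is the shared topological-blocking ingredient appearing in both bounds: rigorously justifying that a resonant periodic orbit of $\phi_{kH}^{t}$ and a fixed point enclosed by it cannot coexist in a single negative unlinked set. This is really a statement about homotopy classes of Hamiltonian isotopies of a disk with its boundary and an interior point fixed, and making it quantitative is where I expect the recursive formulas for $\cN$ announced in Propositions~\ref{prop:recurs-form-disk} and~\ref{prop:form_N_general}, together with Le Calvez's theory of transverse foliations, to enter decisively.
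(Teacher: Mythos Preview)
Your proposal contains genuine gaps that you yourself flag (``I expect'', ``should'', ``principal obstacle''), and the route you take is needlessly circuitous compared to the paper's two-line argument. The crucial tool you mention in passing but do not actually use is Proposition~\ref{prop:c=max-on-disks}: for Morse $H$ and every integer $k$, the decomposition along essential saddles is the \emph{same} for $kH$ as for $H$, so
\[
c(kH)=\max\bigl\{\,kH(\partial D)+c\bigl(\overline{(kH)}_D\bigr):D\in\cD\,\bigr\}.
\]
Dividing by $k$ and letting $k\to\infty$ gives both inequalities at once, provided you know that $c\bigl(\overline{(kH)}_D\bigr)$ is bounded uniformly in $k$. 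This is exactly your ``Key Lemma'', and it holds for \emph{every} disk $D\subset\Sigma$, displaceable or not: the symplectic contraction principle of Section~\ref{sec:symp-cont} shrinks $D$ to a displaceable disk while rescaling $c$ by $e^{s}$, and then the energy-capacity inequality applies. You overlooked this contraction step and were led into an unnecessary and incomplete mnus construction for the non-displaceable case.

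Your lower-bound argument is similarly redundant. Once Proposition~\ref{prop:c=max-on-disks} is in hand, the inequality $c\bigl(\overline{(kH)}_D\bigr)\geq 0$ (positivity, Property~7 in Section~\ref{sec:standard-prop}) already gives $c(kH)\geq k\max_{D\in\cD}H(\partial D)$, hence $\zeta(H)\geq h^{\ast}(H)$; there is no need to revisit unlinked sets directly. In short: invoke Proposition~\ref{prop:c=max-on-disks}, bound the disk contributions by symplectic contraction plus energy-capacity, and the Morse case is done. The passage to continuous $H$ via density and $C^{0}$-continuity of both sides is correct and matches the paper.
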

 
 The quantity on the right hand side of  Formula \eqref{for:quasi-state} has already appeared in the literature in a different (but related) context:  it was introduced by Polterovich and Siburg in \cite{PS00} to study the asymptotic behavior of Hofer's metric on open surfaces with infinite area.

  A rather surprising consequence of the above result is that the functional $\zeta$ which is constructed via symplectic techniques, namely Floer theory, is in fact invariant under the action of all diffeomorphisms, i.e. $\zeta(f \circ \phi) =\zeta(f)$ for any diffeomorphism $\phi$.  Building on the works of Py \cite{Py05, Py06}, Zapolsky (in \cite{Zap}) and Rosenberg (in \cite{Rosen}) constructed genuine (and not partial) quasi-states on the torus and surfaces of genus higher than one, respectively.   Like $\zeta$, both of these quasi-states can be described by simple formulas which are different than the formula for $\zeta$.  The quasi-state on  the torus is only invariant under the action of symplectomorphisms while the other one is invariant under the action of all diffeomorphisms, like $\zeta$.

The above theorem has some interesting corollaries.  In \cite{entov-polterovich09}, Entov and Polterovich introduced the notions of heaviness and super-heaviness. 
A closed subset $X \subset \Sigma$  is called \emph{heavy} if $\zeta(H) \geq \inf(H|_X)$ for every function $H$. A closed subset $X$ is called \emph{superheavy} if $\zeta(H) \leq \sup(H|_X)$ for every function $H$.\footnote{Although it is not obvious from the definition, every superheavy set is necessarily heavy; see \cite{entov-polterovich09}.}   In \cite{kawasaki}, Kawasaki proves that the union of a longitude and a meridian in the torus $T^2$ is superheavy. \footnote{ We have been informed by Kawasaki that he is able to generalize the methods of \cite{kawasaki} to recover Proposition \ref{prop:heaviness}.} Using the above theorem we generalize Kawasaki's result and give the following characterization of heavy and super-heavy subsets of closed aspherical surfaces; see Section \ref{sec:quasi-states} for the proof.

\begin{prop}\label{prop:heaviness}
Let $X \subset \Sigma$ be a closed subset. Then, 
\begin{enumerate}
\item $X$ is heavy if and only if $X$ is not included in a disk.
\item $X$ is super-heavy if and only if any closed curve included in its complement is contractible in $\Sigma$.
\end{enumerate}
\end{prop}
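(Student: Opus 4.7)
Both parts rest on the explicit description
\[ \zeta(H) \;=\; \inf\bigl\{\, h_0 : H^{-1}(h_0,+\infty) \text{ is contractible in } \Sigma \,\bigr\} \]
given by Theorem~\ref{theo:quasi-state}, interpreted with ``contractible in $\Sigma$'' meaning null-homotopy of the inclusion into $\Sigma$. Because $\Sigma$ is aspherical and two-dimensional, for an open subset $W \subset \Sigma$ this is equivalent to the induced map $\pi_1 \to \pi_1(\Sigma)$ being trivial on every connected component, and in particular to every closed loop in $W$ being contractible in $\Sigma$.

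For (1), the easy direction is to fix $X \subset D$ with $D$ an open disk and pick a smooth bump $H : \Sigma \to [0,1]$ supported in $D$ and equal to $1$ on a neighbourhood of $X$. Then $\{H > 0\} \subset D$, so its inclusion factors through the contractible set $D$, whence $\{H>0\}$ is contractible in $\Sigma$ and $\zeta(H) \leq 0 < 1 = \inf_X H$, so $X$ is not heavy. For the converse I argue contrapositively: if $X$ is not heavy, pick $H$ and $h_0 < \inf_X H$ with $U := \{H>h_0\}$ contractible in $\Sigma$; then $X \subset U$. Since $\Sigma \neq \sph^2$, the universal cover $p : \R^2 \to \Sigma$ is available; the null-homotopy of $U \hookrightarrow \Sigma$ lets me lift $U$ to an open $\widetilde U \subset \R^2$ that is homeomorphic to $U$ under $p$, producing a compact, homeomorphic lift $\widetilde X \subset \widetilde U$ of $X$. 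Using the proper discontinuity of the $\pi_1(\Sigma)$-action together with injectivity of $p$ on $\widetilde X$, I thicken $\widetilde X$ to an open set in $\R^2$ on which $p$ remains injective, and then produce inside it a bounded simply-connected open $\widetilde D \supset \widetilde X$; the image $p(\widetilde D)$ is an open disk in $\Sigma$ containing $X$, contradicting the hypothesis.

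For (2), the easy direction is again contrapositive: if $\gamma \subset \Sigma \setminus X$ is a non-contractible loop in $\Sigma$, choose a tubular neighbourhood $V \subset \Sigma \setminus X$ of $\gamma$ and a Hamiltonian $H \geq 0$ supported in $V$ with $\max H = 1$ attained along $\gamma$. Then $\sup_X H = 0$, and for every $h_0 < 1$ the set $\{H>h_0\}$ contains $\gamma$; a set containing a loop non-contractible in $\Sigma$ cannot be contractible in $\Sigma$, so $\zeta(H) = 1 > 0 = \sup_X H$ and $X$ is not super-heavy. Conversely, if every closed curve in $\Sigma \setminus X$ is contractible in $\Sigma$, then for any $H$ and any $h_0 > \sup_X H$ we have $\{H>h_0\} \subset \Sigma \setminus X$, so every loop in $\{H>h_0\}$ is null-homotopic in $\Sigma$. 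Asphericity of $\Sigma$ then upgrades this to null-homotopy of the inclusion on each component, and concatenating with paths in $\Sigma$ joining the resulting contraction points provides a global null-homotopy of $\{H>h_0\} \hookrightarrow \Sigma$. Hence $\zeta(H) \leq h_0$, and letting $h_0 \downarrow \sup_X H$ gives the super-heaviness inequality.

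The one delicate point is the topological lemma used in the converse of~(1): a compact subset of $\Sigma$ that sits inside a null-homotopically-included open set must in fact sit inside an open disk. The naive approach of enclosing $\widetilde X$ in a round ball of $\R^2$ breaks when the lift has diameter larger than the injectivity radius of $p$, so the enclosing open set has to be shaped to the lift via the covering-space argument outlined above. Everything else in the proof reduces to applying Theorem~\ref{theo:quasi-state} together with the aspherical topology of $\Sigma$.
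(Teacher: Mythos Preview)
Your approach differs from the paper's in an interesting way: you work exclusively with the formula of Theorem~\ref{theo:quasi-state}, while the paper invokes the Morse decomposition of Proposition~\ref{prop:cut_ess_sadd} for the implication ``not in a disk $\Rightarrow$ heavy'' in~(1), and for the converse of~(2) uses the general fact that every superheavy set meets every heavy set. Your arguments for~(2) and for ``in a disk $\Rightarrow$ not heavy'' are correct and more self-contained than the paper's.

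There is, however, a genuine gap in the ``delicate'' lemma. The step ``produce inside it a bounded simply-connected open $\widetilde D \supset \widetilde X$'' does not follow from what precedes it. Take $\Sigma=\R^2/\Z^2$ and let $X$ be the union of the two concentric circles of radii $0.05$ and $0.4$ about the image of $(0.5,0.5)$; let $U$ be a thin annular neighbourhood of each circle. When you lift the two components of $U$ independently, one legitimate choice places the small circle about $(0.5,0.5)$ and the large one about $(1.5,0.5)$ in $\R^2$. For this $\widetilde X$ the covering map is injective on $\widetilde X$ and on any thin neighbourhood of it, but \emph{no} simply-connected open set $\widetilde D\supset\widetilde X$ can inject: since $\widetilde D$ is simply connected and open in $\R^2$, each of the two circles bounds a disk inside $\widetilde D$, so $\widetilde D$ contains both $(0.5,0.5)$ and $(1.5,0.5)$, which have the same image in $\Sigma$. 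Thus for this particular lift your construction cannot succeed, even though $X$ visibly lies in a disk in $\Sigma$.

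The lemma you want is true, but it needs more: either one must choose the lifts of the components of $U$ \emph{coherently} (and explain why a coherent choice exists), or one argues differently. A clean route, closer to the paper's spirit, is to take a smooth compact subsurface $S$ with $X\subset\mathrm{int}(S)\subset U$; each component of $S$ is then planar with all boundary circles contractible in $\Sigma$, and one shows that each component lies in the disk bounded by one of its boundary circles, after which finitely many disjoint disks are easily merged into a single one.
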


Since the product of two super-heavy sets is super-heavy, the above result can be used to construct new examples of (strongly) non-displaceable sets.  We refer the reader to \cite{kawasaki} for a sample of such non-displaceability results.  

\subsubsection*{Dispersion free quasi states.}  A symplectic quasi-state $\zeta$ is said to be dispersion free if $\zeta(H^2) = \zeta(H)^2$ for any function $H$. It is known that the Entov-Polterovich quasi-state on $\S^2$ is dispersion free.  The functional $\zeta: C^{\infty}(\Sigma) \rightarrow \R$, defined above, is not dispersion free: using Theorem \ref{theo:quasi-state}, for example, one can find $H$ such that $\zeta(H) \neq \zeta(-H)$.  However, it follows immediately from Theorem \ref{theo:quasi-state} that $\zeta$ is dispersion free on the set of positive or negative functions and more generally $$\zeta(H^2) = \max\{\zeta(H)^2, \zeta(-H)^2\}.$$  This gives a partial answer to Question  3.4 of \cite{entov} and Question 8.5 of \cite{entov-polterovich09}.

\subsubsection*{Non-closed surfaces.}
It is not difficult to see that the invariant $\m N$ can be defined for compactly supported Hamiltonians  on any  surface.  Indeed, the definition does not rely on $\Sigma$ being closed. Following the work of  Frauenfelder and Schlenck \cite{FrSc} (see also \cite{Lan}) one can construct a formal spectral invariant $c$ on compact surfaces with boundary.  We expect that the equality $c = \m N$ continues to hold, for autonomous Hamiltonians, in this setting.

  In \cite{Lan}, Lanzat  constructs (partial) quasi-states on a class of non-closed symplectic manifolds which includes compact surfaces with boundary.  Now, given a formal spectral invariant $c$ on a non-closed surface one can define the functional $\zeta$ via Equation \ref{eq:def-quasi-state}.  We expect that $\zeta$ will continue to satisfy Formula \ref{for:quasi-state}.  Furthermore, we anticipate that the proof of Theorem \ref{theo:quasi-state} can be adapted to show that the partial quasi-state constructed by Lanzat coincides with $\zeta.$  Lastly, note that one  could  directly define $\zeta$ on any aspherical surface (closed or not) via Equation \eqref{for:quasi-state}. It can be checked that $\zeta$ (defined via Equation \eqref{for:quasi-state}) is a partial quasi-state in the sense of Lanzat \cite{Lan}.

\subsection{An overview of the proof of Theorem~\ref{theo.axiomatic-c=n}}\label{sec:idea_of_proof}
The strategy for proving that $c=\m N$ for autonomous Hamiltonians  consists of three main steps. First, in Section \ref{sec:c=N-Morse-plane}, we prove it for Morse functions on the plane.  This is achieved by proving that $\m N$ and $c$ satisfy the same recursive relation. 
  
The second main step is carried out in Section \ref{sec:c=N-Morse-surface} where we prove the equality for Morse functions on closed surfaces.  This is done by relating the values of both  $\m N$  and $c$ to their values on the plane. 

Finally, in Section \ref{sec:c=N-nonMorse}, we complete the proof by perturbing a general Hamiltonian to a carefully chosen nearby Morse Hamiltonian; the non-triviality of this final step stems from the fact that we do not know if $\m N(H)$ depends continuously on $H$.

The most difficult step is (perhaps) the proof of Proposition \ref{prop:recurs-form-c-disk} which establishes the aforementioned recursive formula for $c$.   Essentially, the argument consists in considering a continuous deformation from the zero Hamiltonian to $H$ and following the value of $c$ using the continuity axiom and a careful analysis of the deformation of the spectrum.  An important simplifying factor here is that, having obtained the recursive formula for $\m N$, we already know what it is that we are searching for.

Following the value of $c$ during deformations is facilitated by the tools developed in Section \ref{sec:formal-spec}. In particular, we prove that every \emph{formal} spectral invariant $c$ is monotone and Lipschitz continuous with respect to $H$, and satisfies the Energy-Capacity inequality: the value of $c$ for functions supported on a disk is bounded by the area of the disk.

\begin{figure}[h!]
\centering

\includegraphics[width=10cm]{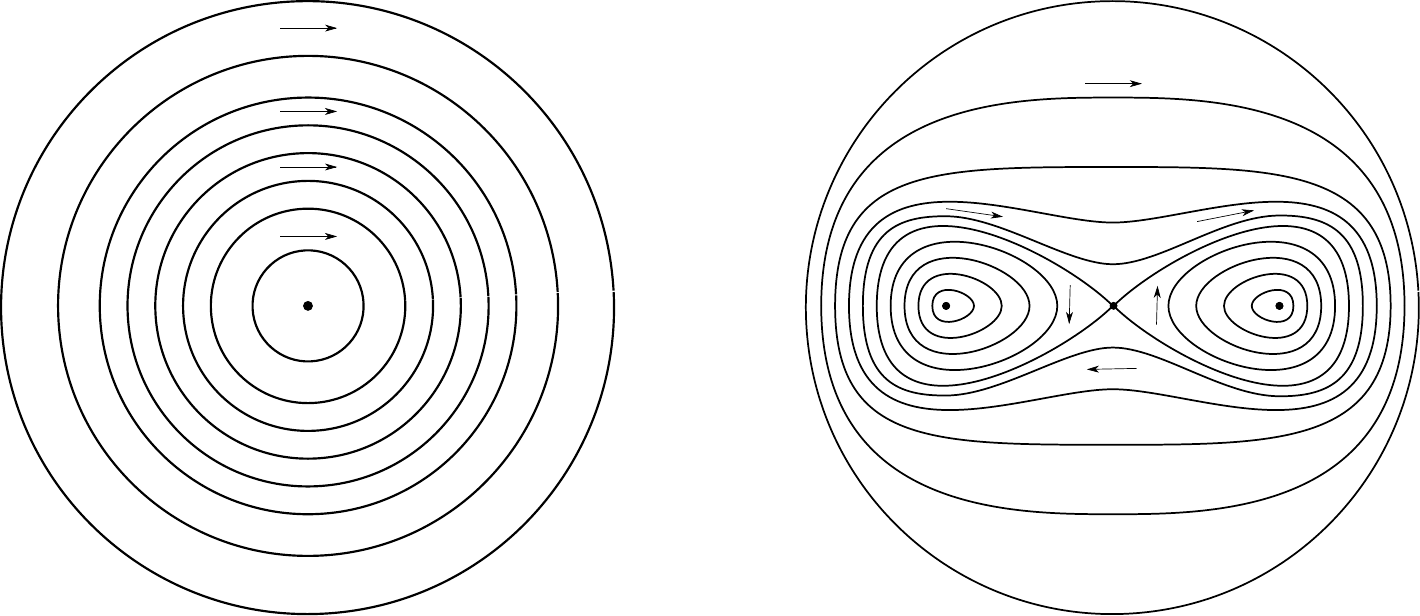}
\caption{Two simple examples of Morse Hamiltonians on the plane: a ``single mountain'' and a ``double mountain.''}
\label{fig:example-hamiltonian-intro}
\end{figure}

To get a taste for the real work, we shall consider here the two simplest scenarios; see Figure \ref{fig:example-hamiltonian-intro}. 
We focus on a non-negative Morse function $H$ on the plane.  
The first and easiest scenario is that of a function without any saddle point; the graph of such function looks like a ``single mountain.''
Let us call trivial the fixed points lying outside the support of $H$. 
Then every two non trivial fixed points of $\phi^1 _{H}$ are linked, and the definition of  $\m N$ entails that it coincides with the minimum value, say $a$, of the actions of its non trivial fixed points. %Let us denote this value by $a$.
By spectrality, the value of $c$ cannot be less than $a$. On the other hand we can bound $H$ from above by a function $G$ which still has $a$ as the minimal positive action, and whose other action values are larger than the area of its support. By the Energy-Capacity inequality $c(G)$ must be equal to $a$, and by monotonicity we get $c(H) \leq c(G) =a$, as wanted.

In the second simplest scenario, $H$ has a single saddle point $s$, and is larger than $H(s)$ on the two disks $T_{0}, T_{1}$ bounded by the level set of $s$.  In this case the graph of $H$ looks like a ``double mountain''.
Again the list of all maximal negative unlinked sets is easy to establish. Mnus's are of two kinds: in addition to the set of trivial fixed points which is contained in every mnus, the first kind consists of a single fixed point of $\phi^1 _{H}$ whose orbit surrounds the saddle point, and the second kind consists of the saddle together with one fixed point in each of the two disks $T_{0}, T_{1}$.
% that are bounded by the saddle level set. 
Denoting by $b, a_{0}, a_{1}$ the minimal positive values of the action respectively outside the saddle level and inside $T_{0}$ and $T_{1}$, the definition of $\m N$ yields 
$$\m N(H) = \min (b, \max(a_{0}, a_{1})).$$
%That argument of the previous situation may be carried out in this setting to give the inequality $c(N) \leq b$.
%A similar argument, using monotonicity, gives $c(H) \leq \max(a_{0}, a_{1})$ [lthis is outrageously simplified, in addition to monotonicity here we use Lipschitz continuity and the max ]. Thus $c(H)\leq \m N(H)$, and it remains to prove the reverse inequality.
Now we try to prove that $c(H)=\m N(H)$. Proving the upper bound $c(H) \leq \m N(H)$ is not much more difficult than in the first scenario (but it does rely on the max formula).  
The lower bound is the most delicate step of the proof, and goes as follows. First, we consider the case when the value of $c$ is attained outside the saddle level set. Here the definition of $b$ gives $c(H) \geq b \geq \m N(H)$, which lets us conclude the equality in this case. In the remaining case we have $c(H) \neq b$; since by  the upper bound $c(H) \leq b$, we get  $c(H) < b$. Now let us write
$
H = F+ H_{ T_{0}} + H_{ T_{1}},
$
where $F$ equals the constant value $H(s)$ on $T_{0} \cup T_{1}$, and $H_{T_{0}}$ and $H_{ T_{1}}$ are supported respectively on $T_{0}$ and $T_{1}$. In this outline we will pretend that these are smooth functions; note that $H_{T_{0}}$ and $H_{ T_{1}}$ have no saddle points and hence they are both ``single mountains.''
By a careful analysis of the action values, we construct a deformation
 $H_{\sigma}$ from $H_{0} = H$ to $H_{1} = H_{T_{0}} + H_{T_{1}}$  with the following properties: during the deformation,
\begin{itemize}
\item the part of the action spectrum corresponding to  orbits in  $T_{0} \cup T_{1}$, which we will refer to the ``inside'' spectrum, decreases at the constant speed $v=H(s)$,
\item the remainder of the spectrum, which we will refer to the ``outside'' spectrum, does not decrease faster than $v$.
\end{itemize}
Now the crucial point is that $c(H_{0}) < b$, whereas the ``outside'' spectrum for $H_{0}$ is no smaller than $b$.  Thus in the bifurcation diagram $\sigma \mapsto \spec(H_{\sigma})$, the connected component of $c(H_{0})$ is disjoint from the connected components of the ``outside'' spectrum, and this component  is a single line with slope $-H(s)$ (this will be clear in Figure \ref{fig:bif-fancy-def} in Section \ref{sec:proof-c=N-Morse-disk}).
 By continuity, we get that $c(H_{0}) = c(H_{1})+H(s)$. Then the max formula and the ``single mountain'' scenario give
$$
c(H_{1}) = \max( c(H_{T_{0}}), c( H_{T_{1}})) = \max(a_{0}-H(s), a_{1}-H(s)).
$$
We conclude that $c(H) = \max(a_{0}, a_{1}) \geq \m N(H)$, as wanted.

\subsection*{Organization of the paper}
In Section~\ref{sec:preliminary-N}  we give the precise definition of the invariant $\m N$ and discuss some of its properties. In Section ~\ref{sec:formal-spec} we  establish those properties of formal spectral invariants which will be used later on in the paper.  Section~\ref{sec:proof_c=N} is devoted to the proof of the main theorem, namely that every formal spectral invariant is equal to $\m N$ on the set of autonomous Hamiltonians. The ``max formulas'', which show that the Viterbo and Schwarz spectral invariants are indeed formal, are proved in Section~\ref{sec:max_form}. Section \ref{sec:countr_exmpl} contains a counter-example for a max formula on the sphere. Finally, a fundamental characterization of unlinked sets, a key ingredient in the definition of $\m N$, is proved in the appendix.
%%%%%%%%%%%%%%%%%%%%%%%%%%%%%%%%%%%%%%%%%%%%%%%%%%%%%%%%%%%%%%%%%
%%%%%%%%%%%%%%%%%%%%%%%%%%%%%%%%%%%%%%%%%%%%%%%%%%%%%%%%%%%%%%%
%%%%%%%%%%%%%%%%%%%%%%%%%%%%%%%%%%%%%%%%%%%%%%%%%%%%%%%%%%%%

\section{Preliminaries: definition of $\m N$}\label{sec:preliminary-N}

In this section we introduce the notions of unlinked sets, rotation number of a fixed point, and Hamiltonian action that lead to the definition of our invariant $\m N$.
Many of the definitions and results of this section hold for general surface diffeomorphisms, not just Hamiltonian diffeomorphisms, and thus in Sections \ref{sec:unlinked_sets} and \ref{sec:rot_number} we work in this more general context.
\subsection{Unlinked sets}\label{sec:unlinked_sets}

\paragraph{Definition and characterisations.}
We consider an orientable surface $\Sigma$ which may be non-compact  but has empty boundary.
We denote by $\mathrm{Diff}_{0}(\Sigma)$ the group of diffeomorphisms that are the time one of a compactly supported isotopy.
% identity component in the group of smooth compactly supported diffeomorphisms of $\Sigma$. 
%An \emph{isotopy generating $\phi$} is any continuous path in $\mathrm{Diff}_{0}(\Sigma)$ from the identity to $\phi$. An isotopy $I = (f_{t})_{t \in [0,1]}$ \emph{fixes} a point $x$ if $f_{t}(x)=x$ for every $t \in [0,1]$. 
Let  $(\phi^t)_{t \in [0,1]}$ be a compactly supported isotopy in $\Sigma$, and denote its time-one $\phi^1$ by $\phi$. A \emph{contractible fixed point} for the isotopy  is a fixed point $x$ of $\phi$ whose trajectory under $(\phi^{t})_{t \in [0,1]}$ is a contractible loop in $\Sigma$. If in addition $\phi^t(x) = x$ for every $t \in [0,1]$, we say that the isotopy \emph{fixes $x$}. 
\begin{defi}
A set $X$ of contractible fixed points of $(\phi^{t})_{t \in [0,1]}$ is \emph{unlinked} if there exists another isotopy $I$ whose time-one is $\phi$, which is homotopic to $(\phi^{t})_{t \in [0,1]}$ as a path in 
$\mathrm{Diff}_{0}(\Sigma)$ with fixed end-points, and that fixes every point of $X$.
\end{defi}
Note than when $\mathrm{Diff}_{0}(\Sigma)$ is simply connected, the notion of unlinkedness depends only on $\phi^1$. This includes the case when $\Sigma$ is the disk, the plane or any closed orientable surface except the sphere and the torus (\cite{gramain1973}). Likewise, on the torus, since $\mathrm{Ham}(\mathbb{T}^2)$ is simply connected (\cite{Pol_book}, Section 7.2), it depends only on $\phi^1$ if we restrict ourselves to Hamiltonian isotopies.\footnote{The space $\mathrm{Diff}_{0}(\Sigma)$ is also most probably simply connected when $\Sigma$ is any non compact surface, but we have no reference for this fact.}

The basic result on unlinked sets is the following.
\begin{theo}~\label{theo.unlinked-criteria1}
A set $X$ of contractible fixed points of $(\phi^{t})_{t \in [0,1]}$  is unlinked if and only if every finite subset of $X$ is unlinked.
\end{theo}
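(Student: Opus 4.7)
The forward implication is immediate from the definition: if $I$ is an isotopy homotopic to $(\phi^t)$ with fixed endpoints that fixes every point of $X$, then $I$ itself witnesses the unlinkedness of every subset $Y\subset X$. So the content lies entirely in the converse, and my plan is to upgrade the finite-subset hypothesis to a genuine isotopy fixing all of $X$ via a compactness / finite-intersection argument.

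For each finite $F\subset X$, denote by $\mathcal{U}_F$ the set of isotopies (from $\mathrm{Id}$ to $\phi$, taken up to an appropriate equivalence and lying in the prescribed homotopy class) that fix every point of $F$. By assumption each $\mathcal{U}_F$ is nonempty, and the family $\{\mathcal{U}_F\}_{F\subset X\text{ finite}}$ has the finite intersection property, since
\[
\mathcal{U}_{F_{1}}\cap\cdots\cap\mathcal{U}_{F_{k}} \supset \mathcal{U}_{F_{1}\cup\cdots\cup F_{k}},
\]
and a finite union of finite sets is finite. If I can realize the $\mathcal{U}_F$ as closed subsets of a compact Hausdorff space of isotopies, then $\bigcap_F \mathcal{U}_F\neq\emptyset$, and any element is the desired isotopy fixing all of $X$.

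The principal obstacle is setting up compactness, since $\mathrm{Diff}_0(\Sigma)$ is far from compact. I would address this by (i) working with $C^0$ isotopies in $\mathrm{Homeo}_0(\Sigma)$, which are adequate for the definition (approximation by smooth isotopies can be done at the end since $\mathrm{Diff}_0(\Sigma)$ is dense in $\mathrm{Homeo}_0(\Sigma)$ for surfaces), (ii) confining the isotopies to a fixed compact region containing $\operatorname{supp}(\phi^t)$, and (iii) producing a family of candidate isotopies that is equicontinuous. The key ingredient for (iii) is to choose the $I_F$ in a canonical form; a natural source of such canonical isotopies on surfaces is Le~Calvez's theory of transverse foliations (or, equivalently, the machinery underlying Handel's lemma alluded to in the acknowledgments), which provides, for any finite set of contractible fixed points, a ``maximal'' isotopy fixing the set and built from equivariant Brouwer-type data that is uniformly controlled. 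Once these isotopies are bounded and equicontinuous, Arzel\`a--Ascoli supplies the needed compactness, and the fixing condition at each point $x\in X$ is manifestly closed under uniform limits.

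If establishing equicontinuity globally is too strong, a backup strategy is a diagonal extraction: reduce first to the case where $X$ is countable (e.g., by Zorn's lemma, after observing that the property ``fixed by the limit isotopy'' passes to the closure of any set on which the limit isotopy fixes points), enumerate $X=\{x_1,x_2,\dots\}$, pick $I_n\in\mathcal{U}_{\{x_1,\dots,x_n\}}$, and extract a subsequence converging on the compact support. Finally, I would verify that the resulting limit isotopy stays in the correct homotopy class: since we work within a single relative homotopy class throughout (and, as noted in the paragraph preceding the theorem, $\mathrm{Diff}_0(\Sigma)$ is simply connected in the cases of interest, so the class is automatic), this should be essentially free.
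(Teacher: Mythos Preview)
Your approach has a genuine gap: the compactness step is never actually established, and the route you propose to obtain it is circular. Le~Calvez's transverse-foliation theorem takes as \emph{input} a maximal unlinked set; the existence of such sets is precisely the corollary drawn from Theorem~\ref{theo.unlinked-criteria1}, so invoking that machinery to manufacture equicontinuous canonical isotopies begs the question. Without that, you have no uniform control on the family $\{I_F\}$, and Arzel\`a--Ascoli does not apply. The diagonal-extraction backup has the same defect. There is also a secondary issue: even if you do get a $C^0$ limit of isotopies, you must still argue that the limit is an isotopy through homeomorphisms (injectivity can fail under uniform limits), and then upgrade back to smoothness while preserving the fixed-point set; neither step is free.

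The paper's argument avoids limits altogether. The key input is Handel's lemma: there is an isotopy $\Delta$ from $\phi$ to a diffeomorphism $f_1$ which is the \emph{identity} on a neighborhood $U=\bigcup U_i$ of the contractible fixed points. On each component $U_i$ all points share a common integer rotation number $\rho_i$, and any two points in $U_i$ are linked unless $\rho_i=0$. One then forms a \emph{single} finite subset $X_U\subset X$ by choosing at most two points of $X$ in each $U_i$; the crucial lemma is that $X$ is unlinked if and only if $X_U$ is. Given an isotopy fixing $X_U$, the components carrying two chosen points have $\rho_i=0$, and a fibration argument on embedding spaces (Proposition~\ref{prop.fibration}) lets one deform the isotopy to fix every point of those $U_i$. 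Thus the infinite problem is reduced to one finite problem, not to a net of finite problems requiring a limit.
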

An important corollary of this theorem is the existence of unlinked sets that are maximal for inclusion (Corollary~\ref{coro.maximal-unlinked-sets}). In this paper, we will use the theorem to prove the existence of maximal \emph{negative} unlinked sets (see Corollary~\ref{coro.existence-mnus} below).
Theorem~\ref{theo.unlinked-criteria1} is proved in the Appendix, as well as Proposition~\ref{prop.unlinked-criteria2} below. Note that the existence of maximal unlinked sets for homeomorphisms is discussed in~\cite{Jaulent:2012aa}.

Theorem~\ref{theo.unlinked-criteria1} is complemented by a geometric characterization of unlinkedness for finite sets, which we describe now.  
Let $X$ be a finite set of contractible fixed points. 
A \emph{geometric pure braid} (based on $X$) is a map 
$b : X \times [0,1] \to \Sigma$ such that $b(x,0)=b(x,1)$ for every $x$ in $X$, and $x \mapsto b(x,t)$ is injective for every $t$. The isotopy $(\phi^{t})_{t \in [0,1]}$ generates the geometric pure braid 
$$
b_{X,(\phi^{t})} = (x,t) \mapsto \phi^t (x).
$$
We will say that this geometric braid \emph{represents the trivial braid} if there exists a continuous map $B : X \times [0,1] \times [0,1]  \rightarrow \Sigma$ such that $B(\cdot,\cdot,0)$ is the constant braid $(x,t) \mapsto x$, $B(\cdot,\cdot,1)=b_{X,(\phi^{t})}$, and $B(\cdot,\cdot,s)$ is a geometric braid for every $s$. 

\begin{prop}~\label{prop.unlinked-criteria2}
 A  finite set $X$  of contractible fixed points of $(\phi^{t})_{t \in [0,1]}$ is unlinked if and only if the geometric braid $b_{X,(\phi^{t})}$ represents the trivial braid.
\end{prop}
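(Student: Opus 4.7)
The plan is to translate the statement into a lifting problem for the evaluation fibration on the diffeomorphism group and then apply the relative homotopy lifting property. Fix an ordering $X = \{x_1, \ldots, x_n\}$ and let $F_n(\Sigma)$ denote the ordered configuration space of $n$ distinct points in $\Sigma$. I would consider the evaluation map
$$\mathrm{ev}_X \colon \mathrm{Diff}_0(\Sigma) \longrightarrow F_n(\Sigma), \qquad f \longmapsto (f(x_1), \ldots, f(x_n)).$$
By the classical results of Palais, Cerf and Lima on evaluation maps of diffeomorphism groups (transported to the compactly supported setting by the usual bump-isotopy argument carried out in disjoint coordinate discs around the $x_i$), this map is a Serre fibration with fiber over $X$ equal to $F := \{f \in \mathrm{Diff}_0(\Sigma) : f(x_i) = x_i \text{ for all } i\}$. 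The isotopy $(\phi^t)$ then becomes a path $\gamma \colon [0,1] \to \mathrm{Diff}_0(\Sigma)$ from $\mathrm{Id}$ to $\phi$; since $X$ consists of fixed points of $\phi$, both endpoints of $\gamma$ lie in $F$, and $\mathrm{ev}_X \circ \gamma$ equals the geometric braid $b_{X,(\phi^t)}$, viewed as a loop in $F_n(\Sigma)$ based at $X$. Under this dictionary, isotopies fixing $X$ pointwise correspond to paths contained in $F$, homotopies of isotopies rel endpoints in $\mathrm{Diff}_0(\Sigma)$ correspond to homotopies of paths rel endpoints, and homotopies through geometric braids correspond exactly to homotopies of loops in $F_n(\Sigma)$ (with the basepoint possibly moving as the homotopy parameter varies).

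The direction $(\Rightarrow)$ is then a matter of projecting. If $I$ is an unlinked witness, then $\mathrm{ev}_X \circ I$ is the constant loop at $X$, so $b_{X,I}$ is the trivial braid, and the given homotopy in $\mathrm{Diff}_0(\Sigma)$ from $I$ to $(\phi^t)$ rel endpoints projects under $\mathrm{ev}_X$ to a homotopy whose slices are geometric braids (each slice is $\mathrm{ev}_X$ applied to a diffeomorphism, hence injective on $X$).

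For the direction $(\Leftarrow)$, assume $b_{X,(\phi^t)}$ is homotopic to the trivial braid through geometric braids. This is a free null-homotopy of the loop $\mathrm{ev}_X \circ \gamma$ in $F_n(\Sigma)$; the trace of the basepoint along this homotopy is itself a loop at $X$, so the standard conjugation argument gives $[\mathrm{ev}_X \circ \gamma] = 1$ in $\pi_1(F_n(\Sigma), X)$, and I may choose a rel-basepoint null-homotopy $H \colon [0,1]^2 \to F_n(\Sigma)$ satisfying $H(\cdot, 0) = \mathrm{ev}_X \circ \gamma$, $H(\cdot, 1) \equiv X$, and $H(0, s) = H(1, s) = X$ for all $s$. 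The path $\gamma$ on $[0,1] \times \{0\}$, together with the constant maps $s \mapsto \mathrm{Id}$ on $\{0\} \times [0,1]$ and $s \mapsto \phi$ on $\{1\} \times [0,1]$, form a consistent partial lift of $H$ along the subcomplex $([0,1] \times \{0\}) \cup (\{0,1\} \times [0,1]) \subset [0,1]^2$, whose inclusion is a cofibration. The relative homotopy lifting property of the Serre fibration $\mathrm{ev}_X$ extends these data to a lift $\widetilde H \colon [0,1]^2 \to \mathrm{Diff}_0(\Sigma)$. The path $I := \widetilde H(\cdot, 1)$ then satisfies $I^0 = \mathrm{Id}$, $I^1 = \phi$, and $\mathrm{ev}_X \circ I \equiv X$, so $I$ is an isotopy fixing $X$ pointwise, and $\widetilde H$ exhibits a homotopy rel endpoints from $(\phi^t)$ to $I$ in $\mathrm{Diff}_0(\Sigma)$. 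Hence $X$ is unlinked.

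The principal technical input is the Serre fibration property of $\mathrm{ev}_X$ on $\mathrm{Diff}_0(\Sigma)$ in the compactly supported setting, and this is where I expect to spend the most effort if a self-contained exposition is desired; given that, the rest is a mechanical application of relative homotopy lifting together with the elementary passage from free to based null-homotopy.
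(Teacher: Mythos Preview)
Your proof is correct and follows essentially the same route as the paper: both arguments rest on the fact that the evaluation map $\mathrm{Diff}_0(\Sigma) \to F_n(\Sigma)$ is a (Serre) fibration and then apply homotopy lifting to the null-homotopy of the braid. The paper proves the fibration property directly via a local section (its Lemma~\ref{lemma.fibration}) rather than invoking Palais--Cerf--Lima, and it passes over the free-versus-based issue that you spell out; otherwise the two proofs coincide.
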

If $x$ is a contractible fixed point, then the geometric pure braid $b_{\{x\},(\phi^{t})}$ clearly represents the trivial braid. As a consequence of the proposition, the set $\{x\}$ is unlinked. For a more interesting example,
let us consider a pair $\{x,y\}$ of contractible distinct fixed points in $\Sigma = \R^2$. One can define the \emph{linking number} $\ell(x,y)$ 
as the degree of the circle map
$$
t \mapsto \frac{\phi^t(x) - \phi^t(y)}{\| \phi^t(x) - \phi^t(y) \|}.
$$
%of the pair under the isotopy $(\phi^{t})_{t \in [0,1]}$ as the total variation of the vector
%from $\phi^{t}(x)$ to $\phi^{t}(y)$ when $t$ goes from $0$ to $1$. 
%Since $\mathrm{Diff}_{0}(\R^2)$ is contractible (\cite{Smale1959}, Theorem B), the integer $\ell(x,y)$ depends only on $\phi$ and not on the choice of the isotopy. 
Then the pair $\{x,y\}$ is unlinked if and only if $\ell(x,y)=0$.

\paragraph{Unlinked subsets of disks.} 

Another consequence of the above results concerns the following situation. Assume that $\Sigma$ is not the sphere, and that our isotopy $(\phi^t)_{t \in [0,1]}$ fixes every point in some neighborhood of the boundary $\partial D$ of some open disk  $D$  in $\Sigma$. Let $X$ be a set of contractible fixed points of $\phi$ that is included in $D$. We will say that $X$ is \emph{unlinked in $D$} if there is  an isotopy in $\mathrm{Diff}_{0}(D)$ whose time one is  $\phi_{\mid D}$ that fixes every point of $X$.

\begin{corol}\label{cor:mnus_closed_surface}
In this situation, $X$ is unlinked if and only if it is unlinked in $D$.
\end{corol}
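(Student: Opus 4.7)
The plan is to prove the two directions separately, using Theorem~\ref{theo.unlinked-criteria1} and Proposition~\ref{prop.unlinked-criteria2} from the appendix as the main tools.

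For the easier direction ($X$ unlinked in $D$ implies $X$ unlinked in $\Sigma$), I would start with a witness isotopy $(J^{t})$ in $\mathrm{Diff}_{0}(D)$ with $J^{1} = \phi|_{D}$ fixing $X$ pointwise. Because $(\phi^{t})$ is the identity on a neighborhood of $\partial D$, it factors as $\phi^{t} = \phi^{t}_{\mathrm{in}} \circ \phi^{t}_{\mathrm{out}}$ into two commuting isotopies supported respectively in $\bar D$ and in $\Sigma \setminus D$. Extending $J^{t}$ by the identity outside $D$ to an isotopy $\tilde J^{t}$ of $\Sigma$, and setting $I^{t} = \tilde J^{t} \circ \phi^{t}_{\mathrm{out}}$, we obtain an isotopy in $\mathrm{Diff}_{0}(\Sigma)$ with $I^{1} = \phi$ that fixes $X$ pointwise. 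To verify that $I^{t}$ is homotopic to $\phi^{t}$ with fixed endpoints, I would invoke the contractibility of $\mathrm{Diff}_{0}(D)$ (a consequence of Smale's theorem): the two paths $\tilde J^{t}|_{D}$ and $\phi^{t}_{\mathrm{in}}|_{D}$ in $\mathrm{Diff}_{0}(D)$ share both endpoints and are therefore path-homotopic, and composing this homotopy with $\phi^{t}_{\mathrm{out}}$ produces the required homotopy in $\mathrm{Diff}_{0}(\Sigma)$.

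For the reverse direction, I would first reduce to the case of finite $X$ by applying Theorem~\ref{theo.unlinked-criteria1} in both $\Sigma$ and $D$. For finite $X$, the hypothesis that $(\phi^{t})$ fixes a neighborhood of $\partial D$ implies $\phi^{t}(D) = D$ for every $t$, so the geometric braid $b_{X,(\phi^{t})}$ is genuinely a braid in $D$. By Proposition~\ref{prop.unlinked-criteria2} applied inside $D$ and inside $\Sigma$, the corollary reduces to the following claim: a braid whose strands all lie in $D$ is trivial in $\Sigma$ if and only if it is trivial in $D$; equivalently, the inclusion-induced homomorphism of pure braid groups $P_{n}(D) \to P_{n}(\Sigma)$ is injective.

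This last point is the crux and is precisely where the hypothesis $\Sigma \neq \S^{2}$ enters essentially (for the sphere the kernel is nontrivial, as is already visible from $P_{2}(\S^{2}) = \Z/2$ versus $P_{2}(D) = \Z$). For the surfaces $\Sigma$ considered here, injectivity of $P_{n}(D) \hookrightarrow P_{n}(\Sigma)$ is a classical fact in the theory of surface braid groups; it can be established by induction on $n$ using the Fadell--Neuwirth fibration $\mathrm{Conf}(n, \Sigma) \to \mathrm{Conf}(n-1, \Sigma)$ with fiber $\Sigma$ minus $n-1$ points, together with the observation that $\partial D$ is essential in $\Sigma$ minus those $n-1$ points, so that $\pi_{1}(D \setminus \{n-1 \text{ points}\}) \hookrightarrow \pi_{1}(\Sigma \setminus \{n-1 \text{ points}\})$ is injective. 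I expect this to be the main technical hurdle: one either cites a standard reference on surface braid groups, or extracts a direct proof from the same techniques used in the appendix to establish Theorem~\ref{theo.unlinked-criteria1}.
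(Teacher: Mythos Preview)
Your proposal is correct and reaches the same conclusion as the paper, but the two arguments diverge in the hard direction (unlinked in $\Sigma$ $\Rightarrow$ unlinked in $D$).

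You reduce to the injectivity of $P_{n}(D)\to P_{n}(\Sigma)$ and propose to establish it by a Fadell--Neuwirth induction. The paper instead gives a direct, self-contained argument via the universal cover: since $\Sigma$ is not the sphere, $\tilde\Sigma$ is contractible (a plane). Lift $D$ to a disk $\tilde D\subset\tilde\Sigma$ and lift the braid deformation to $\tilde\Sigma$. Because $\tilde\Sigma\cong\R^{2}$, the whole deformation can be pushed into $\tilde D$ (e.g.\ by a diffeomorphism $\R^{2}\to\tilde D$ fixing a smaller disk containing the base points), and projecting back to $\Sigma$ gives a deformation to the trivial braid that stays in $D$. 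This is shorter and avoids any appeal to the literature on surface braid groups; it also makes transparent where the hypothesis $\Sigma\neq\S^{2}$ is used (contractibility of $\tilde\Sigma$). Your Fadell--Neuwirth route is perfectly valid and has the advantage of being the ``standard'' statement one would look up, but the inductive step you sketch (injectivity of $\pi_{1}(D\setminus\{\text{points}\})\to\pi_{1}(\Sigma\setminus\{\text{points}\})$) ultimately rests on the same universal-cover observation, so the paper's argument is essentially the distilled version of yours.

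For the easy direction, you are actually more careful than the paper: you explicitly verify, via the contractibility of $\mathrm{Diff}_{0}(D)$, that the glued isotopy is homotopic rel endpoints to $(\phi^{t})$. The paper simply glues and asserts the result.
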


\begin{proof}
By Theorem~\ref{theo.unlinked-criteria1} it suffices to consider the case when $X$ is finite. 
If $X$ is unlinked in $D$, then the isotopy in $D$ given by the definition may be glued with the restriction of $(\phi^t)$ outside $D$ to provide an isotopy in $\Sigma$ which fixes every point of $X$, and we get that $X$ is unlinked. Now assume $X$ is unlinked. This means that the geometric braid $b_{X,(\phi^{t})}$ may be deformed into the trivial braid in $\Sigma$. %Here is the important point: t
The deformation starts with a braid included in $D$ and ends with the trivial braid in $D$, but the braid may go out of $D$ during the deformation.  Let us consider the situation in the universal cover $\tilde \Sigma$. We lift $D$ to a disk $\tilde D$, and let $\tilde X$ be the pre-image of $X$ in $\tilde D$.
The braid $b_{X,(\phi^{t})}$ lifts to a braid $\tilde b$ based on $\tilde X$ in $\tilde D$, and the deformation of $b_{X,(\phi^{t})}$ to the trivial braid lifts to a deformation of $\tilde b$ to the trivial braid in $\tilde \Sigma$.  Since the universal cover $\tilde \Sigma$ is contractible, it is easy to modify the deformation so that it takes place entirely in $\tilde D$. 
%{\color{blue} Details to be removed (I think {\color{red}, agreed!}{\color{sobhan}OK}): (With the notations of the definition of ``representing the trivial braid'', choose some family $(f_{s})_{s \in [0,1]}$ of diffeomorphisms of $\tilde \Sigma$ which for every $s$ contracts the image of $B(.,.,s)$ into $D$, and such that $f_{0} = f_{1} = \mathrm{Id}$; then for each $s$ replace  $B(.,.,s)$ by $f_{s}(B(.,.,s)$).}
Now we project this new deformation down to $D$, and we see that the braid is trivial in $D$. Finally we apply Proposition~\ref{prop.unlinked-criteria2} in $D$ to get that $X$ is unlinked in $D$.
\end{proof}

\begin{remark}It can easily be seen that the above corollary still holds if $D$ is replaced with any compact incompressible subsurface of $\Sigma$.  We do not use the corollary in this generality.
\end{remark}

\paragraph{Unlinked sets for autonomous systems.} In this paragraph we assume again that $\Sigma$ is not the sphere.

We call an isotopy \emph{autonomous} if it is the flow of a time-independent vector field.  Let $(\phi^{t})_{t \in [0,1]}$ be an autonomous isotopy, and $x$ a contractible fixed point of $\phi = \phi^1$ which is not fixed by the isotopy. Then the trajectory of $x$ is a simple closed curve which bounds a unique disk, we denote this disk by $D(x)$. 
\begin{corol}\label{corol:unlinked-sets-autonomous}
 Let $X$ be a set of contractible fixed points of the autonomous isotopy $(\phi^{t})_{t \in [0,1]}$. Then $X$ is unlinked if and only if $X \cap D(x) = \{x\}$ for every point $x$ of $X$ which is not fixed by the isotopy.
\end{corol}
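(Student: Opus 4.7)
The plan is to establish both implications separately, using Theorem~\ref{theo.unlinked-criteria1} to reduce the infinite case to finite sets and Proposition~\ref{prop.unlinked-criteria2} to characterise unlinkedness of a finite set via triviality of the associated geometric pure braid. The crucial feature of an autonomous flow is that distinct orbits are disjoint, so for any contractible fixed point $x$ not fixed by the isotopy, the orbit $\gamma_x$ has a minimal period $1/n_x$ for some integer $n_x\geq 1$, and the loop $t\mapsto\phi^t(x)$ wraps around $\gamma_x$ exactly $n_x$ times.

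For the $(\Leftarrow)$ direction, I would reduce to a finite subset $X_0\subset X$ via Theorem~\ref{theo.unlinked-criteria1}. The hypothesis, combined with the disjointness of distinct orbits, forces the closed disks $D(x_i)$ associated to the $x_i\in X_0$ not fixed by the isotopy to be pairwise disjoint, and forces every point of $X_0$ fixed by the isotopy to lie outside all of them: any boundary coincidence $\gamma_i=\gamma_j$ or nontrivial nesting would place one of $x_i,x_j$ inside the other's disk, contradicting the hypothesis. Each loop $t\mapsto\phi^t(x_i)$ lies in the topological disk $D(x_i)$ and hence contracts to the constant loop at $x_i$ within it; performing these contractions simultaneously on the pairwise disjoint disks, while leaving the isotopy-fixed strands constant, yields a braid homotopy of $b_{X_0,(\phi^t)}$ to the trivial braid. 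Proposition~\ref{prop.unlinked-criteria2} then gives that $X_0$ is unlinked.

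For the $(\Rightarrow)$ direction, I would prove the contrapositive: assuming $y\in X\cap D(x)$ with $y\neq x$ and $x$ not fixed by the isotopy, I will show that the pair $\{x,y\}$ has non-trivial braid. I lift the isotopy to the universal cover $\tilde\Sigma\cong\R^2$ normalised by $\tilde\phi^0=\mathrm{Id}$, and choose the unique lift $\tilde y$ of $y$ lying in the disk $\tilde D(\tilde x)$ bounded by the lift $\tilde\gamma_x$ through $\tilde x$. A braid homotopy in $\Sigma$ would lift uniquely to a braid homotopy of the two-strand lifted braid in $\R^2$, whose triviality is detected by the winding number of $v(t)=\tilde\phi^t(\tilde x)-\tilde\phi^t(\tilde y)$ about $0$; I will show this winding equals $\pm n_x\neq 0$. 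When $\tilde y$ lies in the interior of $\tilde D(\tilde x)$, flow-invariance of $\tilde D(\tilde x)$ keeps $\tilde\phi^t(\tilde y)$ in the open disk (either at the constant $\tilde y$ if $y$ is fixed by the isotopy, or tracing the simple closed curve $\tilde\gamma_y$ bounding a subdisk $\tilde D(\tilde y)$ otherwise); contracting $t\mapsto\tilde\phi^t(\tilde y)$ to a constant interior point through a homotopy confined to $\tilde D(\tilde y)$ avoids $\tilde\gamma_x$, so the winding becomes that of $\tilde\phi^t(\tilde x)$ about a point interior to $\tilde D(\tilde x)$, namely $\pm n_x$. When $\tilde y$ lies on $\tilde\gamma_x$ itself, I write $\tilde y=\tilde\phi^s(\tilde x)$ and parametrise $\tilde\gamma_x$ by $c(u)=\tilde\phi^{u/n_x}(\tilde x)$, so that $v(t)=c(n_xt)-c(n_xt+n_xs)$; applying an orientation-preserving diffeomorphism of $\R^2$ that straightens $\tilde\gamma_x$ to the unit circle (which preserves windings) reduces $v(t)$ to a nonzero constant multiple of $e^{2\pi i n_xt}$, whose winding about $0$ is $\pm n_x$.

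The hard part will be the subcase where both strands live on the common curve $\tilde\gamma_x$: a direct contracting homotopy is unavailable since contracting one strand would force it to meet the other, so one must resort to the reparametrisation-and-straightening argument above.
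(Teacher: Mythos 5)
Your proposal is correct and follows essentially the same route as the paper: the reverse implication via Theorem~\ref{theo.unlinked-criteria1}, Proposition~\ref{prop.unlinked-criteria2} and simultaneous contractions supported in the pairwise disjoint disks $D(x_i)$, and the direct implication by lifting to the universal cover and detecting a nonzero linking (winding) number, then descending as in Corollary~\ref{cor:mnus_closed_surface}. Your explicit treatment of the case where $y$ lies on the orbit $\gamma_x$ itself is a detail the paper leaves implicit, but it does not change the argument.
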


%  (Thus, if we want a global description, the mnus's are the sets of the following form: a certain (finite) collection of 1--periodic points, and all the critical points in the complement of the disks bounded by the collection of 1--periodic points.)
\begin{proof}
First assume $y$ is a point in $X \cap D(x)$ distinct from $x$. In the universal cover of $\Sigma$,  the lifts of $x$ and $y$ in some lift of $D(x)$ have a non zero linking number, and thus they are linked. An argument similar to the proof of Corollary~\ref{cor:mnus_closed_surface} shows that $\{x,y\}$ is linked in $\Sigma$. This proves the direct implication.

 The reverse implication goes as follows. Assume that for every point $x$ in $X$ which is not fixed by the isotopy, $X \cap D(x) = \{x\}$, and let us prove that $X$ is unlinked.  According to Theorem~\ref{theo.unlinked-criteria1} and Proposition~\ref{prop.unlinked-criteria2},  it suffices to prove that every geometric braid generated by a finite subset $X'$ of $X$  represents the trivial braid. For a point $x$ in $X'$ which is not fixed by the isotopy,  the single-strand braid generated by  $\{x\}$ represents the trivial braid, and we can choose the map $B$ deforming the braid so that it is supported in $D(x)$. Due to the hypothesis on $X$, all these deformations do not interfere, and together they give rise to a deformation of the braid $b_{X',(\phi^{t})}$ into the trivial braid (the strands corresponding to the fixed points of the isotopy stay still during the deformation).
\end{proof}

%%%%
\subsection{Rotation number and negative unlinked sets}\label{sec:rot_number}

\paragraph{Definitions.}

For simplicity we restrict ourselves to a surface $\Sigma$ which is either the plane or a closed surface which is not the sphere. Consider a contractible fixed point $x$ for an isotopy $(\phi^t)_{t \in [0,1]}$ as before.
 Since $x$ is a contractible fixed point, there exists a ``capping disk'', i.e. a smooth map $u: \D^2 \to \Sigma$ from the unit disk $\D^2$ whose restriction to the unit circle is (a parametrization of) the trajectory $t \mapsto \phi^t(x)$. Since $\D^2$ is contractible, the pullback of 
$T\Sigma$ under $u$ may be identified with the trivial bundle $\D^2 \times \R^2$. 
Given a unit vector $v$ in $\R^2 \simeq \{x\} \times \R^2 \simeq T_{x} \Sigma$, the pullback of the path $t \mapsto (\phi_{H}^t(x),D_{\phi_{H}^t(x)}\phi_{H}^t .v)$ is a path $(t, v_{t})$ in $\D^2 \times (\R^2 \setminus \{0\})$. The map 
$$
(t,v) \mapsto \frac{v_{t}}{\norm{v_{t}}}
$$
is an isotopy in the circle. We call \emph{rotation number of $x$} and denote by $\rho(x)$ the rotation number of this isotopy, which is a real number defined as follows. We lift the isotopy to an isotopy $(F_t)_{t\in[0,1]}$ of $\R$, whose time one map $F_1$ is a homeomorphism of the line that commutes with the translation $s \mapsto s+1$; the rotation number of the isotopy is, by definition, the translation number of $F_1$, 
$$
\lim_{n \to +\infty} \frac{1}{n} (F_{1}^n(s)-s)
$$
for any $s \in \R$ (see for example~\cite{katok_hasselblatt}). 

\begin{lemma}
The rotation number $\rho(x)$ depends only on $x$ and $\phi^1$.
\end{lemma}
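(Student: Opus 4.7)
The plan is to show that $\rho(x)$ is independent of each of the three auxiliary choices entering its construction: (a) the trivialization of $u^*T\Sigma$, (b) the capping disk $u$, and (c) the isotopy $(\phi^t)$ joining $\mathrm{Id}$ to $\phi^1$. In every step I will produce a homotopy rel endpoints between the two circle isotopies coming from different choices; since the rotation number of orientation-preserving circle homeomorphisms is $C^0$-continuous, it is constant along such a homotopy.

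First I would treat the trivialization. Two trivializations of a trivial rank-two bundle over the contractible base $\D^2$ differ by a continuous map $g : \D^2 \to GL^+(2,\R)$, which is homotopic to a constant map because $\D^2$ is contractible. The circle isotopies extracted from the two trivializations are then homotopic rel endpoints, and thus have the same rotation number.

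Next I would address the capping disk. Let $u_0, u_1 : \D^2 \to \Sigma$ be capping disks with the same boundary $t \mapsto \phi^t(x)$. Gluing them along their boundary circles produces a map $\mathbb{S}^2 \to \Sigma$. For $\Sigma$ equal to $\R^2$ or a closed orientable surface different from $\mathbb{S}^2$, one has $\pi_2(\Sigma) = 0$, so this sphere map is null-homotopic; extending the null-homotopy and smoothing yields a one-parameter family $u_s$, $s \in [0,1]$, of capping disks with the fixed boundary. Each $u_s^*T\Sigma$ is trivial over $\D^2$, and the pulled-back bundle over $\D^2 \times [0,1]$ is trivial as well (the base being contractible), so trivializations can be chosen varying continuously in $s$. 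The resulting family of circle isotopies interpolates continuously rel endpoints between the two originals, and the rotation number is preserved.

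Finally I would address the isotopy. Two isotopies from $\mathrm{Id}$ to $\phi^1$ yield a loop in $\mathrm{Diff}_0(\Sigma)$ based at $\mathrm{Id}$. As recalled in the preceding subsection on unlinked sets, this group is simply connected when $\Sigma$ is the plane or a closed surface other than $\mathbb{S}^2$ and $\mathbb{T}^2$; on $\mathbb{T}^2$ the group $\mathrm{Ham}(\mathbb{T}^2)$ is simply connected, which suffices since the isotopies of interest are Hamiltonian. In each case the two isotopies are homotopic rel endpoints, so fixing a common capping disk and transporting it through the homotopy reduces the claim to the continuity argument of the first step. The main obstacle is the capping-disk step, where $\pi_2(\Sigma) = 0$ is essential and is precisely the geometric reason the sphere is excluded: on $\mathbb{S}^2$ a pair of non-homotopic capping disks would produce rotation numbers differing by an integer coming from $c_1$, whereas asphericity rules out any such ambiguity here.
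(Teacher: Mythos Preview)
Your steps (a) and (b) are correct and match the paper's approach: both argue via homotopies (of trivializations over the contractible disk, then of capping disks using $\pi_2(\Sigma)=0$) together with continuity of the rotation number. The paper frames this slightly differently, first observing that modulo $1$ the rotation number is simply the classical rotation number of the single circle map induced by $D\phi^1(x)$, hence intrinsic to $\phi^1$; the values coming from two choices therefore differ by an integer, and continuity along the homotopy forces that integer to vanish. This is the same mechanism you use, only packaged differently.

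The genuine difference is your handling of the torus in step (c). You appeal to the simple connectivity of $\mathrm{Ham}(\mathbb{T}^2)$, which covers only Hamiltonian isotopies, whereas the lemma is stated for arbitrary compactly supported isotopies in $\mathrm{Diff}_0(\mathbb{T}^2)$, and that group is \emph{not} simply connected. The paper closes this gap by a different route: since the singleton $\{x\}$ is always unlinked, one replaces $(\phi^t)$ by a homotopic isotopy $I=(f_t)$ that fixes $x$ throughout; then $\rho(x)$ is the rotation number of the path $t\mapsto Df_t(x)$ acting on the tangent circle at $x$, and independence from the choice of $I$ follows because the pointed group $\mathrm{Diff}_0(\mathbb{T}^2,x)$ is simply connected (a result of Gramain). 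This argument works for all isotopies, not just Hamiltonian ones. Your restriction suffices for every subsequent application in the paper, but it does not establish the lemma in its stated generality.
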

Here is a sketch of the proof. Notice that the rotation number of a circle homeomorphism is well defined as a real number modulo one. Thus, modulo one, $\rho(x)$ depends only on $x$ and $\phi^1$.
From this we first deduce that $\rho(x)$ does not depend on the trivialization of the tangent bundle over $u$. Then, since  $\pi_{2}(\Sigma)=0$,  we conclude that it does not depend  on the choice of the capping disk $u$ either. Likewise, we see that it depends only on the homotopy class of $(\phi^t)_{t \in [0,1]}$ as a path of diffeomorphisms. 
If $\Sigma$ is not the torus then $\mathrm{Diff}_{0}(\Sigma)$ is simply connected and we are done.
 It remains to take care of the torus. First note that on any surface, since according to Proposition~\ref{prop.unlinked-criteria2} the set $\{x\}$ is unlinked, the homotopy class of $(\phi^t)_{t \in [0,1]}$ contains an isotopy $I = (f_{t})_{t \in [0,1]}$ that fixes the point $x$. Thus we can use the isotopy $I$ and the trivial capping to define $\rho(x)$, and we see that $\rho(x)$ equals the rotation number of the action of the differential of this isotopy on the unit tangent bundle at $x$. Finally, when $\Sigma$ is the torus, we can conclude since the subgroup of elements of $\mathrm{Diff}_{0}(\Sigma)$ fixing $x$ is simply connected (\cite{gramain1973}, Th\'eor\`eme 2 and Proposition 2).

%Since $\pi_{2}(\Sigma)=0$, it is not difficult to check that this number does not depend on the various choices 
%of $u$, of the trivialization of the tangent bundle over $u$, nor on the isotopy $(\phi^t)_{t \in [0,1]}$, 
%but only on $x$ %, on the time one map $\phi=\phi^1$
% and on the homotopy class of $(\phi^t)_{t \in [0,1]}$ as a path of diffeomorphisms. %{\color{sobhan} [Saying that $\rho(x)$ depends on the homotopy class of $(\phi^t)_{t \in [0,1]}$ already implies that it depends on $\phi = \phi^1$.]}

In the Hamiltonian context the rotation number may be generalized to higher dimensions, and is called the mean index, see for example \cite{SZ92, GiGu10}.

%
%{\color{blue} [I propose to remove the following paragraph.If we keep it, we must say something about the sphere case].}{\color{sobhan} Ok, let's remove it but could you briefly explain what we would have to say in the case of the sphere.  I just would like to know it.} An isotopy of the circle $(f_t)_{t\in[0,1]}$ has rotation number zero if and only if $f_1$ has a fixed point $v$ such that the loop $t \mapsto f_t(v)$ is contractible. In our context, this entails the following characterization: a contractible fixed point has rotation number $\rho(x)=0$ if and only if there exists a non zero tangent vector $v$ at $x$ which is fixed by $D\phi(x)$, and such that  the loop $t \mapsto D_{x}\phi^t v$ is contractible in the complement of the zero section in the tangent space $T\Sigma$. 

\begin{defi}
We say that an unlinked set $X$ is \emph{negative} if $\rho(x) \leq 0$ for every $x \in X$.
We say that a negative unlinked set $X$ is \emph{maximal} if there is no negative unlinked set $X'$ strictly containing $X$.
\end{defi}
Note that the rotation number, and hence being negatively unlinked,  is invariant under conjugation in the group $\mathrm{Diff}_{0}(\Sigma)$. Theorem~\ref{theo.unlinked-criteria1} has the following important consequence.

\begin{corol}\label{coro.existence-mnus}
Every negative unlinked set is contained in a maximal negative unlinked set.
Furthermore, the closure of a negative unlinked set is still a negative unlinked set, and maximal negative unlinked sets are closed.
%{\color{blue} non empty and} closed. {\color{blue} [I have included the non-emptyness in the statement, since it is crucial for the definition of $\m N$.]}
\end{corol}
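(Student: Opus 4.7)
The plan is to prove the three assertions in sequence, using Theorem~\ref{theo.unlinked-criteria1} as the main tool. First, for the existence of maximal negative unlinked sets, I would apply Zorn's lemma to the family of negative unlinked sets containing the given one, ordered by inclusion. The only point to verify is that any totally ordered chain $\{X_\alpha\}$ admits an upper bound, for which I would take $Y := \bigcup_\alpha X_\alpha$. Every $y \in Y$ lies in some $X_\alpha$, so $\rho(y) \leq 0$ and $Y$ is negative. To show $Y$ is unlinked, Theorem~\ref{theo.unlinked-criteria1} reduces the task to finite subsets, and every finite subset of $Y$ is already contained in a single $X_\alpha$ by total ordering, hence is unlinked.

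Next, for the closure assertion, let $X$ be a negative unlinked set and let $(f_t)_{t \in [0,1]}$ be an isotopy from $\mathrm{Id}$ to $\phi^1$, homotopic to $(\phi^t)$ with fixed endpoints, with $f_t(x)=x$ for every $x \in X$ and every $t$. The key observation is that the set $F := \{y \in \Sigma : f_t(y)=y \text{ for all } t \in [0,1]\}$ is closed, being the intersection over $t$ of the zero-sets of $f_t - \mathrm{Id}$; since $F \supseteq X$, we have $F \supseteq \bar X$. This single remark does most of the work: it simultaneously shows that every point of $\bar X$ is a contractible fixed point of $\phi^1$ (the constant loop $t \mapsto x$ is homotopic via the given fixed-endpoints homotopy between $(f_t)$ and $(\phi^t)$ to the trajectory $t \mapsto \phi^t(x)$) and that the very same isotopy $(f_t)$ witnesses $\bar X$ as unlinked.

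To upgrade to negative unlinkedness of $\bar X$, I would appeal to continuity of the rotation number $\rho$ on the space of contractible fixed points of $\phi^1$: along the trajectory of $x$, the linearized maps $t \mapsto D_{\phi^t(x)}\phi^t$ depend continuously on the base point, and the classical fact that the rotation number of an orientation-preserving circle homeomorphism varies continuously in the $C^0$ topology then yields continuity of $x \mapsto \rho(x)$. Hence $\rho(x_n) \leq 0$ for a sequence $x_n \to x$ in $X$ forces $\rho(x) \leq 0$, and $\bar X$ is negative. The final statement is then immediate: if $X$ is a maximal negative unlinked set, the inclusion $X \subseteq \bar X$ together with the fact that $\bar X$ is still negative unlinked forces $\bar X = X$.

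The only step deserving genuine care is the continuity of $\rho$ as a function of contractible fixed points; this is not made explicit earlier in the excerpt, but follows from the standard continuity of the rotation number on $\mathrm{Homeo}^+(S^1)$ combined with the fact that the well-definedness of $\rho(x)$ as a real number (not merely modulo $\Z$) was already established via the single-strand braid being trivial. Everything else reduces to the closed nature of fixed-point sets together with the finite-to-infinite reduction of Theorem~\ref{theo.unlinked-criteria1}.
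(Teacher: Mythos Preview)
Your Zorn's lemma argument for the first assertion, and the observation that the set of common fixed points of $(f_t)$ is closed and hence contains $\bar X$, match the paper's proof exactly. The divergence is in the negativity of $\bar X$.

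You argue via continuity of $\rho$ on the set of contractible fixed points of $\phi^1$, justified by continuity of the linearized flow $t\mapsto D_{\phi^t(x)}\phi^t$ in the base point. As stated this is incomplete: these differentials map between varying tangent spaces $T_x\Sigma\to T_{\phi^t(x)}\Sigma$, and turning them into a circle isotopy requires a trivialization coming from a capping disk, whose continuous dependence on $x$ you do not address. The continuity claim is true, and in fact becomes easy once you use the isotopy $(f_t)$ you already have rather than $(\phi^t)$: since $(f_t)$ fixes every point of $\bar X$, for $x\in\bar X$ the number $\rho(x)$ is the rotation number of $t\mapsto D_xf_t$ acting on the single tangent space $T_x\Sigma$ (trivial capping), and this visibly depends continuously on $x$. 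So your route can be completed, but the key move is to compute $\rho$ via $(f_t)$, which you do not quite do.

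The paper bypasses the continuity issue with a sharper and shorter observation: any $x\in\bar X\setminus X$ is an accumulation point of common fixed points of all the $f_t$, so every $D_xf_t$ fixes a common nonzero tangent vector $v$ (any subsequential limit of the directions $(x_n-x)/\|x_n-x\|$ in a chart). A circle isotopy with a global fixed point has rotation number $0$, hence $\rho(x)=0$ outright. This avoids any discussion of cappings or continuity and yields the slightly stronger conclusion $\rho=0$ on $\bar X\setminus X$ rather than merely $\rho\leq 0$.
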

\begin{proof}
For the first part, we provide a short argument relying on Zorn's Lemma (a more constructive proof may be obtained by adapting the proof of Corollary~\ref{coro.maximal-unlinked-sets}). It suffices to consider a family $\cF$ of negative unlinked sets which is totally ordered by inclusion, and check that $\cF$ has an upper bound. Consider the union $X$ of all elements of $\cF$. Theorem~\ref{theo.unlinked-criteria1} entails that $X$ is unlinked, and clearly every point of $X$ has non positive rotation number. Thus $X$ is an upper bound for $\cF$. This proves the first sentence. For the second sentence, consider an unlinked set $X$, and let $I = (f_{t})_{t \in [0,1]}$ be an isotopy fixing every point of $X$. Then $I$ also fixes every point of $\overline{X}$, which shows that $\overline{X}$ is unlinked. Furthermore, at every point $x \in \overline{X} \setminus X$ the differential $D_{x}f_{t}$  has a fixed vector $v$ that does not depend on $t$; thus the rotation number $\rho(x)$ vanishes. This proves that $\overline{X}$ is a negative unlinked set. The closedness of maximal negative unlinked sets follows immediately.
\end{proof}

\paragraph{Negative unlinked sets for Hamiltonian isotopies.}
We now describe some properties that are specific to Hamiltonian systems.
 Let $\Sigma$ be equipped with a symplectic form $\omega$,
 consider a time-dependent Hamiltonian function $H \in C^{\infty}([0,1] \times \Sigma)$, and the corresponding Hamiltonian isotopy $(\phi_{H}^{t})_{t \in [0,1]}$.
%{\color{blue}[I have put here the separation between general isotopies and Hamiltonian isotopies, but I am not sure this is the best choice. I think the existence of negative fixed points deserves a Lemma.]  }

\begin{lemma}\label{lemma:existence-negative-fixed-point}
There exists a negative contractible fixed point. % for the Hamiltonian isotopy $(\phi_{H}^{t})_{t \in [0,1]}$.
\end{lemma}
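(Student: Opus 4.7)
The plan is to split on the two possibilities for $\Sigma$.

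For $\Sigma = \R^2$, the lemma is immediate: since $H$ has compact support, any point $x \notin \supp(H)$ is fixed by every $\phi_H^t$. The trajectory of $x$ is constant and hence contractible, and $D_x \phi_H^t = \mathrm{Id}$ for all $t$, so $\rho(x) = 0 \le 0$. Such an $x$ is already a negative contractible fixed point, and one is done.

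For $\Sigma$ a closed aspherical surface, my plan is first to treat the model case of a $C^2$-small autonomous Morse Hamiltonian $F$. Every critical point of $F$ is a common fixed point of the whole isotopy $(\phi_F^t)$, so its trajectory is constant (hence contractible) and the linearized time-one map $D_x \phi_F^1$ is the time-one flow of the linear Hamiltonian vector field associated with $\mathrm{Hess}_x F$. A direct computation in local symplectic coordinates, with the sign convention $\omega(X_F,\cdot)=-dF$ of the paper, shows that at a local minimum $x_{\min}$ of $F$ (where $\mathrm{Hess}_{x_{\min}} F \ge 0$) the resulting linear symplectic flow rotates in the \emph{negative} direction, yielding $\rho(x_{\min}) \le 0$. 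Thus $x_{\min}$ provides a negative contractible fixed point in this model case.

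To upgrade to a general time-dependent $H$, I would connect $H$ to such a small autonomous Morse $F = H_0$ through a smooth one-parameter family $(H_s)_{s\in[0,1]}$ with $H_1 = H$, and propagate the negative-rotation property along the deformation. Contractible fixed points of $\phi_{H_s}^1$ organize into continuous branches on which $\rho$ depends continuously, and these branches only interact at generic birth/death and crossing bifurcations. The hope is to combine this with the Lefschetz identity $\sum_x \mathrm{ind}(x) = \chi(\Sigma) \le 0$ for the closed aspherical surface, which forces enough fixed points to persist globally, together with compactness of $\Sigma$, to extract at $s=1$ a contractible fixed point of $\phi_H^1$ with non-positive rotation number.

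The main obstacle is precisely this last step, namely ruling out that under the deformation every branch of contractible fixed points simultaneously acquires strictly positive rotation. Controlling rotation numbers across bifurcations, without invoking the spectral-invariant/Floer machinery that is reserved for later sections of the paper, is the delicate point; I would handle it by a careful case analysis at generic bifurcation moments (using that $\rho$ is invariant under conjugation and varies continuously on each branch) combined with the observation that, thanks to Corollary~\ref{coro.existence-mnus}, the closure of the set of negative contractible fixed points is itself negative unlinked, so one only needs to trap a single limit point with $\rho \le 0$ as $s \to 1$.
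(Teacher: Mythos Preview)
Your plane case is correct and matches the paper's proof exactly.

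For the closed case there are two problems.

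First, a sign error: with the paper's convention $\omega(X_F,\cdot)=-dF$, a local \emph{minimum} of a $C^2$-small Morse function has rotation number in $(0,1)$, not $\le 0$. In standard coordinates with $\omega=dx\wedge dy$ and $F=\tfrac12(x^2+y^2)$ one gets $X_F=(-y,x)$, a counterclockwise rotation, hence positive $\rho$. It is the local \emph{maximum} that has $\rho\in(-1,0)$; the paper states this explicitly in the remark on Conley--Zehnder index. This is easily fixed, since every Morse function on a closed surface has a maximum.

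Second, and more seriously, your deformation argument is not a proof but a program, and you yourself identify the gap: there is no mechanism preventing all branches of contractible fixed points from acquiring strictly positive rotation during the homotopy from $H_0$ to $H$. The Lefschetz number controls the signed count of fixed points, not their rotation numbers, and rotation numbers can drift arbitrarily along a branch. Producing a contractible fixed point with $\rho\le 0$ for an arbitrary Hamiltonian diffeomorphism on a closed aspherical surface is essentially an Arnold-conjecture-level statement; it is not accessible by an elementary continuation argument of the kind you sketch.

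The paper does not attempt an elementary proof. For the closed case it invokes Le Calvez's equivariant Brouwer foliation theorem: take a maximal unlinked set $X$ (finite without loss of generality), apply Le Calvez's result to obtain a gradient-like transverse foliation on $\Sigma\setminus X$, and observe that the $\alpha$-limit in $X$ of any leaf has non-positive rotation number by transversality. The paper also notes, as an alternative, that Floer's proof of the Arnold conjecture furnishes a contractible fixed point of Conley--Zehnder index $2n$, which has $\rho\le 0$. Either route relies on a deep external theorem; this is unavoidable here.
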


\begin{proof}
In the case when $\Sigma$ is not compact, every point outside the support of $\phi$ is a negative contractible fixed point.

 In the case when $\Sigma$ is a compact surface, a negative contractible fixed point is provided by P. Le Calvez's proof of the Arnol'd conjecture. Let us recall the outline of the proof (see~\cite{lecalvez2005} for more details).  According to Corollary~\ref{coro.maximal-unlinked-sets} in the Appendix, there exists a maximal unlinked set $X$ for $\phi$. 
Since every accumulation point of $X$ has zero rotation number, we may assume that $X$ is finite.
Le Calvez's Brouwer foliated equivariant theorem provides an oriented foliation on $\Sigma \setminus X$ which is ``homotopically transverse'' to the flow $(\phi_{H}^{t})_{t \in [0,1]}$, which means that  every trajectory of the flow is homotopic in $\Sigma \setminus X$, with fixed end-points, to a curve which is positively transverse to the foliation (in other words, ``every leaf is pushed towards its right''). Such a foliation is ``gradient like'' : in particular, for every leaf $L$, there exists two distinct points  $\alpha(L), \omega (L)$ in $X$ such that the closure of $L$ equals $L \cup \{\alpha(L),\omega (L) \}$. By transversality, the point $\alpha(L)$ has non positive rotation number, and the point $\omega(L)$ has non negative rotation number.

 The existence of a fixed point with non-positive rotation number also follows from Floer's proof of the Arnold conjecture \cite{Floer88}. Indeed, if $\phi$ is non-degenerate it guarantees  the existence of a fixed point with Conley-Zehnder index 2. Such a point has a non positive rotation number according to the next remark, which we include only for the reader's convenience since it is not used in the paper. For degenerate $\phi$ the existence can be obtained by approximating $\phi$ in the $C^1$ topology with a sequence of non-degenerate diffeomorphisms. %{\color{blue}[ I guess the other proofs of Arnol'd conjecture also gives this, a contractible fixed point with CZ index $\leq 0$? If this is the case we should quote them instead of (or in addition to) Patrice's proof. I am also cheating a little bit: in the proofs of AC you do not need the existence of maximal unlinked sets, because you want to prove there are sufficiently many fixed points, thus you can assume that the fixed point set is finite, in which case the existence of maximal unlinked sets is obvious.}{\color{red}I added three sentences at the end of the paragraph. Tell me what you think.{\color{sobhan} Floer's proof applies only to non-deg Hamiltonians.  What can we say about degenerate points?. {\color{blue} I think the existence follows from perturbation;  I added a sequence claiming this, do you agree?]} I think I agree but to be sure is the following statement true: suppose $\phi_n \to \phi$ is $C^1$ and $x_n$ is a sequence of fixed points of $\phi_n$ such that $x_n \to x$.  Then, $\rho(x) = \lim \rho(x_n)$?} {\color{blue} Yes. }}{\color{red}OK}
\end{proof}

\begin{remark}[ (Relation with the Conley--Zehnder index)]
   When $\phi^1_{H}$ is non-degenerate 
   %{\color{blue} [changed ``$H$ is non degenerate'' into ``$\phi^1_{H}$ is non degenerate'']{\color{sobhan}OK}} 
   its 1--periodic orbits can be indexed by the well known Conley--Zehnder index  $\CZ$ which takes values in the integers.  Many conventions are used for normalizing $\CZ$.  Our convention is as follows: Suppose that $H: \Sigma \to \mathbb{R}$ is a non-degenerate $C^2$--small Morse function. We normalize the Conley--Zehnder index so that for every critical point $p$ of $H$, 
$$ \mu_\mathrm{CZ}(p) = i_{\text{Morse}}(p),$$
where $i_{\text{Morse}}(p)$ is the Morse index of $p$. To be specific, in this case
$\mu_\mathrm{CZ}(p)$ is equal to $2$ if $p$ is a local maximum of $H$, $1$ if it is a saddle point and $0$ if it is a local minimum. Note that in the first case the rotation number $\rho(x)$ belongs to $(-1,0)$, it vanishes in the second case, and in the last case it belongs to $(0,1)$.
In general, the Conley--Zehnder index and the rotation number are related by the following formula: if $p$ is any contractible fixed point of a non-degenerate $\phi^1_{H}$, then
\begin{itemize}
\item  If $\mu_\mathrm{CZ}(p)$ is odd, then 
 $
 \displaystyle
 \rho(p) = \frac{-\mu_\mathrm{CZ}(p)+1}{2}.
 $
\item   If $\mu_\mathrm{CZ}(p)$ is even, then 
 $
 \displaystyle
 \rho(p) \in \left( \frac{-\mu_\mathrm{CZ}(p)}{2}, \frac{-\mu_\mathrm{CZ}(p)+2}{2}\right). 
 $
\end{itemize}
%{\color{blue} Maybe the $+1$ here should be $+2$?}

\end{remark}

\subsection{Action functional}

The definitions of this section are valid on every symplectic manifold $(M^{2n},\omega)$ which is symplectically aspherical, i.e., $\langle \omega,\pi_2(M)\rangle=0$. 
Given a (time-dependent) Hamiltonian function $H:[0,1]\times M\to\R$, the \emph{action functional} is the function $\cA_H$ defined on the space of contractible loops in $M$ by the formula $$\cA_H(x)=\int_0^1H(t, x(t))dt-\int_{\D^2}u^*\omega,$$ 
where $u$ is a capping disk of the loop $x$, i.e., a map $u:\D^2\to S$ such that $u|_{\partial\D^2}=x$. In other words, the term $\int_{\D^2}u^*\omega$ is the algebraic area enclosed by $x$. Since the manifold is assumed symplectically aspherical, this term does not depend on the choice of the capping disk.
Moreover, if one only allows \emph{mean normalized} Hamiltonians, i.e. Hamiltonians which are normalized by the condition $\forall t\in[0,1],\int_0^1H(t,x)\omega^n=0$, then the value of the action on periodic orbits does not depend on the choice of the generating Hamiltonian but only on the time one map $\phi_H^1$. This means that the action functional is well defined for fixed points of Hamiltonian diffeomorphisms. % {\color{sobhan} Added mean normalized.}{\color{red} OK}
 If $x,y$  are two points that are fixed under the Hamiltonian flow, then $\cA_{H}(y)-\cA_{H}(x)$ can be geometrically interpreted as the quantity of area flowing through any curve joining $x$ to $y$ under the isotopy $(\phi_{H}^t)_{t \in [0,1]}$.

The most important feature of the Hamiltonian action is that its critical points are exactly the 1--periodic orbits of the Hamiltonian flow $\phi_H^t$ (by this we mean the periodic orbits whose period divides 1). The set of critical values of the action, i.e.,  values  on 1--periodic orbits, is called \emph{spectrum} of the Hamiltonian $H$ and is denoted $\spec(H)$.  It has Lebesgue measure zero. 
See Section \ref{sec:exampl-radi-hamilt} for an example of computation.

\subsection{Definition of $\m N$}
For simplicity again we restrict ourselves to a surface $\Sigma$ which is either the plane $\mathbb{R}^2$, the interior of a closed disk in the plane, or a closed surface which is not the sphere, although everything works on any surface $\Sigma$ for which the inclusion of $\mathrm{Ham}(\Sigma)$ into $\mathrm{Diff}_{0}(\Sigma)$ is trivial at  the level of the fundamental groups (see the footnote above). 

Let us consider a time-dependent Hamiltonian function $H \in C^{\infty}([0,1] \times \Sigma)$, the corresponding Hamiltonian isotopy $(\phi_{H}^{t})_{t \in [0,1]}$, and $\phi= \phi^1$. Remember that the notions of unlinkedness and rotation number depend only on $\phi^1$ and not on the isotopy.

For short we write $mnus$ for ``maximal negative unlinked set'', and we denote the family of mnus's by $\mathrm{mnus}(\phi)$ or $\mathrm{mnus}(H)$. According to Corollary~\ref{coro.existence-mnus}, there exists at least one mnus. Furthermore, since by Lemma~\ref{lemma:existence-negative-fixed-point} there exists a negative contractible fixed point, every mnus is non-empty. Hence the following definition is valid.
\begin{defi}
$$
\cN(H) = \inf_{X \in \mathrm{mnus}(\phi)} \ \ \sup_{x \in X} \cA_H(x).
$$
\end{defi}

If $\Sigma=\R^2$ then $\cA_H(x) = \cA_G(x)$ for every (compactly supported) Hamiltonian function $G$ whose time one is $\phi$. If $\Sigma$  is a closed surface then the same equality holds if $\int_{S} G d\omega= \int_{S} H d\omega$. In particular we may give the following definition.
\begin{defi}
Let  $\cN(\phi)$ be $\cN(H)$ where $H$ is any Hamiltonian function whose time one is $\phi$, normalized by  the condition $\int_{S} H d\omega = 0$ in the case $\Sigma$ is a closed surface.
\end{defi}
Note  that $\m N$ is invariant under conjugation by symplectic diffeomorphisms.

\subsection{Example: radial Hamiltonians}\label{sec:exampl-radi-hamilt}

In this subsection, we illustrate the notions introduced above on a basic but fundamental example. We will make intensive use of this example in the proof of Theorem \ref{theo.axiomatic-c=n}.
Let $H \in C^\infty(\R^2)$ be a smooth autonomous Hamiltonian on the plane, that only depends on the distance to the origin. It will be convenient to write $H$ in the form 
$$\forall x,y\in\R,\  H(x,y)= f(\pi(x^2+y^2)),$$
for some function $f:[0,+\infty)\to\R$. 

\paragraph{Fixed points.} The Hamiltonian vector field is given by 
$$
X_H=(-2\pi yf'(\pi(x^2+y^2)),2\pi xf'(\pi(x^2+y^2)))
$$
 and we see that the flow restricted to the circle of radius $r$ is the rotation by $2\pi f'(\pi r^2)$.
Thus, the fixed points of $\phi_H^1$ are, besides the origin, the points of $\R^2$ whose distance to the origin $r$ is such that $f'(\pi r^2)$ is an integer. 

\paragraph{Rotation numbers.}
Let $(x,y)$ be such a point, denote $s=\pi(x^2+y^2)$ and set $k=f'(s)$.  The orbit of $H$ makes exactly $k$ oriented turns along the circle centered in the origin and passing through $(x,y)$. 
The linearized flow of $H$  along the orbit, i.e. the linear map $D\phi_H^t(x,y)$, acts on a vector $\vec v$ tangent to the circle as the rotation by angle $2\pi kt$,

  thus $\rho(x,y)=k$. Therefore the fixed points with non-positive rotation number correspond to values of $s$ where $f$ is non-increasing. Note that the rotation number of the origin is $f'(0)$.

\paragraph{Mnus's.}
Let $p_{1}, p_{2}$ be two distinct fixed points of $\phi^1_H$. To fix ideas, assume that $p_{2}$ is no closer to the origin than $p_{1}$. Then the linking number $l(p_1, p_2)$ equals the rotation number $\rho(p_{2})$. This immediately leads to the following complete description of the mnus's. Let $X$ denotes the set of critical points of $H$. For every point $p=(x,y)$ such that $f'(\pi(x^2+y^2))$ is a negative integer, let $X_{p}$ denotes the union of $\{p\}$ and of the critical points of $H$ farther than $p$ from the origin. The sets $X_{p}$ are mnus's.
 If $f'(0) \leq 0$ then $X$ is a mnus, in the opposite case $X \setminus\{0\}$ is a mnus (note that, by the intermediate value theorem, in this case this last set is not included in any of the $X_{p}$'s).

\paragraph{Reading the Hamiltonian action on diagrams.}
The Hamiltonian action of these fixed points  is given by $$\cA_H(x,y)=f(s)-sk=f(s)-sf'(s).$$
It corresponds to the intersection of the vertical axis $\{0\}\times \R$ with the tangent to the graph of $f$ at the point $(s,f(s))$, see Figure \ref{fig:reading-action-1}.
The action can also be seen on the graph of minus the rotation number $-f'$. With the above notations, 
$\cA_H(x,y)=-(ks+\int_s^{+\infty}f'(\sigma)d\sigma)$.
This corresponds to the grey area in Figure~\ref{fig:action-rotation-number}.

\begin{figure}[h!]
\centering
\def\svgwidth{0.8\textwidth}
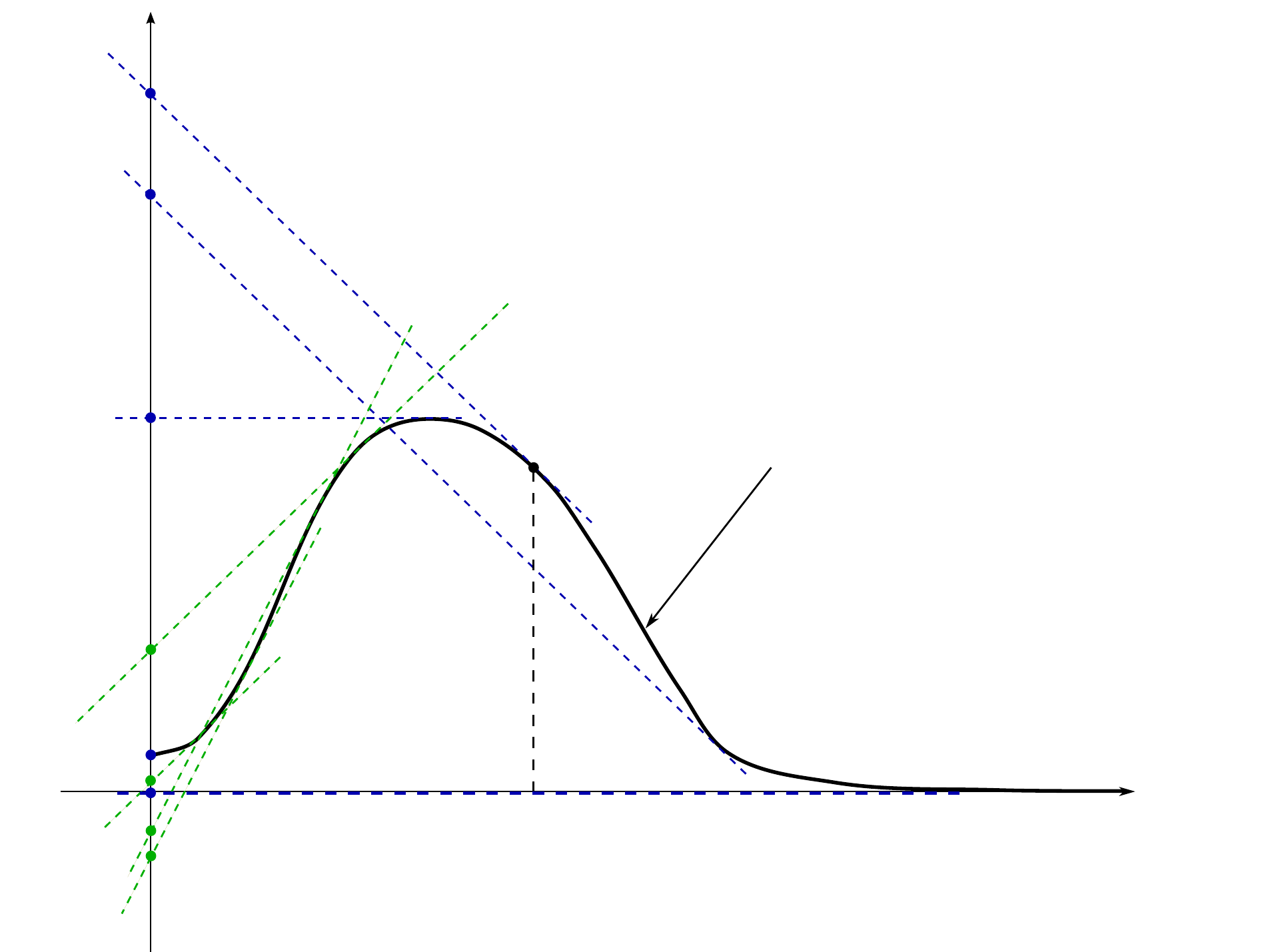
\caption{The dotted lines are the tangents to the graph of $f$ with integer slope. Their tangency points correspond to the fixed points of $\phi_H^1$. The intersections of these lines with the vertical axis (represented by thick dots) give the action. The points with non-positive rotation numbers are in blue.}

\label{fig:reading-action-1}
\end{figure}
\begin{figure}[h!]
\centering
\def\svgwidth{0.8\textwidth}
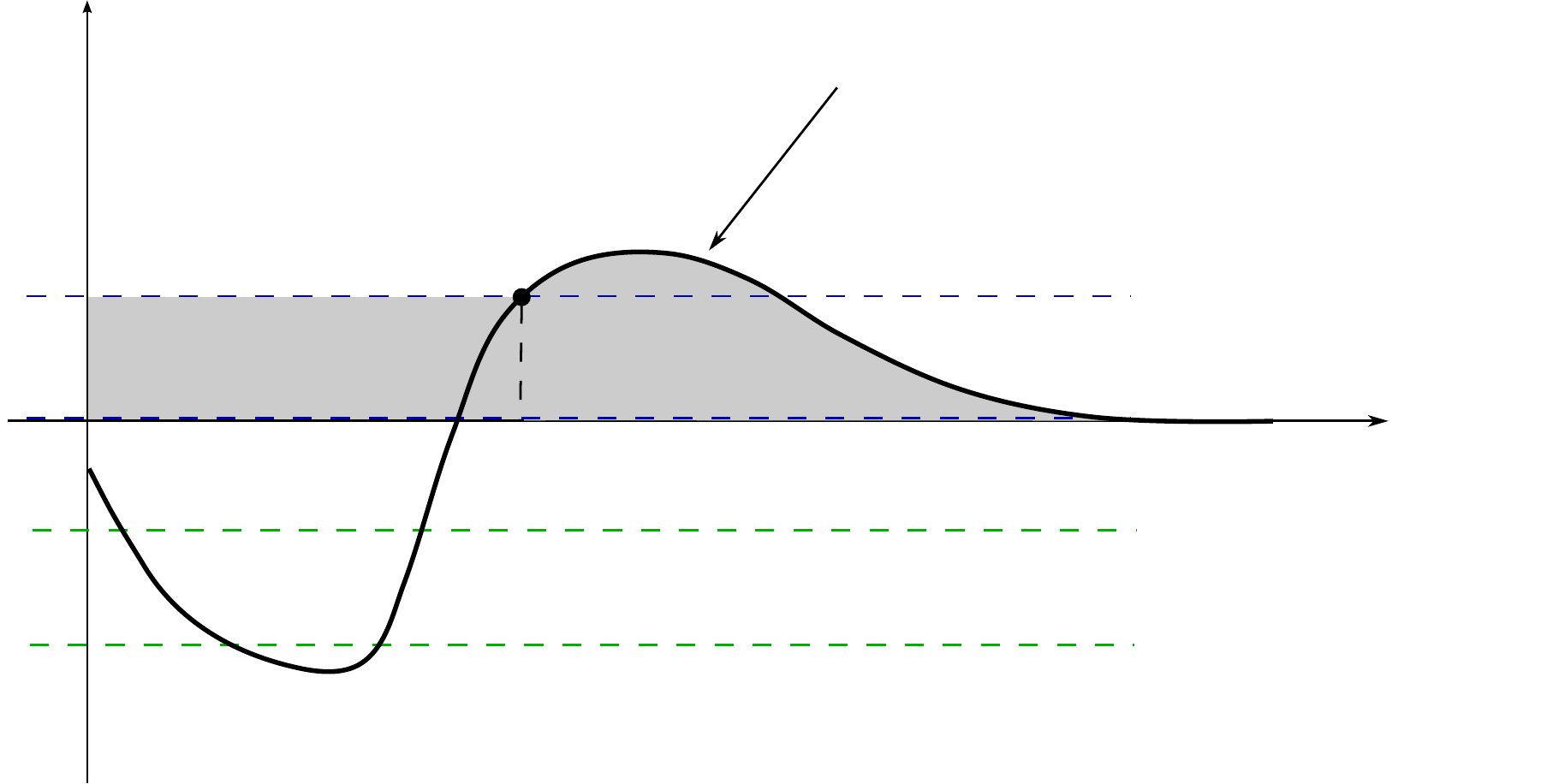
\caption{The fixed point correspond to intersections of the graph $\rho=-f'$ with the horizontal lines ``$\rho\,=$ integer constant''. 
The action of the thick black dot is the area of the grey region. This thick black dot corresponds to the thick black dot on Figure \ref{fig:reading-action-1}.}
\label{fig:action-rotation-number}
\end{figure} 

\paragraph{Computing $\cN$.}
First assume that the function $f:[0,+\infty)\to\R$ is decreasing and has non vanishing derivative on $(0,r_{0})$, where $[0,r_{0}]$ is the support of $f$. Let $Y$ be the complement in the plane of the open disk with radius $r_{0}$. 
The mnus's are the sets of the form $\{x\} \cup Y$
where $x$ is any fixed point not in $Y$. Finally we get 
\begin{align}\label{eq:example-single-mountain-N}
\cN(H) = \min_{x} \cA_H(x),
\end{align}
where the minimum runs on all fixed points of $\phi_{H}^1$ that are not in $Y$. With the interpretation of the action explained above, we see that it is a positive number, attained at a periodic orbit of period exactly one (which is not necessarily the action of the outermost periodic orbit).

Another case when $\cN$ is easy to compute is when $f$ takes only non positive values. Indeed, remember that the set of all critical points of $H$, taking out the origin in case $f'(0)>0$, is a mnus. Since every critical point has a non positive action, we see that $\cN(H) = 0$.
There does not seem to be any easy formula in the case of a general radial Hamiltonian.

\subsection{Max formula for $\cN$}
Here again we assume that $\Sigma$ is the plane or a closed aspherical surface.

\begin{lemma}[Max formula for $\cN$] 
   Suppose that $H_1,\ldots, H_N \in C^\infty([0,1] \times \Sigma, \R)$ are  Hamiltonian functions whose supports are contained in pairwise disjoint open disks $U_{1}, \dots , U_{N}$. Then
 $$\cN(H_1 +\ldots + H_N) =  \max\{\cN(H_1), \ldots, \cN(H_N)\}.$$
\end{lemma}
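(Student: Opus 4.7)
Set $H = H_1 + \dots + H_N$ and $U = U_1 \sqcup \dots \sqcup U_N$. The key observation is that the Hamiltonian vector fields $X_{H_i}$ have pairwise disjoint supports, so $X_H = \sum_i X_{H_i}$, the flow $\phi_H^t$ preserves each $U_i$, it coincides with $\phi_{H_i}^t$ on $U_i$, and it is the identity outside $U$. The plan is then to show that the definition of $\cN$ factors completely through this product decomposition: $(\phi^t_H)$-fixed points, their actions, their rotation numbers, and the mnus's that contain them all split into pieces living inside each $U_i$ (together with the outside region), after which the formula will follow from a purely set-theoretic commutation of $\inf$ and $\max$ over independent parameters.

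First I would verify the local-to-global dictionary on a single fixed point. A contractible fixed point of $\phi_H^1$ is either a point of $\Sigma \setminus U$ (where $\phi_H^t$ is the identity, so the rotation number vanishes and $\cA_H = 0$), or it is a fixed point of $\phi_{H_i}^1$ lying in some $U_i$; in the latter case the orbit $t\mapsto \phi^t_H(x) = \phi^t_{H_i}(x)$ stays in the disk $U_i$, one picks the capping disk inside $U_i$, and from $H|_{U_i} = H_i|_{U_i}$ one gets $\cA_H(x) = \cA_{H_i}(x)$ and the local differential identification gives $\rho_{\phi_H}(x) = \rho_{\phi_{H_i}}(x)$.

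Next I would decompose mnus's. Since $(\phi_H^t)$ fixes a neighborhood of each $\partial U_i$, Corollary~\ref{cor:mnus_closed_surface} applies and a subset of $U_i$ is unlinked for $\phi_H$ in $\Sigma$ iff it is unlinked for $\phi_{H_i}$ inside $U_i$. Together with the previous paragraph this shows that a mnus $X$ for $\phi_H$ decomposes uniquely as
$$X = (\Sigma \setminus U)\,\cup\, X_1 \cup \dots \cup X_N, \qquad X_i \subset U_i,$$
where $X_i$ is a mnus for $\phi_{H_i}^t$ seen as an isotopy in $U_i$; conversely, any such $N$-tuple yields a mnus in $\Sigma$ by concatenating the local isotopies with the identity outside. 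The homotopy condition appearing in the definition of unlinkedness is not an issue here because on $\Sigma$ (and on each disk $U_i$) the relevant space of isotopies is simply connected.

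To conclude, write $a_i(X_i) = \sup_{x\in X_i}\cA_{H_i}(x)$. Every point of $U_i\setminus \mathrm{supp}(H_i)$ is a fixed point with zero action and zero rotation number, hence by maximality belongs to every $X_i$; consequently $a_i(X_i)\geq 0$, and moreover $\cN(H_i) = \inf_{X_i} a_i(X_i)$, the infimum ranging over exactly the same $X_i$'s. The decomposition above gives
$$\sup_{x\in X}\cA_H(x) \;=\; \max\bigl(0,\, a_1(X_1),\, \dots,\, a_N(X_N)\bigr) \;=\; \max_i a_i(X_i),$$
and since $X_1, \dots, X_N$ vary independently, infimum commutes with maximum:
$$\cN(H) \;=\; \inf_{(X_1,\dots,X_N)}\max_i a_i(X_i) \;=\; \max_i \inf_{X_i} a_i(X_i) \;=\; \max_i \cN(H_i).$$
The only place that requires genuine input from the theory is the mnus decomposition step, which rests on Corollary~\ref{cor:mnus_closed_surface}; everything else is bookkeeping.
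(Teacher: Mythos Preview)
Your proof is correct and follows essentially the same approach as the paper: both arguments rest on decomposing the mnus's of $H_1+\cdots+H_N$ into pieces living in each $U_i$ together with the complement, using Corollary~\ref{cor:mnus_closed_surface} as the key input, and then reading off the formula from the resulting product structure. The only cosmetic difference is that the paper reduces to $N=2$ by induction whereas you handle general $N$ directly, and the paper phrases the decomposition in terms of mnus's for $H_i$ on $\Sigma$ (of the form $X_i\cup Y_i$ with $Y_i=\Sigma\setminus U_i$) rather than mnus's in $U_i$; your observation that $a_i(X_i)\geq 0$ is exactly what bridges these two viewpoints.
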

\begin{proof} 
By an easy induction, the proof boils down to the $N=2$ case.
Let $Y_{i}$ denotes the complement of the disk $U_{i}$ in $\Sigma$, and $Y$ be the complement of $U_{1} \cup U_{2}$. The crucial remark is the following:

\emph{The unlinked sets (resp. negative unlinked sets) of $H_{1}+H_{2}$ are the sets of the form
$$
Y' \cup X_{1} \cup X_{2}
$$
where 
\begin{itemize}
\item $Y'$ is included in $Y$,
\item  $X_{i}$, $i=1,2$ is included in $U_{i}$,
\item  $X_{i} \cup Y_{i}$ is an unlinked set (resp. negative unlinked set)  of $H_{i}$.
\end{itemize}
 The mnus's of $H_{1}+H_{2}$ have the same form with $Y'=Y$ and $X_{i} \cup Y_{i}$ is a mnus of $H_{i}$. }

The proof of this remark is a consequence of Corollary~\ref{cor:mnus_closed_surface}. 

We first check that unlinked sets correspond.
If $X_{1}$ is a subset of $U_{1}$ which is unlinked for $H_{1}$, then by Corollary~\ref{cor:mnus_closed_surface} it is unlinked for $H_{1}$ in $U_{1}$; this provides us with some isotopy which is compactly supported in $U_{1}$. If likewise $X_{2}$ is unlinked for $H_{2}$ in $U_{2}$ we get a second isotopy, and we can glue the two isotopies with the identity on $Y$ into an isotopy on $\Sigma$, yielding that $X_{1} \cup X_{2} \cup Y$ is unlinked for $H_{1}+H_{2}$. The proof of the converse implication is similar. For the converse implication, let $X$ be unlinked for $H_{1}+H_{2}$. Then the set $X_{i} = X \cap U_{i}$ is also unlinked, thus by Corollary~\ref{cor:mnus_closed_surface} it is unlinked in $U_{i}$ for $H_{1}+H_{2}$, but this is exactly the same thing as being unlinked in $U_{i}$ for $H_{i}$. Then obviously $X_{i} \cup Y_{i}$ in unlinked for $H_{i}$, and we get $X = Y' \cup X_{1} \cup X_{2}$ as wanted.
The correspondences between negative unlinked sets and mnus's follow immediately.
\end{proof}

%%%%%%%%%%%%%%%%%%%%%%%%%%%%%%%%%%%%%%%%%%%%%%%%%%%%%%%%%%%%%
%%%%%%%%%%%%%%%%%%%%%%%%%%%%%%%%%%%%%%%%%%%%%%%%%%%%%%%%%%%%%%%%%%%%%
%%%%%%%%%%%%%%%%%%%%%%%%%%%%%%%%%%%%%%%%%%%%%%%%%%%%%%%%%%%%%%%%%%%%

\section{Preliminaries: properties of formal spectral invariants}\label{sec:formal-spec}
The main goal of this section is to establish certain properties of \emph{formal} spectral invariants which will be used later on in the paper.  Throughout the section $c$ denotes a \emph{formal} spectral invariant in the sense of Definition \ref{def:formal_spec}.  In Section \ref{sec:standard-prop}, we present those properties of $c$ which are standard in the sense that they are known to hold for  the Floer and generating-function theoretic spectral invariants.  In Section \ref{sec:symp-cont}, we introduce the symplectic contraction principle which provides a powerful tool in the study of spectral invariants on aspherical manifolds.

\subsection{The standard properties of $c$}\label{sec:standard-prop}
Properties 1--6 listed below are among the standard properties which are known to hold for the Floer and generating-function theoretic spectral invariants; see for example \cite{viterbo, schwarz, Oh05b}. The proofs we give in this section for the first six properties are similar to those presented in \cite{viterbo}.   It is interesting to observe that the proofs of the first five properties rely solely on  Spectrality and Continuity of \emph{formal} spectral invariants.  The last two  properties, which prove that $c(H)$ is positive for a large  class of Hamiltonians, rely on the Max formula.  Lastly, we should mention that one standard property of  Floer and generating function theoretic spectral invariants which we have not been able to prove is the triangle inequality.  

\bigskip

\noindent\textbf{1. Symplectic invariance:}   $c(H) = c(H \circ \psi) \;\;\forall H \in C^{\infty}([0,1] \times  \Sigma ),   \forall \psi \in Symp_0,$ where $Symp_0$ denotes the path component of the Identity in $Symp(\Sigma, \omega)$.

\begin{proof} It is a classical fact that $\spec(H\circ \phi) = \spec(H)$ for any symplectomorphism $\phi$.  Let $\psi_s$ denote a path in $Symp_0$ such that $\psi_0 = Id$ and $\psi_1 = \psi.$   Now, the continuous function $s \mapsto c(H \circ \psi_s)$ takes values in the measure zero set $\spec(H)$ and hence it must be constant.
\end{proof}

\medskip
\noindent\textbf{2. Shift:}  $c(H  + r) = c(H) + \int_0^1 r(t) \, dt, $ where $r:[0,1] \rightarrow \R$ is a function of time.
\begin{proof}
For $s \in [0,1]$ let $H_s = H + s r$.  Note that $\spec(H_s) = \spec(H) + s \int_0^1 r(t) \, dt.$   Hence, by the Continuity and Spectrality axioms, the function $s \mapsto c(H_s) - s \int_0^1 r(t)$ is continuous and takes values in the measure-zero set $\spec(H)$.  Hence, it must be constant.  The Shift property follows immediately.
\end{proof}

\medskip
\noindent \textbf{3. Monotonicity:} $c(H) \leq c(G)$ if $H\leq G$.

\begin{proof}  See the proof of Lipschitz continuity. \end{proof}

\medskip
\noindent\textbf{4. Lipschitz continuity:} $ \displaystyle \int_{0}^{1} \min_{x \in M } (H_t-G_t) \, dt \leq c(H) - c(G) \leq \int_{0}^{1} \max_{x \in M } (H_t-G_t) \, dt.$

\begin{proof} We will simultaneously prove monotonicity and  Lipschitz continuity. The continuity axiom implies that it is sufficient to prove these properties in the special case where both $H$ and $G$ are non-degenerate.  For non-degenerate Hamiltonians both of these properties follow from Lemma \ref{lem:c_rate_change}, stated below: take $F_s = G + s (H - G)$ and note that  $\frac{\partial F_s}{\partial s} = H-G$. If $F_s$ is an admissible family in the sense of Lemma \ref{lem:c_rate_change} then both results follow immediately.  If $F_s$ is not admissible then we can perturb it by a $C^2$--small amount and obtain an admissible family such that  $\frac{\partial F_s}{\partial s} \approx H-G$.  We leave the details of this to the reader.

We will now state and prove Lemma \ref{lem:c_rate_change}.  Consider a $1$--parameter family of time dependent Hamiltonians $H_s(t,x), \, s \in [0,1]$ which depends smoothly on $s$.  We call $H_s$ \emph{admissible} if there exists a finite (possibly empty) set of points $\{s_1, \cdots, s_k \}\subset (0,1)$:
\begin{enumerate}
\item The set of  fixed points of $\phi^1_{H_s}$ is finite $\forall s \in [0,1]$,
\item  $\forall s \in [0,1] \setminus \{s_1, \cdots, s_k\},$ the Hamiltonians $H_s$ is non-degenerate and no two fixed points of $\phi^1_{H_s}$ have the same action.
\end{enumerate}
A generic (in the sense of Baire) $1$--parameter family of Hamiltonians is admissible.
\begin{lemma}\label{lem:c_rate_change}Let $H_s(t,x), \, s \in [0,1]$ denote an \emph{admissible} family of Hamiltonians. The function $s \mapsto c(H_s)$ is differentiable at every $s \in [0,1]$ except the finite set of points $\{s_1, \cdots, s_k\}$ where $H_s$ is degenerate and furthermore,
$$  \int_{0}^{1} \min_{x \in M }\frac{\partial H_s}{\partial s}(t, x) dt\leq \frac{d}{ds}c(H_s) \leq \int_{0}^{1} \max_{x \in M }\frac{\partial H_s}{\partial s}(t, x) dt.$$
\end{lemma}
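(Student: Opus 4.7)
The plan is to exploit admissibility to pass, locally in $s$, from the \emph{a priori} set-valued information ``$c(H_s) \in \spec(H_s)$'' to the precise statement ``$c(H_s)$ is the action of a single smoothly varying fixed point,'' and then differentiate.

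Fix $s_0 \in [0,1] \setminus \{s_1,\dots,s_k\}$. Since $H_{s_0}$ is non-degenerate, $\phi_{H_{s_0}}^1$ has finitely many, transversal fixed points, and by the implicit function theorem applied to $\phi_{H_s}^1 - \mathrm{Id}$ each of them extends uniquely to a smooth family of fixed points $s \mapsto x_s$ defined on an open interval $I$ around $s_0$. By the admissibility hypothesis the total number of fixed points is locally constant in $s$ (no collisions can occur on $I$ without producing a degeneration), so these smooth branches exhaust $\Fix(\phi_{H_s}^1)$ for $s \in I$. Writing $\alpha_i(s) = \cA_{H_s}(x_s^{(i)})$ for these finitely many branches, the actions $\alpha_i$ are continuous functions of $s$ with $\alpha_i(s_0)$ pairwise distinct. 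Shrinking $I$ if necessary, we may assume the $\alpha_i$ remain pairwise disjoint throughout $I$.

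By the Spectrality axiom $c(H_s) \in \{\alpha_1(s), \dots, \alpha_n(s)\}$ for every $s \in I$, and by the Continuity axiom $s \mapsto c(H_s)$ is continuous. A continuous selection from finitely many disjoint continuous graphs must be constantly equal to one of them; hence there exists $i_0$ such that $c(H_s) = \alpha_{i_0}(s)$ for every $s \in I$. In particular, $s \mapsto c(H_s)$ is smooth on $I$, and the differentiability claim of the lemma is established at every $s_0 \notin \{s_1,\dots,s_k\}$.

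It remains to compute the derivative and obtain the bounds. Writing $x_s = x_s^{(i_0)}$, the periodic orbit $t \mapsto \phi_{H_s}^t(x_s)$ is a critical point of the action functional $\cA_{H_s}$ for every $s \in I$. A standard variational computation (differentiating under the integral sign in the definition of $\cA_{H_s}$ and using that the variation of the loop lies in the kernel of $d\cA_{H_s}$ at the critical point, so that only the explicit $s$-dependence survives) gives
\begin{equation*}
\frac{d}{ds}c(H_s) \;=\; \frac{d}{ds}\cA_{H_s}(x_s) \;=\; \int_0^1 \frac{\partial H_s}{\partial s}\bigl(t, \phi_{H_s}^t(x_s)\bigr)\, dt.
\end{equation*}
Bounding the integrand pointwise between $\min_{x\in M}\partial_s H_s(t,x)$ and $\max_{x\in M}\partial_s H_s(t,x)$ and integrating yields the desired inequalities. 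The main conceptual step is the local constancy of the ``selected branch''; the only potential obstacle is ensuring the admissibility hypothesis really does prevent branches from meeting away from the $s_i$'s, which is precisely the role of condition (2) in the definition of admissibility.
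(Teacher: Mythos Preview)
Your proof is correct and follows essentially the same approach as the paper: use admissibility to get finitely many smooth action branches with disjoint values, invoke continuity plus spectrality to conclude $c(H_s)$ tracks a single branch, then differentiate using that the loop-variation term vanishes since $x_s$ is a critical point of $\cA_{H_s}$. Your write-up is in fact more careful than the paper's own sketch, which leaves the ``continuous selection from disjoint graphs'' step implicit.
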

\begin{proof}[Proof of Lemma \ref{lem:c_rate_change}]
Let $I_k$ denote the open interval $(s_k, s_{k+1})$ and consider $s \in I_k$.  There exists a 1-- periodic orbit $x_s$ of $\phi^1_{H_s}$ such that $c(H_s) = \m A_{H_s}(x_s)$.  The admissibility condition implies that the fixed point $x_s$ varies smoothly on the entire interval $I_k$; indeed no bifurcations take place in this interval.  Furthermore, since $c$ is continuous it must be the case that $c(H_s) = \m A_{H_s}(x_s)$.  This implies that in fact $c$ is smooth in the interval $I_k$ hence we can differentiate: We will use the symbol $x_s$ to denote the 1--periodic orbit associated to the fixed point $x_s$.
$$\frac{d}{ds}c(H_s) = \frac{d}{ds} A_{H_s} (x_s) = \frac{\partial}{\partial r} \m A_{H_r}(x_s) + \frac{\partial}{\partial r}  \m A_{H_s}(x_r).$$
Now, $\frac{\partial}{\partial r} \m A_{H_s}(x_r) = 0$ because $x_s$ is a critical point of $ \m A_{H_s}$.  A simple computation yields
 $$\frac{\partial}{\partial r} \m A_{H_r}(x_s) =  \int_{0}^{1} \frac{\partial H_s}{\partial s}(t, x_s(t)) \, dt.$$  The result follows immediately.
\end{proof}\end{proof}
\begin{remark} \label{rem:c-for-cts-fns}
Observe that the Lipschitz continuity property of $c$ allows us to extend $c$ to all continuous functions.
\end{remark}
\medskip
\noindent \textbf{5. Energy-Capacity inequality:} Let $K, H$ be two Hamiltonians such that $\phi^1_K$ displaces the support of $H$.  Then, $|c(H)| \leq \int_{0}^{1} (\max_{x \in M } K_t - \min_{x \in M } K_t ) \, dt.$

\begin{proof}
For each $s \in [0,1]$ consider the Hamiltonian $F_s(t,x) = sH(st,x) + K(t, (\phi^{st}_H)^{-1}(x)$.  The time-1 map of the flow $F_s$ is given by $\phi^s_H \circ \phi^1_K$.  Using the fact that $ \phi^1_K$ displaces the support of $H$ one can prove that the fixed points of $\phi^s_H \circ \phi^1_K$ are precisely the fixed points of $\phi^1_K$ and furthermore for each fixed point $x$ we have $\m A_{F_s} (x) = \m A_K(x)$.  Hence, $\spec(F_s) = \spec(K)$.  It then follows that the continuous function $s \mapsto c(F_s)$ is constant and thus, $$c(K) = c(H(t,x) + K(t, (\phi^{t}_H)^{-1}(x)).$$  This, combined with the Lipschitz continuity of $c$, yields:
$$  c(H) - c(K) \leq \int_{0}^{1} \max_{x \in M } (-K_t(\phi^{t}_H)^{-1}(x))) \, dt = - \int_{0}^{1} \min_{x \in M } K_t \, dt,$$
and thus $c(H) \leq c(K) - \int_{0}^{1} \min_{x \in M } K_t \, dt.$ Using the Lipshitz continuity again we obtain:
$c(H) \leq  \int_{0}^{1} (\max_{x \in M } K_t - \min_{x \in M } K_t) \, dt.$  Similarly, one proves that $- \int_{0}^{1} (\max_{x \in M } K_t - \min_{x \in M } K_t) \, dt \leq c(H).$
\end{proof}

We do not use the following property in this article.  However, we state it as it is one of the standard properties of the Floer and generating function theoretic spectral invariants.

\medskip
\noindent \textbf{6. Path independence:} Suppose that $\phi^1_H = \phi^1_G$.  If $\Sigma = \R^2$  then $c(H) = c(G)$.  In the case where $\Sigma \neq \R^2$ then $c(H) = c(G)$ if we assume additionally that $\int H_t \omega^2 = 0 = \int G_t \omega^2$ for each $t \in [0,1].$
\begin{proof}
Since $Ham(\Sigma)$ is simply connected there exists a path of Hamiltonians $F_s$ such that $F_0 = H, F_1 = G$ and $\phi^1_{F_s} = \phi^1_H = \phi^1_G$.  It follows from our assumptions that $\spec(F_s) = \spec(H) = \spec(G)$ and hence the function $s \mapsto c(F_s)$ is constant.
\end{proof}

\medskip

\noindent\textbf{7. Positivity:} If $H$ is supported in a disk, then $c(H)\geq 0$.

\begin{proof} This follows readily from the Max formula applied to $H$ and 0 which gives: $c(H)=\max(c(H),0)$.
\end{proof}

\medskip
\noindent \textbf{8. Non-degeneracy:}  If $H \neq 0$ and $H\geq 0$, then $c(H)>0$.
 \begin{proof}
   One can find a small disk $D$, a short time interval $[t_0, t_1]$, and a positive constant $m$ such that $H(t,x) \geq m$ for all $(t, x) \in [t_0, t_1] \times D.$  By Lemma \ref{lem:c_pos}, there exists a Hamiltonian $F$ such that
   \begin{itemize}
   \item $F$ is supported in $D$,
   \item $ F(t, x) <\frac{(t_1 -t_0)}{2}m$ for each $x$ in the interior of $D$,
   \item $c(F) > 0.$
   \end{itemize}
We will show that $c(H) \geq c(F)$.   
Let $\alpha:[0,1] \rightarrow [0,1]$ denote a smooth reparametrization of $[0,1]$ such that 
   \begin{itemize}
   \item $\alpha(t) = 0 $ for all $t \leq t_0,$ 
   \item $\alpha'(t) < \frac{2}{(t_1 -t_0)},$
   \item $\alpha(t)  = 1$ for all $t \geq t_1$.  
   \end{itemize}
   Set $G(t,x) = \alpha'(t) F(\alpha(t), x)$. The flow of $F$ is given by $\phi^t_F(x) = \phi^{\alpha(t)}_f(x)$ and so by the path independence property $c(G) = c(F)$.  On the other, $G(t,x) \leq H(t,x)$ and hence, by monotonicity, $c(G) \leq c(H)$.  
    It remains to prove the following lemma:
   \begin{lemma}\label{lem:c_pos}
   For any disk $D \subset \Sigma$ and any positive constant $\epsilon$, there exists a Hamiltonian $F$ such that $F \leq \epsilon$,  support of $F$ is contained in $D$ and  $c(F)> 0.$
   \end{lemma}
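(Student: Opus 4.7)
The plan is to start from a Hamiltonian $H_0$ supported in some disk $D_0\subset\Sigma$ with $c(H_0)>0$, provided by Non-triviality together with Positivity; using the $C^0$-Lipschitz continuity of $c$ established earlier, I may further assume $H_0\ge 0$ by replacing $H_0$ with a nearby non-negative Hamiltonian in $C^0$-norm.

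The first ingredient is a \emph{cap-and-shift} trick. For any $a\in(0,c(H_0))$, the truncation $H_0^a:=\max(H_0-a,0)$ satisfies $H_0^a\ge H_0-a$ pointwise, and hence, by Monotonicity and Shift,
\[
c(H_0^a)\ge c(H_0-a)=c(H_0)-a>0.
\]
Moreover $\max H_0^a=\max H_0-a$ and $\mathrm{supp}(H_0^a)=\{H_0>a\}\subset D_0$. By itself this reduces both the support and the pointwise maximum, but cannot push the pointwise maximum below the ``action gap'' $\max H_0-c(H_0)$.

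To beat this gap and land inside the prescribed disk $D$, I would combine the cap with the Max formula. Choose a disjoint collection $V_1\sqcup\cdots\sqcup V_N\subset D_0$ of small sub-disks, each of area at most $\mathrm{area}(D)$, covering the superlevel set $\{H_0>c(H_0)/2\}$. Cutting off $H_0$ outside this union, $\widetilde H:=H_0\cdot\chi$ with $\chi$ a smoothed indicator of $\bigsqcup V_i$, one has $\|\widetilde H-H_0\|_{C^0}\le c(H_0)/2$, and therefore $c(\widetilde H)>0$ by Lipschitz continuity. As $\widetilde H$ is supported in the disjoint union $\bigsqcup V_i$, the Max formula yields $c(\widetilde H)=\max_i c(\widetilde H|_{V_i})$, so at least one restriction $\widetilde H|_{V_i}$ is supported in a sub-disk of area at most $\mathrm{area}(D)$ with positive $c$. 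Symplectic Invariance then transports this small-support Hamiltonian into $D$, and a final cap-and-shift inside $D$ reduces the pointwise maximum below $\epsilon$; the Energy-Capacity inequality ensures that for Hamiltonians supported in small disks, $c$ itself is small, so the remaining gap after the final cap can be pushed below $\epsilon$.

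The main technical obstacle will be compatibility: if $\{H_0>c(H_0)/2\}$ is a large connected component, one of the $V_i$ must contain it, which forces a lower bound on the allowable sub-disk area. This difficulty is navigated by iterating the cap-and-shift procedure (which successively shrinks the relevant superlevel set) before invoking the Max formula, and by using the $C^0$-openness of $\{H:c(H)>0\}$ to replace $H_0$ by a nearby Hamiltonian whose high-value region fits into arbitrarily small disks.
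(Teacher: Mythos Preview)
Your proposal has a genuine gap that the suggested iteration does not close. The central step is to cover the superlevel set $\{H_0>c(H_0)/2\}$ by a \emph{disjoint} union of sub-disks each of area at most $\mathrm{area}(D)$, but if that superlevel set is connected and has area exceeding $\mathrm{area}(D)$, this is topologically impossible: a connected open set cannot be covered by pairwise disjoint open disks unless one of them already contains it. You acknowledge this obstacle and propose iterating cap-and-shift, but the cap $H\mapsto H^a=\max(H-a,0)$ (with $a<c(H)$, so that $c(H^a)>0$) shrinks the support only to $\{H>a\}$; iterating, the support can be pushed at best to $\{H_0\ge c(H_0)\}$ before positivity is lost, and there is no reason this set has area below $\mathrm{area}(D)$. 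The same limitation blocks your final step: the gap $\max H-c(H)$ satisfies $\max H^a-c(H^a)\le \max H-c(H)$ under capping but need not decrease, so if the initial gap exceeds $\epsilon$ you cannot force $\max F<\epsilon$ while keeping $c(F)>0$ by this mechanism alone. (A minor side issue: on $\R^2$, $H_0-a$ is not compactly supported, so Shift does not literally apply; the inequality $c(H_0^a)\ge c(H_0)-a$ does follow from Lipschitz continuity since $H_0^a-H_0\ge -a$ pointwise.)

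The paper proceeds by an entirely different and much shorter route, the \emph{symplectic contraction principle}: any disk is an incompressible Liouville domain, so the negative Liouville flow $A_s$ $(s\le 0)$ is defined on it, and one sets $H_s(x)=e^sH(A_s^{-1}(x))$ on $A_s(D_0)$, zero elsewhere. Then $\phi^t_{H_s}=A_s\phi^t_H A_s^{-1}$, whence $\spec(H_s)=e^s\spec(H)$; by Spectrality and Continuity this forces $c(H_s)=e^s c(H)>0$. Simultaneously $|H_s|\le e^s\max|H|$ and the support has area $e^s\,\mathrm{area}(D_0)$. Choosing $s$ negative enough makes $\max|H_s|<\epsilon$ and the support small enough to be carried into $D$ by a Hamiltonian diffeomorphism $\psi$; then $F=H_s\circ\psi$ works by Symplectic Invariance. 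The point is that the Liouville scaling reduces the support area, the sup-norm, and the spectral invariant \emph{by the same factor}, which is precisely the simultaneous control your cap-and-cover scheme cannot achieve.
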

   The proof of this lemma uses the ``symplectic contraction'' principle which is described in Section \ref{sec:symp-cont}.  We postpone the proof to the end of that section.
 \end{proof}

\subsection{The symplectic contraction principle} \label{sec:symp-cont}
In this section we introduce the ``symplectic contraction'' technique which describes the effect of the flow of a Liouville vector field on a formal spectral invariant $c$. This technique has been used by Polterovich in \cite{polterovich}.  Throughout this section we will work on a general aspherical symplectic manifold $M$ and  we suppose that $c: C^{\infty}([0,1] \times M)$ is any function satisfying the three axioms of Definition \ref{def:formal_spec}.  The reason for working in this generality is that the symplectic contraction technique is used in our proof  Theorem \ref{theo:max-formula-aspherical} which holds for aspherical manifolds of higher dimensions.

   Recall that a domain $U \subset M$ is said to be \emph{Liouville domain} if the closure of $U$ admits a vector field $\xi$ which is transverse to the boundary $\partial U$ and satisfies $L_{\xi} \omega = \omega$, where $L$ is the Lie derivative. The vector field $\xi$ is referred to as the Liouville vector field of the domain $U$.  Note that the Liouville vector field $\xi$ necessarily points outward along $\partial U$ and therefore the flow $A_t: U \rightarrow U$ of  $\xi$ is defined $\forall t \leq 0.$  This flow ``contracts'' the symplectic form $\omega: \, \, A_t^* \omega = e^t \omega.$  Recall also that  $U \subset M$ (not necessarily Liouville)  is called incompressible if the map  $i_* : \pi_1(U) \rightarrow \pi_1(M)$, induced by the inclusion $i: U \rightarrow M,$ is injective.

Let $U$ denote an \emph{incompressible Liouville} domain in  $M$ and let $F:[0,1]\times M \rightarrow \R$ be a Hamiltonian supported in $U$.  For each fixed $s \leq 0$ consider the Hamiltonian

 $$F_s(t,x):=  \begin{cases}
e^s F(t, A_s^{-1}(x)) &\mbox{ if } x \in A_s(U),\\
0 &\mbox{ if } x\notin A_s(U).
\end{cases} $$
It can be checked that the Hamiltonian flow of $F_s$ is given by $$\phi^t_{F_s}(x):=  \begin{cases}
A_s \phi^t_F A_s^{-1}(x) &\mbox{ if } x \in A_s(U),\\
x &\mbox{ if } x\notin A_s(U).
\end{cases} $$

It follows that there exists a 1--1 correspondence between the 1--periodic orbits of $F$ and $F_s$. Indeed, if $x(t)$ is a 1--periodic orbit of $F$ then $x_s := A_s(x(t))$ is a 1--periodic orbit of $F_s$.   Next, we claim that  $x$ is contractible if and only if $x_s$ is and furthermore
$\m A_{F_s}(x_s) = e^s \m A_{F}(x)$:    Since $U$ is incompressible we can pick a capping disk $D$ contained in $U$ for the orbit $x$.  Then, $A_s(D)$ is a capping disk for $x_s$.  Now, we compute 
 $$\m A_{F_s}(x_s) = \int_{0}^{1} e^s F(t, A_s^{-1}(x_s(t))) - \int_{A_s(D)} \omega$$ 
 $$ = \int_{0}^{1} e^s F(t, (x(t))) - \int_D e^s \omega = e^s \m A_{F}(x).$$
It follows that 
\begin{align} \label{eq:spec_dial1}
\spec(F_s) = e^s \spec(F).  
\end{align}
Using the spectrality and continuity properties of spectral invariants we conclude that
\begin{align} \label{eq:spec_dial2}
c(F_s) =  e^s c(F).  
\end{align}
We have symplectically contracted the Hamiltonian $F$. \\

We end this section with a proof of Lemma \ref{lem:c_pos}.
\begin{proof}[Proof of Lemma \ref{lem:c_pos}]
By the non-triviality axiom there exists a disk $D_0$ and a Hamiltonian $H$ supported in $D_0$ such that $c(H) \neq 0$.  By the max formula $c(H)$ is necessarily positive.  
Note that a disk is a Liouville domain and so we can apply the symplectic contraction principle.  Let $H_s$ denote a symplectic contraction of $H$ as described above.  Picking $s$ to be sufficiently negative yields $|H_s| \leq \epsilon.$
Observe that  $H_s$ is supported in the disk $A_s(D_0)$ whose area is $e^s Area(D_0)$.  Hence, by picking $s$ to be sufficiently negative we can ensure that the area of the support of $H_s$ is smaller than the area of the disk $D$ and so we can find a Hamiltonian diffeomorphism $\psi$ which maps the support of $H_s$ into $D$.  Set $F = H_s \circ \psi$.    The Hamiltonian $F$ is supported in $D$, is bounded above by $\epsilon$ and, using the symplectic invariance property and the symplectic contraction principle, we see that $c(F) = c (H_s) = e^s c(H) >0.$

\end{proof}

%%%%%%%%%%%%%%%%%%%%%%%%%%%%%%%%%%%%%%%%%%%%%%%%%%%%%%%%%%%%%%%%%
%%%%%%%%%%%%%%%%%%%%%%%%%%%%%%%%%%%%%%%%%%%%%%%%%%%%%%%%%%%%%%%
%%%%%%%%%%%%%%%%%%%%%%%%%%%%%%%%%%%%%%%%%%%%%%%%%%%%%%%%%%%%
\section{Proof of Theorem \ref{theo.axiomatic-c=n}}\label{sec:proof_c=N}
%%%%%%%%%%%%%%%%%%%%%%%%%%%%%%
As mentioned in the introduction, Theorem \ref{theo:main} is an immediate consequence of Theorems \ref{theo:max-formula-balls} and \ref{theo.axiomatic-c=n}.  The main goal of this Section is to prove Theorem \ref{theo.axiomatic-c=n}.  This is done in three stages.  In Sections \ref{sec:c=N-Morse-plane} and \ref{sec:c=N-Morse-surface}, we prove the theorem for Morse functions on the plane and closed surfaces of positive genus, respectively.  In Section \ref{sec:c=N-nonMorse}, we explain how one can pass from Morse functions to general autonomous Hamiltonians. 

Along the way, we obtain several results which may be of independent interest as they describe  algorithms for computing \emph{formal} spectral invariants and the invariant $\m N$ on autonomous Hamiltonians.  For example, Propositions \ref{prop:recurs-form-disk} and \ref{prop:recurs-form-c-disk} provide recursive formulas for computing $\m N$ and $c$ on the plane.  Propositions  \ref{prop:form_N_general} and \ref{prop:c=max-on-disks} are quite surprising as they demonstrate that computing $\m N$ and $c$ on closed surfaces can easily be reduced to computations on the plane!  In Section \ref{sec:quasi-states}, we use Proposition \ref{prop:c=max-on-disks} to prove Theorem \ref{theo:quasi-state} and Proposition \ref{prop:heaviness} on the Entov-Polterovich quasi-state. 

\subsection{Theorem \ref{theo.axiomatic-c=n} for Morse functions on the plane} \label{sec:c=N-Morse-plane}

In this section, we prove the equality $c=\cN$ for Morse functions on the plane. Throughout this section, we call a function $H:\R^2\to\R$  a \emph{Morse function} if its support is a closed topological disk and it admits finitely many critical points in the interior of its support, all of which are non degenerate and corresponds to distinct values of $H$.
 
Proving that $c=\cN$ for such functions is done in two steps. We first establish a recursive formula for $\cN$; see Proposition \ref{prop:recurs-form-disk} in  Section \ref{sec:recurs-form-disk}.  We  then show that this relation is also satisfied by $c$; see Proposition \ref{prop:recurs-form-c-disk} in Section \ref{sec:proof-c=N-Morse-disk}.

%%%%%%%%%%%%%%%%%%%%%%%%%%%%%%%
\subsubsection{A recursive formula for $\m N$}\label{sec:recurs-form-disk}
The main goal of this section is to present and prove a recursive formula for $\m N$; this formula appears in Proposition \ref{prop:recurs-form-disk}.  Giving a precise statement of this formula will require some preparation.  Let $H: \R^{2} \to \R$ be a Morse function. Assume $H$ admits at least one saddle point. 
For a saddle point $s$ of $H$, we consider the level set $H^{-1}(H(s))$ and let $C(s)$ be the connected component of $s$ in this set; $C(s)$ is the union of the stable and the unstable manifold of $s$ for the flow $(\phi_{H}^{t})$, and it is homeomorphic to a bouquet of two circles (see Figure~\ref{fig:outer-saddle} below). 
 Let $d$ be one of the two bounded connected components of $\R^2\setminus C(s)$. The function $H_D=H|_{\overline D}-H(s)$ vanishes on the boundary of $D$. 
%Since $H_D$ is not smooth does it make sense to speak of $\cN(H_D)$? Maybe we can replace the previous sentence with the following three sentences: 
We would like to relate $\cN(H)$ to $\cN(H_D)$. However, $H_D$ is not smooth.  To circumvent this problem we will introduce an appropriate class $\bar H_{D}$ of smoothings of $H_D$. Then the recursive formula in Proposition~\ref{prop:recurs-form-disk} will express $\m N(H)$ in terms of $\m N(\bar H_{T_{0}}),\m N(\bar H_{T_{1}})$, where $T_{0}, T_{1}$ are the two bounded connected components of the complement of $C(s_{0})$ for the outermost saddle point $s_{0}$ of $H$ (see Notation~\ref{notation:proof}). We will keep the following notations throughout Section \ref{sec:proof_c=N}. 

\begin{figure}[h!]
\centering
\def\svgwidth{0.8\textwidth}
%(also indicate $s$, $C(s)$, $D$).
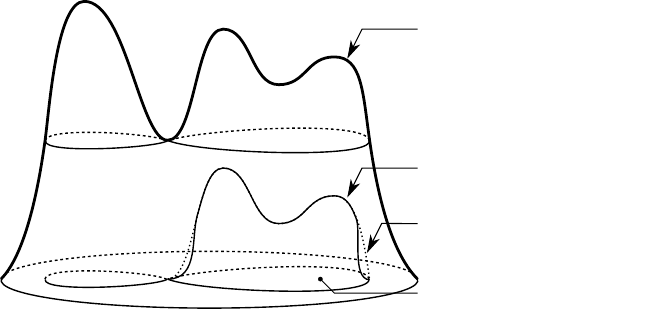
\caption{Graphs of $H, H_{D}, \bar H_{D}$}
\label{fig:H-bar-D}
\end{figure}

\begin{notation}\label{notation:bar-H} (See Figure~\ref{fig:H-bar-D})
  Assume that $H|_D>H(s)$ near the boundary of $D$ (we leave it to the reader to adapt the notations in the opposite case). We denote by $\m E_{D}$ the set of all functions $\bar H_D$ of the form $$\bar H_{D}(x)=\begin{cases}
0 &\mbox{ if } x \notin D,\\
\rho\circ H(x)-H(s)  &\mbox{ if } x\in D\setminus D', \\
H(x)-H(s)  &\mbox{ if } x\in D',
\end{cases}
$$
where:
\begin{itemize}
\item  $D'\subsetneq D$ is an open disk which contains all the 1--periodic orbits of $H$ in $D$ and such that for some constant $h> H(s)$,  $H|_{\partial D'}=h$,
\item  $\rho:(H(s),h)\to [H(s),h)$ is a smooth function such that for some $\eps>0$, 
\begin{itemize}
\item $\rho(t)=H(s)$ for all $t\in(H(s),H(s)+\eps]$,
\item $\rho(t)=t$ for all $t\in[h-\eps,h)$,
\item $0<\rho'(t)<\tau$ for all $t\in(H(s)+\eps,h-\eps)$, where $\tau>1$ denotes the smallest period of orbits of $H$ in $D\setminus D'$.
\end{itemize}
\end{itemize}  
\end{notation}

In the sequel, the notation $\bar H_D$ will be used for any function in $\m E_D$. 
The relevant properties of the functions $\bar H_D$ are summarized in the next lemma,  whose proof is straightforward.

\begin{lemma}\label{lemma:properties-bar-H} Every Hamiltonian $\bar H_{D}\in \m E_D$ is smooth and enjoys the following properties:
\begin{enumerate}
\item The support of $\bar H_{D}$ is a disk $D''$ included in $D$,
\item $\bar H_{D} = H - H(s)$ on a closed disk included in the interior of $D''$, which contains all the fixed points  of both $\phi_H^1$ and $\phi_{\bar H_{D}}^1$ that are contained  in the interior of $D''$.
%{\color{blue} This is not true since $\bar H_{D}$ fixes every point near the boundary of $D$... I guess the point is that these fixed point have the same action as $s$ (which is $H(s)$).} {\color{red}[ I introduced the disk $D'$ to fix this. The sentence is heavier but hopefully true now!]}{\color{sobhan} But the other sentence was lighter! Should we pick truth over lightness? I slightly reworded Vincent's sentence.}{\color{red}OK. lol.}
\item For every such fixed point $x$ of $\phi_H^1$ in $D$, $ \cA_{\bar H_{D}}(x) =  \cA_H(x) -H(s)$, so that, in particular, all the elements in $\m E_D$ have the same spectrum. 
\end{enumerate}
Moreover, the set  $\m E_D$ is convex, and the continuous function $H_D$ that coincides with $H-H(s)$ on $D$ and vanishes elsewhere, belongs to its $C^0$-closure.
\end{lemma}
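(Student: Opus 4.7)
The plan is to verify the three numbered items in turn and then the two ``moreover'' claims, working directly from the defining formula for $\bar H_{D}$ and the listed conditions on $D'$, $h$, and $\rho$. None of the steps is deep; only the dynamical argument about the collar region $D \setminus D'$ requires care.

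For smoothness and item (1), I would check that the three branches of the piecewise definition glue smoothly. At the inner interface $\partial D'$, where $H = h$, the condition $\rho(t) = t$ on $[h - \eps, h)$ forces the two branches $\rho \circ H - H(s)$ and $H - H(s)$ to agree on a collar of $\partial D'$, giving $C^{\infty}$ gluing. At the outer boundary $\partial D \subset C(s)$, where $H = H(s)$, the condition $\rho(t) = H(s)$ on $(H(s), H(s)+\eps]$ forces $\bar H_{D} \equiv 0$ on an entire neighborhood of $\partial D$ inside $D$, matching the zero extension. This observation also identifies the support $D''$ with a closed subset of the interior of $D$; topologically $D''$ is a disk since its complement in $D$ is an annular collar of $\partial D$ on which $H \leq H(s) + \eps$.

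For items (2) and (3), I propose to take the required closed disk to be $\overline{D'}$. It is contained in $\mathrm{int}(D'')$ because $H \geq h > H(s) + \eps$ on $\overline{D'}$, and the equality $\bar H_{D} = H - H(s)$ holds there by definition. The inclusion of all fixed points of $\phi^{1}_{H}$ lying in $D$ in $\overline{D'}$ is part of the defining condition on $D'$. The main step is to show the same for fixed points of $\phi^{1}_{\bar H_{D}}$ in $\mathrm{int}(D'')$: on $D \setminus D'$ one has $X_{\bar H_{D}} = \rho'(H)\, X_{H}$, so $X_{\bar H_{D}}$ preserves the level sets of $H$ and traverses each one at a rescaled speed. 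Since the period of $X_{H}$ on every such level set is at least $\tau$ and $\rho'(H) < \tau$, the period of $X_{\bar H_{D}}$ on that level set is strictly greater than $1$, ruling out $1$-periodic orbits; critical points of $\bar H_{D}$ in this annulus are also ruled out, because $\rho'(H) > 0$ there and all critical points of $H$ in $D$ already lie inside $D'$. Once the fixed points are localized in $\overline{D'}$, the action identity follows by substituting $\bar H_{D} = H - H(s)$ into the action integral computed on a capping disk of the common orbit.

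Finally, for convexity I would first arrange for two elements to share the same parameters $D'$ and $h$ (by passing to a common sub-disk if necessary), so that their convex combination takes the form $(\lambda \rho_{1} + (1-\lambda) \rho_{2}) \circ H - H(s)$ on the collar; the defining inequalities on $\rho$ are clearly preserved by convex combinations. For the $C^{0}$-approximation of $H_{D}$, I would take a sequence $\bar H_{n} \in \m E_{D}$ with $\eps_{n} \to 0$ and $\rho_{n}$ chosen to coincide with the identity outside an interval of width $O(\eps_{n})$ around $H(s)$; the margin $\tau > 1$ leaves room for the required rapid transition while keeping $\rho'_{n} < \tau$, so that $\|\rho_{n} - \mathrm{id}\|_{C^{0}} = O(\eps_{n})$ and hence $\bar H_{n} \to H_{D}$ uniformly on $\R^{2}$. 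The main (mild) obstacle in the whole argument is the dynamical one for the collar, where the bound $\rho' < \tau$ is used essentially.
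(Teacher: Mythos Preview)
Your argument is the natural one and matches what the paper intends (the paper itself omits the proof, calling it ``straightforward''). Two small points deserve correction.

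First, the justification ``$H \geq h$ on $\overline{D'}$'' for $\overline{D'}\subset\mathrm{int}(D'')$ is false: $H$ can dip below $h$ (even below $H(s)$) at interior critical points of $D'$. The inclusion still holds, but for a different reason: on $D'$ one has $\bar H_{D}=H-H(s)$, whose zero set has empty interior since $H$ is Morse, so $D'\subset D''$ and hence $D'\subset\mathrm{int}(D'')$; and a collar of $\partial D'$ on the annulus side lies in $D''$ because $H$ is close to $h>H(s)+\eps$ there, so $\partial D'\subset\mathrm{int}(D'')$ as well.

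Second, the convexity step hides a genuine wrinkle. When you re-express $\bar H^{(1)}$ over the smaller disk $D'_{2}$ by extending $\rho_{1}$ by the identity, the derivative bound you must check is $\tilde\rho_{1}'<\tau_{2}$, where $\tau_{2}$ is the minimum period over $D\setminus D'_{2}$. Since $\tau_{2}\leq\tau_{1}$ in general, the original hypothesis $\rho_{1}'<\tau_{1}$ does not give this, and the literal condition in the definition can fail for the convex combination. What survives, and is all that is ever used, is the \emph{levelwise} bound $\rho'(t)<T(t)$ (the period of $X_{H}$ on the level $\{H=t\}$): this is clearly preserved under convex combinations and is exactly what rules out $1$-periodic orbits of $\bar H_{D}$ in the collar. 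So items (1)--(3), the common spectrum, and the $C^{0}$-approximation all go through as you wrote; only the word ``convex'' in the lemma should be read with this slightly relaxed derivative condition in mind.
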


% \begin{lemma}\label{lemma:properties-bar-H} Every Hamiltonian $\bar H_{D}\in \m E_D$ is smooth and enjoys the following properties:
% \begin{enumerate}
% \item $\bar H_{D}$ is supported in $D$,
% \item $\bar H_{D} = H - H(s)$ on a disk included in $D$, which contains all the fixed points in $D$ of both $H$ and $\bar H_{D}$. 
% \item For every such fixed point $x$ of $\phi_H^1$ in $D$, 
% $$ \cA_{\bar H_{D}}(x) =  \cA_H(x) -H(s).$$
% \end{enumerate}
% \end{lemma}

% \begin{lemma}\label{lemma:properties-set-bar-H} 
% \begin{enumerate}
% \item the set $\m E_D$ is convex, 
% \item the continuous function $K_D$ that coincides with $H-H(s)$ on $D$ and vanishes elsewhere, belongs to the $C^0$-closure of $\m E_D$, 
% \item all the elements in $\m E_D$ have the same spectrum. 
% \end{enumerate}
% \end{lemma}

\begin{remark}\label{remark:c(barH)} The continuity and spectrality properties imply that the spectral invariant $c$ is constant on the set $\m E_D$. Moreover this constant value is $c(H_D)$. 
\end{remark}

Before giving a precise statement of the recursive formula promised at the beginning of this section, we need to introduce a new set of notations that will follow us throughout the proof. 

%{\color{blue} At some point we have considered adding a formal remark that there is such an ``outer-most saddle''. This is easy but it is a crucial point, so I would favor doing so. ADDED: finally I have just included this in the definition of $s_{0}$ below.} {\color{sobhan} What is the argument for proving that such $s_0$ exists? Poincare-Hopf? \color{blue} Yes. Here is an argument for us. For every $s$ we choose a nearby periodic orbit surrounding $C(s)$, and we remove the (open) disk bounded by this orbit. Likewise for every local max or min we remove a small open disk bounded by a periodic orbit. We are left with a set $A$ which is a disk with a certain number of holes, foliated by level sets of $H$, with no critical point. According to Poincar\'e-Hopf, the Euler characteristic is zero, thus $A$ is an annulus, which means there was only one hole after all, which means either that there was only one critical point (local max or min), or there was an ``outer most'' saddle. OK, maybe we should include this... {\color{sobhan}Yes, we should include this!}}{\color{red}I agree, can you take care of this, Fred?} {\color{sobhan} I just copy-pasted Frederic's argument into point 2 below.  Is this OK?}

\begin{figure}[h!]
\centering
\def\svgwidth{0.8\textwidth}
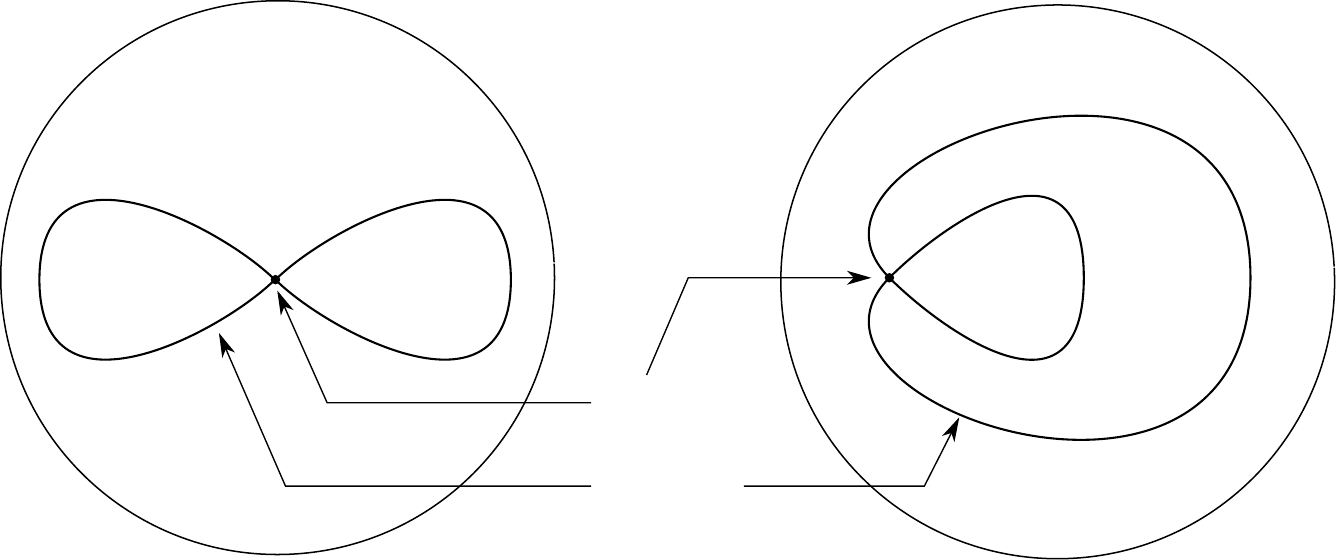
\caption{Notations $Y, s_{0}, C(s_{0}), b, T_{0}, T_{1}$: the two cases}
\label{fig:outer-saddle}
\end{figure}

\begin{notation}\label{notation:proof} (See Figure~\ref{fig:outer-saddle}) Let $H$ be a Morse function which admits at least one saddle point. 
\begin{enumerate} 
\item We denote by $\supp(H)$ the support of $H$ in $\R^2$, and by $Y$ the unbounded component of the closure of the complement of $\supp(H)$.  
%We denote by $Y$ the closed semi-infinite annulus which is the closure of the complement of support of $H$ in $\R^2$.

\item There exists a saddle point of $H$, which we denote by $s_{0}$, such that  the interior of the outer component of $\supp(H)\setminus C(s_0)$ contains no critical point of $H$. 
 %We sometimes refer to $s_0$ as the ``outer most'' saddle of $H$. 

Here is a brief argument as to why this outermost saddle $s_{0}$ %the outer most saddle 
must exist.  For every saddle $s$ choose a nearby periodic orbit surrounding $C(s)$, and  remove the (open) disk bounded by this orbit. Likewise for every local maximum or minimum  remove a small open disk bounded by a periodic orbit. We are left with a set $A$ which is a disk with a certain number of holes, foliated by level sets of $H$, containing  no critical point of $H$. According to the Poincar\'e-Hopf Theorem, the Euler characteristic of $A$ is zero, thus $A$ is an annulus, which means there was only one hole after all.  Hence, there was either only one critical point (local maximum or minimum), or there was an outermost saddle.
% We denote by $s_0$ the ``outer-most singularity'' of $H$, i.e., the unique saddle point of $H$ for which the interior of the outer component of $\supp(H)\setminus C(s_0)$ contains no critical point of $H$.
\item We denote by $b,T_{0}, T_{1}$ the three connected components of $\supp(H) \setminus C(s_{0})$, $b$ being the outer one. Note that $b$ contains no critical point of $H$.
\item If moreover $H>0$ on $b$, we set:
$$\m N_{b} = \min\{ \cA_H(x)\,|\, x \text{ fixed point of } \phi_{H}^{1} \text{ in }b\}.$$
\end{enumerate}
\end{notation}

In the case where $H$ has no saddle, we set $b$ to be the interior of the support of $H$ and define $\m N_{b}$ by the same formula  when $H >0$ on $b$.
Note that when $H$ is positive on $b$, the orbits inside $b$ turn in the negative direction and hence $\rho(x) \leq 0$ for every fixed point $x \in b$.
 %{\color{sobhan} let's add this: and hence $\rho(x) \leq 0$ for any fixed point $x \in b$. \color{blue} I agree.} 

The above construction may be applied to $T_0$, $T_1$ giving rise to two sets $\m E_0$ and $\m E_1$ of functions $\bar H_{T_0}$ and $\bar H_{T_1}$. We are now ready to state our recursive formula.

\begin{prop}\label{prop:recurs-form-disk}
If $H$ has no saddle points, then 
$$\m N(H) =\begin{cases} 0&\text{ if }H|_b<0\\
 \cN_b&\text{ if }H|_b>0\end{cases}.$$ 
If $H$ has at least one saddle, then
$$
\m N(H) = \begin{cases}\max(0,H(s_0)+\max(\m N(\bar H_{T_{0}}),\m N(\bar H_{T_{1}})     ))&\text{ if }H|_b<0\\
\min(\m  N_{b} , H(s_0)+\max(\m N(\bar H_{T_{0}}),\m N(\bar H_{T_{1}})     ))&\text{ if }H|_b>0
\end{cases}.
$$
\end{prop}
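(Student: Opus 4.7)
The plan is to compute $\cN(H)$ by enumerating all maximal negative unlinked sets (mnus's) of $\phi^1_H$. The key tool is Corollary \ref{corol:unlinked-sets-autonomous}, which for an autonomous flow reduces the unlinked condition to $X \cap D(y) = \{y\}$ for every non-critical $y \in X$. This, together with the standard rotation-number facts for autonomous Hamiltonians (any local maximum has $\rho \leq 0$, any local minimum has $\rho \geq 0$, every saddle has $\rho = 0$, and non-critical periodic orbits in $\{H > 0\}$ have $\rho \leq 0$ while those in $\{H < 0\}$ have $\rho \geq 0$), yields a purely combinatorial description of the mnus's over which we then minimize.

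In the no-saddle case, $H$ has a single interior critical point and the disks $D(y)$ of the non-critical fixed points in $b$ are nested. If $H|_b < 0$ every such $y$ has $\rho > 0$ and is excluded from all negative unlinked sets, so every mnus contains $Y$, has sup-action zero, and thus $\cN(H) = 0$. If $H|_b > 0$ every such $y$ is admissible but the disk condition together with nesting forces a mnus to have the form $Y \cup \{y\}$ for a single $y$, yielding $\cN(H) = \inf_y \cA_H(y) = \cN_b$. In the saddle case the fixed points of $\phi^1_H$ split into three families (trivial points of $Y$; non-critical orbits in $b$, whose bounded disks contain $T_0 \cup T_1 \cup \{s_0\}$; non-critical orbits in $T_0 \cup T_1$, whose bounded disks lie in one $T_i$), and I sort the mnus's accordingly. \emph{Type A} (possible only when $H|_b > 0$): if $X$ contains a non-critical $y \in b$, then $X \cap D(y) = \{y\}$ kills every non-critical orbit in $T_0 \cup T_1$ and every critical point lying in $T_0 \cup T_1 \cup \{s_0\}$, nesting leaves $y$ alone in $b$, so $X = Y \cup \{y\}$ and the Type A contribution is $\cN_b$. \emph{Type B}: if $X$ meets $b$ only in $Y$ then $s_0$ (which has $\rho = 0$ and is fixed by the isotopy) must be added by maximality, and the disk conditions inside $T_0$ and $T_1$ decouple, so Corollary \ref{cor:mnus_closed_surface} lets us identify the traces $X \cap T_i$ with mnus's of $\bar H_{T_i}$. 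Lemma \ref{lemma:properties-bar-H} shifts actions by $H(s_0)$, giving the Type B contribution $\max\bigl(0, H(s_0) + \max(\cN(\bar H_{T_0}), \cN(\bar H_{T_1}))\bigr)$. Combining the two types and exploiting the sign of $H(s_0)$ together with the non-negativity of $\cN(\bar H_{T_i})$ in each sub-case produces the four announced formulas.

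The main obstacle is the Type B bookkeeping, that is, the identification between mnus's of $H$ restricted to $T_i$ and mnus's of the smoothed Hamiltonian $\bar H_{T_i}$. Three matching facts must each be verified with some care: the sets of fixed points of $\phi^1_H$ and $\phi^1_{\bar H_{T_i}}$ inside $T_i$ coincide (this comes from the reparametrization conditions on $\rho$ in Notation \ref{notation:bar-H}, which preclude any new $1$-periodic orbit from appearing in the collar $D \setminus D'$); rotation numbers and hence unlinked relations agree, because by Lemma \ref{lemma:properties-bar-H}(2) the two Hamiltonians coincide on a neighborhood of every common fixed point; and being unlinked in $T_i$ is equivalent to being unlinked in $\R^2$, which is precisely Corollary \ref{cor:mnus_closed_surface}. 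Once these ingredients are in place the inf-sup decouples across $T_0$ and $T_1$ and the recursion is immediate.
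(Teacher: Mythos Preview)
Your argument is correct and follows essentially the same route as the paper's: enumerate the mnus's via Corollary~\ref{corol:unlinked-sets-autonomous}, split them into Type~A ($X=Y\cup\{y\}$ with $y\in b$) and Type~B ($X=Y\cup\{s_0\}\cup X_0\cup X_1$), and match the Type~B pieces with mnus's of $\bar H_{T_i}$ via the action shift of Lemma~\ref{lemma:properties-bar-H}. The only visible difference is in the no-saddle case, where the paper conjugates to a radial model and cites Section~\ref{sec:exampl-radi-hamilt}, while you argue directly from disk nesting---a small, self-contained variation but not a different method.
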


% {\small \color{red} [ The formula written in Fred's sketch was $$\m N(H) = \min(\m  N_{b} , H(s_{0}) + \max(\m N(\bar H_{T_{0}}),\m N(\bar H_{T_{1}})     )).$$
% But it does not work for a negative double mountain. $\m N$ should be 0, but the formula above gives $H(s_0)$. This is coherent with $c\geq 0$, and the proof of the theorem seems to go through...
% Another option (I tend to prefer that one) is to state the formula as follows:
% $$
% \m N(H) = \begin{cases}\max(0,H(s_0)+\m N(\bar H_{T_{0}}),H(s_0)+\m N(\bar H_{T_{1}})     ))&\text{ if }H|_b<0\\
% \min(\m  N_{b} , \max(H(s_0)+\m N(\bar H_{T_{0}}),H(s_0)+\m N(\bar H_{T_{1}})     ))&\text{ if }H|_b>0
% \end{cases}.
% $$
% Tell me how you prefer to write this. I'll modify the proof accordingly. ]
% }
% {\color{sobhan}  [There is a small problem here: if $H$ is a single negative mountain then we get $\m N (H) = \infty.$]}{\color{red}[I modified the statement. I think it is good now.]  \color{blue} [I prefer the second option, which explicitly distinguish the two cases. Then we do not need the value $\cN_{b} = +\infty$ anymore? {\color{sobhan} Ok, let's go with Frederic's suggestion.}]}

\begin{proof} 
We first assume that $H$ has no saddle point. In this case $H$ has only one critical point $p$ not in $Y$, which is either a maximum or a minimum. The complement of $Y \cup\{p\}$ is foliated by invariant closed curves surrounding $p$, and it is well known that there exists a compactly supported symplectic diffeomorphism $\Psi$ such that $H \circ \Psi$ is a radial function as in Section ~\ref{sec:exampl-radi-hamilt}.
We have already computed the value of $\cN$ in this case; see Equation~\eqref{eq:example-single-mountain-N}.
%If $H$ is negative on its support, then $H$ has no orbits with non-positive rotation number except for the fixed points outside its supports. Thus, $H$ has only one m.n.u.s., the set $Y$. All these points have action 0, hence, $\cN(H)=0$.
%If $H$ is positive on $b$, the m.n.u.s.'s of $H$ are the subsets of the form $X=Y\cup C$, where $C$ is a single 1--periodic orbit in the support of $H$. Points in $Y$ have action 0, whereas points in $C$ have positive action. Thus, $\cN(H)=\m N_b$. {\color{blue} [If we have explained previously how to compute $\cN$ for example 3, then for this first part of the proof we could just refer to the previous explanation.]}

Let us now assume that $H$ has at least one saddle and is negative on $b$. In that case, we need to prove that 
\begin{equation}\label{eq:2}
\m N(H) =  \max(0, H(s_{0})+\max(\m N(\bar H_{T_{0}}),\m N(\bar H_{T_{1}}))).
\end{equation}
Note that two  fixed points of $\phi_{H}^1$ that lie respectively in $T_0$ and $T_1$ are always unlinked. Such fixed points are also unlinked with the critical point $s_0$ and all the points in $Y$. Moreover, every orbit in $b$ rotates in the positive direction and hence the base $b$ contains no negative fixed point. It follows that the maximal negative unlinked sets (mnus's) of $H$ are exactly the sets of fixed points that are of the form $X=Y\cup \{s_0\}\cup X_0\cup X_1$, where $X_0$ is a set that is maximal for inclusion among the negative unlinked sets of $\phi_{H}^1$ that are included in $T_{0}$, and likewise for $X_{1}$; for short we say that $X_{0}$ and  $X_{1}$ are  mnus's for the restrictions $H|_{T_0}$ and $H|_{T_1}$. 
 As a consequence, 
\begin{equation}\label{eq:1}
\m N(H) = \max \left(0,H(s_0),\inf_{X_0}\sup_{x\in X_0}\cA_H(x),\inf_{X_1}\sup_{x\in X_0}\cA_H(x)\right),
\end{equation}
where infima are taken over the mnus's $X_0$ of $H|_{T_0}$ and the mnus's $X_1$ of $H|_{T_1}$.
For $i=1,2$, the Hamiltonian $\bar H_{T_i}$ has been built so that the mnus's of $\bar H_{T_i}$ are of the form $\bar X_i=Y_i\cup X_i$ where $X_i$ is a mnus's of $H|_{T_i}$ and $Y_i$ is the complement of the support of $\bar H_{T_i}$. We can compute the maximum of the action on such a set: 
\begin{align}\label{eq:3}\begin{split} \sup_{x\in\bar X_i}\cA_{\bar H_{T_i}}(x)&=\max\left(0,\sup_{x\in X_i}\cA_{\bar H_{T_i}}(x)\right)\\ 
&= \max\left(H(s_0),\sup_{x\in X_i}\cA_{H}(x)\right)-H(s_0). 
\end{split}
\end{align}
We then deduce \eqref{eq:2} from \eqref{eq:1} and \eqref{eq:3}.

We now assume that $H$ is positive on $b$;  recall that this means every fixed point in $b$ is a negative fixed point. A non-trivial orbit in $b$ is linked with any other fixed point of $H$ that it encloses. Therefore, the mnus's of $H$ are of two possible types:
\begin{itemize}
\item[(A):] $X=Y\cup \{x\}$  where $x$ is a  fixed point of $\phi_{H}^1$ in $b$. 
\item[(B):] $X=Y\cup \{s_0\}\cup X_0\cup X_1$ where $X_0$ is a mnus of $H|_{T_0}$ and $X_1$ is a mnus of $H|_{T_1}$.
\end{itemize}
Thus, 
\begin{equation*}
\m N(H)=\min\left(\inf_{X\text{ of type A}}\,\sup_{x\in X}\cA_H(x),\inf_{X\text{ of type B}}\,\sup_{x\in X}\cA_H(x)\right).
\end{equation*}
The same argument as in the case $H|_b< 0$ gives: 
\begin{align*}\inf_{X\text{ of type B}}\,\sup_{x\in X}\cA_H(x) &=\max(0, H(s_{0})+\m N(\bar H_{T_{0}}),H(s_{0})+\m N(\bar H_{T_{1}}))\\
&= H(s_0)+\max(\m N(\bar H_{T_{0}}),\m N(\bar H_{T_{1}}).
\end{align*}
The last equality follows from the fact that $H(s_0)>0$.
Note that since $H> 0$ on $b$, the non-trivial orbits in $b$ enclose disks with negative area and hence have positive actions. Therefore, for a mnus of the form  $Y\cup \{x\}$, where $x$ is fixed point of $\phi_{H}^1$ in $b$, the maximum of the action is precisely the action of $x$. Thus,
$$\inf_{X\text{ of type A}}\,\sup_{x\in X}\cA_H(x)=\m N_b,$$
and we get the equality $\m N(H) = \min(\m  N_{b} , H(s_0)+\max(\m N(\bar H_{T_{0}}),\m N(\bar H_{T_{1}})))$, as we wished.
\end{proof}

%%%
% \medskip
% {\color{red} Do we talk about some ``dual formula'' anywhere?}{\color{sobhan} I don't know!  If we don't use it maybe not since the paper is so long.   Or we can write a very short proof for the appendix. \color{blue} Let's forget about it?}{\color{sobhan} OK.}

\subsubsection{Proof of $c=\m N$}
\label{sec:proof-c=N-Morse-disk}

Let $c$ be a formal spectral invariant on the plane $\R^2$. In this section we prove that $c=\cN$ for all Morse functions on the plane. The main step toward this will be to prove of the following proposition.

\begin{prop}\label{prop:recurs-form-c-disk}
If $H$ has no saddle points, then 
$$c(H) =\begin{cases} 0&\text{ if }H|_b<0\\
 \cN_b&\text{ if }H|_b>0\end{cases}.$$ 
If $H$ has at least one saddle, then 
$$
c(H) = \begin{cases}\max(0,H(s_0)+\max(c(\bar H_{T_{0}}),c(\bar H_{T_{1}})     ))&\text{ if }H|_b<0\\
\min(\m  N_{b} , H(s_0)+\max(c(\bar H_{T_{0}}),c(\bar H_{T_{1}})     ))&\text{ if }H|_b>0
\end{cases}.
$$
\end{prop}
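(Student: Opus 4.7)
The proposition has the same four-case structure as Proposition~\ref{prop:recurs-form-disk} (no saddle / with saddle, crossed with $H|_b < 0$ / $H|_b > 0$), and I would prove it case-by-case using only the axioms of a formal spectral invariant together with the derived properties of Section~\ref{sec:formal-spec}. The no-saddle cases are the base: if $H|_b < 0$ then $H \leq 0$ globally, so monotonicity together with $c(0) \in \spec(0) = \{0\}$ gives $c(H) \leq 0$, while positivity (since $H$ is supported in a disk) gives $c(H) \geq 0$; hence $c(H) = 0$. If $H|_b > 0$ then $H \geq 0$ globally, non-degeneracy gives $c(H) > 0$, and spectrality forces $c(H) \geq \m N_b$ because $\m N_b$ is the smallest positive element of $\spec(H)$ for a single-mountain Hamiltonian. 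For the reverse inequality $c(H) \leq \m N_b$, I would construct a single-mountain majorant $G \geq H$ whose only positive spectral value below $\mathrm{area}(\supp G)$ is $\m N_b$ itself (e.g.\ a radial Hamiltonian on a slightly enlarged disk, tuned so that the unique ``short'' critical action of $\phi_G^1$ is $\m N_b$); then Energy--Capacity, spectrality and non-degeneracy pin down $c(G) = \m N_b$, and monotonicity concludes.

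\textbf{Saddle case, upper bound.} The bound $c(H) \leq \m N_b$ (when $H|_b > 0$) is obtained as in the no-saddle case by a single-mountain majorant supported in a small neighborhood of $\overline b$. For the bound $c(H) \leq H(s_0) + \max(c(\bar H_{T_0}), c(\bar H_{T_1}))$, the key identity is
$$c\bigl(H(s_0) + \bar H_{T_0} + \bar H_{T_1}\bigr) = H(s_0) + \max\bigl(c(\bar H_{T_0}), c(\bar H_{T_1})\bigr),$$
which follows from the Shift property applied to the constant $H(s_0)$ together with the Max formula applied to $\bar H_{T_0} + \bar H_{T_1}$ (whose summands are supported in the disjoint disks $T_0, T_1$). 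The desired inequality then follows by comparing $H$ to a compactly supported smooth realization of the left-hand side, combining monotonicity and Lipschitz continuity to absorb the difference, which by construction on $Y$, $b$, and each $T_i$ is bounded by $H(s_0)$ plus an arbitrarily small error coming from the smoothing in $\m E_{T_i}$. The $H|_b < 0$ case is parallel, with a $\max(0,\cdot)$ truncation to preserve compact support.

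\textbf{Saddle case, lower bound --- the main obstacle.} The delicate inequality is
$$c(H) \geq \min\bigl(\m N_b,\, H(s_0) + \max(c(\bar H_{T_0}), c(\bar H_{T_1}))\bigr).$$
If $c(H) \geq \m N_b$ there is nothing to prove, so I assume $c(H) < \m N_b$. Following the strategy of Section~\ref{sec:idea_of_proof}, I would construct a smooth one-parameter family $\{H_\sigma\}_{\sigma \in [0,1]}$ with $H_0 = H$ and $H_1 = \bar H_{T_0} + \bar H_{T_1}$, engineered so that during the deformation (i) the ``inside'' part of $\spec(H_\sigma)$ (actions of fixed points in $T_0 \cup T_1$) is translated uniformly by $-\sigma H(s_0)$, while (ii) the ``outside'' part of $\spec(H_\sigma)$ decreases no faster than $-\sigma H(s_0)$ and at $\sigma = 0$ already sits at or above $\m N_b$. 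Under the assumption $c(H) < \m N_b$, the connected component of $c(H_0)$ in the bifurcation diagram $\sigma \mapsto \spec(H_\sigma)$ is disjoint from the outside spectrum throughout the deformation and is a straight line of slope $-H(s_0)$. Spectrality together with Lipschitz continuity of $c$ forces $c(H_\sigma)$ to follow this line, so $c(H_0) = c(H_1) + H(s_0)$, and a final application of the Max formula to $H_1 = \bar H_{T_0} + \bar H_{T_1}$ yields $c(H) = H(s_0) + \max(c(\bar H_{T_0}), c(\bar H_{T_1}))$.

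\textbf{Expected main difficulty.} The heart of the argument is the construction and analysis of $H_\sigma$. One must arrange $H_\sigma$ so that the inside 1-periodic orbits persist as smooth families with their actions shifting exactly linearly by $-\sigma H(s_0)$, while the outside orbits --- and the saddle $s_0$ itself, which may bifurcate as its plateau flattens --- cannot overtake the inside spectrum before reaching the $\m N_b$ threshold. This is where the careful action-value bookkeeping sketched at the end of Section~\ref{sec:idea_of_proof} becomes essential, and where the smoothing class $\m E_{T_i}$ of Notation~\ref{notation:bar-H} shows its purpose by making the inside-orbits of $H_\sigma$ genuinely smooth at every $\sigma$ while keeping their actions on a straight translate of $\spec(H)$.
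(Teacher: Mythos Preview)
Your overall strategy matches the paper's closely, and you have correctly identified the deformation argument as the heart of the matter. Two points need attention, one minor and one a genuine gap.

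The minor point: on $\R^2$ the identity $c(H(s_0) + \bar H_{T_0} + \bar H_{T_1}) = H(s_0) + \max(c(\bar H_{T_0}), c(\bar H_{T_1}))$ does not literally make sense, since adding a nonzero constant destroys compact support. The paper uses instead a compactly supported plateau function $F$ equal to $H(s_0)$ on $\overline{T_0 \cup T_1}$, together with Lipschitz continuity; your follow-up sentence suggests you intend something similar, but the Shift property is not available as stated.

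The genuine gap is in your lower-bound argument for $H|_b > 0$. You assert that the connected component of $c(H_0)$ in the bifurcation diagram is disjoint from the outside spectrum throughout the deformation, but the outside spectrum contains $0$ (the action of every trivial fixed point), and nothing prevents the line $\sigma \mapsto c(H_0) - \sigma H(s_0)$ from reaching $0$ before $\sigma = 1$. If it does, continuity and positivity would trap $c(H_\sigma)$ at $0$ thereafter, and the identity $c(H_0) = c(H_1) + H(s_0)$ would fail. The paper closes this gap via a separate lemma (Lemma~\ref{lemma:positive-base-positive-c}): since $s_0$ is a non-degenerate saddle and $H > 0$ on $b$, at least one of $\bar H_{T_0}, \bar H_{T_1}$ is positive near the boundary of its support, and a comparison with a radial minorant together with a spectrum-tracking argument yields $c(\bar H_{T_0} + \bar H_{T_1}) > 0$. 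With $c(H_1) > 0$ secured, the line never touches $0$ and your argument goes through. This ingredient is essential and should be part of your plan.
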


Before giving the proof of this proposition, we explain how to deduce from it that $c=\cN$ for Morse functions on the plane.

\begin{proof}[Proof of $c=\cN$ for Morse functions on the plane.] We argue by induction on the number of saddles of $H$. First, it follows immediately from Propositions \ref{prop:recurs-form-disk} and \ref{prop:recurs-form-c-disk} that $\cN$ and $c$ coincide on functions having no saddle points. Then, assume that $c=\cN$ for all Morse functions having at most $k$ saddle points and let $H$ be a Morse function with $k+1$ saddle points. Then, $\bar H_{T_0}$ and $\bar H_{T_1}$ both have at most $k$ saddle points,  hence $c(\bar H_{T_0})=\cN(\bar H_{T_0})$ and  $c(\bar H_{T_1})=\cN(\bar H_{T_1})$. Now using Propositions \ref{prop:recurs-form-disk} and \ref{prop:recurs-form-c-disk} again, we deduce $c(H)=\cN(H)$.
\end{proof}

We now turn to the proof of Proposition \ref{prop:recurs-form-c-disk}. 
We note once and for all that since $\bar H_{T_{0}}$ and $\bar H_{T_{1}}$ are supported on disjoint disks, the max formula applies. Thus in the case when $H$ has at least one saddle, the formula we wish to prove reduces to 
$$
c(H) = \begin{cases}\max(0,H(s_0)+c(\bar H_{T_{0}} + \bar H_{T_{1}})   )&\text{ if }H|_b<0\\
\min(\m  N_{b} , H(s_0)+c(\bar H_{T_{0}}+\bar H_{T_{1}})     )&\text{ if }H|_b>0
\end{cases}.
$$

\begin{proof} The proof will be split into the two cases $H|_b<0$ and $H|_b>0$. 

\medskip
\noindent\textbf{Case 1: $H<0$ on $b$.} 

First assume that $H$ admits no saddle point. Then, $H\leq 0$ hence $c(H)\leq 0$ by monotonicity.
On the other hand, we have $c(H)\geq 0$ by positivity. Thus $c(H)=0$ as claimed.

We now assume that $H$ has a saddle point, so that we can use Notations \ref{notation:bar-H} and let $\eps>0$. Then, it is possible to find at a $C^0$-distance less than $\eps$ from $H$ a function that can be written as a sum $F+\bar H_{T_0}+\bar H_{T_1}$, where $F$ is a smooth non-positive function with exactly two critical values: 0 with critical locus $F^{-1}(0)=Y$, and $H(s_0)$ with critical locus $F^{-1}(H(s_0))=\overline{T_0\cup T_1}$. 
The Lipschitz property of $c$ yields: $$|c(H)-c(F+\bar H_{T_0}+\bar H_{T_1})|\leq\eps.$$
We will prove that 
\begin{equation}\label{eq:4}
c(F+\bar H_{T_0}+\bar H_{T_1})=\max(0,H(s_0)+c(\bar H_{T_0}+\bar H_{T_1})).
\end{equation}
%{\color{red}
By taking $\eps$ arbitrary small, it follows immediately that the formula of the Proposition holds in this case. 

% Now by Remark \ref{remark:c(barH)}, the right hand side does not depend on the choices of $\bar H_{T_0}$ and $\bar H_{T_1}$. Taking $\eps$ arbitrary small, we will obtain 
% \begin{equation*}
% c(H)=\min(\cN_b,\max(0,H(s_0)+c(\bar H_{T_0}),H(s_0)+c(\bar H_{T_1}))).
% \end{equation*}
% Then, it will follow from the induction hypothesis applied to $\bar H_{T_0}$ and $\bar H_{T_1}$ and from Proposition \ref{prop:recurs-form-disk} that $c(H)=\cN(H)$.

We consider the 1-parameter family of functions $\sigma\mapsto K_\sigma= \sigma F+\bar H_{T_{0}}+\bar H_{T_{1}}$.
For $\sigma\in[0,1]$, the spectrum of $K_\sigma$ is given by: 
%{\color{blue} [The following formula does not seem to take into account the 1--periodic orbits in $b$.]}{\color{red}[ you are right! So I modified the argument below. See in particular the sentence in red below.]}{\color{sobhan} Seems ok. \color{blue} OK!}
\begin{align*}\spec(K_\sigma) &= \spec(\sigma F) \cup(\sigma H(s_0)+\spec(\bar H_{T_{0}}+\bar H_{T_{1}})).
%\\ &= \{0,\sigma H(s_0)\}\cup(\sigma H(s_0)+\spec(\bar H_{T_{0}}+\bar H_{T_{1}})).
\end{align*}

%{\color{red}
Note that $\spec(\sigma F)$ only contains non-positive values. Therefore, by the positivity property of $c$, we know that $c(K_\sigma)$ is either 0 or belongs to $\sigma H(s_0)+\spec(\bar H_{T_{0}}+\bar H_{T_{1}})$. For $\sigma=0$ we have $c(K_0)= c(\bar H_{T_{0}}+\bar H_{T_{1}})\geq 0$. The continuity of $c$ imposes that as long as $\sigma H(s_0)+c(\bar H_{T_{0}}+\bar H_{T_{1}})\geq 0$, one has $c(K_\sigma)=\sigma H(s_0)+c(\bar H_{T_{0}}+\bar H_{T_{1}})$, and $c(K_\sigma)=0$ in the opposite case. In particular for $\sigma=1$, we get \eqref{eq:4} (see Figure \ref{fig:bif-negative-base}). 
\begin{figure}[h!]
\centering
\def\svgwidth{1\textwidth}
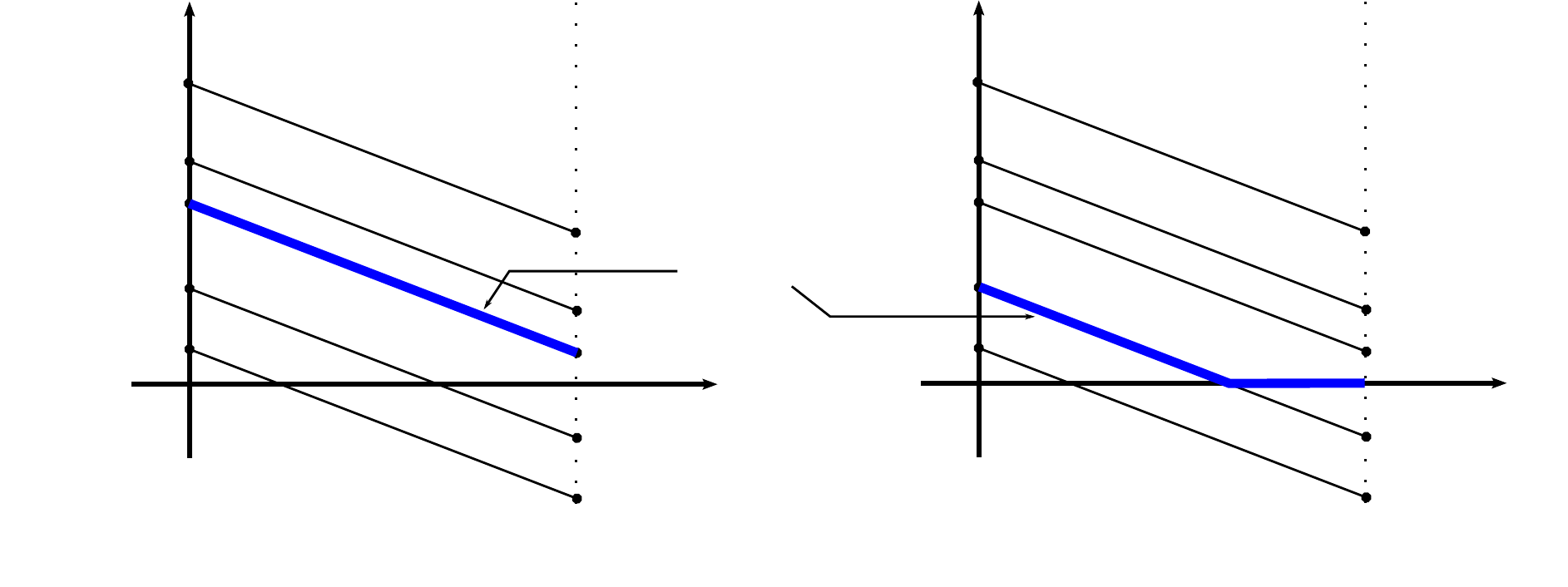
\caption{Bifurcation diagram for the spectrum of the deformation $K_\sigma$: (a) represents the case  $H(s_0)+c(\bar H_{T_{0}}+\bar H_{T_{1}})> 0$ and (b) the opposite case.}
\label{fig:bif-negative-base}
\end{figure}

\bigskip
\noindent\textbf{Case 2: $H>0$ on $b$.} 

This case is much more complicated than the previous one and we will divide its proof into several claims. We will first prove, in claims \ref{claim:upper1} and \ref{claim:upper2}, that $c(H) \leq \min(\m N_{b} , H(s_0)+ c(\bar H_{T_{0}} + \bar H_{T_{1}})     ).$

\begin{claim}\label{claim:upper1} Assume that $H$ is positive on $b$. Then,
$$ c(H) \leq \cN_{b}.$$
\end{claim}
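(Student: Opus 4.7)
\medskip

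The plan is to dominate $H$ from above by a smooth, nonnegative, saddle-free Hamiltonian $G \geq H$, compactly supported in a disk, such that $\cN_b(G) = \cN_b(H) = \cN_b$. By monotonicity, $c(H) \leq c(G)$. For such a single-mountain $G$, the argument sketched in the introduction---bound $G$ from above by a Hamiltonian with minimal positive action $\cN_b$ and all other actions larger than the area of its support, then invoke Spectrality, Positivity, Non-degeneracy and the Energy-Capacity inequality---gives $c(G) = \cN_b$. This will yield $c(H) \leq \cN_b$.

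To construct $G$, let $x_* \in b$ be a fixed point of $\phi_H^1$ achieving $\cA_H(x_*) = \cN_b$, and let $\gamma = \{\phi_H^t(x_*) : t \in [0,1]\}$ be its orbit. Since by the choice of $s_0$ the annulus $b$ contains no interior critical points of $H$, the orbit $\gamma$ is a simple closed level curve of $H$ inside $b$, bounding a disk $D_\gamma \subset \R^2$ that contains $\overline{T_0 \cup T_1 \cup C(s_0)}$. I would fix a thin collar neighborhood $W$ of $\gamma$ inside $D_\gamma$ and set $G = H$ on $\R^2 \setminus (D_\gamma \setminus W)$; this keeps $\gamma$ as a $1$-periodic orbit of $\phi_G^1$ with the same action $\cN_b$, and every other fixed point of $\phi_H^1$ in this exterior region contributes an action $\geq \cN_b$ to $\spec(G)$ by the minimality of $\cN_b$. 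On the inner disk $D_\gamma \setminus W$ I would take $G$ to be a smoothly-glued, single-peaked function with a unique interior maximum of height $M$ and nested closed level curves. Choosing $M > \sup H$ ensures $G \geq H$ globally, and, after conjugating $G$ on $D_\gamma \setminus W$ by a symplectomorphism that straightens it into a radial function as in Section~\ref{sec:exampl-radi-hamilt}, choosing the radial profile to be sufficiently steep forces every $1$-periodic orbit of $\phi_G^1$ there to wind a large number of times around the peak, so that by the explicit action formula of Section~\ref{sec:exampl-radi-hamilt} each such orbit has action strictly greater than $\cN_b$.

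With this construction, $G$ is a single-mountain Hamiltonian with $G \geq 0$, and the fixed points of $\phi_G^1$ outside the trivial set $Y$ split into three groups: the fixed points of $\phi_H^1$ in $b \setminus (D_\gamma \setminus W)$ (with action $\geq \cN_b$), the point $x_*$ (with action exactly $\cN_b$), and the new interior fixed points in $D_\gamma \setminus W$ (with actions $> \cN_b$). Hence $\cN_b(G) = \cN_b$, and the single-mountain argument recalled in the first paragraph gives $c(G) = \cN_b$, so that $c(H) \leq c(G) = \cN_b$, as required. The main technical obstacle will be the smooth gluing across $\partial W$ together with the simultaneous control of the winding numbers and actions of the new interior orbits of $\phi_G^1$; the cleanest way to handle it is to first symplectically straighten $G$ inside $D_\gamma \setminus W$ into a radial function and then appeal directly to the formulas of Section~\ref{sec:exampl-radi-hamilt}.
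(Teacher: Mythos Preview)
Your strategy is the same as the paper's --- dominate $H$ from above by a Hamiltonian whose spectral invariant you can compute exactly, and then use monotonicity --- but your execution has a real gap, and the paper's route is both shorter and more precise.

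The problematic step is the claim that ``choosing the radial profile to be sufficiently steep forces every $1$-periodic orbit of $\phi_G^1$ there to wind a large number of times around the peak.'' This is false: since your peak is smooth with $g'(0)=0$ and $g'(s_*-\epsilon)=f'(s_*-\epsilon)$, the derivative $g'$ sweeps through every integer between $0$ and $\min g'$, so orbits of winding $-1$ always appear near both ends of the steep region. The near-center ones are harmless (their action is close to $M$), but the near-edge one, if it exists, has action approximately $f(s_*-\epsilon)+(s_*-\epsilon)$, which can lie \emph{below} $\cN_b$ unless you know something about the rotation number at $x_*$. What you are missing is precisely the observation the paper makes first: the orbit $x_*$ realising $\cN_b$ necessarily has $f'(\alpha_0)=-1$ (otherwise a further-out orbit with slope $-1$ would have smaller action). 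Only with this in hand can one arrange the bounding Hamiltonian so that no spurious low-action orbit appears near the gluing region.

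The paper avoids your intermediate $G$ altogether. After reducing to the radial picture on an annulus containing all periodic orbits in $b$ and proving $f'(\alpha_0)=-1$, it bounds $H$ directly by a single radial ``simple bump'' $H_1$ (the class of Lemma~\ref{lemma:c-for-simple-bumps}) chosen tangent to $f$ at $\alpha_0$, with $f_1''(\alpha_0)>0$. Lemma~\ref{lemma:c-for-simple-bumps} then gives $c(H_1)=f(\alpha_0)+\alpha_0=\cN_b$ outright, via Energy--Capacity and a short bifurcation argument. So the paper's $H_1$ plays simultaneously the roles of your $G$ and of the further majorant you invoke from the introduction, and the tangency at $\alpha_0$ is exactly what makes the action control work without any gluing analysis.
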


The proof of this claim will require us to compute $c$ explicitly for a class of very simple functions. This is the content of the following lemma.

\begin{lemma}\label{lemma:c-for-simple-bumps} (See Figure~\ref{fig:c-for-simple-bump})	Let $H$ be a radial function defined by $H(x,y)=f(\pi(x^2+y^2))$ for some function $f:[0,+\infty)\to[0,+\infty)$ satisfying for some $A>0$:
\begin{itemize}
\item For all $a\geq A$, $f(a)=0$,
\item $f(0)>A$ and $f'(0)=0$.
\item $f''$ vanishes at a unique point $a_0$ in $(0,A)$,
\end{itemize}
Then $c(H)=f(a_1)+a_1$ where $a_1$ is the unique real number for which $f'(a_1)=-1$ and $f''(a_1)>0$. 
\end{lemma}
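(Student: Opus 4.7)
The plan is to pin down $c(H)$ by deforming $H$ through a family of radial Hamiltonians (all satisfying the lemma's hypotheses) toward a simple endpoint $H_1$ where $c(H_1)$ can be read off directly from the axioms, while keeping the rotation-$(-1)$ orbit at radius $\sqrt{a_1/\pi}$ and its action $V := f(a_1)+a_1$ fixed throughout.

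First I would construct a smooth family $\{H_\lambda = f_\lambda(\pi(x^2+y^2))\}_{\lambda \in [0,1]}$ with $H_0 = H$, each $f_\lambda$ satisfying the lemma's hypotheses, and with $f_\lambda(a_1) = f(a_1)$, $f_\lambda'(a_1) = -1$, $f_\lambda''(a_1) > 0$ for every $\lambda$. These conditions guarantee that $V \in \spec(H_\lambda)$ for every $\lambda$. The endpoint $f_1$ would be a flat-enough single bump so that $f_1'$ takes values in $(-2,0]$, forcing the only integer rotation numbers of $\phi^1_{H_1}$ to be $0$ and $-1$. One concrete way is to keep $f_\lambda = f$ on a neighborhood of $[a_1, A]$ and linearly interpolate on $[0, a_1]$ toward a suitable such $f_1$.

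Second I would compute $c(H_1)$ directly. The spectrum of $H_1$ reduces to the four values $\{0,\, f_1(0),\, V,\, f_1(s_1') + s_1'\}$, where $s_1' < a_0^1$ is the rotation-$(-1)$ orbit on the concave-down side of $f_1$. An elementary analysis of $g(s) := f_1(s)+s$ (it increases on $[0, s_1']$, decreases on $[s_1', a_1]$, increases on $[a_1, A_1]$, with $g(0) = f_1(0)$ and $g(A_1) = A_1$) gives $V < A_1 < f_1(0) < f_1(s_1')+s_1'$. Hence $V$ is the unique element of $\spec(H_1) \cap (0, A_1]$, and the combination of the non-degeneracy property, the Energy-Capacity inequality and spectrality (all established in Section~\ref{sec:formal-spec}) forces $c(H_1) = V$. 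Third I would propagate this equality: let $E := \{\lambda \in [0,1] : c(H_\lambda) = V\}$. By the previous step $1 \in E$, and $E$ is closed by Lipschitz continuity of $c$. Since $V \in \spec(H_\lambda)$ for every $\lambda$, if $V$ is isolated in $\spec(H_\lambda)$ uniformly near some $\lambda_0 \in E$, continuity of $c$ and spectrality force $c(H_\lambda) = V$ on a neighborhood of $\lambda_0$, so $E$ is open. Hence $E = [0,1]$ and in particular $c(H) = V$.

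The main obstacle is the isolation of $V$ in $\spec(H_\lambda)$ throughout the deformation. As $\lambda$ decreases, whenever $f_\lambda'(a_0^\lambda)$ crosses an integer $-k$ with $k \geq 2$ a pair of rotation-$(-k)$ orbits bifurcates out of the inflection, producing two new spectral branches. The concave-side values $g_k^\lambda(s_k'^\lambda)$ exceed $f_\lambda(0) > A_\lambda \geq V$, so they cannot meet $V$; but the convex-side values $g_k^\lambda(a_k^\lambda)$ could a priori cross $V$. I would handle this by a generic choice of the family $\{f_\lambda\}$: each coincidence $g_k^\lambda(a_k^\lambda) = V$ is a codimension-one condition and can be removed by an arbitrarily small admissible perturbation, and any remaining isolated transversal crossing can be bridged by a local continuity argument, since $V$ remains in the spectrum on both sides of the crossing.
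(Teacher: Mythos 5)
Your overall strategy is the same as the paper's: deform $H$ through radial Hamiltonians satisfying the hypotheses to a ``flat'' bump with $f'\in(-2,0]$, compute $c$ there via non-degeneracy, the energy--capacity inequality and spectrality, and transport the answer back along the bifurcation diagram. The endpoint computation is correct. The gap is in the propagation step. First, genericity cannot do what you ask of it: a codimension-one coincidence in a one-parameter family generically occurs at isolated points with transverse crossings; it cannot be removed altogether. Second, your fallback --- ``any remaining isolated transversal crossing can be bridged by a local continuity argument, since $V$ remains in the spectrum on both sides'' --- is not a valid argument. At a transverse crossing of two spectral branches, continuity plus spectrality only tell you that $c(H_\lambda)$ lies on \emph{some} branch on each side; they do not prevent $c$ from switching branches at the crossing. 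So as written, the set $E$ need not be open at a crossing parameter, and the proof does not close.

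The fix is elementary and is exactly the observation the paper uses: for any $f_\lambda$ satisfying the hypotheses, the action of the fixed point at radius $\sqrt{s/\pi}$ is $s\mapsto f_\lambda(s)-sf_\lambda'(s)$, whose derivative is $-sf_\lambda''(s)$. This is strictly positive on the concave region $(0,a_0^\lambda)$ and strictly negative on the convex region $(a_0^\lambda,A_\lambda)$. Hence all concave-side orbits have action $\geq f_\lambda(0)>A_\lambda\geq V$, and --- the case you flag as the ``main obstacle'' --- the convex-side orbits $a_k^\lambda$ with $k\geq 2$ satisfy $a_k^\lambda<a_1^\lambda$ and therefore have action \emph{strictly larger} than that of $a_1^\lambda$. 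In other words, the action of $a_1^\lambda$ is the strict minimum of all non-zero actions for every $\lambda$, the only spectral value below it is the constant $0$, and so the $V$-branch never meets any other branch. No genericity or bridging is needed, and your connectedness argument then goes through verbatim.
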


\begin{figure}[h!]
\centering
\def\svgwidth{0.5\textwidth}
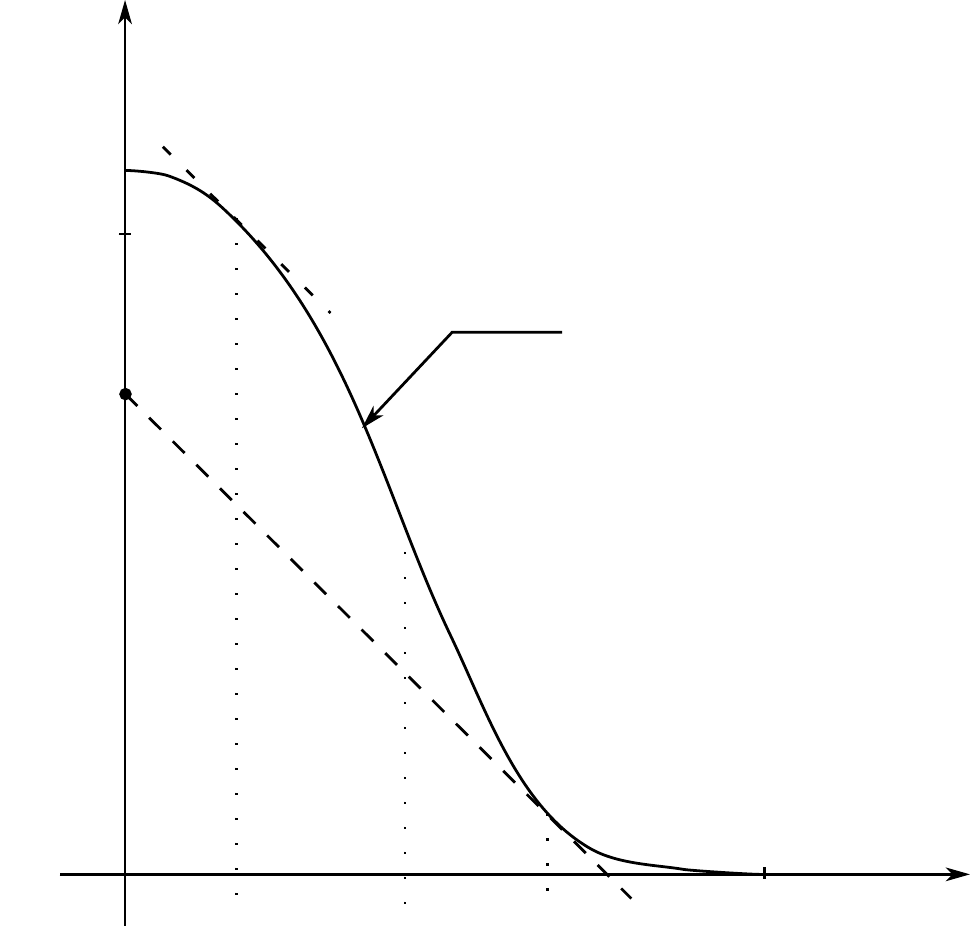
\caption{\label{fig:c-for-simple-bump}Graph of $f$ satisfying the assumptions of Lemma \ref{lemma:c-for-simple-bumps}: $a_{1}'$ and $a_1$ are the only points where $f'=-1$.}
\label{fig:simple-bump}
\end{figure} 

\begin{proof} First note that it follows from the assumptions that $f'<0$ on $(0,A)$. Moreover, $f'$ decreases between $0$ and $a_0$ and then increase between $a_0$ and $A$. Thus $f'$ attains its minimum at $a_0$ and $f'(a_0)<-1$. As explained in Section \ref{sec:exampl-radi-hamilt}, the fixed points of $\phi_H^1$ correspond to the values of $a$ for which, $f'(a)$ is an integer. We see that in our case, for each integer $f'(a_0)<k<0$, we have either 0 or 2 possibilities that we denote $a_k'< a_k$. We have also seen in Section \ref{sec:exampl-radi-hamilt} how to compute the action of such fixed points. In particular, all points have non-negative action, and the critical point 0 has action $>A$. A crucial remark for our purpose is that the point $a_1$ corresponds to the strict minimum of all non-zero actions of the fixed points of $\phi_H^1$. 

Let us first study the case where our function satisfies $f'>-2$. The spectrum of $H$ is made up of four values corresponding to the actions of $0$, $a_1'$, $a_1$ and the points outside the support. The spectral invariant $c$ cannot be reached outside the support by non-degeneracy. Moreover, the action of $a_1'$ is larger than that of 0 and the action of 0 is larger than $A$. Now, the area of the support of $H$ is less than $A$ and thus by the energy-capacity inequality the spectral invariant cannot be reached at any of these two points and therefore is reached at $a_1$ as claimed.

Let us now turn to the general case where $f'$ is not assumed larger than $-2$. Let $\tilde f$ be a function satisfying the assumptions of the lemma and with $\tilde f'>-2$. We leave to the reader to check that there exist a continuous path between $f$ and $\tilde f$ within the functions satisfying the assumptions of the lemma. We consider the bifurcation diagram of spectra obtained from this deformation. For $\tilde{f}$, the spectral invariant is reached at the point $a_1$ (which moves along the deformation but never disappears). Now it follows from the remark made above that the path in the bifurcation diagram associated to $a_1$ has no bifurcation, and therefore that the spectral invariant for $f$ is reached at $a_1$.
\end{proof}

We are now ready to prove the claim.

\begin{proof}[Proof of Claim \ref{claim:upper1}] Let $x_0$ be a fixed point of $\phi_H^1$ in $b$ for which $\cN_b=\cA_H(x_0)$. We need to prove that $c(H)\leq \cA_H(x_0)$. Denote by $\alpha_0$ the area enclosed by the orbit of $x_0$. If $\alpha_0=0$, which means that $x_0$ is the unique critical point of $H$, then $\cA_H(x_0)=\max(H)\geq c(H)$. Assume now that $\alpha_0>0$. 

%{\color{blue} We still have to change a little bit the following paragraph (the base $b$ is a pinched annulus, not an annulus). I try to do this tomorrow.}
%{\color{red}[I did it:{\color{sobhan} OK}]} 

By conjugating with an area preserving diffeomorphism and using 
 symplectic invariance of $c$, we can assume that  
the 1--periodic orbits of $H$ in the base $b$ are all included in an annulus $b'=\{(x,y)\in\R^2\,|\,\alpha<\pi(x^2+y^2)<\alpha'\}\subset b$  having the same outer boundary as $b$, and that on this annulus $b'$, $H$ has the form of Section \ref{sec:exampl-radi-hamilt}, i.e.  $H(x,y)=f(\pi(x^2+y^2))$, for all $(x,y)\in b'$, for some smooth decreasing function $f:(\alpha,\alpha')\to\R$. Note that for $\pi(x^2+y^2)\geq\alpha'$, one has $H(x,y)=0$. Also note that $f'(\alpha_0)=-1$. To see this, assume that we have $f'(\alpha_0)<-1$, and consider the smallest value $\alpha_1>\alpha_0$ for which $f'(\alpha_1)=-1$. Then, we see easily by considering the diagram of  Figure \ref{fig:action-rotation-number} that the action value associated to $\alpha_1$ is smaller than that of $\alpha_0$. This would then contradict the definition of $\alpha_0$. 
%{\color{sobhan} Is this where we use the fact that  $\cN_b=\cA_H(x_0)$?  If so we should point it out. \color{blue} Not really, I think the argument works for any 1--periodic orbit in $b$.} {\color{sobhan} But $f'(\alpha_0) = -1$ does not hold for any 1--periodic orbit.  }{\color{red}[I have added an argument above to explain why we have $f'=-1$. i think the argument works for any orbit with rot number equal 1.]}{\color{sobhan} OK.   \color{blue} I see, we need $f'=-1$ so that there exists $H_{1}$ with $c(H_{1}) = \cA_H(x_0)$!}

We now choose a radial Hamiltonian $H_1\geq H$ given by $H_1(x,y)=f_1(\pi(x^2+y^2))$, for a function $f_1:[0,+\infty)\to[0,+\infty)$ satisfying the assumptions of Lemma \ref{lemma:c-for-simple-bumps} and the following additional properties (see Figure~\ref{fig:c-upper-bound-1}): $f_1(\alpha_0)=f(\alpha_0)$, $f_1'(\alpha_0)=f'(\alpha_0)=-1$ and $f''(\alpha_0)>0$. By Lemma \ref{lemma:c-for-simple-bumps}, $c(H_1)=\cA_H(x_0)$. As a consequence, we obtain $c(H)\leq \cA_H(x_0)$ using monotonicity.
\end{proof}

\begin{figure}[h!]
\centering
\def\svgwidth{0.8\textwidth}
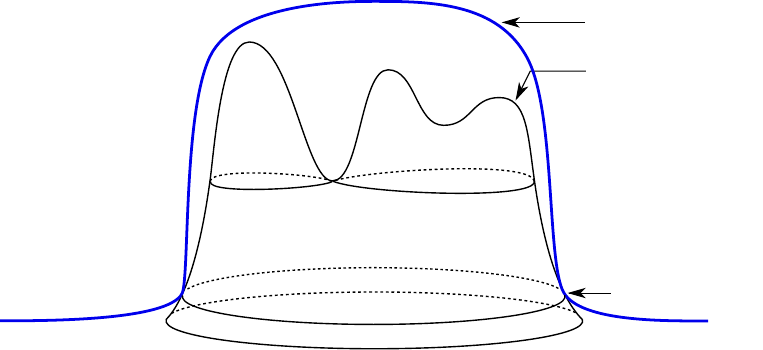
\caption{(Proof of Claim~\ref{claim:upper1}) construction of $H_{1} \geq H$ with $c(H_{1}) = \cN_{b}$}
\label{fig:c-upper-bound-1}
\end{figure}

\begin{claim}\label{claim:upper2}
 If $H$ has at least one saddle and if $H\geq 0$ on $b$, then
$$
c(H) \leq H(s_{0})+c(\bar H_{T_{0}} + \bar H_{T_{1}}). 
$$ 
\end{claim}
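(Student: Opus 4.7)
The plan is to dominate $H$ by a carefully designed ``tower'' function $F$ and then exploit the Lipschitz continuity of $c$; the max formula enters only at the very end to identify the limit. The key numerical input is that, since $b$ contains no critical point of $H$ in its interior and the boundary values of $H$ on $\overline b$ are $0$ (on $\partial\mathrm{supp}(H)$) and $H(s_0)$ (on $C(s_0)$), the maximum principle forces $0\leq H\leq H(s_0)$ on $\overline b$. This is the observation that will allow $F$ to be bounded pointwise by $H(s_0)$, yielding exactly the constant we need.

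I would first define the continuous function $F\colon\R^2\to\R$ by $F\equiv H(s_0)$ on $\overline{T_0}\cup\overline{T_1}$, $F\equiv H$ on $\overline b$, and $F\equiv 0$ on $Y$. The two definitions agree on $C(s_0)$ (where $H\equiv H(s_0)$) and on $\partial\mathrm{supp}(H)$ (where $H\equiv 0$), so $F$ is continuous, and by the remark above $0\leq F\leq H(s_0)$ everywhere. A direct computation shows that $H - F$ equals $H - H(s_0)$ on $\overline{T_i}$ and vanishes elsewhere, i.e.\ $H - F = H_{T_0} + H_{T_1}$ in the notation of Lemma~\ref{lemma:properties-bar-H}. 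Smoothing $F$ to $F^\delta$ with $\|F^\delta-F\|_{C^0}<\delta$ and $0\leq F^\delta\leq H(s_0)$ (convolution with a bump, capped from above if necessary), the Lipschitz property of $c$ (Property~4 of Section~\ref{sec:standard-prop}) gives
\[
c(H) - c(H-F^\delta) \leq \sup F^\delta \leq H(s_0).
\]

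Letting $\delta\to 0$, we have $H-F^\delta\to H_{T_0}+H_{T_1}$ in $C^0$; since $c$ is Lipschitz in $C^0$ and therefore extends to continuous Hamiltonians (Remark~\ref{rem:c-for-cts-fns}), this yields $c(H-F^\delta)\to c(H_{T_0}+H_{T_1})$. Finally, Remark~\ref{remark:c(barH)} identifies $c(H_{T_i})$ with $c(\bar H_{T_i})$, and applying the max formula to the disjoint supports of $\bar H_{T_0},\bar H_{T_1}$ (and, by approximation, to their continuous counterparts) gives $c(H_{T_0}+H_{T_1}) = c(\bar H_{T_0}+\bar H_{T_1})$. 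Passing to the limit in the displayed inequality then yields the desired bound $c(H)\leq H(s_0)+c(\bar H_{T_0}+\bar H_{T_1})$. The one delicate step is the construction of $F$, and specifically the uniform bound $\sup F\leq H(s_0)$ — which is exactly the role played by the hypothesis that $b$ contains no critical point of $H$.
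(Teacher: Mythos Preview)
Your proof is correct and essentially matches the paper's. The only minor difference is that the paper takes $F$ to be a smooth plateau at height $H(s_0)$ over all of $\mathrm{supp}(H)$ (yielding $H \leq F + H_{T_0} + H_{T_1}$ and invoking monotonicity before Lipschitz), while you set $F = H$ on $b$ to obtain the exact decomposition $H = F + H_{T_0} + H_{T_1}$ and use Lipschitz alone; both arguments rest on the same maximum-principle bound $0 \leq H \leq H(s_0)$ on $b$ and on the constancy of $c(\bar H_{T_0} + \bar H_{T_1})$ via the max formula and Remark~\ref{remark:c(barH)}.
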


\begin{proof} Let $F:\R^2\to\R$ be a smooth compactly supported non-negative function that equals $
H(s_0)$ on the support of $H$, has only $0$ and $H(s_0)$ as critical values, and whose flow has no non-trivial 1--periodic orbit (see Figure~\ref{fig:c-upper-bound-2}). By construction $H\leq F+H_{T_0}+H_{T_1}$, where for $i=1,2$, $H_{T_i}$ is the continuous function that coincides with $H-H(s_0)$ on $T_i$ and vanishes elsewhere. Hence $c(H)\leq c(F+H_{T_0}+H_{T_1})$.    Let $\eps>0$.
According to Lemma \ref{lemma:properties-bar-H}, the functions $\bar H_{T_0}$, $\bar H_{T_1}$ can be chosen so that their $C^0$ distance to respectively $H_{T_0}$ and $H_{T_1}$ is arbitrary small.
The continuity of spectral invariants gives: $$c(H)\leq c(F+\bar H_{T_{0}}+\bar H_{T_{1}})+\eps.$$
By the Lipschitz property we get: %and the Max formula then imply:
\begin{align*}c(H)&\leq c(\bar H_{T_{0}}+\bar H_{T_{1}})+\max F+\eps\\
&= H(s_0)+c(\bar H_{T_{0}}+\bar H_{T_{1}})+\eps. %\\
%&= H(s_0)+\max(c(\bar H_{T_{0}}),c(\bar H_{T_{1}}))+\eps.
\end{align*}

\begin{figure}[h!]
\centering
\def\svgwidth{1\textwidth}
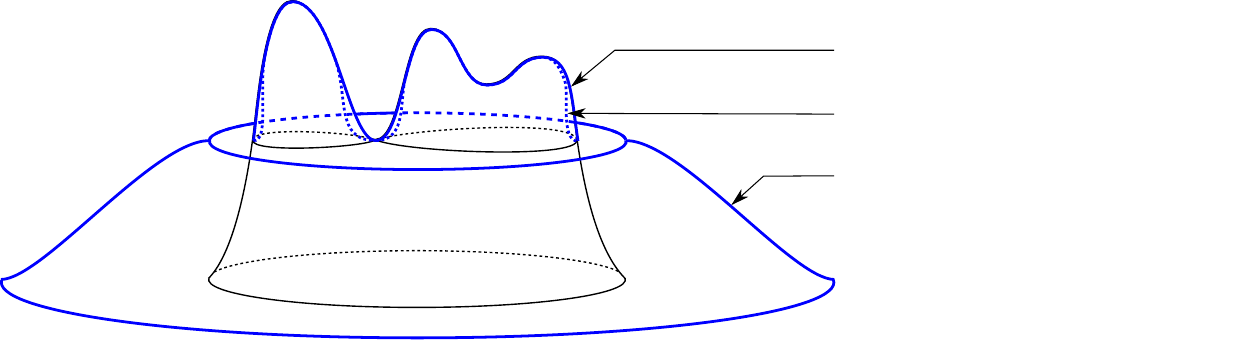
\caption{(Proof of Claim~\ref{claim:upper2}) construction of $F+H_{T_0}+H_{T_1} \geq H$ with $c(F+H_{T_0}+H_{T_1}) \leq H(s_0)+c(H_{T_{0}}+H_{T_{1}})$}
\label{fig:c-upper-bound-2}
\end{figure}

Now according to Remark \ref{remark:c(barH)}, the values of $c(\bar H_{T_{0}})$ and $c(\bar H_{T_{1}})$
are independent of the choices of $\bar H_{T_{0}}$ and $\bar H_{T_{1}}$. This means that $\eps$ can be made arbitrary small and concludes the proof. 
%{\color{blue}[ Don't we need the same explanation just after equation \ref{eq:4}?}{\color{red}Took care of it!]}{\color{sobhan}OK.}
\end{proof}

By Claims \ref{claim:upper1} and \ref{claim:upper2}, we have established the upper bounds required for Proposition \ref{prop:recurs-form-c-disk}. We now turn to the proof of the lower bounds. The next claim achieves the case of Hamiltonians without any saddle point.

\begin{claim}\label{claim:H-pos-no-saddle} Assume that $H$ is Morse, non-negative and has no saddle point. Then, $c(H)=\cN_b$.
\end{claim}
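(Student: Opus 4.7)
The plan is to prove the two inequalities $c(H)\leq \cN_b$ and $c(H)\geq \cN_b$ separately. The upper bound $c(H)\leq \cN_b$ is immediate from Claim~\ref{claim:upper1}: the hypotheses force $H>0$ on $b$, since $b$ is the interior of $\supp(H)$ and a Morse function on a topological disk with no saddle has, by the Euler characteristic computation, exactly one interior critical point, which must here be a local maximum with positive value (as $H\geq 0$ vanishes on the boundary of support). Thus the work lies entirely in proving the lower bound.

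The key observation is that every non-trivial fixed point of $\phi_H^1$ has \emph{strictly positive} action. To check this we may assume, after conjugating by an area-preserving diffeomorphism (which leaves both $c$ and $\cN$ invariant, by symplectic invariance), that $H$ is radial of the form $(x,y)\mapsto f(\pi(x^2+y^2))$ with $f\geq 0$, non-increasing on its support $[0,R]$, as was already done in the no-saddle case of the proof of Proposition~\ref{prop:recurs-form-disk}. Applying the computation of Section~\ref{sec:exampl-radi-hamilt}, a non-trivial fixed point at radius $r$ with $s=\pi r^2\in[0,R)$ has action
\[
\cA_H(x)=f(s)-sf'(s)\geq f(s)>0,
\]
using $s\geq 0$, $f'(s)\leq 0$, and the fact that $f(s)>0$ in the interior of $\supp(f)$ (including at $s=0$, which corresponds to the maximum).

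The lower bound now follows at once. By the observation above, $\spec(H)\subset\{0\}\cup[\cN_b,+\infty)$, where $\cN_b$ is by definition the minimum of the (strictly positive) actions of the non-trivial fixed points, hence $\cN_b>0$. The non-degeneracy property (Property~8 of Section~\ref{sec:standard-prop}) applied to the non-zero non-negative function $H$ gives $c(H)>0$, while spectrality gives $c(H)\in\spec(H)$. These two facts together force $c(H)\geq \cN_b$, completing the proof. The only delicate ingredient is the appeal to non-degeneracy: absent it, one would only know $c(H)\in\spec(H)$ and $c(H)\geq 0$, leaving open the spurious possibility $c(H)=0$ corresponding to the trivial fixed points.
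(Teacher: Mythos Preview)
Your proof is correct and follows essentially the same approach as the paper: use non-degeneracy to rule out $c(H)=0$, spectrality to place $c(H)$ among the positive actions (hence $\geq \cN_b$), and Claim~\ref{claim:upper1} for the upper bound. Your reduction to radial coordinates to verify positivity of non-trivial actions is a slight elaboration of what the paper treats as already known (the fact that orbits in $b$ with $H|_b>0$ have positive action was noted earlier, in the proof of Proposition~\ref{prop:recurs-form-disk}), but it is not a different route.
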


\begin{proof} By non-degeneracy, we have $c(H)>0$. Thus, $c(H)$ is the action of a point in the interior of the support of $H$, hence, by definition, cannot be smaller than $\m N_b$. By Claim \ref{claim:upper1}, we get $c(H)=\m N_b$.
\end{proof}

\noindent\textit{End of the proof of Proposition \ref{prop:recurs-form-c-disk}.} It remains to  establish that  
\begin{equation}\label{eq:5}
c(H)\geq\min(\cN_b,H(s_0)+c(\bar H_{T_0}+\bar H_{T_1})).
\end{equation}

First assume that $c(H)$ is the action of a fixed point in $b$. Then, by definition of $\cN_b$, $c(H)\geq \cN_b$. By Claim \ref{claim:upper1}, we get $c(H)=\cN_b$ which implies \eqref{eq:5}.

Assume now that $c(H)$ is not attained on $b$. Then, by Claim \ref{claim:upper1}, $c(H)<\cN_b$. Similarly to the argument used in Case 1, for all $\eps>0$, we can find at $C^0$-distance less than $\eps$ from $H$ a Hamiltonian of the form $F+\bar H_{T_0}+\bar H_{T_1}$, where $F$ is a smooth non-negative function, with only two critical values: 0, attained on $Y$, and  $h=H(s_0)$ attained on a neighborhood of $\overline{T_0\cup T_1}$. We also choose $F$ close enough to $H$ on $b$ so that it has no  non-trivial 1--periodic orbit with action in $(0,\cN_b)$.  Since $|c(H)-c(F+\bar H_{T_0}+\bar H_{T_1})|\leq\eps$, we have for $\eps$ small enough $c(F+\bar H_{T_0}+\bar H_{T_1})<\cN_b$. This implies in particular that  $c(F+\bar H_{T_0}+\bar H_{T_1})$ is attained in $T_0\cup T_1$. Similarly as in Case 1, we will consider a deformation of the form $K_\sigma=F_\sigma+\bar H_{T_0}+\bar H_{T_1}$, with $F_0$ arbitrarily close to $F$ and $F_1=0$ to prove that 
%$c(F_0+\bar H_{T_0}+\bar H_{T_1})=h+\max(c(\bar H_{T_0}), c(\bar H_{T_1}))$, 
$c(F_0+\bar H_{T_0}+\bar H_{T_1})=h+c(\bar H_{T_0} + \bar H_{T_1})$, 
which in turn implies the same equality for $c(H)$. Nevertheless, we will have to be slightly more careful in the way we construct it.

\begin{claim}\label{claim:special-deformation} Let $F:\R^2\to\R$ be a smooth non-negative function, with only two critical values: 0 attained on the complement of an open disk $D$, and $h>0$ attained on a smaller closed disk $D'\subset D$. Then, arbitrarily $C^1$-close to $F$, there exists a function $\tilde{F}$ that coincides with $F$ on $D'\cup(\R^2\setminus D)$, and a one parameter family of smooth functions $(F_\sigma)_{\sigma\in[0,1]}$ with $F_0=\tilde{F}$, $F_1=0$ and the two properties:
\begin{enumerate}
\item $F_\sigma$ has only two critical values, 0 and $\max F_\sigma=(1-\sigma)h$,
\item Every Lipschitz function $\delta$ defined on an interval $I\subset[0,1]$ such that $ \delta(\sigma)\in\spec(F_\sigma)$
for all $\sigma\in I$, satisfies $\delta'(\sigma)\geq-h$ almost everywhere.
\end{enumerate}
% \begin{align}\label{eq:6}
% \begin{split}
% &\text{Every Lipschitz function }\delta:[0,1]\to \R\text{ s.t. } \delta(\sigma)\in\spec(F_\sigma),\\
% &\text{for all }\sigma\in[0,1], \text{ satisfies }\delta'(\sigma)\geq-h\text{ almost everywhere.}
% \end{split}\end{align}
% and 
% \begin{align}\label{eq:11}
% \max F_\sigma=\sigma h
% \end{align}
\end{claim}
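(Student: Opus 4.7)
The plan is to take $\tilde F := F$ itself (which trivially satisfies the coincidence and $C^1$-closeness requirements) and to define the family by the simple rescaling
\[
F_\sigma(x) := (1 - \sigma)\, F(x), \qquad \sigma \in [0,1].
\]
This is jointly smooth in $(x, \sigma)$ with $F_0 = \tilde F$, $F_1 \equiv 0$, and $\nabla F_\sigma = (1 - \sigma)\, \nabla F$. Consequently, for $\sigma < 1$ the critical points of $F_\sigma$ coincide with those of $F$, so its critical values are $(1 - \sigma) \cdot \{0, h\} = \{0, (1 - \sigma) h\}$, with the maximum $(1 - \sigma)h$ attained on $D'$; this immediately gives property~(1).

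The key observation behind the choice of rescaling (as opposed to, say, a cutoff $\max(F - \sigma h, 0)$, which would fail to be smooth) is that $F_\sigma$ is a scalar multiple of $F$, so its Hamiltonian flow is merely the $F$-flow reparametrized in time, $\phi^t_{F_\sigma} = \phi^{(1-\sigma)t}_F$, and in particular $F$ itself is preserved along the flow of $F_\sigma$. For a continuous family $(x_\sigma)$ of 1-periodic orbits of $F_\sigma$, set $\delta(\sigma) := \cA_{F_\sigma}(x_\sigma)$; the first-variation formula for the action functional at a critical orbit yields
\[
\delta'(\sigma) \;=\; \int_0^1 \partial_\sigma F_\sigma(x_\sigma(t))\, dt \;=\; -\int_0^1 F(x_\sigma(t))\, dt \;=\; -F(x_\sigma(0)),
\]
and since $0 \leq F \leq h$ this lies in $[-h, 0]$, with the extremal rate $-h$ attained exactly on orbits lying in the plateau $D'$.

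The remaining step is to pass from this branchwise bound to an arbitrary Lipschitz selection $\delta : I \to \R$ with $\delta(\sigma) \in \spec(F_\sigma)$. At almost every $\sigma_0 \in I$ the derivative $\delta'(\sigma_0)$ exists by Rademacher's theorem, and continuity of the spectrum forces $\delta$ to agree locally near $\sigma_0$ with some branch of 1-periodic orbits of $F_\sigma$, whose slope is $\geq -h$ by the previous calculation. The main subtlety, and perhaps the only place where genuine care is required, is the handling of bifurcation values of $\sigma$ where branches merge or terminate (for instance, as $\sigma \to 1$, a given nontrivial fixed point of $F_\sigma$ eventually degenerates into the boundary of its support). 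However, the Lipschitz continuity of $\delta$ forces any transition between branches to occur where they share the same value, so the lower bound $\delta'(\sigma) \geq -h$ propagates through such transitions and yields property~(2).
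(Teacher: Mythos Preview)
Your scaling $F_\sigma=(1-\sigma)F$ is a genuinely different and more economical construction than the paper's, and Property~(1) together with the branch-slope computation $\frac{d}{d\sigma}\cA_{F_\sigma}(x_\sigma)=-F(x_\sigma(0))\in[-h,0]$ are both correct. The difficulty is entirely concentrated in the last paragraph, and that is precisely the point that drives the paper to its more elaborate deformation.

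The sentence ``continuity of the spectrum forces $\delta$ to agree locally near $\sigma_0$ with some branch of 1-periodic orbits'' is not justified and is not obviously true. What you have established is that every \emph{smooth branch} of the bifurcation diagram has slope $\geq -h$; to pass from this to an arbitrary Lipschitz selection you need to know that near a generic $\sigma_0$ the diagram is a \emph{finite} union of smooth graphs over $\sigma$, so that a mean-value argument forces $\delta'(\sigma_0)$ to equal some branch slope. For the naive scaling the bifurcation values of $\sigma$ are those with $m/(1-\sigma)$ a critical value of $g=-f'$, and without any hypothesis on $F$ these can accumulate, there can be infinite level sets of $g$, and branches can be born and die; your choice $\tilde F=F$ throws away exactly the freedom the claim gives you to avoid this. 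The fix is to take $\tilde F$ a $C^1$-small perturbation on the annulus so that $g$ becomes Morse (finitely many nondegenerate critical points, none at integer height); then the bifurcation set is finite, away from it the spectrum consists of finitely many smooth branches, and your argument goes through. As written, though, the proof of Property~(2) is a sketch rather than a proof.

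The paper takes the opposite trade-off. It perturbs $g$ and then deforms it by ``peeling off'' one integer layer at a time: on each subinterval $[\tau_{k+1},\tau_k]$ only the part of $g_\sigma$ strictly between levels $k$ and $k+1$ moves, so the integer level sets $\{s:g_\sigma(s)\in\Z\}$ are literally \emph{fixed}. Hence periodic orbits neither appear nor disappear along each subinterval, the spectrum is a fixed finite set of values moving smoothly with explicitly computable rates $\leq h$, and Property~(2) is immediate. More work in the construction buys a trivial analysis; your approach inverts this, and is perfectly viable once the bifurcation structure is controlled.
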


\begin{remark}The second property, which may appear rather strange at first glance, simply states that all the  curves in the bifurcation diagram of the deformation $(F_\sigma)_{\sigma\in[0,1]}$ have slope $\geq -h$.\end{remark}

We assume this claim for the time being and postpone its proof to the end of this section.
As explained above, we let $K_\sigma=F_\sigma+\bar H_{T_0}+\bar H_{T_1}$, where $F_\sigma$ is a one parameter family as provided by Claim \ref{claim:special-deformation}. Of course, in our settings, the disks $D'$ and $D$ of Claim \ref{claim:special-deformation} are respectively a neighborhood of $\overline{T_0\cup T_1}$ and the interior of the support of $H$.  For all $\sigma\in[0,1]$, the spectrum of $K_\sigma$ is given by:
\begin{align*}\spec(K_\sigma) = \spec(F_\sigma) \cup((1-\sigma)h+\spec(\bar H_{T_{0}}+\bar H_{T_{1}})).
\end{align*}
The bifurcation diagram $\bigcup_{\sigma\in[0,1]}\{\sigma\}\times\spec(K_\sigma)$ is the union of the horizontal line corresponding to the action 0, parallel lines with slope $-h$ that correspond to the subset $(1-\sigma) h+\spec(\bar H_{T_{0}}+\bar H_{T_{1}})$  and pieces of curves corresponding to the actions of the non-trivial 1--periodic orbits of $F_\sigma$ (see Figure \ref{fig:bif-fancy-def}). 

\begin{figure}[h!]
\centering
\def\svgwidth{0.7\textwidth}
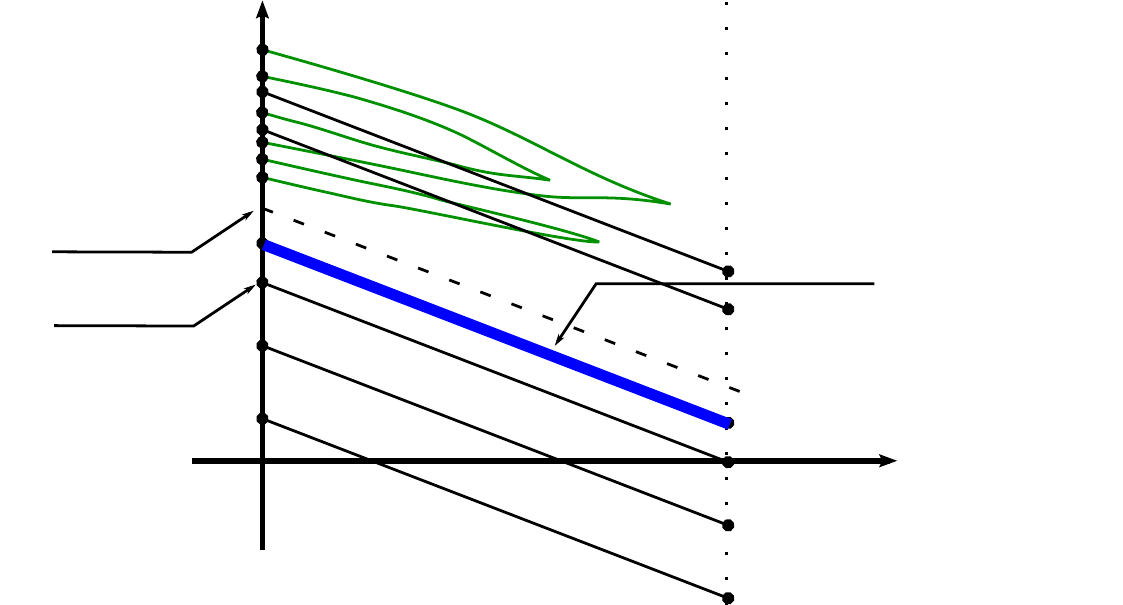
\caption{ The bifurcation diagram of the deformation $K_{\sigma}$.}
\label{fig:bif-fancy-def}
\end{figure}

 These pieces of curves never decrease faster than $-h$, as follows from Property 2 in Claim \ref{claim:special-deformation}. Moreover at $\sigma=0$ these curves are all above the value $\cN_b$. Thus, no curve in the bifurcation diagram that start from a value $>\cN_b$ crosses a line of slope $-h$ with initial value $<\cN_b$. Since $c(K_0)$ is smaller than $\cN_b$ and belongs to the spectrum, $c(K_\sigma)$ remains on the line $(\sigma,c(K_0)-\sigma h)_{\sigma\in[0,1]}$, until it reaches the value 0. After that point, if it exists, the positivity of $c$ implies that it remains constant equal to zero. Now, since $H$ is positive on $b$ and $s_0$ is a non-degenerate saddle, one of the two functions $\bar H_{T_0}$ and $\bar H_{T_1}$ must be positive near the boundary of its support. Thus, it is a consequence of the Max Formula and  Lemma \ref{lemma:positive-base-positive-c} below that 
 
$c(K_1)=c(\bar H_{T_0} + \bar H_{T_1}) = \max(c(\bar H_{T_0}), c(\bar H_{T_1}))$ is positive. 
This implies 
$c(K_0)=h+c(K_1)=h+c(\bar H_{T_0} + \bar H_{T_1})$ 
 and we see that the proof of the Proposition \ref{prop:recurs-form-c-disk} is achieved up to Claim \ref{claim:special-deformation} and Lemma \ref{lemma:positive-base-positive-c} below.
\end{proof} 

\begin{remark}
A consequence of Proposition \ref{prop:recurs-form-c-disk} and the positivity of $c$ is that in the case $c(H)<\cN_b$, which we were just considering, we have the inequality $h\leq\cN_b$. Since $\spec(F_1)=\{0\}$, this implies in particular that all the curves in the bifurcation diagram corresponding to the non-trivial periodic orbits of $F_\sigma$ that start at $\sigma=0$ must die at some point. Moreover, it will follow from the proof of Claim \ref{claim:special-deformation} that $F_\sigma$ can be constructed so that no birth occurs in its bifurcation diagram. This is illustrated on Figure \ref{fig:bif-fancy-def}.
\end{remark}

\begin{lemma}\label{lemma:positive-base-positive-c}
If $H$ is positive on $b$, then $c(H)>0$.
\end{lemma}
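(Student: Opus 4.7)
The plan is to construct a function $G \leq H$ of the form $G = \phi + \bar H_{T_0} + \bar H_{T_1}$ whose three summands are supported in pairwise disjoint disks. Monotonicity will then give $c(H) \geq c(G)$; the Max Formula (Axiom 4) will yield
$$c(G) = \max\bigl(c(\phi), c(\bar H_{T_0}), c(\bar H_{T_1})\bigr) \geq c(\phi);$$
and since $\phi$ will be chosen as a non-negative bump supported in a disk, Property 8 (Non-Degeneracy) will force $c(\phi) > 0$, closing the argument.

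The case where $H$ has no saddle is immediate: then $b$ is the interior of $\supp(H)$, so $H \geq 0$ globally with $H \not\equiv 0$, and Property 8 applied directly to $H$ yields $c(H) > 0$.

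Assume now that $H$ has at least one saddle. The main geometric preparation is the observation that $H(s_0) > 0$ and that $b$ occupies the ``low'' sectors at $s_0$. Indeed, since $H$ has no critical points in $b$ and $\overline b$ is a compact annular region whose boundary has $H \in \{0, H(s_0)\}$, the usual Morse argument shows $b$ is foliated by regular level sets of $H$ taking values in the open interval between the two boundary levels. Combined with $H > 0$ on $b$, this forces $H(s_0) > 0$ and $H < H(s_0)$ on $b$. Hence the two inner lobes $T_0, T_1$ lie on the ``high'' side of $s_0$, i.e. $H > H(s_0)$ on $T_i$ near $\partial T_i$, which is exactly the case addressed by Notation \ref{notation:bar-H}. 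The construction there produces $\bar H_{T_0}, \bar H_{T_1}$ satisfying $\bar H_{T_i} \leq H - H(s_0) \leq H$ on $\supp(\bar H_{T_i})$ (using $\rho(t) \leq t$ together with $H(s_0) > 0$) and $\bar H_{T_i} = 0 \leq H$ elsewhere in $T_i$ (where $H$ is close to $H(s_0) > 0$).

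To finish, pick a closed disk $D \subset b$ on which $H \geq \eps$ for some $\eps > 0$, and let $\phi$ be a smooth non-negative bump with $0 \leq \phi \leq \eps$, $\phi \not\equiv 0$ and $\supp(\phi) \subset D$. The inequality $G \leq H$ is then a routine region-by-region check: on $D$, $G = \phi \leq \eps \leq H$; on $b \setminus D$, $G = 0 < H$; on each $T_i$, $G = \bar H_{T_i} \leq H$ by the previous paragraph; and outside $\supp(H)$, $G = 0 = H$. The three summands live in the pairwise disjoint open disks $D, T_0, T_1$, so the Max Formula applies and completes the argument sketched at the start. The most delicate point is the geometric identification of $b$ with the ``low'' side of $s_0$, without which the construction of Notation \ref{notation:bar-H} would have to be carried out in its sign-flipped version and would no longer sit beneath $H$.
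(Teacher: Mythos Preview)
Your overall strategy — bounding $H$ from below by a bump $\phi$ supported in $b$ plus terms on $T_0, T_1$, then invoking the Max Formula and Non-degeneracy — is sound, but the central geometric claim is wrong. You assert that both $T_0, T_1$ lie on the ``high'' side of $s_0$, i.e.\ $H > H(s_0)$ near $\partial T_i$. This holds only in the \emph{side-by-side} configuration of the figure-eight $C(s_0)$, where $b$ occupies the two opposite ``low'' sectors at $s_0$. In the \emph{nested} configuration (the second case in Figure~\ref{fig:outer-saddle}), $b$ occupies just one low sector, and the inner disk --- one of $T_0, T_1$ --- occupies the opposite low sector, so $H < H(s_0)$ near its boundary. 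Concretely, take $H$ with exactly one local max, one saddle $s_0$, and one local min $m$ with $H(m) < 0 < H(s_0)$: then $H>0$ on $b$, yet the component containing $m$ has $H$ decreasing monotonically from $H(s_0)$ on its boundary down to $H(m)<0$ (indeed this component cannot sit in a $+$ sector, since with $m$ as its only critical point it would then need an interior local max). For this ``low'' lobe the opposite case of Notation~\ref{notation:bar-H} applies, the natural inequality is $\rho \geq \mathrm{id}$, and your chain $\bar H_{T_i} \leq H - H(s_0)$ breaks down.

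The argument can be repaired: on the low lobe, choose $D'$ close enough to $\partial T_i$ that $H>0$ on $T_i \setminus D'$; then $\bar H_{T_i} \leq 0 < H$ there, while $\bar H_{T_i} = H - H(s_0) < H$ on $D'$. The paper's own proof avoids the issue entirely by a different route: it bounds $H$ from below by a single radial ``bump-with-a-dip'' $F_1$ and runs a deformation from the nonnegative truncation $\max(0,F_1)$ (whose invariant is positive by Non-degeneracy) to $F_1$, along which the nonnegative part of the spectrum is frozen, forcing $c(F_1)=c(F_0)>0$.
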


\begin{proof} Up to conjugation by an area preserving diffeomorphism, $H$ is larger than a smooth radial Hamiltonian of the form of Section  \ref{sec:exampl-radi-hamilt}: $F_1(x,y)=f_1(\pi(x^2+y^2))$, for all $(x,y)\in\R^2$, with $f_1:[0,+\infty) \rightarrow \R$ having a simple profile: for some real numbers $0<a_0<a_1<a_2$, it is strictly increasing on $[0,a_1]$, strictly decreasing on $[a_1,a_2]$, $f_1(a_0)=0$ and $f_1$ vanishes on $[a_2,+\infty)$ (note that $f_{1}(0)$ may be negative).   By monotonicity, we only have to verify that $c(F_1)>0$ to prove $c(H)>0$.

Let $F_0$ be a smooth non negative approximation of the function $\max(0,F_1)$. By non-degeneracy, $c(F_0)>0$. Now Let $F_\sigma$ be a smooth decreasing deformation from $F_0$ to $F_1$. We may also assume that all the functions $F_\sigma$ are radial, hence of the form $F_\sigma(x,y)=f_\sigma(\pi(x^2+y^2))$ and that all the functions $f_\sigma$ are increasing on $[0,a_0]$ and coincide with $f_1$ on $[a_0,+\infty)$. All the orbits of $F_\sigma$ located in the circle of area $a_0$ have negative action. Thus, the non-negative part of the spectrum remains unchanged along the deformation. As a consequence, using spectrality and continuity we obtain $c(F_1)=c(F_0)>0$, and thus $c(H)>0$. 
\end{proof}

There only remains to construct the deformation of Claim \ref{claim:special-deformation}.

\begin{proof}[Proof of Claim \ref{claim:special-deformation}]
Up to conjugation with an area preserving diffeomorphism, we may assume that the annulus $D\setminus D'$ is given in coordinates by $\{(x,y)\in\R^2\,|\,\alpha<\pi(x^2+y^2)<\beta\}$ and $F$ is a radial Hamiltonian, as in Section \ref{sec:exampl-radi-hamilt}: for all $(x,y)\in \R^2$, $F(x,y)=f(\pi(x^2+y^2))$, where $f=h$ on $[0,\alpha]$, $f$ decreases on $(\alpha,\beta)$ and $f=0$ on $[\beta,+\infty)$. We denote $g=-f'$. We will construct the deformation $F_\sigma$ as radial functions $F_\sigma(x,y)=\int_{\pi(x^2+y^2)}^{+\infty}g_\sigma(u)du$, where $g_\sigma$ will be a deformation such that $g_1 =\tilde{g}$ where $\tilde{g}$ is a function arbitrarily close to $g$, $g_0 = 0$ and  $g_{\sigma}$  vanishes on $[0,\alpha]$ and $[\beta,+\infty)$ for all $\sigma$. Recall from Section \ref{sec:exampl-radi-hamilt} that the spectrum of $F_\sigma$ is calculated by considering the points where $g_\sigma$ is an integer. 

We first perturb $g$ so that the set of $s\in(\alpha,\beta)$ such that $g(s)$ is an integer is finite. Then, we let $\tilde{g}$ be a smooth $C^0$-perturbation of $g$ obtained by flattening $g$ in a small neighbourhood of all the points $s$ where $g(s)$ is an integer (See Figure \ref{fig:fancy1}). The function $\tilde{F}$ is then defined as $$\tilde{F}(x,y)=\int_{\pi(x^2+y^2)}^{+\infty}\tilde{g}(u)du.$$

\begin{figure}[h!]
\centering
\def\svgwidth{0.9\textwidth}
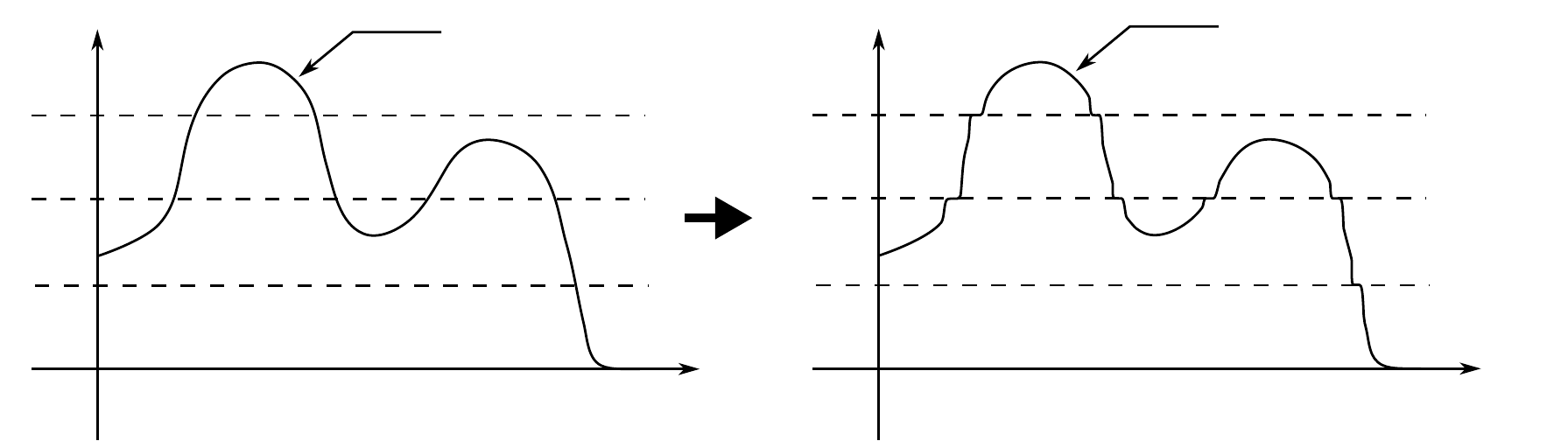
\caption{The deformation from $g$ to $\tilde{g}$.}
\label{fig:fancy1}
\end{figure}

To construct the deformation $g_\sigma$ from $\tilde g$ to $0$, we first introduce the following set of notations. Let $N$ be the integer part of $\max\tilde g$. For all integers $k=0,1,\ldots,N+1$, we set the truncated functions $\gamma_k=\min(g,k)$ and $\delta_k=\gamma_{k+1}-\gamma_{k}$. Clearly, $\gamma_{N+1}=\tilde{g}$ and $\gamma_0=0$, hence $\tilde{g}=\sum_{k=0}^{N}\delta_k$. 
The effect of the perturbation $\tilde{g}$ is that each function $\gamma_k$, $\delta_k$ is smooth whereas an analoguous definition for $g$ would only yield continuous functions.
We also set $h_k=\int_0^{+\infty}\delta_k(u)du$, so that $h=\sum_{k=0}^Nh_k$. Finally, let $\tau_k=\frac1h(h_k+\ldots+h_N)$. In particular, $0=\tau_{N+1}<\tau_N<\ldots<\tau_1<\tau_0=1$.

We can now define the deformation:

\begin{equation}\label{eq:12} g_{\sigma}(s)=\gamma_k(s)+\tfrac h{h_k}(\tau_{k}-\sigma)\delta_k(s),
\end{equation}
for all $k=0,\ldots,N$, $\sigma\in[\tau_{k+1},\tau_k)$, and $s\in[0,+\infty)$ (see Figure \ref{fig:fancy2}).
Let us check that this deformation suits our needs.

\begin{figure}[h!]
\centering
\def\svgwidth{0.9\textwidth}
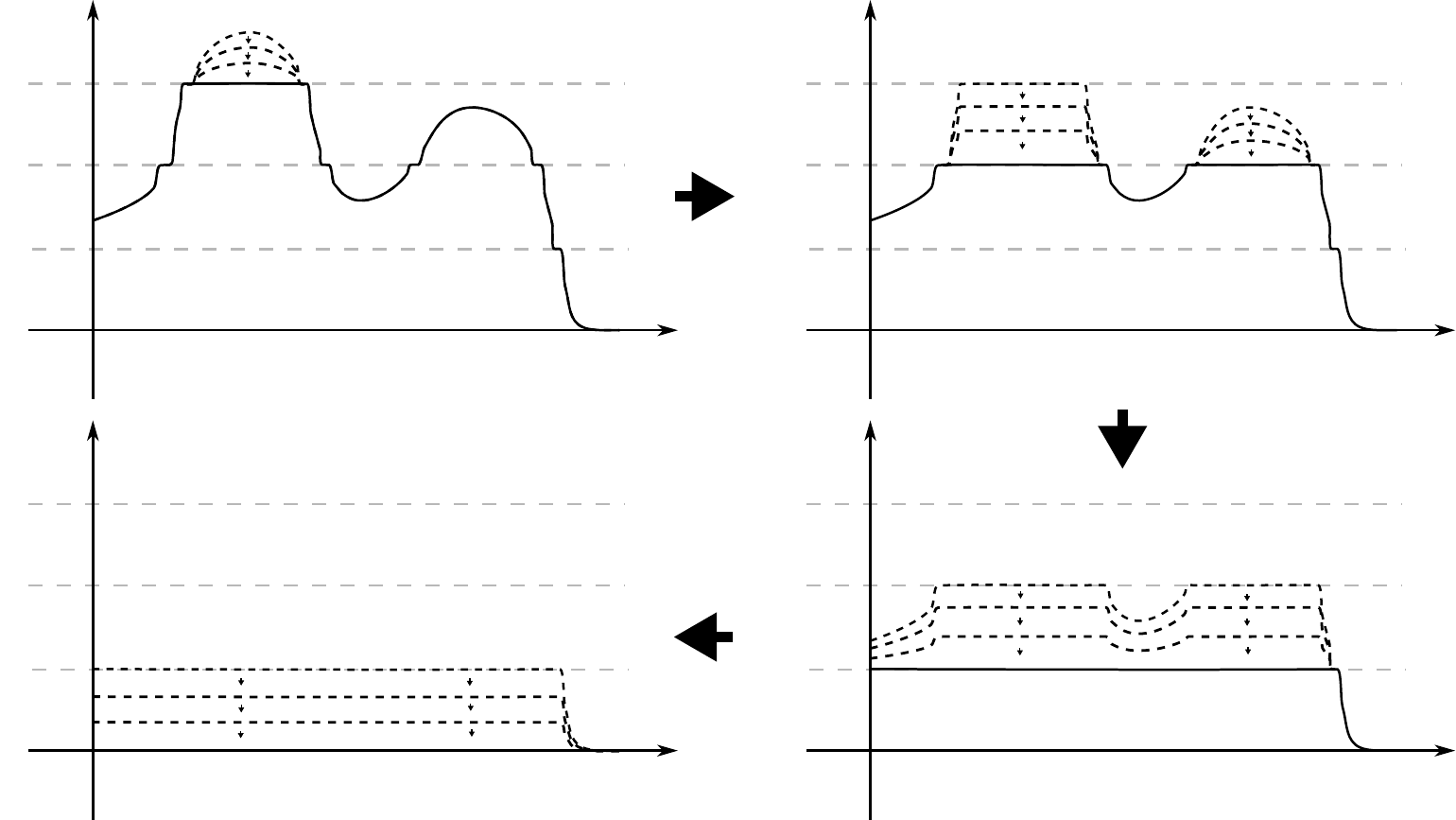
\caption{The deformation from $\tilde{g}$ to 0 via $g_{\tau_3}$,  $g_{\tau_2}$ and  $g_{\tau_1}$.}
\label{fig:fancy2}
\end{figure}

First, note that $\sigma\mapsto g_\sigma$ is continuous on $[0,1]$ in the $C^0$-topology. This follows from the fact that when $\sigma$ evolves from $\tau_{k+1}$ to $\tau_k$, the factor $\tfrac{h}{h_k}(\tau_{k}-\sigma)$  evolves from 1 to 0, and so $g_{\sigma}$ evolves from $\gamma_{k+1}$ to $\gamma_k$. As a consequence of this continuity, Property 1 in Claim \ref{claim:special-deformation} can be checked by considering separately each interval of deformation $(\tau_{k+1}, \tau_k)$. The maximum of $F_\sigma$ is the total integral $\int_{0}^{+\infty}g_\sigma(u)du$. By equation \eqref{eq:12}, its rate of decrease on the interval  $(\tau_{k+1}, \tau_k)$ is $$\tfrac h{h_k}\int_0^{+\infty}\delta_k(u)du=h.$$
This proves the first property.

As the first one, the second property in Claim \ref{claim:special-deformation} only needs to be established on each interval $(\tau_{k+1}, \tau_k)$. As we already recalled, it follows from Section \ref{sec:exampl-radi-hamilt} that the spectrum of $F_\sigma$ can be computed by only considering the points where $g_{\sigma}$ is an integer $\ell$. It turns out that along each interval $(\tau_{k+1}, \tau_k)$ and for each integer $\ell$, the set of these points remains unchanged. Moreover, for each such point $s$, the action is obtained as the area of the shaded region in Figure \ref{fig:action-rotation-number}. This area has two parts, a rectangle part whose area is $\ell s$ and an integral part whose area is $\int_s^{+\infty}g_{\sigma}(u)du$. Along the deformation interval $(\tau_{k+1}, \tau_k)$, the rectangle part of the area remains constant, whereas the integral part decreases  at the rate 
$$ \tfrac{h}{h_k}\int_s^{+\infty}\delta_k(u)du\leq h.$$
As a consequence, over the interval  $(\tau_{k+1}, \tau_k)$, the action spectrum of $F_\sigma$ is a finite union of non-increasing smooth curves  whose slopes are never smaller than $-h$. Property 2 of Claim \ref{claim:special-deformation} follows.
\end{proof}

\subsection{Theorem \ref{theo.axiomatic-c=n} for Morse functions on closed surfaces of genus $\geq 1$} \label{sec:c=N-Morse-surface}

In this section, we prove the equality $c=\cN$ for Morse functions on closed surfaces of positive genus. This is done in two steps. We first establish a formula  which reduces the problem of computing $\m N$ to computations for Hamiltonians supported in disks; see Proposition \ref{prop:form_N_general} in  Section \ref{sec:formula-n-positive-genus}.  We  then show that this formula is also satisfied by $c$; see Proposition \ref{prop:c=max-on-disks} in Section \ref{sec:proof-c=N-Morse-surface}.

\subsubsection{A formula for $\m N$}\label{sec:formula-n-positive-genus}
Let $\Sigma$ denote a closed surface of positive genus and consider a Morse function $H : \Sigma \rightarrow \R$. The goal of this section is to present a formula which reduces computing $\m N(H)$ to computing $\m N$ on  the restriction of $\phi^1_{H}$ to a collection of invariant disks; see Proposition \ref{prop:form_N_general} below.

Let $s$ be a saddle point of $H$ and denote by $C(s)$ the connected component of $H^{-1}(H(s))$ which contains $s$.  Note that $C(s)$ is a circle pinched at $s$.  Equivalently, we can view $C(s)$ as a union of two circles $C_0(s), C_1(s)$ whose intersection is $\{s\}.$  We will say that the saddle $s$ is \emph{essential} if at least one of these two circles is not contractible in $\Sigma.$  The following proposition describes a decomposition of the surface $\Sigma$ obtained by cutting it along the pinched circles of essential saddles.  We postpone the proof to the end of this section.

\begin{prop}\label{prop:cut_ess_sadd} (See Figure~\ref{fig:essential-saddles})
Let $\Sigma'$ be the open and disconnected surface obtained from $\Sigma$ by removing $C(s)$ for each essential saddle $s$ of $H$. Let $S$ denote a connected component of $\Sigma'$.  Then,
\begin{enumerate}
\item $S$ is either a disk or a cylinder.
\item If $S$ is a cylinder then  $\phi_{H}^1$ has no contractible fixed point in $S$. 
\item The map $i_*: \pi_1(S) \rightarrow \pi_1(\Sigma)$ induced by inclusion is injective.  
\end{enumerate}  
\end{prop}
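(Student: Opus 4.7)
The plan is to analyze the topological and Morse-theoretic structure of each component $S$ of $\Sigma'$ by exploiting the fact that only non-essential saddles of $H$ remain in the interior of any component. I will address Parts 1 and 3 together via a Reeb graph reduction, and then deduce Part 2 as a direct consequence.

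The cornerstone will be the following structural lemma: for any saddle $s$ of $H$ (essential or not), each contractible lobe $C_\epsilon(s)$ of its figure-eight bounds a disk $D_\epsilon(s)$ in $\Sigma$ whose interior contains no essential saddle of $H$. Indeed, if some essential saddle $s'$ lay in $\mathrm{int}(D_\epsilon(s))$, then $C(s') \subset H^{-1}(H(s'))$ would be trapped inside $D_\epsilon(s)$ --- distinct level sets of a Morse function do not meet, so $C(s')$ cannot cross $\partial D_\epsilon(s) = C_\epsilon(s) \subset H^{-1}(H(s))$ --- making both lobes of $C(s')$ contractible in the disk and contradicting the essentialness of $s'$. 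Two consequences follow: each contractible lobe of an essential saddle bounds an open disk that is itself a component of $\Sigma'$ (one family of disk components), and for any non-essential saddle $s$ lying in a component $S$, both disks $D_0(s), D_1(s)$ are contained in $S$.

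For Parts 1 and 3, consider any component $S$ and form the Reeb graph $\Gamma$ of $H|_{\bar S}$, whose vertices are the critical points of $H$ in $S$ (all interior saddles being non-essential) together with a $1$-valent ``boundary vertex'' at each boundary circle of $\bar S$ (each sitting at a single $H$-level $H(s_i)$). At every non-essential saddle $s \in S$, two of the three adjacent Reeb branches lead into the disks $D_0(s), D_1(s) \subset S$ and therefore form subtrees of $\Gamma$, since $H$ restricted to a disk has tree Reeb graph. Iteratively collapsing these subtrees --- equivalently, cancelling each non-essential saddle against an extremum in one of its lobe-disks --- reduces $\Gamma$ to a graph without any interior $3$-valent vertex, which must be a simple interval. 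A compact connected surface with boundary whose Morse function has such an interval Reeb graph is necessarily a disk or an annulus, proving Part 1. For Part 3: a disk component has trivial $\pi_1$, so injectivity is automatic; for an annulus component, the two boundary circles of $\bar S$ are freely homotopic in $\bar S$, hence either both are contractible in $\Sigma$ or both are non-contractible. But a cylinder $\bar S$ with both boundary circles contractible in $\Sigma$ would necessarily sit inside a disk $D \subset \Sigma$ (a standard fact on positive-genus surfaces, since otherwise the complementary configuration would force $\Sigma$ to be a sphere). This disk $D$ contains no essential saddle by the structural lemma applied to its bounding lobe, so $\mathrm{int}(D)$ is itself a single disk component of $\Sigma'$, contradicting $\bar S$ being a separate component. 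Hence both boundary circles of a cylinder component are non-contractible in $\Sigma$, and since a generator of $\pi_1(S) = \Z$ is freely homotopic to either, $i_*$ is injective.

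For Part 2, the Reeb graph reduction shows additionally that a cylinder component $S$ contains no interior critical point of $H$: any such critical point would contribute a subtree that, after collapse, would force $S$ to terminate as a disk rather than persist as an annulus. Hence fixed points of $\phi_H^1$ in $S$ lie on non-constant $1$-periodic orbits of $X_H$, which are level sets of $H$ winding around the cylinder --- non-contractible in $S$ and, by Part 3, non-contractible in $\Sigma$. The main obstacle will be the Reeb graph reduction: I will need to justify carefully the iterative subtree collapse in the presence of the piecewise level-set boundary of $\bar S$ (each boundary circle living at its own $H$-level coming from a distinct essential saddle), and confirm that the endpoint of the collapse is genuinely an interval rather than some more complicated graph.
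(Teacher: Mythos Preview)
Your approach is sound and genuinely different from the paper's. The paper proceeds by a direct Euler-characteristic count: assuming $S$ is not a disk (so $\chi(S)\le 0$), it removes a neighborhood of $D_0(s)\cup D_1(s)$ for every non-essential saddle $s$ in $S$, obtaining $S'$ with $\chi(S')\le\chi(S)$; since $S'$ has no saddles, Poincar\'e--Hopf gives $\chi(S')\ge 0$, forcing $\chi(S')=\chi(S)=0$ --- so in fact no disks were removed, $S$ has no saddles (and then no extrema either), and $S$ is a cylinder. For Part~3 the paper first shows the two boundary lobes come from \emph{distinct} essential saddles $s\neq s'$ (otherwise $H$ would have a critical point in $S$), and then argues directly that if one lobe bounded a disk $D$, the disk $D\#S$ would trap the entire figure-eight $C(s)$, contradicting essentiality.

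Your Reeb-graph route trades this bookkeeping for a combinatorial tree argument and is arguably more transparent about \emph{why} the cylinder components are critical-point-free. The step you should make precise is not the ``iterative collapse'' itself but the following: since at each non-essential saddle the two disk-branches are subtrees containing no boundary leaf, the unique tree-path between any two boundary leaves cannot pass through a degree-$3$ vertex (it would have to enter and exit through two non-subtree branches, but only one exists). This immediately gives at most two boundary leaves, and in the two-boundary case forces $\Gamma$ to equal that bare path --- hence no saddles and no extrema in $S$. This is cleaner than invoking Morse cancellation, and it also supplies the ``no interior critical point'' input you need for Part~2. One technical caution: $\bar S$ may fail to be a manifold with boundary at essential saddle points lying on $\partial S$; you can sidestep this by working with a slight level-set shrinking $S_\varepsilon\subset S$ whose boundary consists of honest smooth circles of $H$, without affecting the Reeb graph. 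Finally, for the last line of your Part~3, remember to use that $\pi_1(\Sigma)$ is torsion-free to pass from ``generator has nontrivial image'' to ``$i_*$ injective''.
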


\begin{figure}[ht!]
\centering
\includegraphics[width=5cm,height=6cm]{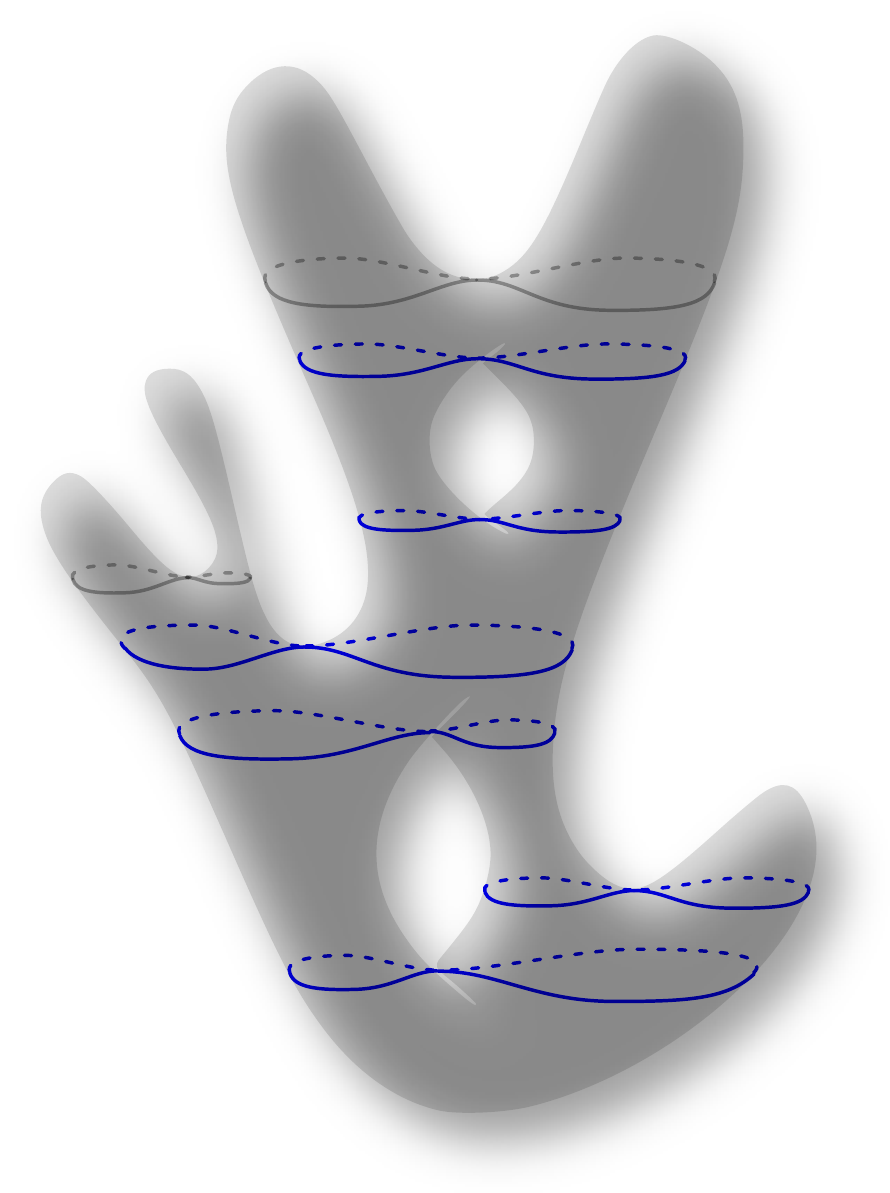}
\hspace{2cm}
\includegraphics[width=5cm,height=6cm]{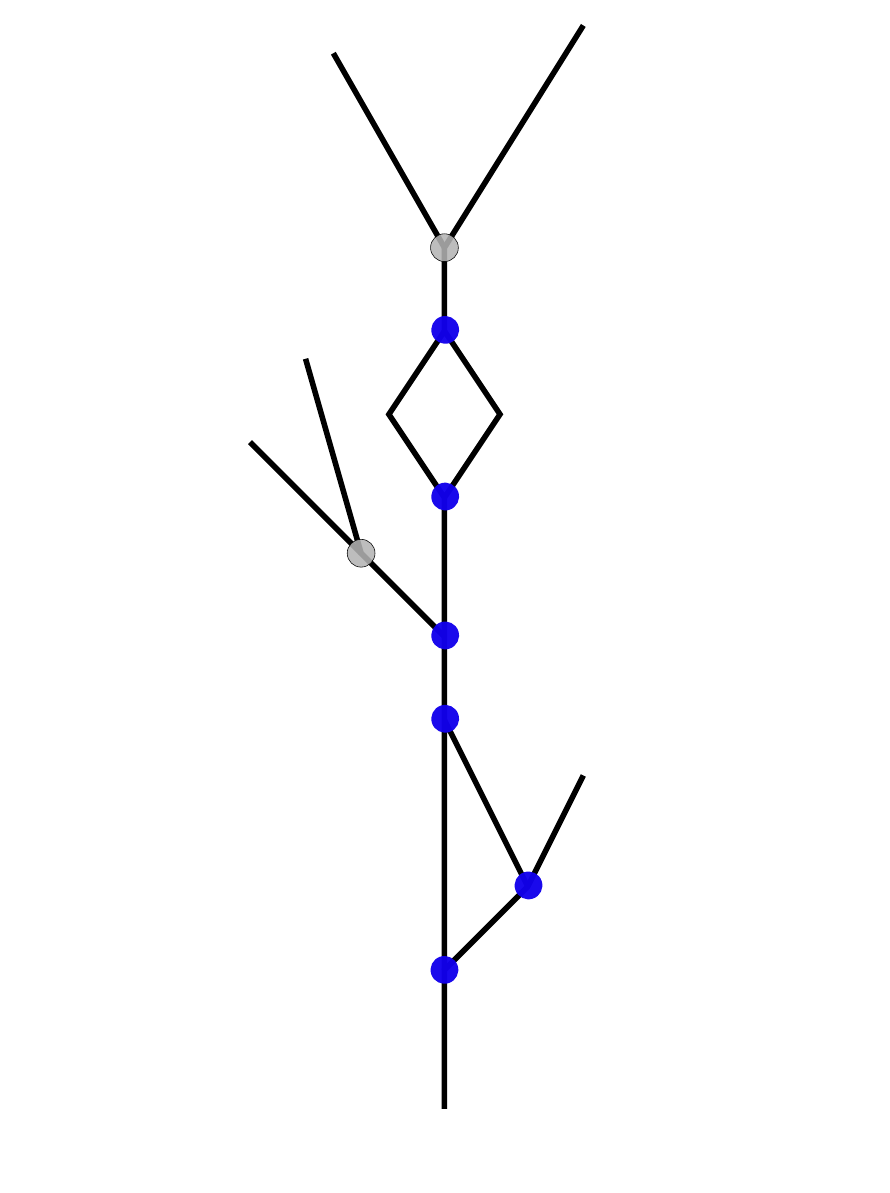}
\caption{\label{fig:essential-saddles}A typical Hamiltonian function (the $z$ coordinate) on a genus two surface, with six essential saddles which decompose the surface into four disks and seven essential annuli. On the right, the corresponding Reeb graph, whose vertices are the critical points and whose edges are the connected component of the complement of the union of the $C(s)$'s. The ``free ends'' of the Reeb graph corresponds to the components of $\Sigma'$ that are disks. Essential saddles correspond to vertices which belong to the ``core graph'', the subgraph obtained by removing the free ends.}
\label{fig:Reeb-graph}
\end{figure}

Define  $\cD$ to be the set of  all the disks obtained via the above decomposition of $\Sigma$.
Note that $H$ is constant on the boundary of each of these disks. For every disk $D \in \cD$ let $\bar H_{D} \in \m E_D$ be an appropriate smoothing of $H|_D - H(\partial D)$  defined exactly as in Notation \ref{notation:bar-H}. 
We can now present the main result of this section.

\begin{prop} \label{prop:form_N_general}
$$
\m N(H) = \max \{ H(\partial D) + \m N(\bar H_{D}) : D \in \cD \}.
$$
\end{prop}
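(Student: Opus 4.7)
The plan is to establish a bijection between the maximal negative unlinked sets (mnus's) of $H$ and tuples of mnus's of the disk-supported Hamiltonians $\bar H_D$, $D \in \cD$, in the spirit of the proof of Proposition~\ref{prop:recurs-form-disk}. The key structural observation is that every contractible fixed point of $\phi_H^1$ is either an essential saddle or lies in the interior of some $D \in \cD$: by Proposition~\ref{prop:cut_ess_sadd} part 2 the cylinders of $\Sigma'$ carry no contractible fixed points, and each pinched circle $C(s)$ meets $\Fix(\phi_H^1)$ only at $s$.

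Concretely, I would carry out four steps. First, verify that every mnus $X$ of $H$ decomposes uniquely as $X = E \sqcup \bigsqcup_{D \in \cD} Y_D$, where $E$ is the set of all essential saddles and $Y_D = X \cap D$; the inclusion $E \subset X$ holds by maximality, since each essential saddle $s$ has rotation number zero and is unlinked with every collection of contractible fixed points lying in the disks (each such orbit is contractible inside its disk $D$, which does not contain $s$, so the associated geometric braid is trivial via Proposition~\ref{prop.unlinked-criteria2}). Second, establish the translation between unlinkedness for $H$ and for $\bar H_D$: namely, $Y_D$ is a negative unlinked set for $H$ in $\Sigma$ if and only if $Y_D \cup (\Sigma \setminus \supp(\bar H_D))$ is a mnus of $\bar H_D$; this uses Corollary~\ref{cor:mnus_closed_surface}, the incompressibility of $D$ (Proposition~\ref{prop:cut_ess_sadd} part 3), and the fact that the flows of $H$ and of $\bar H_D + H(\partial D)$ coincide on every $1$-periodic orbit inside $D$, so that fixed points and rotation numbers match. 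Third, translate actions via $\cA_H(x) = \cA_{\bar H_D}(x) + H(\partial D)$ for $x \in Y_D$ (immediate from Notation~\ref{notation:bar-H}) and $\cA_H(s) = H(s)$ for $s \in E$. Fourth, assembling these ingredients, for any mnus $X$,
$$\max_{x \in X} \cA_H(x) = \max\Bigl( \max_{s \in E} H(s),\ \max_{D \in \cD}\bigl( H(\partial D) + \max_{y \in Y_D} \cA_{\bar H_D}(y) \bigr) \Bigr),$$
and since the $Y_D$'s may be chosen independently, taking the infimum over mnus's commutes with the maximum over $D$ and yields $\cN(H) = \max_D (H(\partial D) + \cN(\bar H_D))$.

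The main obstacle will be Step 2, the translation between the two notions of unlinkedness. The subtlety is that $\phi_H^1$ does not in general fix $\partial D$, so Corollary~\ref{cor:mnus_closed_surface} cannot be applied to $H$ directly, whereas $\phi_{\bar H_D}^1$ does fix a neighborhood of $\partial D$ by construction. The bridge goes through Proposition~\ref{prop.unlinked-criteria2}: one passes to the universal cover of $\Sigma$, uses the incompressibility of $D$ to contract the braid generated by $Y_D$ within a lift of $D$, and projects back, mirroring the argument in Corollary~\ref{cor:mnus_closed_surface}. A secondary point, used in Step 4 to absorb the essential-saddle contribution, is the inequality $\max_{s \in E} H(s) \leq \max_D H(\partial D)$: the essential saddle $s^*$ realizing $\max_E H$ is necessarily adjacent to a disk in $\cD$, because a gradient ascent trajectory leaving $s^*$ stays in a single component of $\Sigma'$ (it cannot cross $C(s')$ for any essential $s'$ with $H(s') \leq H(s^*)$) and must therefore terminate at a local maximum, which lies in a disk of $\cD$ by Proposition~\ref{prop:cut_ess_sadd} part 2.
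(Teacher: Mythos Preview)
Your proposal is correct and follows essentially the same architecture as the paper: decompose each mnus of $H$ as the essential saddles together with pieces $X_D$ in the disks, identify these pieces with mnus's of $\bar H_D$, translate actions, and absorb the essential-saddle contribution. The difference lies almost entirely in how you handle Step~2.

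The paper bypasses the obstacle you identify by invoking Corollary~\ref{corol:unlinked-sets-autonomous} rather than Corollary~\ref{cor:mnus_closed_surface}. Since $H$ is autonomous, that corollary says a set is unlinked if and only if $X \cap D(x) = \{x\}$ for every $x \in X$ with nontrivial orbit. Every such $x$ lies in some disk $D \in \cD$, and the disk $D(x)$ bounded by its orbit is contained in $D$; hence the unlinkedness condition manifestly decouples across the disks and is insensitive to anything happening outside $D$. The same criterion applied to $\bar H_D$ (also autonomous, with the same orbits in $D'$) gives the bijection between the pieces $X_D$ and the mnus's of $\bar H_D$ with no further work. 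This renders the universal-cover detour unnecessary.

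Your proposed route through Proposition~\ref{prop.unlinked-criteria2} would also succeed, and in fact the obstacle is milder than you suggest: you already observe that the flows of $H$ and of $\bar H_D$ coincide on $D'$, which contains every point of $Y_D$. Hence the geometric braids $b_{Y_D,(\phi_H^t)}$ and $b_{Y_D,(\phi_{\bar H_D}^t)}$ are literally the same curve in $\Sigma$, so Proposition~\ref{prop.unlinked-criteria2} gives the equivalence of unlinkedness directly, with no need to pass to the universal cover or to worry about $\phi_H^1$ moving $\partial D$.

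For the saddle absorption, the paper argues slightly differently: an essential saddle $s$ not on the boundary of any $D \in \cD$ borders three essential annuli, and since each annulus contains no critical point of $H$, at least one of them has a saddle $s'$ on its other boundary with $H(s') > H(s)$; iterating, the maximum over essential saddles is realized on some $\partial D$. Your gradient-ascent argument is a valid alternative reaching the same conclusion. In either case one then uses $\m N(\bar H_D) \geq 0$ (positivity for disk-supported Hamiltonians) to conclude $H(s) = H(\partial D) \leq H(\partial D) + \m N(\bar H_D)$.
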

\begin{proof} 
Let $\m S$ denote the set of critical points of $H$ that do not belong to the union of the open disks $D \in \cD$. According to Proposition~\ref{prop:cut_ess_sadd}, this is exactly the set of essential saddles.
%(in the example of Figure~\ref{fig:Reeb-graph}, $\m S$ contains 2 elements).
Using Corollary~\ref{corol:unlinked-sets-autonomous} and Proposition~\ref{prop:cut_ess_sadd}, we get the following description. The mnus's for $H$ are the sets of the form
\begin{align} \label{eq:mnuss_formula}
X = \m S \cup \bigcup_{D \in \cD} X_{D}
\end{align}
where $X_{D}$ is a subset of $D$ which is negative, unlinked, and maximal for inclusion among the negative unlinked subsets of $D$.
%such that $X_{D} \cup ( \Sigma \setminus D)$ is a mnus for the diffeomorphism that is the identity outside $D$ and coincides with $\varphi$ on $D$.
%the time one of the hamiltonian flow generated by $\bar H_{D}$.
Similarly, according to the definition of $\bar H_{D}$, the mnus's for $\phi^1_{\bar H_{D}}$ are the sets 
$$
X_{D} \cup \nu_{D}
$$
where $\nu_{D}$ is the connected component of $\Sigma \setminus D$ in the set $\bar H_{D}^{-1}(0)$, and $X_{D}$ is as above.
Now the properties of $\bar H_{D}$, as expressed in Lemma~\ref{lemma:properties-bar-H}, entail that
$$
\sup_{x \in X_{D} \cup \{s_{D}\}} \cA_{H}(x) = H(\partial D) + \sup_{x \in X_{D} \cup \nu_{D}} \cA_{\bar H_{D}} (x)
$$
where, for each $D \in \cD$, the point  $s_{D}$ is the unique saddle point of $H$ in $\partial D$. From equality~\ref{eq:mnuss_formula} we deduce that 
$$
\m N (H) = \max \left( \{ H(\partial D) + \m N(\bar H_{D}) : D \in \cD \} \cup \{ H(s) : s \in \m S \} \right) .
$$
We would like to get rid of the last term of the union. Let $s \in \m S$ be an essential saddle. Consider first the case where $s$ does not belong to the boundary of any disk $D \in \cD$ (in the example of Figure~\ref{fig:Reeb-graph}, $\m S$ contains 2 such elements). According to Proposition~\ref{prop:cut_ess_sadd}, $s$ is in the boundary of three essential annuli, and the second point of the proposition entails that there exists at least another essential saddle $s'$ on the other boundary of one of the three annuli such that $H(s) < H(s')$. In the opposite case when $s$ belongs to the boundary of some $D \in \cD$, note that $H(s) \leq H(\partial D) + \m N(\bar H_{D})$, indeed $H(s) = H(\partial D)$ and $\m N(\bar H_{D}) \geq 0$.
%every mnus for $\bar H_{D}$ contains $s$ and $\bar H_{D} (s) =0$.
 From these considerations it follows that in the last formula for $\m N(H)$, the maximum is always attained in the first term of the union, and we get Proposition~\ref{prop:form_N_general}.

\end{proof}

\begin{proof}[Proof of Proposition \ref{prop:cut_ess_sadd}]

Let $S$ be a component of $\Sigma'$. Denote by $\chi(S)$, the Euler characteristic of $S$.  Recall that
$$\chi(S) = 2 - 2g(S) - N_b(S),$$
where $g$ is the genus of $S$ and  $N_b$ denotes the number of boundary components of $S$.  Since $S$ is a surface with boundary we see immediately that $\chi(S) \leq 1.$  We will suppose for the rest of the proof of that $S$ is not a disk which implies that $ \chi(S) \leq 0.$

If $s$ is a saddle point in $S$ then it is not essential, and each of the loops of $C(s)$ bounds a disk in $\Sigma$. Let $D$ be a disk bounding one of the two loops of $C(s)$, say $C_0(s)$. We claim that $D$ is included in $S.$ Indeed, otherwise, the interior of $D$ meets the boundary of $S$, and hence it intersects $C(s')$ for an essential saddle $s'$. But the boundary of $D$, i.e. $C_0(s)$, is contained in the interior of $S$ and hence it does not meet $C(s')$. By a connectedness argument, $C(s')$  is entirely included in $D$ and hence $s'$ is not an essential saddle; contradiction.  

Now let $S'$ be the surface obtained from $S$ by removing a neighborhood of the disks bounding the two loops of $C(s)$ for each saddle $s$ in $S$.   We have obtained $S'$ from $S$ by removing a number of disks from $S$ and hence $\chi(S')  \leq \chi(S) $.   Now, let $N_{max}(S'), N_{min}(S'), N_{sad}(S')$ denote number of maxima, minima, and saddles of $H$ inside $S'$.  By the Poincar\'e-Hopf theorem, $$\chi(S') = N_{max}(S') -  N_{sad}(S') + N_{min}(S').$$  The function $H$ has no saddles in $S'$ and so $N_{sad}(S') = 0$.  Therefore,  $\chi(S') \geq 0.$ Since $\chi(S)\leq 0$, we see immediately that $\chi(S') = \chi(S) = 0$.  We conclude that $S$ is a cylinder and that there are no saddles of $H$ in $S$.  Another application of the Poincar\'e-Hopf theorem implies that $H$ has, in fact, no critical point inside $S$.  Of course, this implies that the time-1 map $\phi^1_H$ has no non contractible fixed point in $S$ as the disk bounding a 1--periodic orbit would necessarily contain a critical point of $H$.

It remains to prove that $i_* : \pi_1(S) \rightarrow \pi_1(\Sigma)$ is  injective. Note that  $\pi_1(S)$ is generated by either one of the boundary components of $S$ and so it is sufficient to prove that these two loops are not contractible in $\Sigma$.   These boundary components, say $C, C',$  are loops associated, as described earlier, to two essential saddles $s, s',$ respectively. We will first show that $s, s'$ are distinct: Indeed, if $s = s'$ then $H$ takes the same value on the two boundary components of $S$ and this would force $H$ to have a critical point inside $S$.  Next, for a contradiction suppose that $C$ is contractible in $\Sigma$.  Let $D$ be a disk bounding $C$. Then, $D' = D \# S$ is a disk bounding $C'$.  The disk $D'$ meets the pinched circle $C(s)$ but the boundary of $D'$, i.e. $C'$, is disjoint from $C(s)$.  We see that $C(s)$ is entirely contained in $D'$, which contradicts the fact that the saddle $s$ is essential.  This completes the proof.
\end{proof}

%In this section we consider a compact surface $S$, maybe with boundary, which is not the sphere.
%Let $H:S \to \R$ be a hamiltonian function. We consider the family $\cD$ of disks bounded by the ``outermost contractible 1--periodic orbits''. More precisely, let $\cO$ denotes the set of contractible 1--periodic orbits of the flow $(\phi_{H}^{t})$ which are not critical points of $H$. Then $\cO$ is a union of disjoint circles. Each of this circle bounds a unique disk in $S$, and the inclusion order on these disks induces a partial order on $\cO$. We define $\cD$ to be the set of maximal disks, that is,
%$$
%\m D = \{D : \partial D \in \cO \mbox{ and } \forall D' \mbox{ such that } \partial D' \in \cO,  ( D \cap D' = \emptyset \mbox{ or } D' \subset D) \}.
%$$
%Note that $H$ is constant on the boundary of each of these disks. For every disk $D \in \cD$ we define a function $\bar H_{D}$ in a way completely analogous to the above definition of the function $H_{T_{0}}$ in the case of the disk: roughly speaking, the flow of $\bar H_{D}$ coincides with the restriction to the disk $D$ of the flow of $H$, except on some insignificant neighborhood of the boundary of $D$.
%\begin{prop}
%$$
%\m N(H) = \max ( H(\partial D) + \m N(\bar H_{D}) : D \in \cD ).
%$$
%\end{prop}
%
%\begin{proof}
%*******
%\end{proof}

%%%%%%%%%%%%%%%%%%

%%%%%%%%%%%%%%%%%%%%%
\subsubsection{Proof of $c=\m N$}\label{sec:proof-c=N-Morse-surface}

The main step here is to prove that $c$ is determined by its value on functions supported on the  disks delimited by essential saddles, in the same way as $\cN$.
We use the notations of Proposition \ref{prop:form_N_general}.

\begin{prop}\label{prop:c=max-on-disks}
$$c(H)=\max\{H(\partial D)+c(\bar H_D)\,|\,D\in\m D\}.$$
\end{prop}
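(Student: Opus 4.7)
The plan is to mirror the proof of Proposition~\ref{prop:form_N_general} for $\cN$, substituting the combinatorial description of mnus's by the axioms of a formal spectral invariant and the analytic consequences derived in Section~\ref{sec:formal-spec} (monotonicity, Lipschitz continuity, Shift, Positivity). The starting observation is that, by Proposition~\ref{prop:cut_ess_sadd}(2), every contractible fixed point of $\phi^1_H$ lies in some $D \in \m D$, and by Lemma~\ref{lemma:properties-bar-H} its action equals $H(\partial D) + \cA_{\bar H_D}(x)$; this gives the spectral decomposition $\spec(H) = \bigcup_{D \in \m D} \bigl(H(\partial D) + \spec(\bar H_D)\bigr)$. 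In parallel, for the continuous function $\hat H_D$ equal to $H$ on $\bar D$ and to $H(\partial D)$ elsewhere, the Shift axiom combined with Lipschitz continuity immediately yields $c(\hat H_D) = H(\partial D) + c(\bar H_D)$, which is the target value for each disk.

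Given $\varepsilon > 0$, I would approximate $H$ within $\varepsilon$ in $C^0$-norm by a smooth function of the form $G = F + \sum_{D \in \m D} \bar H_D$, where $F$ is a background function equal to $H(\partial D)$ on a neighborhood of each $\bar D$ and chosen, on each cylinder, radial and strictly monotone in the coordinate transverse to the core. By Proposition~\ref{prop:cut_ess_sadd}(3), every non-trivial 1-periodic orbit of $\phi^1_{\sigma F}$ on such a cylinder wraps non-trivially around a loop that is non-contractible in $\Sigma$, so it contributes nothing to the (contractible) spectrum; a direct computation therefore yields $\spec\bigl(\sigma F + \sum_D \bar H_D\bigr) = \bigcup_D \bigl(\sigma H(\partial D) + \spec(\bar H_D)\bigr)$ for every $\sigma \in (0,1]$. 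By Lipschitz continuity of $c$, the problem reduces to showing $c(G) = \max_D \bigl(H(\partial D) + c(\bar H_D)\bigr)$.

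For the lower bound $c(G) \geq H(\partial D) + c(\bar H_D)$, fix $D$ and construct a Hamiltonian $\hat K_D \leq G$ by suppressing the contribution of every other disk (with a symmetric choice according to the sign of $\bar H_{D'}$ for peak vs.\ valley disks), whose spectral invariant is explicitly computable as $H(\partial D) + c(\bar H_D)$ via a single-disk deformation in the spirit of Claim~\ref{claim:special-deformation} together with the Shift property; monotonicity then gives $c(G) \geq c(\hat K_D)$, and maximizing over $D$ yields the lower bound. For the upper bound, consider the deformation $K_\sigma = \sigma F + \sum_D \bar H_D$ for $\sigma \in [0,1]$: at $\sigma = 0$ the Max Formula axiom applied to the pairwise-disjoint supports of the $\bar H_D$ gives $c(K_0) = \max_D c(\bar H_D)$. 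The main obstacle is to show that $c(K_\sigma)$ tracks the upper envelope $\sigma \mapsto \max_D \bigl(\sigma H(\partial D) + c(\bar H_D)\bigr)$ of the affine branches in the bifurcation diagram of $K_\sigma$: while continuity and spectrality confine $c(K_\sigma)$ to a single branch between crossings, at each crossing of two branches the spectral invariant must jump to the one with larger slope, a jump that is continuous and is forced by combining the lower bound established above with monotonicity along the deformation. Letting $\varepsilon \to 0$ then closes the proof.
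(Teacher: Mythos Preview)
Your overall structure mirrors the paper's, but there is a genuine gap in the upper-bound argument, and the lower-bound construction glosses over the very obstacle the paper has to work hardest to overcome.

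\medskip

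\textbf{The upper bound.} Your tracking argument along $K_\sigma = \sigma F + \sum_D \bar H_D$ asserts that at each crossing ``the spectral invariant must jump to the one with larger slope,'' justified by the lower bound plus monotonicity. But the lower bound $c(K_\sigma) \geq \max_D(\sigma H(\partial D) + c(\bar H_D))$ only prevents $c$ from dropping \emph{below} the envelope; it does nothing to prevent $c$ from jumping \emph{above} it. Concretely: take $D_0,D_1$ with $H(\partial D_0)<H(\partial D_1)$ and $c(\bar H_{D_0})>c(\bar H_{D_1})$, and suppose $\bar H_{D_1}$ has a spectral value $a_1>c(\bar H_{D_1})$. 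The branch $\sigma H(\partial D_1)+a_1$ crosses the envelope branch $\sigma H(\partial D_0)+c(\bar H_{D_0})$ \emph{before} the envelope branch from $D_1$ does, and nothing in your axioms forbids $c$ from switching onto it there. You would then end at $c(K_1)=H(\partial D_1)+a_1$, strictly larger than the claimed maximum. Monotonicity in $\sigma$ does not help: the jump is upward. So the upper bound is not established.

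\medskip

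\textbf{The lower bound.} Your construction of $\hat K_D\leq G$ with $c(\hat K_D)=H(\partial D)+c(\bar H_D)$ requires an auxiliary bump $F_D$ supported in $D$ with maximum $H(\partial D)$ whose spectral invariant you can compute. But if $H(\partial D)$ exceeds the area of $D$ --- which nothing rules out --- any such $F_D$ has non-trivial $1$-periodic orbits, and the deformation $\sigma F_D+\bar H_D$ acquires extra spectral branches that can derail the tracking. The vague appeal to Claim~\ref{claim:special-deformation} does not settle this: that claim builds a \emph{specific} deformation with controlled slopes, not a general bound on $\spec(\sigma F_D)$.

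\medskip

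\textbf{How the paper handles this.} The paper's key additional tool is the symplectic contraction principle (Section~\ref{sec:symp-cont}). After approximating $H$ by $H'=F+\sum_D\bar H_D$, it contracts each disk by the Liouville flow, producing $H'_s$ with $c(H'_s)=e^s c(H')$ and $\spec(H'_s)=e^s\spec(H')$. For $s$ close to $-\infty$, the background $e^s F$ is so $C^2$-small that one can slip underneath it a function $G$ supported in $\bigcup_D D$ with \emph{no} non-trivial $1$-periodic orbits, and the Max Formula applies directly to $e^s G+\sum_D\bar H_{D,s}$. This gives the lower bound cleanly (Step~I). For the upper bound, rather than tracking the envelope, the paper runs a second deformation $K_u = u\,e^s G + (1-u)\,e^s F + \sum_D\bar H_{D,s}$ and shows, using the crucial fact that $\max\spec(e^s F)\leq c(K_0)$ (because $F$ has no interior local maxima on $\Sigma\setminus\bigcup D$), that the only branch $c$ can sit on is the constant one (Step~II). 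Symplectic contraction is what makes both steps work; without it, or an equivalent device, your argument does not close.

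\medskip

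A minor additional point: your spectral decomposition $\spec(K_\sigma)=\bigcup_D(\sigma H(\partial D)+\spec(\bar H_D))$ omits the values $\sigma H(s)$ for essential saddles $s$ that do \emph{not} bound any disk in $\mathcal D$ (these exist, see Figure~\ref{fig:Reeb-graph}). They are dominated by the maximum but still enter the bifurcation diagram.
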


\begin{proof}
It will be convenient to assume that $H$ is positive. This can be assumed without loss of generality thanks to the shift property of formal spectral invariants. 

We first claim that the only contractible periodic orbits of $H$ (of any length) inside $ S = \Sigma\setminus\bigcup_{D \in \m D} D$ are the essential saddles of $H$.    Indeed, according to Proposition \ref{prop:cut_ess_sadd}, all the critical points of $H$ in the surface $S$ are essential saddles, and if we remove from $S$ the sets $C(s)$ for all essential saddles $s$, we are left with a collection of disjoint cylinders which contain no critical points of $H$. If $S_{0}$ is such a cylinder, it is foliated by periodic orbits of the Hamiltonian parallel to the boundary curves of $S_{0}$.  It follows from the third point of Proposition \ref{prop:cut_ess_sadd} that $S_{0}$ contains no contractible periodic orbit of $H$.

Arbitrarily close to $H$, one can find a Hamiltonian $H'$ of the form $H'=F+\sum_{D\in\m D}\bar H_D$, where $F:\Sigma\to\R$ is a smooth positive function which is constant and equal to $H(\partial D)$ on each of the disks $D\in\m D$ and which coincides with $H$ on all of $S$ except near the boundary of $S$.
We will prove that 
\begin{equation}\label{eq:7} c(H') = \max\{H(\partial D)+c(\bar H_D)\,|\,D\in\m D\}.
\end{equation}
By continuity of $c$, the result then follows.

We will now use the symplectic contraction principle to build a deformation of $H'$ as follows.  Note that we can not treat the disks $D \in \m D$ as Liouville domains as  the boundary of $D \in \m D$  could be a pinched circle.  For each $D\in\m D$, we choose an open disk $\nu_D$ with a smooth boundary which is compactly contained in $D$ and which contains the support of $\bar H_D$.
It is clear that $\bigcup \nu_D$ is a Liouville domain.  Let $\xi$ be a Liouville vector field for $\bigcup \nu_D$.  For $s\in(-\infty,0]$, denote by  $A_s:\bigcup  \nu_D\to \bigcup  \nu_D$ the negative flow of $\xi$ and set
$$H'_s(x)=
\begin{cases}e^s F(x) & \text{if }  x\in \Sigma\setminus A_s(\bigcup \nu_D),\\
 e^s H(\partial D) +  e^s \bar H_D(A_s^{-1}(x)) &  \text{if }  x\in A_s(\nu_D),\ D \in\m D.
\end{cases}
$$

Note that $H_0'=H'$. Moreover, since (by the discussion in the first paragraph of the proof)  $F$ has no non-trivial contractible periodic orbits in $ S = \Sigma\setminus\bigcup D$,  the spectrum of $H'_s$ satisfies: $\spec(H'_s)=e^s\spec(H')$.
The spectrality and continuity of spectral invariants thus yield $c(H'_s)=e^sc(H')$ for all $s\in(-\infty,0]$. Similarly, $c(\bar H_{D,s})=e^sc(\bar H_D)$ where $\bar H_{D,s}$ is defined by 
$$\bar H_{D,s}(x)=
\begin{cases}0 &\text{if }  x\in \Sigma\setminus A_s(\bigcup \nu_D),\\
 e^s \bar H_D(A_s^{-1}(x)) &  \text{if }  x\in A_s(\nu_D),\ D\in\m D.
\end{cases}
$$
Therefore, \eqref{eq:7} will be proved if we prove that the following equality holds for some, and hence all, $s\in(-\infty,0]$: 
\begin{equation}\label{eq:8} c(H'_s)= \max\{e^sH(\partial D)+c(\bar H_{D,s})\,|\,D\in\m D\}.
\end{equation}

We will prove this in two steps.  

\medskip

\noindent \textbf{Step I:} We prove that $c(H'_s)\geq  \max\{e^sH(\partial D)+c(\bar H_{D,s})\,|\,D\in\m D\}, $ for all $s \in (-\infty,0]$.  

Since $F$ is positive, there exists a function $G\leq F$, supported in $\bigcup D$, that coincides with $H(\partial D)$ on each of the sets $\nu_D$ and with no critical points other than those outside its support and those in $\bigcup \nu_D$.
Pick $s$ close enough to $-\infty$ so that $e^sG$ has no non-trivial periodic orbit of length $1$. By monotonicity, $$c(H'_s)=c\left(e^sF+\sum_{D\in\m D}\bar H_{D,s}\right)\geq c\left(e^sG+\sum_{D\in\m D}\bar H_{D,s}\right).$$ Since $G$ is supported in the union of the disks $D\in\m D$ we may apply the max formula of Definition \ref{def:formal_spec}. If $G_D$ stands for the component of $G$ supported in $D$, we get:
$$c\left(e^sG+\sum_{D\in\m D}\bar H_{D,s}\right)=\max\{c(e^sG_D+\bar H_{D,s})\,|\,D\in\m D\}.$$
Now we claim that $c(e^sG_D+\bar H_{D,s})=e^sH(\partial D)+c(\bar H_{D,s})$. Together with the previous inequality it yields $c(H'_s)\geq  \max\{e^sH(\partial D)+c(\bar H_{D,s})\,|\,D\in\m D\} $ for $s$ sufficiently close to $- \infty$ which of course implies that the inequality holds for all $s \in (-\infty, 0]$.

To prove our claim we need to distinguish between two cases. To simplify the  notations, we name the functions involved by $h=\bar H_{D,s}$, $g=e^sG_D$ and let $\kappa$ be the real number $e^sH(\partial D)$, so that our claimed equality is now
\begin{equation}\label{eq:10}c(g+h)=\kappa+c(h).
\end{equation}
To summarize the settings, $g+h\geq 0$, $g$ has no non-trivial 1--periodic orbits and has only two critical values 0 and $\kappa$, the critical locus $g^{-1}(\kappa)$ contains the open set $\nu_D$ which contains the support of $h$.

First case: $c(h)>0$. In that case, we
consider the deformation $(K_u)_{u\in[0,1]}$ defined by $K_u=ug+h$. The spectrum of $K_u$ is by construction the union of $\{0\}$ and a shifted part $u\kappa+\spec( h)$. By monotonicity, $c(K_u)$ increases with $u$. Moreover, since $c(K_0)=c(h)>0$, we conclude that $c(K_u)$ never vanishes along the deformation and hence belongs to the shifted part of the spectrum. By continuity, it follows that for all $u$, $c(K_u)=u\kappa+c(K_0)$. Taking $u=1$, we get exactly Equation \eqref{eq:10}.
 
Second case: $c(h)=0$. In that case we can find arbitrarily $C^2$-close to $h$ a function $\tilde{h}$
satisfying $c(\tilde{h})>0$. Indeed, take $f$ to be a $C^2$-small non-negative bump function whose support is included in a disk contained in $\nu_D$ that does not intersect a disk containing the support of $h$. Let $\tilde{h}=f+h$. By the max formula and the non-degeneracy property, $c(\tilde h)=\max(c(f),c(h))=c(f)>0$.
Now we may apply the first case to $\tilde{h}$ to obtain $c(g+\tilde h)=\kappa+c(\tilde h)$. Equation \eqref{eq:10} then follows by continuity of $c$ . 

\noindent \textbf{Step II:} We prove that $c(H'_s)= \max\{e^sH(\partial D)+c(\bar H_{D,s})\,|\,D\in\m D\}, $ for all $s \in (-\infty,0]$. 
 Let $G$ be as in Step I.
Once again, we pick $s$ close enough to $-\infty$ so that $e^sG$ has no non-trivial periodic orbit of length $1$.

We will now show that 
$$c(H'_s)=c\left(e^sF+\sum_{D\in\m D}\bar H_{D,s}\right) = c\left(e^sG+\sum_{D\in\m D}\bar H_{D,s}\right).$$

To simplify the notation let $f = e^s F, g = e^s G, h_D= \bar H_{D,s}.$  We want to show that $$c(f + \sum_D h_D) = c(g+ \sum_D h_D).$$  

% {\color{blue} [I propose to remove all this :]
% Observe that  $g$ is supported in $\bigcup D,$  $g$ and $f$ are both constant on each $\nu_D$ and that constant is $e^sH(\partial D)$.  Furthermore, the only 1--periodic orbits of $g$ and $f$ are the trivial ones corresponding to their critical points.  It follows that 
% \begin{align*}\spec(g) &= \{ 0 \} \cup \{e^s H(\partial D): D \in \m D\}\\
% \spec(f) &=   \{e^s H(\partial D): D \in \m D\} \cup \{e^sH(p): p \text{ essential saddle of $H$ in } S\}.
% \end{align*}
% [ and to replace it by the blue lines below. Or instead we can remove the following blue paragraph and keep the above sentence, but the formula for the spectrums does not seem to be really useful. Anyway, I wrote the paragraph below to check that the formula for the spectrum of $K_{u}$ was OK.]}{\color{sobhan}OK.}

Consider the deformation $K_u = u g + (1-u) f + \sum_D h_D$, where $u\in [0,1]$.  %We leave it to the reader to check that

Note that 
\begin{itemize}
\item on $\nu_{D}$ we have $g=f =e^s H(\partial D)$, hence $K_{u} = e^s H(\partial D) + h_{D}$,
\item on $D \setminus \nu_{D}$, $f$ is still constant, hence $K_{u} = (1-u) e^s H(\partial D) + u g$,
\item on $S$, $K_{u} = (1-u)f$ since $g= \sum h_{D}=0$.
\end{itemize}
Since $ug$ and $(1-u)f$ have no $1$-periodic orbits except their critical points, we get
 $$\spec(K_u) = (1-u)\, \spec(f) \cup\,\bigcup_D \; ( e^s H(\partial D) + \spec(h_D) ).$$

Note that $K_0 = f + \sum_D h_D$ and $K_1 = g + \sum_D h_D$.  

\begin{claim}$c(K_0)  \geq \max(\spec(f))$. 
\end{claim}
\begin{proof}[Proof of the claim]
Since by positivity $c(\bar{H}_{D,s}) \geq 0$, it follows from Step I that $c(f + \sum_D h_D ) \geq \max\{e^s H(\partial D): D \in \m D\}.$  As for the other values in $\spec(f)$, they are all smaller than $\max\{e^s H(\partial D): D \in \m D\}.$  This is because, by Proposition \ref{prop:cut_ess_sadd}, the Morse function $H|_S$, where $S = \Sigma \setminus \bigcup \nu_D$, has no local maxima in the interior of $S$ and hence it must attain its maximum on a boundary component of the surface $S$.
\end{proof}

We will now use the above claim to finish the proof of Step II.

First, assume that $c(K_0) > \max(\spec(f))$.  It follows from the above description of $\spec(K_u)$ that the bifurcation diagram $\bigcup_{u \in [0,1]} \{u\}\times \spec(K_u)$ consists of straight lines with slope 0 corresponding to  elements of the form $( e^s H(\partial D) + \spec(h_D) )$ and decreasing lines corresponding to elements of $(1-u) \; \spec(f)$.  It follows from the above claim that the decreasing lines in $(1-u) \;  \spec(f)$ never intersect the line with slope zero corresponding to $c(K_0)$.  Hence, by continuity of $c$, $c(K_u) = c(K_0)$ for all $u \in [0,1]$.  In particular, $c(K_1) = c(K_0)$.

Next, suppose that $c(K_0) = \max(\spec(f))$.   As in the last paragraph of Step I, by making a $C^2$--small perturbation we can ensure that $c(\bar H_{D}) >0$ for each $\bar H_{D}$ and therefore  $\max\{H(\partial D)+c(\bar H_{D})\,|\,D\in\m D\} > \max\{H(\partial D)|\,D\in\m D\}.$  It follows from Step I that in fact $c(H'_s) >  \max\{e^s H(\partial D)|\,D\in\m D\}.$ Hence, we may in fact assume that  $c(K_0) >  \max(\spec(f))$.  
\end{proof}

\begin{proof}[Proof of Theorem \ref{theo.axiomatic-c=n} for Morse functions on higher genus surfaces.] Let $H$ be a Morse function on $\Sigma$. Cut $\Sigma$ along all essential saddles as described in Section \ref{sec:formula-n-positive-genus} and let $\m D$ be the family of disks obtained. For every disk $D$, the function $\bar H_D$ has only non-degenerate critical points in the interior of its support. Since we already proved Theorem \ref{theo.axiomatic-c=n} for Morse functions on the plane, Lemma \ref{lemma:restricted-N-and-c} below implies that $c(\bar H_D)=\cN(\bar H_D)$.

As an immediate consequence of Propositions \ref{prop:c=max-on-disks} and \ref{prop:form_N_general} we get $c(H)=\cN(H)$.
\end{proof}

 The following lemma compares the invariant $\cN: C^{\infty}([0,1]\times \R^2) \to \R$ with its sibling $\cN: C^{\infty}([0,1]\times \Sigma) \to \R$.  We will denote the first one by $\cN_{\R^2}$ and the latter by $\cN_{\Sigma}$.
\begin{lemma}\label{lemma:restricted-N-and-c} Let $\iota:a\D^2\to \Sigma$ be an area preserving embedding of the standard disk of area $a$ into $\Sigma$. Let $D=\iota(a\D^2)$ be its image. 
\begin{itemize}
\item For every function $H$ with support in $a\D^2$, $\cN_{\Sigma}(H')=\cN_{\R^2}(H)$, where $ H' = H \circ \iota^{-1}$ on $ D$ and $ H'=0$ elsewhere.
\item Let $c: C^{\infty}([0,1]\times \Sigma) \rightarrow \R$  be a formal spectral invariant. Then, the map $\iota^*c :C^{\infty}([0,1]\times \R^2) \to \R$ defined for every function $H$ supported in $a\D^2$
 by $
\iota^*c(H)= c(H')$,
extends to a formal spectral invariant on $\R^2$.
 \end{itemize}

%  Then 
% {\color{blue}
% \begin{itemize}
% \item for every smooth function $H$ on $\Sigma$ supported in $D$,
% $$\cN_{\Sigma}(H)=\cN_{\R^2}(H\circ\iota).$$
% \item Let $c: C^{\infty}([0,1]\times \Sigma) \rightarrow \R$  be a formal spectral invariant. Then, the map $\iota^*c :C^{\infty}([0,1]\times \R^2) \to \R$ defined for every function $H$ supported in $a\D^2$
%  by 
% $$
% \iota^*c(H)= c(H') \mbox{ where } H' = \begin{cases}
% H \circ \iota^{-1} & \mbox{ on } D \\
% 0  & \mbox{ on } \Sigma \setminus D
% \end{cases}
% $$
% \end{itemize}
% }extends to a formal spectral invariant on $\R^2$.
\end{lemma}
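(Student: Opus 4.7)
The two claims are treated separately. For $\cN_\Sigma(H') = \cN_{\R^2}(H)$, the strategy is to exhibit an action-preserving bijection between $\mathrm{mnus}(H')$ and $\mathrm{mnus}(H)$. Since $\iota$ is an area-preserving embedding, it conjugates $\phi_H^1$ on $a\D^2$ to $\phi_{H'}^1$ on $D$, sending fixed points to fixed points while preserving both the Hamiltonian action (by the area-preserving property applied to any capping disk) and the rotation number. By Corollary~\ref{cor:mnus_closed_surface}, unlinkedness of a subset of $D$ in $\Sigma$ is equivalent to unlinkedness inside $D$, and likewise for subsets of $a\D^2$ in $\R^2$. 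Any mnus on either side must contain all fixed points outside the support of the Hamiltonian (the flow is the identity there, so such points can always be adjoined to any negative unlinked set), and these trivial fixed points carry action zero. Hence every mnus $X'$ of $H'$ decomposes as $(\Sigma \setminus \mathrm{supp}(H')) \cup X'_{in}$ with $X'_{in} \subset D$, and the map $X'_{in} \mapsto \iota^{-1}(X'_{in})$ identifies it with the corresponding mnus of $H$. Since both $\sup_{X'} \cA_{H'}$ and $\sup_{X} \cA_H$ reduce to $\max\bigl(0, \sup_{X'_{in}} \cA_{H'}\bigr)$, taking infima over mnus's gives the desired equality.

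For the extension of $\iota^* c$ to a formal spectral invariant on $\R^2$, the plan is to use the symplectic contraction principle of Section~\ref{sec:symp-cont}. Given any compactly supported $H \in C^\infty([0,1] \times \R^2)$, choose an open disk $B \supset \mathrm{supp}(H)$ with its radial Liouville vector field and associated negative flow $A_s$. Pick $s \leq 0$ sufficiently negative that $A_s(B)$ has area strictly less than $a$, and let $\psi$ be a compactly supported Hamiltonian diffeomorphism of $\R^2$ carrying $A_s(B)$ inside $a\D^2$. Define
\[
\tilde c(H) := e^{-s}\, c\bigl((H_s \circ \psi^{-1})'\bigr),
\]
where $H_s$ denotes the contracted Hamiltonian from Section~\ref{sec:symp-cont}. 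Independence of $s$ follows from the scaling identity $c(F_s) = e^s c(F)$ of Section~\ref{sec:symp-cont}, and independence of $\psi$ follows from symplectic invariance of $c$ on $\Sigma$. For $H$ already supported in $a\D^2$, the choice $s = 0$, $\psi = \mathrm{id}$ recovers $\iota^* c(H)$, so $\tilde c$ indeed extends $\iota^* c$.

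The axioms for $\tilde c$ are then verified in turn: spectrality follows from $\spec(H) = e^{-s} \spec(H_s)$ combined with spectrality of $c$ and the invariance of the spectrum under $\psi$ and $\iota$; non-triviality is inherited directly from $c$; continuity reduces to continuity of the contraction map $H \mapsto H_s$ together with continuity of $c$, after checking that $s$ may be chosen uniformly on a $C^\infty$-neighborhood of $H$. The main obstacle is the max formula: given $H_1, \ldots, H_N$ with pairwise disjoint supports in $\R^2$, one must simultaneously contract and transport them so that their images remain disjointly supported inside $D \subset \Sigma$. This is achieved by taking a single disk $B$ containing the union of supports, a single contraction parameter $s$, and---using the flexibility of Hamiltonian isotopies on $\R^2$---a single $\psi$ carrying the disjoint pieces of $A_s(B)$ into pairwise disjoint sub-disks of $a\D^2$. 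Linearity of the contraction ensures $(H_1 + \cdots + H_N)_s = \sum_i (H_i)_s$, so the max formula for $c$ on $\Sigma$ transfers directly to $\tilde c$.
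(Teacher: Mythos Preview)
Your approach matches the paper's: the first part is exactly the content of Corollary~\ref{cor:mnus_closed_surface} (which you spell out in more detail than the paper does), and the second part is the symplectic contraction principle of Section~\ref{sec:symp-cont}.

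One simplification worth noting: the paper uses the \emph{standard} radial Liouville vector field $\zeta$ on all of $\R^2$, whose negative flow $A_s$ contracts every compact set toward the origin. Since the origin lies in $a\D^2$, for $s$ sufficiently negative one has $A_s(\mathrm{supp}(H)) \subset a\D^2$ automatically, and the auxiliary Hamiltonian diffeomorphism $\psi$ you introduce becomes unnecessary. This matters because your one-line justification ``independence of $\psi$ follows from symplectic invariance of $c$ on $\Sigma$'' is not quite immediate: given two choices $\psi,\psi'$, the map $\psi'\circ\psi^{-1}$ need not preserve $a\D^2$, so it cannot be transported directly to a symplectomorphism of $\Sigma$ via $\iota$. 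One can still argue independence (connect $\psi$ to $\psi'$ by a path $\psi_u$ keeping the support inside $a\D^2$, and use spectrality plus continuity to see that $e^{-s}c((H_s\circ\psi_u^{-1})')$ is constant in $u$), but working with the global radial field from the start sidesteps the issue entirely and makes the well-definedness check, as well as the independence of $s$, a direct application of Equation~\eqref{eq:spec_dial2}.
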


\begin{proof} The first part of the lemma follows immediately from Corollary~\ref{cor:mnus_closed_surface}.
Let us now turn to the second part of the lemma. Denote $\bar c=\iota^*c$. since the spectrum of $H$ is the same as the spectrum of $H'$, it is clear that $\bar c$ defines a formal spectral invariant on the set of Hamiltonians supported in $a\D^2$.  
To extend $\bar c$ to every function on $\R^2$ we use the symplectic contraction principle. Let $\zeta$ be the standard Liouville vector field on $\R^2$ and denote by $A_s$, where $s \in \R$, the time $s$ map of its flows.

Given a Hamiltonian $F:[0,1] \times \R^2 \rightarrow \R$, we pick $s\leq 0$ such that $A_s(\supp(F)) \subset a \D^2$ and define $F_s(t,x):= e^s F(t, A_s^{-1}(x))$.
Now, $F_s$ is supported in $a \D^2$ and so we can now define $\bar c(F)$ by  $$\bar c(F):= e^{-s} \bar c(F_s).$$
One can easily check that this defines a formal spectral invariant on $\R^2$.
\end{proof}

\subsubsection{Byproduct: quasi-states, heavy and super-heavy sets}\label{sec:quasi-states}
Since they are consequences of the tools developed in the preceding sections \ref{sec:formula-n-positive-genus} and \ref{sec:proof-c=N-Morse-surface}, we now give the proofs of Theorem \ref{theo:quasi-state} and Proposition \ref{prop:heaviness}.

\begin{proof}[Proof of Theorem \ref{theo:quasi-state}] First note that every disk in $\Sigma$ can be symplectically contracted to a displaceable disk. It follows from the energy-capacity inequality and the symplectic contraction principle that for every disk $D$ there is a uniform bound $C_D$ on the value of $c$ on functions supported in $D$. 

Now let $H$ be a Morse function on $\Sigma$ and let $\m D$ be the family of disks obtained by cutting along essential saddles as in Section \ref{sec:formula-n-positive-genus}. Note that for all integer $k$, the function $kH$ yields to the same decomposition. Thus we may apply Proposition \ref{prop:c=max-on-disks} to $kH$: 
$$c(kH)=\max\{kH(\partial D)+c(\bar{(kH)}_D)\,|\,D\in\m D\}.$$
Since $0\leq c(\bar{(kH)}_D)\leq C_D$, we deduce that $\zeta(H)=\max\{H(\partial D)\,|D\in\m D\}$. 
 Since the maximum of $H$ over its essential saddles is nothing but the maximum of $H(\partial D)$, $D\in\m D$, this concludes the proof of the theorem for Morse functions.
 
 Now for any continuous function $H$ on $\Sigma$ we consider the quantity 
$$
\eta(H) = \inf\left\{h_{0}: H^{-1}(h_{0}, +\infty) \mbox{ is contractible in } \Sigma\right\}.
$$
We leave it to the reader to check that $\eta$ depends continuously on $H$ when the space of continuous functions is equipped with the sup norm. Besides, $\zeta$ is $1$-Lipschitz for the sup norm. Proposition~\ref{prop:cut_ess_sadd} implies that if $H$ is Morse then $\zeta(H) = \eta(H)$. Since Morse functions are dense in the space of continuous functions, we conclude that $\zeta = \eta$.
\end{proof}

\begin{proof}[Proof of Proposition \ref{prop:heaviness}] 
We begin with the first part of the proposition.  Assume that a closed subset $X\subset \Sigma$ is not included in an open disk. Since $\zeta$ is continuous, it is sufficient to verify the definition of heaviness, or super heaviness, for Morse functions. Thus, let $H$ be a Morse function on $\Sigma$ and denote $C:= \inf(H|_X)$. Consider the decomposition described in Proposition \ref{prop:cut_ess_sadd} associated to $H$. If $X$ intersects the level set of an essential saddle, then $\zeta(H)\geq C$ by Theorem \ref{theo:quasi-state}. Otherwise $X$ is included in the surface $\Sigma'$ of Proposition \ref{prop:cut_ess_sadd}. Since it is not included in a disk, $X$ meets one of the cylinders, say $S$, which form the connected components of $\Sigma'$. Since the restriction of $H$ to $S$ has no critical point, it attains its maximum on one of the boundary components of $S$, hence this maximum is the value of $H$ at an essential saddle; on the other hand, it is larger than the minimum of $H$ on $X$, hence larger than $C$. Using Theorem  \ref{theo:quasi-state} again, we get $\zeta(H)\geq C$.

Conversely assume that $X$ is included in an open disk $D$. Then, by the argument used in the proof of Theorem \ref{theo:quasi-state}, there exists a constant  $C_D$ uniformly bounding the values of $\zeta$ on functions supported in $D$.  Thus, $\zeta$ vanishes on all functions supported in $D$. Taking a smooth function supported in $D$ with value $C$ on $X$, we see that $X$ cannot be heavy.

Next, we prove the second half of the proposition. Let $H$ be a Morse function and denote $C:= \sup(H|_X)$. Assume that the complement of $X$ admits no closed non-contractible curves.  By Theorem \ref{theo:quasi-state}, showing $\zeta(H) \leq C$,  reduces to showing that $X$ meets the level set of all essential saddles of $H$. But this is immediate since $X$ meets all non-contractible curve. 

 Conversely, assume that the complement of $X$ contains a curve $Y$ which is non-contractible. Then, by the first part of the proposition $Y$ is heavy.  Moreover, it is disjoint from $X$.  It is easy to see from the definition that every superheavy set must intersect every heavy set.  We conclude that $X$ is not superheavy. 
\end{proof}

%%%%%%%%%%%%%%%%%%%%%%
\subsection{From Morse functions to any autonomous Hamiltonian}\label{sec:c=N-nonMorse}

In this section, we finish the proof of Theorem \ref{theo.axiomatic-c=n}. In Sections \ref{sec:proof-c=N-Morse-disk} and \ref{sec:proof-c=N-Morse-surface}, we proved it for Morse functions. We will deduce Theorem \ref{theo.axiomatic-c=n} from this particular case. The \emph{formal} spectral invariant $c$ depends continuously on $H$; thus the deduction would be immediate if $\cN$ shared the same property. While this continuity property is still unknown, we will see that every $H$ can be approximated by some particular Morse function $H'$ for which we can prove that $\m N(H')$ is close to $\m N (H)$.

\paragraph{Topology of fixed and periodic orbits.}
Let $H: \Sigma \to \R$ be a smooth function. We decompose the set of contractible fixed points as a disjoint union:
$$ \Fix_{c}(\phi^{1}_{H}) = \Per_{c}(H) \sqcup \Crit_{iso}(H) \sqcup \Crit_{acc}(H),$$
where $\Per_{c}(H)$ is the set of contractible fixed points of $\phi_H^1$ that are not critical points of $H$, $\Crit_{iso}(H)$ the subset of isolated points in $\Crit(H)$ and $\Crit_{acc}(H)$ its subset of non isolated points.

Since $H$ is smooth, the closure of $\Per_{c}(H)$ does not meet $\Crit_{acc}(H)$. Indeed, let $x$ be accumulated by critical points. Then the second differential $d^2H(x)$ is degenerate. Up to replacing $H$ by $H \circ A$ where $A$ is a Hamiltonian diffeomorphism for which $x$ is a saddle point with a big dilatation in the degenerate direction, we may assume that $\norm{d^2H(x)}$ is arbitrarily small. Choose a neighborhood $V$ of $x$ on which $\norm{d^2H}$ is still small. Then on the one hand, by continuity of the flow, every 1--periodic orbit starting close enough to $x$ is included in $V$; on the other hand, by a standard argument, $V$ does not contain any 1--periodic orbit (see for example~\cite{audin-damian}, Proposition 6.5.1).

Let $U$ be the complement of the closure of $\Per_{c}(H)$. As a consequence, only a finite number of connected components of $U$ intersect $\Crit(H)$, and each of these connected components is the interior of a compact manifold with boundary, the boundary being made of a finite number of contractible 1--periodic orbits. 
We denote by $U_{1}, \dots, U_{\ell}$ the connected components that meet the set $\Crit(H)$. Each $U_{i}$ is invariant by the flow. 

We will now use the above description to build a Morse perturbation $H'$ such that $\m N(H)$ is close to $\m N(H')$.

\paragraph{Perturbation Lemma}

For each index $i$, we set
$$
X(U_{i}) = \{x \in \Crit(H) \cap U_{i}\,|\, \rho(x) \leq 0\}.
$$
and choose a function $G_i$ compactly supported in $U_{i}$ as follows. If $X(U_i)$ is empty, we let $G_i=0$. Otherwise, let $x_{i} \in X(U_{i})$ be such that $H(x_{i}) = \max_{x \in X(U_{i})} H(x)$. Again if $d^2H(x_{i})$ is negative definite then let $G_{i}=0$. In the remaining case, define $G_{i}$ so that its maximum is attained at $x_{i}$, such that the Hessian $d^{2}G_{i}(x_{i})$ is negative definite. Note that $x_{i}$ is still a fixed point for the time one of the flow associated to $H+G_{i}$, and its rotation number is strictly negative. Moreover, we choose $G_i$ to be $C^2$-small enough that $H+G_i$ has no non-trivial periodic orbit in $U_i$.

Now, for each $U_{i}$, choose $F_{i}$ compactly supported in a neighborhood of the set of degenerate critical points of $H$ in $U_{i}$, 
which is $C^{2}$-small and such that the Hamiltonian
$$
H' = H + \sum_{i=1}^l (G_{i}+ F_{i})
$$
is a Morse function. Finally, let $X'(U_{i}) = \{x \in \Crit(H') \cap U_{i}\,|\, \rho(x) \leq 0\}$.

\begin{lemma}\label{lemma:perturbation}
If each $F_{i}$ is small enough in the $C^{2}$ topology, then
\begin{enumerate}
\item $\Per_{c}(H') = \Per_{c}(H)$,
\item for each $U_{i}$, the set $X(U_{i})$ is empty if and only if the set $X'(U_{i})$ is empty,
\item $\max_{X'(U_{i})} H'$ is close to $\max_{X(U_{i})} H$,
\item $\cN(H')$ is close to $\cN(H)$.
\end{enumerate}
\end{lemma}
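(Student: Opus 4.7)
The plan is to establish the four claims in order. For claim~(1), outside $\bigcup_{i} U_i$ we have $H'=H$, so the flows agree there and the non-trivial contractible 1-periodic orbits coincide. Inside each $U_i$, we must show $\Per_c(H+G_i+F_i)\cap U_i=\emptyset$. Since $H+G_i$ already has this property by hypothesis, a compactness-and-stability argument in $\overline{U_i}$ (analogous to the one used just before the lemma to show $\Per_c(H)$ cannot accumulate at $\Crit_{acc}(H)$) shows that no non-trivial 1-periodic orbit of $H'$ can appear in $U_i$ when $F_i$ is sufficiently $C^2$-small. With this in hand, $\Per_c(H')=\Per_c(H)$ follows: for $x\in\Per_c(H)$ the $H$-orbit of $x$ lies in $\Per_c(H)\subset\overline{\Per_c(H)}$, disjoint from $\bigcup_i U_i$, so $H=H'$ along the orbit, giving $x\in\Per_c(H')$; the reverse inclusion is symmetric, using that we have just ruled out non-trivial 1-periodic orbits of $H'$ inside $\bigcup_i U_i$.

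Claims~(2) and~(3) follow from $C^2$-stability of non-degenerate critical points together with the observation that $\rho>0$ forces a non-degenerate local minimum. If $X(U_i)=\emptyset$, then all critical points of $H$ in $U_i$ are such minima, so $G_i=F_i=0$, $H'=H$ on $U_i$, and $X'(U_i)=\emptyset$. If $X(U_i)\neq\emptyset$, the construction provides either $x_i$ itself (when $d^2H(x_i)$ is negative definite) or a nearby perturbed critical point of $H+G_i$ with negative-definite Hessian; this point persists under $F_i$ as a point $x'_i\in X'(U_i)$ with strictly negative rotation, and $H'(x'_i)\to H(x_i)=\max_{X(U_i)}H$ as the perturbations shrink. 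Conversely, every critical point of $H'$ with $\rho\leq 0$ must lie near a critical point of $H$ with $\rho\leq 0$, since non-degenerate local minima with $\rho\in(0,1)$ are preserved under small $C^2$-perturbations; this yields $\max_{X'(U_i)}H'\leq\max_{X(U_i)}H+o(1)$.

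Claim~(4) is the heart of the lemma. The crucial tool is Corollary~\ref{corol:unlinked-sets-autonomous}: for autonomous Hamiltonians, unlinkedness only constrains the non-trivial periodic orbits, while any set of critical points is automatically unlinked. Combined with the connectedness fact that each $U_i$ is either entirely inside or entirely outside every disk $D(x_j)$ bounded by a periodic orbit $x_j\in\Per_c(H)$ (because $\partial D(x_j)=x_j\subset\overline{\Per_c(H)}$ is disjoint from the open set $U_i$), every mnus of $H$ decomposes canonically as $Y=\{x_j\}_{j\in J}\cup\bigcup_{i\in I}X(U_i)\cup K$, where $\{x_j\}_{j\in J}\subset\Per_c(H)$ is a maximal negatively-rotating unlinked family of periodic orbits, $I$ collects the indices $i$ for which $U_i$ lies outside every $D(x_j)$, and $K$ gathers any remaining negative fixed points outside $\bigcup_i U_i$. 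Claim~(1) allows us to replicate this description for $H'$, producing a bijection $Y\leftrightarrow Y'$ between mnus's of $H$ and $H'$ in which $X'(U_i)$ takes the place of $X(U_i)$. Since the periodic orbits in $\Per_c(H)=\Per_c(H')$ lie outside $\bigcup_i U_i$ where $H=H'$, their orbits, capping disks and hence actions agree, so $\cA_H(x_j)=\cA_{H'}(x_j)$ for $j\in J$, and likewise $K$ is common to $H$ and $H'$ with identical actions. The only discrepancy between $\sup_Y\cA_H$ and $\sup_{Y'}\cA_{H'}$ therefore comes from the critical-point contributions inside the $U_i$'s, which are controlled by $\max_i\|G_i+F_i\|_{C^0}$ via claim~(3). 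Taking infima over mnus's, $|\cN(H)-\cN(H')|=o(1)$ as $\|F_i\|_{C^2}\to 0$. The main technical subtlety — handling mnus's that may contain infinitely many periodic orbits or critical points accumulating on $\partial\bigcup_i U_i$ — is tamed precisely by the pointwise identity $\Per_c(H)=\Per_c(H')$ proved in claim~(1), which ensures the bijection and action-comparison is uniform rather than merely approximate.
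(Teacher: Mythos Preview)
Your proof is correct and follows essentially the same approach as the paper: both establish (1)--(3) by direct stability arguments and then prove (4) by invoking Corollary~\ref{corol:unlinked-sets-autonomous} to decompose every mnus as a ``periodic part'' $Z\subset\overline{\Per_c(H)}$ together with the sets $X(U_i)$ for those $U_i$ lying outside all disks $D(z)$, $z\in Z$, and then using $\Per_c(H')=\Per_c(H)$ to set up the bijection $Z\cup\bigcup X(U_i)\leftrightarrow Z\cup\bigcup X'(U_i)$. Your write-up is slightly more explicit than the paper's (for instance, you spell out why the $H$- and $H'$-actions agree on $Z$ and why each connected $U_i$ sits on one side of every $\partial D(x_j)$), but the strategy is identical.
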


\begin{proof}
 The three first properties are easily obtained. Indeed, by choosing the $F_i$'s small enough in the $C^2$ sense, we first ensure that $H'$ has the same non-trivial 1--periodic orbits as $H$. This gives Property 1. 
 
If $X(U_{i})$ is empty then it contains no degenerate critical point of $H$, we get $H'=H$ 
on $U_{i}$ and thus $X'(U_{i})$ is also empty. If it is not empty, then 
 the $C^2$-smallness of $F_{i}$ implies that the rotation number of the point $x_i$ remains negative. Property 2 follows. Finally, Property 3 is an immediate consequence of Property 2 and the $C^0$-smallness of the $F_i$'s.

To prove Property 4, we establish a bijective correspondence between mnus's of $H$ and mnus's of $H'$. 
 The structure of unlinked sets for autonomous systems is described
by Corollary~\ref{corol:unlinked-sets-autonomous}. As a consequence of this description, the mnus's are the sets of the following form: a certain (finite) collection $Y \subset \Per_c(H)$ and all the critical points in the complement of the union of the disks $D(y)$ bounded by the  1--periodic orbits of points $y$ in $Y$.
In particular, the mnus's of $H$ are all of the form:
$$X=Z\cup \bigcup_{i=1}^lX_i,$$
where $Z$  is a subset of the closure of $\Per_c(H)$ and each $X_i$ is either $X(U_i)$ or $\emptyset$. The mnus's of $H'$ have a similar description. To every mnus $X=Z\cup\bigcup_{i=1}^l X_i$ of $H$, we associate a set $\Psi(X)=Z\cup\bigcup_{i=1}^l X_i'$, where for every $i\in\{1,\ldots,l\}$, $X_i'=\emptyset $ if $X_i=\emptyset$, and $X_i'=X'(U_i)$ if $X_i=X(U_i)$. It follows from Property 2 that the map $\Psi$ is a bijection between mnus's of $H$ and mnus's of $H'$. Moreover, Property 3 implies that the maximum of the action of $H$ over $X$ is close to the maximum of the action of $H'$ over $\Psi(X)$. Taking minimum over all mnus's we get Property 4.
\end{proof}

\paragraph{End of the proof of Theorem~\ref{theo.axiomatic-c=n}}

Let $H$ be a smooth function on $\Sigma$. Then, according to Lemma \ref{lemma:perturbation}, we can find arbitrary close to $H$ a Morse function $H'$ such that $\cN(H')$ is close to $\cN(H)$. On the other hand, the continuity of spectral invariants also implies that $c(H')$ is close to $c(H)$. Since we proved that $c=N$ for all Morse functions, we obtain that $c(H)$ is arbitrary close to $\cN(H)$. Thus $c(H)=\cN(H)$.\hfill$\Box$

%%%%%%%%%%%%%%%%%%%%%%%%%%%%%%%%%%%%%%%%%%%%%%%%%%%%%%%%%%%%%%%%%
%%%%%%%%%%%%%%%%%%%%%%%%%%%%%%%%%%%%%%%%%%%%%%%%%%%%%%%%%%%%%%%
%%%%%%%%%%%%%%%%%%%%%%%%%%%%%%%%%%%%%%%%%%%%%%%%%%%%%%%%%%%%

\section{Max Formulas for spectral invariants of Schwarz and Viterbo} \label{sec:max_form}
The main goal of this section is to prove that the spectral invariants constructed by Viterbo on $\R^{2n}$ and by Schwarz on closed aspherical manifolds satisfy certain max formulas.  It is an immediate consequence of these max formulas  that the spectral invariants of Viterbo and Schwarz are both \emph{formal} spectral invariants in the sense of Definition \ref{def:formal_spec}.   As mentioned in the introduction, these max formulas are of independent interest and have consequences that go beyond the scope of this paper. 
For this reason, in this section of the paper we no longer restrict ourselves to two dimensional symplectic manifolds.  

 \medskip
 
 \noindent \textbf{The max formula on $\R^{2n}$:} Following  Viterbo's notation, we will denote by $c_+$ and $c_-$ the two spectral invariants constructed by him in \cite{viterbo}.  We will recall their construction, which is based on generating functions, in Section \ref{sec:def_c_R2n}.
 
 We will say that $N$ subsets $A_1,\ldots,A_N$ in $\R^{2n}$ are \emph{symplectically separated} if the minimum over all indices $1\leq i<j\leq N$ of the euclidean distance between $\psi(A_i)$ and $\psi(A_j)$ can be made arbitrary large for some symplectic diffeomorphism $\psi$. For example, two disjoint convex sets are always symplectically separated. In Section \ref{sec:proof-max-formula-R2n} will prove the following statement.

\begin{theo}\label{theo:max-formula-R2n} If $H_1,\ldots, H_N$ are compactly supported Hamiltonian diffeomorphisms of $\R^{2n}$ whose supports are symplectically separated, then: $$c_+(H_1+\ldots +H_N)=\max(c_+(H_1),\ldots,c_+(H_N)),$$
$$ c_-(H_1+\ldots+H_N)=\min(c_-(H_1),\ldots,c_-(H_N)).$$
\end{theo}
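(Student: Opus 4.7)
The plan is to reduce to the case $N=2$ by an easy induction (the supports of any $N-1$ of the $H_i$ remain symplectically separated from the remaining one), and to deduce the $c_-$ formula from the $c_+$ formula via Viterbo's duality $c_-(H) = -c_+(\bar H)$, where $\bar H(t,x) = -H(t, \phi_H^t(x))$; since each $\supp H_i$ is invariant under $\phi_{H_i}^t$, this duality preserves the symplectic separation hypothesis. The essential task is therefore
\[
c_+(H_1 + H_2) = \max(c_+(H_1), c_+(H_2))
\]
for two compactly supported Hamiltonians with symplectically separated supports, and by symplectic invariance of $c_+$ I may arrange $\supp H_1 \subset B_1$ and $\supp H_2 \subset B_2$ inside two disjoint Euclidean balls lying at arbitrarily large mutual distance.

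The first key ingredient is the spectral decomposition. Because the supports are disjoint, the Hamiltonian vector fields commute, so $\phi^t_{H_1+H_2} = \phi^t_{H_1} \circ \phi^t_{H_2}$; hence every contractible $1$-periodic orbit of $\phi^1_{H_1+H_2}$ is localized either in $B_1$, in $B_2$, or outside both, and its action with respect to $H_1+H_2$ coincides with its action with respect to the relevant $H_i$ (and equals $0$ for orbits outside both balls). Consequently
\[
\spec(H_1+H_2) = \{0\} \cup \spec(H_1) \cup \spec(H_2),
\]
and both $c_+(H_1+H_2)$ and $\max(c_+(H_1), c_+(H_2))$ must belong to this finite measure-zero set.

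With this structural observation in hand, I would prove both inequalities by the bifurcation-diagram technique already developed in Section~\ref{sec:proof-c=N-Morse-plane}, applied to the homotopy $K_s = H_1 + sH_2$, $s \in [0,1]$. Along this homotopy the spectrum decomposes as $\spec(K_s) = \{0\} \cup \spec(H_1) \cup \spec(sH_2)$, where the first two branches are independent of $s$ while the third shrinks continuously to $\{0\}$ as $s\to 0$. The continuity and spectrality of $c_+$ constrain $c_+(K_s)$ to remain on a single branch except at isolated values of $s$ where two branches cross. The symplectic contraction principle of Section~\ref{sec:symp-cont}, applied inside $B_2$ to scale $H_2$ continuously down to zero, provides a boundary condition at ``$s=-\infty$'' from which the branch followed by $c_+$ can be tracked backwards, and the symmetric argument with the roles of $H_1$ and $H_2$ interchanged yields the other half of the $\max$.

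The main obstacle will be to rigorously exclude jumps of $c_+$ between branches of $\spec(K_s)$ at crossing points; this cannot be deduced from the four axioms of a formal spectral invariant alone (otherwise the max formula itself would already follow from those axioms). One must therefore exploit specific features of Viterbo's generating function construction: for a Hamiltonian supported in a ball the generating function can be chosen to be quadratic at infinity with a well-understood critical-point structure, and $c_+$ is defined as the min-max of this generating function over the top-dimensional relative cohomology class. This selection principle, combined with the contraction and deformation arguments above, forces $c_+(H_1+H_2)$ to sit on whichever of the two branches realizes the larger of $c_+(H_1)$ and $c_+(H_2)$, yielding the $\max$-formula; the $\min$-formula for $c_-$ then follows from the duality recalled at the outset.
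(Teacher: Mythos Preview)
Your preliminary reductions are correct and match the paper: induction to $N=2$, Viterbo duality to reduce the $c_-$ formula to the $c_+$ formula (or vice versa), and the decomposition $\spec(H_1+H_2)=\spec(H_1)\cup\spec(H_2)$ coming from the disjointness of supports. But from that point on the proposal is not a proof. You yourself identify the obstacle --- ruling out jumps of $c_+(K_s)$ between branches of the bifurcation diagram at crossing points cannot be obtained from spectrality and continuity alone --- and then do not resolve it. The final paragraph invokes ``specific features of Viterbo's generating function construction'' and a ``selection principle'' without naming any concrete mechanism; the sentence ``this \ldots forces $c_+(H_1+H_2)$ to sit on whichever of the two branches realizes the larger of $c_+(H_1)$ and $c_+(H_2)$'' is an assertion, not an argument. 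A deformation $K_s=H_1+sH_2$ also does not behave as simply as you suggest: the spectrum of $sH_2$ is the action spectrum of $\phi^s_{H_2}$, which does not scale linearly in $s$, so the picture of ``one fixed branch and one shrinking branch'' is more complicated than stated.

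The paper avoids the bifurcation difficulty entirely by a direct topological argument on generating functions. Using the symplectic separation, one chooses g.f.q.i.'s $S_1,S_2$ for $\phi_1,\phi_2$ with the \emph{same} fibre quadratic form $Q$, arranged so that $S_1=Q$ on an open set $U_2\times\R^N$ containing $\supp\phi_2$ and $S_2=Q$ on $U_1\times\R^N$ containing $\supp\phi_1$, with $U_1\cup U_2=\S^{2n}$. Then the function $S$ that equals $S_i$ on $U_i\times\R^N$ is a g.f.q.i.\ for $\phi_1\circ\phi_2$. A Mayer-Vietoris argument on the pairs $(S^\lambda,S^{-\infty})$ for $\lambda<0$ shows that the map $H^d(S^\lambda,S^{-\infty})\to A_1\oplus A_2$ (restriction to the two pieces) is injective, and likewise each $H^d(S_i^\lambda,S_i^{-\infty})\to A_i$ is injective; chasing the commutative square with $H^0(\S^{2n})$ yields that $i_\lambda(e)=0$ for $S$ if and only if it vanishes for both $S_i$, i.e.\ $c_-(\phi_1\circ\phi_2)=\min(c_-(\phi_1),c_-(\phi_2))$. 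This is the missing idea: one compares the sublevel-set cohomologies directly rather than trying to track a spectral value along a homotopy.
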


The proof of this theorem is by induction. For $N=2$, the idea is that when both supports are far enough from each other (which can be achieved by a suitable sympletic diffeomorphism), then it becomes possible to build a generating function of $H_1+H_2$ that coincides with a generating function of $H_1$ on some open set surrounding the support of $H_1$ and with a generating function of $H_2$
 on some open set surrounding the support of $H_2$. Then an argument based on the Mayer-Vietoris long exact sequence, applied to the sublevels of the generating functions, allows us to compare the different spectral invariants. The details will be carried out in Section  \ref{sec:proof-max-formula-R2n}.

\medskip
 \noindent \textbf{The max formula on closed and aspherical symplectic manifolds:}
 Let $c$ denote the spectral invariant constructed by Schwarz on a closed and aspherical symplectic manifold $M$.  We will recall the construction of $c$ in section \ref{sec:def_c_aspherical}.
 
 Recall the definition of an incompressible Liouville domain from Section \ref{sec:symp-cont}.    In Section \ref{sec:proof-max-formula-aspherical} will prove the following max formula for Hamiltonians whose supports are contained in a disjoint union of incompressible Liouville domains.
  
  \begin{theo}\label{theo:max-formula-aspherical}
   Suppose that $F_1,\ldots, F_N$ are  Hamiltonians whose supports are contained, respectively, in pairwise disjoint incompressible Liouville domains $U_1 ,\ldots, U_N$. Then,
 $$c(F_1 +\ldots + F_N) =  \max\{c(F_1), \ldots, c(F_N)\}.$$
  \end{theo}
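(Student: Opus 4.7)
The plan is to reduce Theorem~\ref{theo:max-formula-aspherical} to Theorem~\ref{theo:max-formula-R2n} (the analogue on $\RN$) via the symplectic contraction principle of Section~\ref{sec:symp-cont}, combined with the symplectic invariance of $c$. A straightforward induction on $N$ reduces the statement to the case $N = 2$, so the attention is on $F_1, F_2$ supported in incompressible Liouville domains $U_1, U_2 \subset M$.

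First I would compute $\spec(F_1 + F_2)$. Since the supports are disjoint, the Hamiltonian flow $\phi^t_{F_1 + F_2}$ preserves $U_1$, $U_2$ and their complement separately, and on each piece it coincides with the flow of the corresponding $F_i$ (or is trivial outside). Hence every contractible $1$-periodic orbit is either a constant in $M \setminus (U_1 \cup U_2)$ (with action $0$) or an orbit of $\phi^1_{F_i}$ inside $U_i$; in the latter case incompressibility of $U_i$ lets us take the capping disk inside $U_i$, so the action equals $\cA_{F_i}$. Thus
\[ \spec(F_1 + F_2) \subseteq \{0\} \cup \spec(F_1) \cup \spec(F_2). \]
Applying the contraction construction of Section~\ref{sec:symp-cont} inside each $U_i$ with parameter $r \leq 0$, the same analysis applied to $G_r := F_{1,r} + F_{2,r}$, together with the scaling $\spec(F_{i,r}) = e^r \spec(F_i)$, yields
\[ \spec(G_r) \subseteq \{0\} \cup e^r \spec(F_1) \cup e^r \spec(F_2). \]
By spectrality and continuity, the map $r \mapsto c(G_r)$ stays on a single branch of the resulting bifurcation diagram, hence $c(G_r) = e^r c(F_1 + F_2)$ for all $r \leq 0$; similarly $c(F_{i,r}) = e^r c(F_i)$.

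For $r$ sufficiently negative the contracted domains $A^1_r(U_1)$ and $A^2_r(U_2)$ have arbitrarily small symplectic capacity, so each fits inside a small Darboux ball in $M$. Connectedness of $M$ then furnishes $\Psi \in \Ham(M)$ that simultaneously sends both domains into disjoint small balls contained in a single Darboux chart $D \subset M$. Writing $\tilde F_i := F_{i,r} \circ \Psi^{-1}$, symplectic invariance gives $c(\tilde F_1 + \tilde F_2) = c(G_r)$ and $c(\tilde F_i) = c(F_{i,r})$. Identifying $D$ with an open subset of $\RN$, the standard locality property of Floer spectral invariants identifies the Schwarz invariant on $M$ of a Hamiltonian compactly supported in $D$ with Viterbo's $c_+$ of the corresponding Hamiltonian on $\RN$. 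The supports of $\tilde F_1$ and $\tilde F_2$ are disjoint Euclidean balls, hence symplectically separated in the sense of Section~\ref{sec:max_form}, so Theorem~\ref{theo:max-formula-R2n} delivers
\[ c_+(\tilde F_1 + \tilde F_2) = \max(c_+(\tilde F_1), c_+(\tilde F_2)). \]
Combining these identities and dividing by $e^r$ yields $c(F_1 + F_2) = \max(c(F_1), c(F_2))$, as required.

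The main obstacle is the locality step: verifying that the Schwarz invariant on $M$ of a Hamiltonian compactly supported in a Darboux chart coincides with Viterbo's $c_+$ on $\RN$. This is folklore but requires a careful comparison of the Floer-theoretic and generating-function constructions, typically via a ``no-escape'' argument confining Floer trajectories to the displaceable Darboux ball. A secondary technicality is the simultaneous packing of the two contracted domains into a common Darboux chart by a single $\Psi \in \Ham(M)$; this should be routine given that the capacities of $A^i_r(U_i)$ tend to zero as $r \to -\infty$, but deserves verification.
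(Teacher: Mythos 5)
Your reduction breaks down at the packing step, and the failure is not a technicality but hits the whole point of the theorem. Symplectic contraction replaces $U_i$ by $A_r(U_i)\subset U_i$, which is the image of $U_i$ under a flow isotopic to the identity inside $U_i$; in particular $A_r(U_i)$ still carries every homotopy class of loop of $U_i$, and by incompressibility these classes remain nontrivial in $\pi_1(M)$. So whenever $\pi_1(U_i)\neq 0$ (e.g.\ $U_i$ an annular neighborhood of a non-contractible curve on a surface --- exactly the domains $\nu_D$ used later in Proposition~\ref{prop:c=max-on-disks}), the set $A_r(U_i)$ contains a loop that is non-contractible in $M$, and no $\Psi\in\Ham(M)$ can map it into a Darboux ball, which is contractible. ``Arbitrarily small capacity'' does not rescue this: smallness of capacity or volume says nothing about the ambient isotopy class of the embedding. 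Your argument would therefore at best cover the case where every $U_i$ is a ball, i.e.\ Theorem~\ref{theo:max-formula-balls}, not the general statement.

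Even in the ball case, the ``locality'' identification of Schwarz's invariant on $M$ with Viterbo's $c_+$ on $\R^{2n}$ is a genuine unproven input, not a citation-free folklore step; nothing in the paper (or in your proposal) establishes it, and the comparison of Floer-theoretic and generating-function spectral invariants is a known nontrivial problem. The paper's actual proof stays entirely on $M$ and is Floer-theoretic: after contracting the $F_i$ so that their spectra and $C^0$-norms are small, it uses Hein-type energy estimates (Lemma~\ref{lem:energy_est}) to show that no boundary or continuation trajectory between index-$2n$ orbits can cross the shells around the $\partial U_i$, and then pins down the form of every Floer cycle representing $[M]$ (Lemma~\ref{lem:main_lemma}). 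The parts of your argument that do survive --- the induction on $N$, the computation of $\spec(F_1+F_2)$, and the scaling $c(G_r)=e^r c(F_1+F_2)$ --- are consistent with Section~\ref{sec:symp-cont}, but they do not carry the proof without the confinement analysis.
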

  Interestingly enough, this max formula does not hold on non-aspherical manifolds.  In Section \ref{sec:countr_exmpl} we will construct an example of a Hamiltonian on the sphere which does not satisfy this max formula.
  
  Here is an overview of our strategy for proving the above theorem.   The idea is to symplectically contract each of the $F_i$'s, as described in Section \ref{sec:symp-cont}, to obtain functions $F_{i,s}$.  Equation \eqref{eq:spec_dial2} implies that it is sufficient to prove the max formula for the $F_{i,s}$'s.  Next we study the Floer trajectories of (an appropriate perturbation of) $F_{1,s} +\ldots + F_{N,s}$. An application of  Lemma \ref{lem:energy_est} will provide us with a positive constant $\epsilon > 0$ such that any Floer trajectory which travels between distinct $U_i$ and $U_j$ has energy greater than $\epsilon.$ On the other hand,  by picking $s$ to be sufficiently negative we can ensure, using Equation \eqref{eq:spec_dial1}, that the spectrum of $F_{1,s} +\ldots + F_{N,s}$ is contained in $(- \frac{\epsilon}{4}, \frac{\epsilon}{4})$ and hence any Floer trajectory traveling between distinct $U_i$ and $U_j$ has action less than $\frac{\epsilon}{2}$. Using these ideas, in Lemmas \ref{lem:no_cont_traj} and \ref{lem:no_bdry_traj}, we conclude that there exist no such Floer trajectories.  This drastically simplifies the Floer homological picture and allows us to fully describe the relations among the various Floer cycles representing the fundamental class $[M]$; see Lemma \ref{lem:main_lemma}.   We carry out the details of this strategy in Section \ref{sec:proof-max-formula-aspherical}.

\subsection{The max formula on $\bb R^{2n}$}

In this section, we establish the max formula for the spectral invariant $c_+$ introduced by Viterbo in \cite{viterbo} using generating functions. Let us quickly remind the reader of its construction.

\subsubsection{Generating functions and the construction of $c_+$}\label{sec:def_c_R2n}

Given a Lagrangian submanifold $L$ in a cotangent bundle $T^*M$ of a closed manifold $M$, a generating function quadratic at infinity (or g.f.q.i) for $L$ is a function $S:M\times\R^N\to\R$ for some integer $N$, such that $L$ admits the following description
$$L=\{(x,p)\in T^*M\,|\, \exists\xi\in\R^N,\partial_\xi S(x,\xi)=0, \partial_x S(x,\xi)=p\},$$
and moreover $S$ coincide with a quadratic form $Q$ at infinity, i.e., there exists a compact set $K\subset M\times\R^N$  and a non-degenerate quadratic form $Q$ on $\R^N$ such that for every $(x,\xi)\notin K$, $S(x,\xi)=Q(\xi)$.
According to a theorem of Laudenbach and Sikorav (\cite{sikorav}, \cite{brunella}), every Lagrangian submanifold which is Hamiltonian isotopic to the zero section admits a g.f.q.i.  

Hamiltonian diffeomorphisms of the standard symplectic space $(\R^{2n},\omega_0)$ can also be represented by generating functions by the following construction. Let $\phi\in\Ham_c(\R^{2n})$ and denote by $\Gamma_\phi$ its graph which is a Lagrangian submanifold of $(\R^{2n}\times\R^{2n},-\omega_0\oplus\omega_0)$. Given a symplectic diffeomorphism $\Psi:\R^{2n}\times\R^{2n}\to T^*\R^{2n}$ the Lagrangian $\Psi(\Gamma_\phi)$ is Hamiltonian isotopic to the zero section and therefore admits a g.f.q.i. $S:\R^{2n}\times\R^N\to \R$. This function can be extended to $\S^{2n}=\R^{2n}\cup\{\infty\}$ by setting $S(\infty,\xi)=Q(\xi)$ for all $\xi \in \R^N$. We continue to denote this extension by $S$ and refer to it as a g.f.q.i. for $\phi$. 

Spectral invariants are  defined as follows. Let us denote by $e$ and $\mu$ the generators of the cohomology groups $H^0(\S^{2n})$ and $H^{2n}(\S^{2n})$ (with coefficients in a field $\mathbb F$). Given a function $F$, we denote by $F^\lambda=\{x\,|\, F(x)\leq\lambda\}$ its $\lambda$ sublevel. Moreover, the notations ``$F^{-\infty}, F^{\infty}$'' will mean ``$F^\lambda$ for $\lambda$ close to $-\infty, \infty$'', respectively. Let $d$ stand for the dimension of the negative space of the quadratic form $Q$.  Recall that
$H^k(Q^{+\infty},Q^{-\infty})=\{0\}$ for every integer $k \neq d$ and 
$H^d(Q^{+\infty},Q^{-\infty})=\F$.
For every real number $\lambda$ there is a group homomorphism $i_\lambda:H^*(\S^{2n})\to H^{*+d}(S^\lambda,s^{-\infty})$ which is the composition of the following natural maps:
\begin{align*}H^*(\S^{2n})&\simeq H^*(\S^{2n})\otimes H^{*}(Q^{+\infty},Q^{-\infty})\\
&\simeq H^{*}(\S^{2n}\times Q^{+\infty},\S^{2n}\times Q^{-\infty})=H^{*}(S^{+\infty},S^{-\infty})\\
&\to H^{*}(S^\lambda,S^{-\infty}).
\end{align*}
Note that for a class $\alpha \in H^*(\S^{2n})$ of degree $k$, $i_\lambda(\alpha)$ has degree $k+d$.
It follows from the Viterbo-Th\'eret uniqueness theorem (\cite{viterbo}, \cite{theret}) that the following definition does not depend on the choice of the g.f.q.i.

\begin{defi}(Viterbo \cite{viterbo})
\begin{align*} c_-(\phi) &=\inf\{\lambda\,|\,i_\lambda(e)\neq 0\},\\
c_+(\phi) &=\inf\{\lambda\,|\,i_\lambda(\mu)\neq 0\}.
\end{align*}
\end{defi}

The two invariants are related by the duality formula $c_+(\phi)=-c_-(\phi^{-1})$ for every $\phi\in \Ham_c(\R^{2n})$ and satisfy the inequalities $c_-\leq 0\leq c_+$.
It is known that the invariant $c_+$ satisfies all the axioms of Theorem \ref{theo.axiomatic-c=n} except for the Max Formula which will be established below. 

We define these spectral invariants for a compactly supported Hamiltonian $H$ by setting
$$c_+(H)=c_+(\phi_H^1),\quad c_-(H)=c_-(\phi_H^1).$$

\subsubsection{Proof of the max formula on $\R^{2n}$}\label{sec:proof-max-formula-R2n}
\begin{proof}[Proof of Theorem \ref{theo:max-formula-R2n}] First note that by an easy induction argument the general case follows from the particular case where $N=2$. Next, remark that by the duality formula, the max formula for $c_+$ is equivalent to the min formula for $c_-$. We will prove the min formula for $c_-.$ 

We will use the notation $\phi_1, \phi_2$ for the time-one maps of $H_1$ and $H_2$. 
Let $S_1:\S^{2n}\times\R^{N_1}\to\R$, $S_2:\S^{2n}\times\R^{N_2}\to\R$ be generating functions quadratic at infinity for $\phi_1$ and $\phi_2$. It follows from the proof of the existence of generating functions that $S_1$ and $S_2$ can be chosen so that they have the same number of extra-parameters, i.e. $N_1=N_2=:N$ and they coincide at infinity with the same quadratic form $Q:\R^N\to\R$. Indeed, if we refer for instance to the proof given in \cite{brunella}, the quadratic form obtained when one constructs a g.f.q.i. for a diffeomorphism $\phi$ can be chosen to depend only on the number of diffeomorphisms $C^1$-close to the identity used to decompose $\phi$. Moreover, using the fact that the supports are symplectically separated, we can conjugate $\phi_1$ and $\phi_2$ by an appropriate symplectic diffeomorphism $\psi$ to ensure that we are in the following situation (recall that $c_\pm$ is conjugation invariant): There exist open sets $U_1$ and $U_2$ in $\S^{2n}$  such that 
\begin{itemize}
\item $U_1\cup U_2=\S^{2n}$,
\item $U_1$ and $U_2$ are contractible and their intersection is connected,
\item $\forall (x,v)\in U_2\times \R^N,\ S_1(x,v)=Q(v)$,
\item $\forall (x,v)\in U_1\times\R^N,\ S_2(x,v)=Q(v)$.
\end{itemize}
In particular, $S_1$ and $S_2$ coincide with $Q$ on $(U_1\cap U_2)\times \R^N$.

Let $\phi=\phi_1\circ\phi_2=\phi_{H_1+H_2}^1$. It follows from the assumptions above that the Lagrangian $\Psi(\Gamma_\phi)$ coincides with $\Psi(\Gamma_{\phi_1})$ on $T^*U_1$ and with $\Psi(\Gamma_{\phi_2})$ on $T^*U_2$. Therefore, the function $S:\S^{2n}\times\R^N\to\R$  defined by 
$$ S(x,v)=\begin{cases} S_1(x,v) \text{ if } x\in U_1,\\
 S_2(x,v) \text{ if } x\in U_2,
\end{cases}$$
is a generating function for $\phi$.

Let $\lambda<0$ be a negative real number. 
% , hence {\color{sobhan} I put this in align environment so that we could refer to it later on.
% \begin{align}\label{eq:retract}
% H^*(Q^{\lambda},Q^{-\infty})=\{0\}.
% \end{align} }
For $i=1,2$, we consider the following (commutative) diagram of inclusions of pairs
$$
\begin{CD}
(S^{\lambda},S^{-\infty}) @<<< (S_i^{\lambda}\cap(U_i\times\R^N),S_i^{-\infty}\cap (U_i\times\R^N))\\
@VVV @VVV\\
(S^{+\infty},S^{-\infty}) @<<<   (S_i^{\lambda},S_i^{-\infty}).
\end{CD}
$$
Note that  $(S_i^{\lambda}\cap(U_i\times\R^N),S_i^{-\infty}\cap (U_i\times\R^N))= (S^{\lambda}\cap(U_i\times\R^N),S^{-\infty}\cap (U_i\times\R^N))$, which gives the top horizontal arrow.
We denote by $A_i$ the cohomology group $A_i= H^d(S_i^{\lambda}\cap(U_i\times\R^N),S_i^{-\infty}\cap (U_i\times\R^N))$. The above diagram induces the following commutative diagram in degree $d$ cohomology:
$$
\begin{CD}
H^d(S^{\lambda},S^{-\infty}) @>>> A_i\\
@AAA @AAA\\
H^0(\S^{2n}) @>>>  H^d (S_i^{\lambda},S_i^{-\infty}).
\end{CD}
$$

We now prove that the right vertical map $H^d(S_i^{\lambda},S_i^{-\infty})\to A_i$ is injective. We prove it for $i=1$, the case $i=2$ being similar. Consider the Mayer-Vietoris sequence for the covering  $\{S_1^{\lambda} \cap (U_1\times \R^N), S_1^{\lambda} \cap (U_2\times \R^N)\}$ of $S_1^{\lambda} $
 It provides in particular an exact sequence
$$C\to H^d(S_1^{\lambda},S_1^{-\infty})\to A_1\oplus B,$$
where 
\begin{align*} B &= H^d(S_1^{\lambda}\cap(U_2\times\R^N),S_1^{-\infty}\cap (U_2\times\R^N))\\
 &= H^d(U_2\times Q^{\lambda},U_2\times Q^{-\infty})\\
 &= \{0\},
\end{align*}
where the last equality holds since $\lambda<0$ and hence $U_2\times Q^{\lambda}$ retracts onto $U_2\times Q^{-\infty}$, and  
 \begin{align*}C &= H^{d-1}(S_1^{\lambda}\cap((U_1\cap U_2)\times\R^N),S_1^{-\infty}\cap ((U_1\cap U_2)\times\R^N)))\\
&= H^{d-1}((U_1\cap U_2)\times Q^{\lambda},(U_1\cap U_2)\times Q^{-\infty})\\
&= \{0\}.
\end{align*}
Thus, $H^d(S_1^{\lambda},S_1^{-\infty})\to A_1$ is injective.

We then consider the ``direct sum'' diagram: 
\begin{equation}\label{diagram}
\begin{CD}
H^d(S^{\lambda},S^{-\infty}) @>>> A_1\oplus A_2\\
@AAA @AAA\\
H^0(\S^{2n}) @>>>  H^d (S_1^{\lambda},S_1^{-\infty})\oplus H^d (S_2^{\lambda},S_2^{-\infty}).
\end{CD}
\end{equation}
We have seen that the right vertical arrow is injective.
Let us now show that the top horizontal arrow is also injective. This follows again from a Mayer-Vietoris sequence, the same as before but with $S$ instead of $S_1$:
$$C\to H^d(S^{\lambda},S^{-\infty})\to A_1\oplus A_2,$$
where 
\begin{equation*}C = H^{d-1}(S^{\lambda}\cap((U_1\cap U_2)\times\R^N),S^{-\infty}\cap ((U_1\cap U_2)\times\R^N))) = \{0\},
\end{equation*}
 as above.

We can now conclude. In the diagram (\ref{diagram}), the top horizontal arrow and the right vertical arrow are both injective. Therefore, for all $\lambda<0$ the image of a generator $e$ of $H^0(\S^{2n})$ by the bottom horizontal arrow is zero if and only if its image by the left vertical arrow is zero. Since $c_-\leq 0$, this implies the min formula for $c_-$. By duality, the max formula for $c_+$ follows.
\end{proof}

\subsection{The max formula on closed and aspherical symplectic manifolds}\label{sec:max_form_aspherical}
In this section, we establish the max formula for the spectral invariant $c$ introduced by Schwarz in \cite{schwarz} using Hamiltonian Floer theory. Let us quickly remind the reader of its construction.

\subsubsection{Hamiltonian Floer theory and spectral invariants}  \label{sec:def_c_aspherical}
In this section, we review the necessary preliminaries on Hamiltonian Floer theory and spectral invariants.   We refer the reader to Section \ref{sec:preliminary-N} for preliminaries, and our conventions, on the action functional and the Conley--Zehnder index.  Throughout the section, $(M, \omega)$ will denote a closed, connected and aspherical symplectic manifold.  The closed symplectic manifolds we are interested in this paper, i.e. closed surfaces other than $\S^2$, are all aspherical.  Floer homology was first introduced in the setting of aspherical manifolds by Floer \cite{Floer88}.  The standard reference for Floer theory in the settings of this section is \cite{salamon_lec}.  For further information on the subject  we invite the reader to consult \cite{mcduff-salamon}, \cite{audin-damian}.

  Although spectral invariants are defined for degenerate and even continuous Hamiltonians, Hamiltonian Floer homology can only be defined for non-degenerate Hamiltonians and therefore throughout the rest of this section we suppose that all Hamiltonians are non-degenerate. The Floer complex of (non-degenerate) $H$ is defined as the  $\bb Z_2$--vector space spanned by $\Crit(\m A_H)$ the set of critical points of the action functional.  Recall that $\Crit(\m A_H)$ is the set of contractible 1--periodic orbits of $\phi^t_H$.  This complex is graded by the Conley-Zehnder index.

Floer's differential is defined by counting perturbed pseudo-holomorphic cylinders:  pick a 1--parameter family of  $\omega$--compatible almost complex structures $J_t$ and consider maps $ u \co \bb R \times S^1 \rightarrow M$ satisfying Floer's equation
\begin{align}\label{eq:Floer}
\del_s u + J_t(u)(\del_t u - X_H^t(u))=0.
\end{align}
The set of Floer trajectories between two critical points of $\cA_H$, $x_-$ and $x_+$, is defined as 
\begin{align*}
  \m{\widehat{M}}(x_-,x_+;H,J) = \left\{\! u \co \bb R \times S^1 \rightarrow M \left|\!
      \begin{array}{l} u \text{ satisfies } \eqref{eq:Floer} \\ \forall t,\, u(\pm\infty,t)= x_\pm(t) \end{array}
\!\right. \!\!\right\}
\end{align*}
where the limits $u(\pm\infty,t)$ are uniform in $t$. Note that the above set admits an $\bb R$--action by reparametrization $s \mapsto s+\tau$.   The moduli space of Floer trajectories between $x_-$ and $x_+$, denoted by $\m M(x_-,x_+;H,J)$, is the quotient $\m{\widehat{M}}(x_-,x_+;H,J)/\bb R$. 

 The almost complex structure $J$ is said to be \emph{regular} if the linearization of the operator $u\mapsto \del_s u + J_t(u)(\del_t u - X_H^t(u))$ is onto for all $u$ in $\m{\widehat{M}}(x_-,x_+;H,J)$. Regularity of $J$ implies that the above moduli spaces  are all smooth finite dimensional manifolds and the dimension of $\m M(x_-,x_+;H,J)$ is $\CZ (x_-) - \CZ (x_+) -1$.   A suitably generic choice of $J$ is regular in the following sense:  The set of regular $J$'s, denoted by $\m J_{reg}(H)$, is of second category in the set of all compatible almost complex structures.  If $\CZ(x_-) - \CZ(x_+)=1$, the moduli space is compact and hence finite.  This allows us to define the Floer boundary map
$ \partial : CF_*(H) \rightarrow CF_{*-1}(H)$: For a generator $x_-$ we define  $\partial (x_-)$ by     
 \begin{align*}
  \partial(x_-) = \sum_{x_+} \# \m M (x_-,x_+;H,J) \cdot x_+
\end{align*}
where the sum is taken over all 1--periodic orbits $x_+$ such that $\CZ(x_-) -\CZ(x_+) = 1$ and $\#$ denotes the mod--2 cardinality of $\m M (x_-,x_+;H,J)$.  The above definition is extended to the entire chain complex by linearity.  

It is well-known that $\partial^2 = 0$ and thus $\partial$ defines a differential on  $CF_*(H)$.  The Floer homology of $(H, J)$, denoted by $HF_*(H, J)$, is the homology of the complex $(CF_*(H), \partial)$.  

In the course of the proof of Theorem \ref{theo:max-formula-aspherical}, we will appeal to the following observation about the structure of $\m J_{reg}(H)$.
\begin{remark} \label{rem:trans_boundary}
Suppose that $H$ is a non-degenerate Hamiltonian and let $W$ denote an open subset of $M$ containing all the 1--periodic orbits of the flow of $H$.  Fix an almost complex structure $J_0$ on $M$.   One can find a regular almost complex structure $J \in \m J_{reg}(H)$ such that $J = J_0$ on the complement of $W$. 

 This fact, which was explained to us by A. Oancea, follows easily from the content of the proof of transversality presented in \cite{FHS}; see Theorem 5.1 of \cite{FHS}. 
\end{remark}

\noindent \textbf{Invariance of Floer homology.}  Although the Floer complex depends on $(H, J)$, the Floer homology groups are independent of this auxiliary data.  Indeed, there exist morphisms
\begin{align*}
  \Psi_{H_0}^{H_1} \co CF(H_0) \rightarrow CF(H_1)
\end{align*}
inducing isomorphisms in homology which are called continuation morphisms. (To keep the notation light we have eliminated the almost complex structures from our notations.) We now describe the morphism $\Psi_{H_0}^{H_1}$. 
Pick $J_i  \in \m J_{reg}(H_i)$ and take a homotopy, denoted by $(H_s, J_s)$, from $(H_0, J_0)$ to $(H_1, J_1)$   such that  $$(H_s, J_s)=\begin{cases}  (H_0, J_0) &\text{ if } s \leq 0\\
(H_1, J_1) &\text{ if }s \geq 1\end{cases}.$$ 

Consider maps $u: \R \times \S^1 \rightarrow M$ solving an $s$--dependent version of Floer's equation \eqref{eq:Floer}:
\begin{align}\label{eq:Floer_cont}
\del_s u + J_{s,t}(u)(\del_t u - X_H^{(s,t)}(u))=0 \,\, \forall (s,t) \in \bb R \times S^1.
\end{align}
 
For 1--periodic orbits $x_0 \in \Crit(\m A_{H_0}), x_1 \in \Crit(\m A_{H_1})$ define the moduli space 
$$\m{M}(x_0,x_1, H_s, J_s) = \left\{\! u \co \bb R \times S^1 \rightarrow M \left|\!
      \begin{array}{l} u \text{ satisfies } \eqref{eq:Floer_cont} \\ u(- \infty,t)= x_0(t), \, u(+ \infty,t)= x_1(t) \end{array}
\!\right. \!\!\right\}$$

The homotopy $(H_s, J_s)$ is said to be regular if the linearization of the operator $u\mapsto \del_s u + J_{s,t}(u)(\del_t u - X_H^{(s,t)}(u))$ is onto, which implies that
the above moduli spaces are smooth finite dimensional manifolds of dimension $\CZ(x_0) - \CZ(x_1)$.  A suitably generic choice of $(H_s, J_s)$ is indeed regular. When the moduli space is zero--dimensional it is compact and hence finite.  Thus, we can define
\begin{align}\label{eq:cont_map}
  \Psi_{H_0}^{H_1}(x_0) = \sum_{x_1} \# \m M (x_0,x_1;H_s,J_s) \cdot x_1
\end{align}
where the sum is taken over all $x_1 \in \Crit(\m A_{H_1})$ such that $\CZ(x_0)= \CZ(x_1)$ and $\#$ denotes mod--2 cardinality. The morphism $\Psi_{H_0}^{H_1}$ is then extended by linearity to all of $CF_*(H_0)$. It can be shown that continuation morphisms descend to homology; we will continue to denote the maps induced on homology by the same notation. The induced map on homology does not depend on the choice of the homotopy $(H_s, J_s)$. Furthermore, at the homology level, continuation maps satisfy the following composition rule: 
\begin{align}\label{eq:compos_cont_maps}
  \Psi_{H_0}^{H_0}=\id \quad \mbox{and} \quad \Psi_{H_0}^{H_1}\circ \Psi_{H_1}^{H_2}=\Psi_{H_0}^{H_2}.
\end{align}
We see that $\Psi_{H_0}^{H_1}$ gives an isomorphism between $HF_*(H_0, J_0)$ and $HF_*(H_1, J_1).$   

Lastly, if $H$ is taken to be a $C^2$--small Morse function then the Floer homology of $H$ coincides with its Morse homology.  It follows from the above that for any regular pair $HF_*(H,J) = H_*(M).$

Invariance of Floer homology can also be established via the PSS morphism \cite{PSS}, $$\Phi_H : H_*(M) \rightarrow HF_*(H, J),$$ which gives a direct isomorphism between Morse homology and Floer homology.  Below, we will use the fact that such isomorphism exists to construct spectral invariants but we will not recall the construction of the PSS isomorphism.

 The following observation, which is analogous to Remark \ref{rem:trans_boundary}, will be used in the course of the proof of   Theorem \ref{theo:max-formula-aspherical}.
\begin{remark} \label{rem:trans_cont}
Suppose that $H_0, H_1$ are non-degenerate Hamiltonians and $J_i \in \mathrm{J}_{reg}(H_i)$ are regular almost complex structures.  Let $(H_s, J_s)$ be any homotopy, as described above, from $(H_0, J_0)$ to $(H_1, J_1)$.
Let $W$ denote an open subset of $M$ containing all the 1--periodic orbits of the flows of $H_1$, $H_2$.  One can find a regular homotopy $(H'_s, J'_s)$ from $(H_0, J_0)$ to $(H_1, J_1)$  such that $H' = H$ and $J' = J$ on the complement of $W$. 

 This fact, like Remark \ref{rem:trans_boundary}, follows easily from the content of the proof of Theorem 5.1 of \cite{FHS}. 
\end{remark}

\noindent \textbf{Spectral Invariants.} 
Let $u: \R \times S^1 \rightarrow M$ denote a Floer trajectory solving either one of Equations \eqref{eq:Floer}, \eqref{eq:Floer_cont}.  The energy of $u$ is defined as 
\begin{align}\label{def:energy}
 E(u) := \int_{\R\times[0,1]}\|\partial_su\|^2dsdt ,
\end{align}
where $\|\cdot\|$ is the norm associated to the metric $\omega(\cdot,J\cdot)$. Clearly, $E(u) \geq 0$.

It follows from a standard computation that if  $u$ is a Floer trajectory contributing to the boundary map, i.e. $u \in  \widehat{\m M}(x_-,x_+;H,J)$, then
\begin{align}\label{eq:energy_action}
\m A_H(x_-)-\m A_H(x_+)= E(u).
\end{align}
 Thus  action decreases along Floer trajectories.   Now let $a\in\R$ be a regular value of the action functional, i.e. $a\notin\spec(H)$. It follows from this observation that if we denote by $CF_*^a(H)$ the $\Z_2$--vector space generated by 1--periodic orbits of action $<a$, then $CF_*^a(H)$
is a subcomplex of $CF_*(H)$. We denote $i^a:HF_*^a(H,J) \to HF_*(H,J)$ the map induced on homology by the inclusion.
Let $[M] \in H_*(M)$ denote the fundamental class \footnote{Spectral invariants can be defined for Morse homology classes other than $[M]$ however, we have not introduced spectral invariants in full generality since we will only be dealing with the  spectral invariants associated to $[M]$.} of $M$ and define the spectral invariant of $H$ to be the number
\begin{align}\label{def:spec_inv}
c(H)= \inf \{ a \in \bb R : \Phi_H([M]) \in \mathrm{im}(i^a) \} \,. 
\end{align}

Roughly speaking, this is the minimal action required to see the fundamental class $[M]$ in $HF_*(H,J)$.   Thus far we have defined $c(H)$ for non-degenerate $H$.  One can show that spectral invariants of two non-degenerate Hamiltonians $H$, $G$ satisfy the Lipshitz estimate from the Lipschitz continuity property in Section \ref{sec:formal-spec}.  This estimate allows us to extend $c( \cdot)$ continuously to all smooth (in fact continuous) Hamiltonians. 

The spectral invariant constructed in this section satisfies the spectrality and continuity axioms from Definition \ref{def:formal_spec} and all the properties discussed in Section \ref{sec:formal-spec}; for proofs we refer the reader to  \cite{Oh05b, Oh06a, schwarz}.  Below we prove that $c$ is indeed a \emph{formal} spectral invariant, in the sense of Definition \ref{def:formal_spec}, by showing that it satisfies the max formula.

\subsubsection{Proof of the max formula on closed aspherical manifolds.} \label{sec:proof-max-formula-aspherical}
 Our proof of Theorem \ref{theo:max-formula-aspherical}  relies on the following preliminary fact.
  
\medskip
  \noindent \textbf{Energy estimates for Floer trajectories:}  
  The following lemma is a slight reformulation of Proposition 3.2 of \cite{hein}. We will not provide a proof as it follows quite easily from Hein's argument.  A similar result appears in \cite{usher}; see Lemma 2.3 therein.  Recall that $E(u)$ denotes the Energy of a Floer trajectory as defined by Equation \eqref{def:energy}.
  
\begin{lemma}\label{lem:energy_est}
  Let $V$ denote an open subset of $M$ with (at least) two distinct smooth boundary components $W_1, W_2$. Consider a Hamiltonian $H$ which is autonomous in $V$ and whose time-1 map $\phi^1_H$ has no fixed points in $V$.  Furthermore, assume that $W_1$ and $W_2$ are contained in two distinct level sets of $H$.  Suppose that $u: \R \times \S^1 \rightarrow M$ satisfies Floer's equation \eqref{eq:Floer}.  There exists a constant $\epsilon(V, H|_V, J|_V) >0$, depending on the domain $V$ and the restrictions of the Hamiltonian $H$ and the almost complex structure $J$ to the domain $V$ such that if $u$ intersects $W_1$ and $W_2$ then  $$E(u) \geq \epsilon.$$
\end{lemma}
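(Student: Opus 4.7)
\emph{Plan.} The strategy is a compactness-contradiction argument. Suppose the conclusion fails: there exists a sequence of Floer trajectories $u_n \in \widehat{\m M}(x_-^n, x_+^n; H, J)$, each meeting both boundary components $W_1$ and $W_2$, with $E(u_n) \to 0$. The plan is to extract a non-trivial limit that would force the existence of a $1$-periodic orbit of $X_H$ living in $\overline{V}$ and touching both $W_1$ and $W_2$, contradicting the hypotheses that $\phi_H^1$ has no fixed points in $V$ and that $H$ separates $W_1$ from $W_2$ by distinct level values.

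First, I would localize. Since $\overline V$ is compact and $u_n$ crosses from $W_1$ to $W_2$, for each $n$ one can find a parameter $s_n \in \R$ and an arc $\gamma_n$ in the cylinder $\R \times S^1$ connecting a preimage of $W_1$ to a preimage of $W_2$ along which $u_n(\gamma_n)$ stays in $\overline V$. After translating in the $s$-direction and restricting attention to a uniformly bounded piece of cylinder, we get a sequence of maps $\tilde u_n$ into $\overline V$ (or a slight enlargement of it, where $H$ and $J$ are fixed), still satisfying the Floer equation for $H$, still meeting $W_1$ and $W_2$, and with $E(\tilde u_n) \to 0$.

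Next comes the key elliptic step: because energies go to zero and there is no sphere bubbling in the relevant setting, the standard $\varepsilon$-regularity/mean value inequality for the Floer operator yields uniform $C^1$ bounds for $\tilde u_n$ on compact sub-cylinders. Elliptic bootstrapping then gives $C^\infty_{\mathrm{loc}}$ convergence of a subsequence to a limit $\tilde u_\infty$ with $E(\tilde u_\infty) = 0$. A zero-energy solution of $\partial_s u + J(\partial_t u - X_H(u)) = 0$ is independent of $s$ and satisfies $\dot\gamma = X_H(\gamma)$, i.e.\ $\tilde u_\infty(s,t) = \gamma(t)$ for a (possibly constant) loop $\gamma$ tangent to $X_H$. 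By construction $\gamma$ has image in $\overline V$, and its image meets both $W_1$ and $W_2$. Since $H$ is autonomous on $V$, it is constant along $\gamma$, which forces $H|_{W_1} = H|_{W_2}$, contradicting the hypothesis that $W_1$ and $W_2$ lie on different level sets. This contradiction produces the desired $\eps = \eps(V,H|_V,J|_V) > 0$.

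The main obstacle is the localization-plus-compactness step: arranging that the limit point $\gamma$ actually lies in $\overline V$ (and not at some point where the trajectory has escaped $V$) and that the arc $\gamma_n$ stabilizes in length, so that the limit $\gamma$ genuinely connects $W_1$ to $W_2$ rather than degenerating to a point on a single level set. Here one exploits two independent facts: (i) since $\phi_H^1$ has no fixed points in $\overline V$, a compactness argument gives a uniform positive lower bound $\delta$ on $\sup_{t \in [0,1]} d(\phi_H^t(x), x)$ for $x \in \overline V$, which prevents the $\gamma_n$'s from shrinking to a point; (ii) the difference $|H|_{W_1} - H|_{W_2}|$ is a fixed positive quantity that must be realized by the oscillation of $H \circ \tilde u_n$ along $\gamma_n$, and combined with a uniform bound on $\|dH\|$ on $\overline V$ this gives a positive lower bound on the diameter of $\tilde u_n(\gamma_n)$. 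These two geometric constraints are exactly what prevents the zero-energy limit from being trivial and force the contradiction.
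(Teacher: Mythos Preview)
The paper does not actually prove this lemma: it simply cites Hein's Proposition~3.2 and Usher's Lemma~2.3, whose proofs are direct integral estimates rather than compactness arguments. So there is no ``paper's proof'' to compare to; what matters is whether your argument closes.

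Your compactness-by-contradiction strategy is natural, and you correctly identify the crux: one must guarantee that the zero-energy limit $\gamma$ genuinely touches both $W_1$ and $W_2$. But your proposed fixes (i) and (ii) do not achieve this. Both (i) and (ii) produce \emph{lower} bounds on the diameter of the image $u_n(\gamma_n)$ --- a fact that is already automatic, since that image contains points of $W_1$ and of $W_2$, which lie on distinct $H$-levels. What you actually need is an \emph{upper} bound on the $s$-extent of the domain arc $\gamma_n$, so that after translation both endpoints sit in a fixed finite cylinder and are seen by the $C^\infty_{\mathrm{loc}}$ limit. Nothing in (i) or (ii) gives that: as $E(u_n)\to 0$, the trajectory becomes nearly $s$-independent on every compact piece, and it is entirely possible that the $W_1$-hit and the $W_2$-hit occur at $s$-values drifting apart, with the limit on any fixed window being an orbit on a single level set (say on $W_1$). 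No contradiction follows. A second, smaller issue: in (i) you assume no fixed points of $\phi_H^1$ in $\overline V$, while the lemma only assumes this in the open set $V$; periodic orbits on $\partial V$ are not excluded, so the limit $\gamma$ sitting on $W_1$ is perfectly consistent with the hypotheses.

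The Hein/Usher approach sidesteps all of this by avoiding compactness entirely. The key observation is that since $H$ is autonomous on $V$ and $dH(X_H)=0$ there, Floer's equation gives $d(H\circ u) = dH(\partial_s u)\,ds + dH(J\partial_s u)\,dt$ on $u^{-1}(V)$, so $|\nabla(H\circ u)|$ is pointwise controlled by $|\partial_s u|$. The fixed gap $|H|_{W_1}-H|_{W_2}|>0$ then feeds into a direct crossing/coarea estimate over $u^{-1}(V)$ to bound $\int_{u^{-1}(V)}|\partial_s u|^2$ from below by a constant depending only on $V$, $H|_V$, $J|_V$. If you want to salvage the contradiction route, you would have to replace the ``bounded domain arc'' step by something quantitative of this sort --- at which point you have essentially reproduced the direct argument.
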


\begin{proof}[Proof of Theorem \ref{theo:max-formula-aspherical}]
Observe that it is sufficient to prove the theorem under the assumption that each $U_i$ is connected;  we will make this assumption from this point onward.  We first choose an auxiliary connected incompressible Liouville domain $U_0$ that does not intersect any of the $U_i$'s. For every $i=0,\ldots,N$, let $\xi_i$ denote a Liouville vector field of $U_i$. 
We construct shells  $V_0,\ldots, V_{N}$ near the boundary of the domains $U_0,\ldots, U_{N}$ as follows: a tubular neighborhood of the boundaries $\partial U_i$ can be identified, via a diffeomorphism, with  $(-\delta, \delta) \times \partial U_i$ such that $(-\delta, 0) \times \partial U_i$ is contained inside $U_i$.  Set $V_i = (0, \delta) \times \partial U_i$.   Observe that, since we are not supposing $\partial U_i$ is connected each shell $V_i$ might in fact be a union of connected shells.

 Take $\delta$ from the previous paragraph to be small enough such that $(-\delta, 0) \times \partial U_i$ does not intersect the support of $F_i$. Pick an autonomous Hamiltonian $H$ such that 
 \begin{enumerate}
 \item $H=0$ on $U_i \setminus (-\delta, 0) \times \partial U_i$ for all $i=0,\ldots,N$, and $H < 0$ on the rest of $M$. Hence, $H$ vanishes on the supports of all $F_i$'s and on $U_0$.
 \item $H$ has no critical points in $[-\delta, \delta] \times \partial U_i$, $i=0,\ldots,N$. 
 \item For each $i=0,\ldots,N$, the sets $\partial U_i$ and $\{\delta \}\times \partial U_i$ are contained in distinct level sets of $H$. 
 \item  In the interior of its support, $H$ is Morse and has no local maxima.  
 \item  In the interior of its support, $H$ is sufficiently $C^2$--small such that the only 1--periodic orbits of $H$ are its critical points and furthermore, the Morse index of these critical points coincides with their Conley--Zehnder index.
 \end{enumerate}

Fix an almost complex structure $J$ on $M$. Suppose that $u$ is a Floer trajectory, solving Floer's equation \eqref{eq:Floer} for any Hamiltonian and almost complex structure which coincide with $H$ and $J$ on the shells $V_0,\ldots, V_N$.   By applying Lemma \ref{lem:energy_est}, we obtain $\epsilon > 0$ such that if the image of $u$ crosses\footnote{To be more precise, by saying that the image of $u$ crosses one of the shells $V_0,\ldots, V_N$ we mean that there exists $i$ such that the image of $u$ intersects $U_i \setminus V_i$ and $M \setminus U_i$.} one of the shells $V_0,\ldots, V_N$ then 
 \begin{align} \label{eq:Doris_estimate}
  E(u) \geq 4 \epsilon. 
\end{align} 
 
    Next, we symplectically contract each of the $F_i$'s to obtain $F_{1,s},\ldots, F_{N,s}$ such that for each $i \in \{1,\ldots,N\}$ we have 
    \begin{align*}
    \spec(F_{i,s}) \subset (- \tfrac{\epsilon}{2}, \tfrac{\epsilon}{2}) \text{   and    }
    \|F_{i,s}(t, \cdot)\|_{\infty} \leq \tfrac{\epsilon}{2}.
    \end{align*} 
    By Equation \eqref{eq:spec_dial2}, $c(F_{i,s}) = e^s c(F_i)$ and $c(F_{1,s}+\ldots+ F_{N,s}) = e^s c(F_1 +\ldots+ F_N)$. Hence, it is sufficient to prove the max formula for the $F_{i,s}$'s.  To simplify our notation, we will continue to denote the newly obtained Hamiltonians $F_{i,s}$ by $F_i$.
   
  Define $F_{N+1} = F_1 +\ldots + F_N$.  We will need the following lemma to prove the max formula.   We postpone its proof to the end of this section.
   
   \begin{lemma}\label{lem:1}
   $c(F_i + H)= c(F_i )$ for $i=1,\ldots, N+1.$
   \end{lemma}
   
   Next, pick an autonomous Morse Hamiltonian $G_0$ which is a $C^2$--small perturbation of $H$, which coincides with $H$ outside of $U_0,\ldots, U_N$ and which has precisely $N+1$ maximum points $p_0 \in U_0$, $p_1\in U_1$,..., $p_N \in U_N$.  For $i=1,\ldots,N+1$ define $G_i = G_0 + F_i$.  For any indices $i,j$ denote $$\spec(G_i; U_j) = \{\m A_{G_i}(x): \, x \in \Crit(\m A_{G_i})  \text{ and $x$ contained in } U_j \}.$$

    Recall that $ \spec(F_{i,s}) \subset (- \frac{\epsilon}{2}, \frac{\epsilon}{2})$.  Therefore, by taking $G_0$ to be sufficiently $C^2$--close to $H$, and thus sufficiently  $C^2$--close to $0$ on $U_0 \cup\ldots\cup U_N$, we can guarantee that 
    \begin{align}\label{eq:small_spec}
    \spec(G_i; U_j) \subset (-\epsilon, \epsilon),
    \end{align}
for all $i\in\{0,\ldots,N+1\}$ and $j\in\{0,\ldots,N+1\}$.
    Furthermore, since $ \|F_{i,s}(t, \cdot)\|_{\infty} \leq \frac{\epsilon}{2}, \, \,\, \forall t \in [0,1]$ and the Hamiltonians $G_i$ all coincide with $H$ outside of the $U_i$'s we can also guarantee that
    \begin{align}\label{eq:small_norm}
     \| G_i(t, \cdot) - G_j(t, \cdot) \|_{\infty} \leq \epsilon, \, \,\, \forall t \in [0,1].
    \end{align}
    Lastly, by replacing $F_1,\ldots, F_{N+1}$ with $C^2$--nearby Hamiltonians we may assume that $G_1, \ldots, G_{N+1}$ are non-degenerate as well. 
    
   By Remark \ref{rem:trans_boundary} we can pick almost complex structures $J_i \in \m J_{reg}(G_i)$ such that on the shells $V_0,\ldots, V_N$ each $J_i$ coincides with the almost complex structure $J$ introduced above to obtain the estimate  \eqref{eq:Doris_estimate}.  By doing so, and noting that the $G_i$'s coincide with $H$ on the shells $V_0,\ldots, V_N$, we can ensure that the estimate $$E(u) > 4 \epsilon$$ holds for any Floer trajectory $u$ of the Hamiltonians $G_i$, solving Equation \eqref{eq:Floer}, which crosses any of the shells $V_0,\ldots, V_N$.  
  
  In the course of this proof we will also need to use the estimate \eqref{eq:Doris_estimate} for Floer trajectories of the various continuation morphisms  $\Psi^{G_i}_{G_j} :CF_*(G_i) \rightarrow CF_*(G_j)$, for any $i,j \in \{0,\ldots,N+1\}.$ To define these morphisms, we must make a specific choice of a homotopy from $(G_i, J_i)$ to $(G_j, J_j)$.  
   By Remark \ref{rem:trans_cont}, we can pick a regular homotopy $(G_s^{ij}, J_s^{ij})$ from $(G_i, J_i)$ to $(G_j, J_j)$ such that on the shells $V_0, \ldots , V_N$  the almost complex structures $J_s^{ij}$  coincides with $J$, introduced above, and the Hamiltonians $G_s^{ij}$ coincide with the linear homotopy   $ (1 -\beta(s)) G_i + \beta(s) G_j = G_i + \beta(s)(G_j - G_i),$  where $\beta :\R \rightarrow [0,1]$ is a smooth non decreasing function such that $\beta(s) = 0$ for $s\leq 0$ and $\beta(s) = 1$ for $s\geq 1$. Note that for each $s$ we have $G_s^{ij} = H$  on the shells $V_0, \cdots, V_N$.
Once again, it follows that the estimate $$E(u) > 4 \epsilon$$ holds for every Floer trajectory $u$, solving Equation \eqref{eq:Floer_cont} for $G_s$, which crosses some of the shells $V_0,\ldots,V_N $.  We will now use this estimate to prove  the following lemma which will be used repeatedly.
  
\begin{lemma}\label{lem:no_cont_traj}
  Let $x_0 \in \Crit(\m A_{G_i}), x_1 \in \Crit(\m A_{G_j})$ be of Conley--Zehnder index $2n$.   Consider solutions $u$ of \eqref{eq:Floer_cont} contributing to the continuation morphism $\Psi^{G_i}_{G_j} :CF_*(G_i) \rightarrow CF_*(G_j)$.
  If there exists $u$ such that $u(- \infty, t)=x_0(t)$ and $u( \infty, t)=x_1(t)$, then there exists $k\in\{0,\ldots,N\}$ such that both of $x_0, x_1$ are contained in $U_k$.  
  
  Furthermore, the entire image of the Floer trajectory $u$ is contained in the interior of $\bar{U}_k \cup V_k$.  
 \end{lemma} 
 \begin{proof}[Proof of Lemma \ref{lem:no_cont_traj}]
Note that all the 1--periodic orbits of the $G_i$'s with Conley--Zehnder index $2n$ are contained in the $U_i$'s. For a contradiction suppose that $x_0, x_1$ are not contained in the same $U_i$.  The Floer trajectory $u$ would have to cross at least one of the shells $V_i$ and hence must have energy greater than $4 \epsilon$; see \eqref{eq:Doris_estimate}.  On the other hand, by picking the homotopy $G^{ij}_s$ to be $C^{\infty}$ close to $G_i + \beta(s) (G_j -G_i)$ and using a standard computation in Floer theory (see for example Lemma 2.12 of \cite{schwarz}) we get 
\begin{align}\label{eq:energy_action2}
 E(u) \leq \m A_{G_i}(x_0)-\m A_{G_j}(x_1) + \int_{0}^{1} \| G_i(t, \cdot) - G_j(t, \cdot) \|_{\infty} \, dt + \epsilon,
\end{align}
   where $\| \cdot \|$ denotes the $L^{\infty}$ norm on functions. 
 The  terms from the right hand side of the above inequality are all smaller than $\epsilon$ by Equations \eqref{eq:small_spec} and \eqref{eq:small_norm}.  Therefore, the right hand side gives an upper bound of approximately $3 \epsilon$ for $E(u)$ contradicting the lower bound of $4 \epsilon$ for $E(u)$.  
 
 We see from the above that the Floer trajectory $u$ can not cross any of the shells $V_i$.  Hence, the entire image of $u$ must be contained in $\bar{U}_k \cup V_k$ for some $k$.
\end{proof}    
 
 We will also need a variation of the above lemma for the Floer boundary maps $\partial : CF_*(G_i) \rightarrow CF_*(G_i).$  We will not give a proof of this lemma as it is similar to, and in fact simpler than, the proof of Lemma \ref{lem:no_cont_traj}.
 \begin{lemma}\label{lem:no_bdry_traj}
  Let $x_0, x_1 \in \Crit(\m A_{G_i})$ such that $x_0, x_1$ are contained in $U_0 \cup \ldots\cup U_N$.   Consider solutions $u$ of \eqref{eq:Floer} contributing to the boundary map $\partial :CF_*(G_i) \rightarrow CF_*(G_i)$.  If there exists $u$ such that $u(- \infty, t)=x_0(t)$ and $u( \infty, t)=x_1(t)$, then there exists $k\in\{0,\ldots,N\}$ such that both of $x_0, x_1$ are contained in $U_k$.
  
  Furthermore, the entire image of the Floer trajectory $u$ is contained in the interior of  $\bar{U}_k \cup V_k$.
 \end{lemma}

   For $ i=0,\ldots,N+1$, denote by $[M]_{G_i} \in HF_*(G_i, J_i)$ the element of $HF_*(G_i, J_i)$ representing the fundamental class of $M$. The following lemma describes the relations among the Floer cycles which represent these fundamental classes.
  \begin{lemma}\label{lem:main_lemma}
  The Floer homology classes $[M]_{G_i}$ have the following forms:
  \begin{enumerate}
  \item $[M]_{G_0}$ is represented uniquely by $p_0 + \ldots + p_N.$
  \item  For all $i=1,\ldots,N$, any representative of the fundamental class $[M]_{G_i}$ is  of the form 
$\left[C_i + \sum_{j\in\{0,\ldots,N\},j\neq i}\ p_j\right],$ where $C_i$ is a non-trivial sum of 1--periodic orbits of $G_i$ each of which is contained in the region $U_i$.
   \item Any representative of the fundamental class $[M]_{G_{N+1}}$ is of the form  $[p_0+C_1 +\ldots + C_N],$ where each $C_i$ is a non-trivial sum of 1--periodic orbits of $G_i$ each of which is contained in the region $U_i$. 
   \end{enumerate}
   Furthermore, 
   \begin{equation*}
[M]_{G_{N+1}} = [p_0 + C_1 +\ldots C_{N}] \iff  \forall i\in\{1,\ldots,N\},\ [M]_{G_i}=\left[C_i + \sum\nolimits_{i\neq j}p_j\right].                    
  \end{equation*} 
  \end{lemma}

  The max formula is an easy consequence of the above lemmas.  Since all the points $p_i$ have action almost zero, it follows immediately from Lemma \ref{lem:main_lemma} that $c(G_{N+1})$ is almost $\max\{c(G_1), \ldots, c(G_N)\}$. On the other hand, by Lemma \ref{lem:1}, $c(G_i) = c(F_i)$. Thus, the $F_i$'s satisfy the max formula. 

It remains to prove the Lemmas \ref{lem:1} and \ref{lem:main_lemma}.
  
  \begin{proof}[Proof of Lemma \ref{lem:main_lemma}]
  Since $G_0$ is $C^2$--small its Floer and Morse theory coincide.  Now in Morse homology the fundamental class is uniquely represented by the sum of all maxima and hence $[M]_{G_0} = [p_0 + \ldots+ p_N]$. To see this, one can think of the isomorphism between Morse homology and cellular homology, induced by the map that associates to a critical point its unstable manifold. In cellular homology, the fundamental class is uniquely represented by the sum of all cells of top dimension. Thus, the fundamental class in Morse homology has to be represented by the sum of all the critical points of maximal Morse index, that is of all maxima.

  Next we will prove the second assertion with regards to the form of $[M]_{G_i}$. For simplicity, we write the proof for $i=1$ and $N=2$. The argument in the general case is similar except that the notation is heavier. All of the 1--periodic orbits of $G_1$ with Conley--Zehnder index $2n$ are contained in the interior of $U_0\cup U_1 \cup U_2$. Thus, any representative of $[M]_{G_1}$ is of the form $C_1 + \lambda p_0+ \mu p_2$ where $C_1$ is a sum of 1--periodic orbits in $U_1$ and $\lambda,\mu \in \Z_2$.  We must prove that $C_1$ is non-trivial, $\lambda \neq 0$ and $\mu \neq 0$. 
 
   Consider the continuation morphism $\Psi^{G_0}_{G_1} :CF_*(G_1) \rightarrow CF_*(G_0)$ as defined by Equation \eqref{eq:cont_map}.  Since $[M]_{G_0}$ is uniquely represented by $p_0+ p_1 + p_2$ it must be the case that $p_0+ p_1 + p_2 = \Psi^{G_0}_{G_1}(C_1 + \lambda p_0+\mu p_2) =  \Psi^{G_0}_{G_1}(C_1) +\Psi^{G_0}_{G_1}( \lambda p_0)+\Psi^{G_0}_{G_1}( \mu p_2).$   Lemma \ref{lem:no_cont_traj} implies that $\Psi^{G_0}_{G_1}(C_1) = p_1$, $\Psi^{G_1}_{G_0}( \lambda p_0) = p_0$ and $\Psi^{G_0}_{G_1}( \mu p_2)=p_2$.  In particular, $C_1 \neq 0$, $\lambda \neq 0$ and $\mu\neq 0$. 

The proof of the third  assertion, about $[M]_{G_3}$ is very similar to the above and hence we will omit it. 

Lastly, we prove the final assertion. Again, to lighten the notation we only show the argument in the case $N=2$. 
 We will need to compare different continuation maps and therefore, it will be necessary for us that the homotopies used to define these maps are compatible in the following sense: For all $0\leq i,j,i',j'\leq 3$, $(G_s^{ij},J_s^{ij})=(G_s^{i'j'},J_s^{i'j'})$ on each open set $\bar U_k\cup V_k$ where $G_i=G_{i'}$ and $G_j=G_{j'}$.

 First, suppose that $[M]_{G_3} = [p_0 + C_1 + C_2]$.  We must show that $[M]_{G_1} = [C_1 + p_0 + p_2]$ and $[M]_{G_2} = [C_2 + p_0 + p_1 ]$.  We begin by proving the following claim about the continuation morphism $\Psi^{G_3}_{G_3} :CF_*(G_3) \rightarrow CF_*(G_3).$

\begin{claim}\label{claim:1}
 $\Psi^{G_3}_{G_3} (C_i) = C_i + B_i,$ for $i=1,2$ where $B_i$ is in the image of the Floer boundary map $\partial: CF_*(G_3) \rightarrow CF_*(G_3).$
\end{claim}
\begin{proof}[Proof of Claim \ref{claim:1}]
Recall that $\Psi^{G_3}_{G_3}$ induces the identity map on homology; see \eqref{eq:compos_cont_maps}.
This  in particular implies that  $\Psi^{G_3}_{G_3}(p_0+C_1 + C_2) = p_0+C_1 + C_2 + B,$ where $B$ is a boundary term.  First suppose that $B$ has a non-trivial $p_0$ contribution.  This would entail the existence of a Floer boundary trajectory $u$, solving Equation \ref{eq:Floer} for the Hamiltonian $G_3$, such that $u(\infty, t) = p_0$.  Now, $u(-\infty, t)$ would have to be a 1--periodic orbit of CZ--index $2n+1$.  Since all such 1--periodic orbits are contained in the open sets $U_i$, we conclude using Lemma \ref{lem:no_bdry_traj} that $u(-\infty, t)$ is contained in $U_0$.  But $G_3|_{U_0} = G_0|_{U_0}$ and $G_0$ is a $C^2$--small Hamiltonian and hence it has no 1--periodic orbit of $CZ$ index $2n+1$. We see that $B$ can not have a non-trivial $p_0$ contribution.
Since all the remaining 1--periodic orbits of $G_3$ with Conley--Zehnder index $2n$ are contained in $ U_1 \cup U_2$ it follows that $B = B_1 + B_2$ where $B_i$ is a sum of 1--periodic orbits contained in $U_i$.
Applying Lemma \ref{lem:no_cont_traj}, we conclude that  $\Psi^{G_3}_{G_3}(C_i) = C_i + B_i$. 

It remains to show that each $B_i$ is a boundary term.  We know that there exists $D \in CF_{2n+1}(G_3)$ such that $\partial D = B.$  Observe that all the 1--periodic orbits of $G_3$ with Conley-Zehnder index greater than $2n$ are contained in $U_1 \cup U_2$:  this is because outside of  $U_1 \cup U_2$ the Hamiltonian $G_3$ coincides with $H$ which is sufficiently $C^2$--small and Morse; see the 5th property in the list of properties of $H$.  It follows that we can write $D =D_1 + D_2$ with $D_i$ being a sum of 1--periodic orbits contained in $U_i$.  Finally, applying Lemma \ref{lem:no_bdry_traj} we conclude that $\partial(D_i) = B_i$.
\end{proof}

  We will next show that $[M]_{G_1} = [p_0 + C_1 + p_2]$.  This will be achieved by proving that the continuation morphism $\Psi^{G_1}_{G_3} :CF_*(G_3) \rightarrow CF_*(G_1)$ satisfies the following:
\begin{align*} 
\Psi^{G_1}_{G_3}(C_1) =  C_1 + B_1,\  \Psi^{G_1}_{G_3}( p_0) = p_0\text{ and } \Psi^{G_1}_{G_3}( C_2) = p_2,
\end{align*}
where $B_1$ is a boundary term.   Since $\Psi^{G_1}_{G_3}(p_0+C_1 + C_2) = \Psi^{G_1}_{G_3}( p_0)+\Psi^{G_1}_{G_3}(C_1) + \Psi^{G_1}_{G_3}(C_2)$ is a representative for $[M]_{G_1}$, by the second assertion it is of the form $p_0+C_1' + p_2$, where $C_1'$ is a sum of 1--periodic orbits contained in $U_1$.  By Lemma \ref{lem:no_cont_traj}, this can only occur if $\Psi^{G_1}_{G_3}(C_1)= C_1'$, $\Psi^{G_1}_{G_3}( p_0) = p_0$ and $\Psi^{G_1}_{G_3}(C_2) = p_2$.  We must now show that $\Psi^{G_1}_{G_3}(C_1)= C_1 + B_1$.  We will apply the latter part of Lemma \ref{lem:no_cont_traj}: since $C_1$ and $\Psi^{G_1}_{G_3}(C_1)= C_1'$ are both contained in $U_1$, it must be the case that all the Floer trajectories contributing to $\Psi^{G_1}_{G_3}(C_1)$ are contained in the set $\bar{U}_1 \cup V_1$.  Observe that $G_1|_{\bar{U}_1 \cup V_1} = G_3|_{\bar{U}_1 \cup V_1}$ (indeed they both coincide with $G_0|_{\bar{U}_1 \cup V_1} + F_1$)and hence $(G_s^{33},J_s^{33})=(G_s^{31},J_s^{31})$ by the compatibility requirement. It can easily be checked that this implies that $\Psi^{G_1}_{G_3}(C_1) = \Psi^{G_3}_{G_3}(C_1).$ But, Claim \ref{claim:1} tells us that $\Psi^{G_3}_{G_3}(C_1) = C_1 + B_1$ where $B_1$ is a boundary term.  

Similarly, one can prove  that $[M]_{G_2} = [p_0 + p_1 + C_2]$ by showing that the continuation morphism $\Psi^{G_2}_{G_3} :CF_*(G_3) \rightarrow CF_*(G_2)$ satisfies the following:
\begin{align*} 
 \Psi^{G_2}_{G_3}(p_0)=p_0,\ \Psi^{G_2}_{G_3}( C_1) = p_1 \text{ and } \Psi^{G_2}_{G_3}(C_2) =  C_2 + B_2,
\end{align*}
where $B_2$ is a boundary term. 

Finally, it only remains to prove that if $[M]_{G_1} = [p_0 + C_1 + p_2]$ and $[M]_{G_2} = [p_0 + p_1 + C_2]$ then $ [M]_{G_3} =  [p_0 + C_1 + C_2]$.  We will use the following claim which is analogous to Claim \ref{claim:1}.  Its proof is similar to the proof of Claim \ref{claim:1} and hence will be omitted.
\begin{claim}\label{claim:2}
$\Psi^{G_i}_{G_i} (C_i) = C_i + B_i,$ for $i=1,2$ where $B_i$ is in the image of the Floer boundary map $\partial: CF_*(G_i) \rightarrow CF_*(G_i).$
\end{claim}

 Clearly, $\Psi^{G_3}_{G_1}(p_0+C_1 + p_2) = \Psi^{G_3}_{G_1}(p_0)+ \Psi^{G_3}_{G_1}(C_1) + \Psi^{G_3}_{G_1}(p_2)$.  As was done in the previous paragraph, by appealing to the latter part of Lemma \ref{lem:no_cont_traj} and observing that $G_1|_{\bar{U}_1 \cup V_1} = G_3|_{\bar{U}_1 \cup V_1}$, which implies $(G_s^{13},J_s^{13})=(G_s^{11},J_s^{11})$, one proves that  $\Psi^{G_3}_{G_1}(C_1) =  \Psi^{G_1}_{G_1}(C_1) = C_1 + B_1,$ where $B_1$ is a boundary term and the last equality follows from Claim \ref{claim:2}.  Similarly, by appealing to the latter part of Lemma \ref{lem:no_cont_traj} and observing that $G_1|_{\bar{U}_2 \cup V_2} = G_0|_{\bar{U}_2 \cup V_2}$, we conclude that $\Psi^{G_3}_{G_1}(p_2) = \Psi^{G_3}_{G_0}(p_2)$. 
Lastly, using similar arguments, we check that $\Psi^{G_3}_{G_1}(p_0) =\Psi^{G_3}_{G_0}(p_0)$. 
 We conclude from the above discussion that $$ [M]_{G_3} =  [\Psi^{G_3}_{G_1}( p_0+C_1 + p_2)] = [\Psi^{G_3}_{G_0}(p_0)+ C_1 + B_1 + \Psi^{G_3}_{G_0}(p_2)]$$ $$ = [\Psi^{G_3}_{G_0}(p_0)+ C_1  + \Psi^{G_3}_{G_0}(p_2)] .$$
Similarly, we obtain $$[M]_{G_3}  = [ \Psi^{G_3}_{G_2}( p_0 + p_1 + C_2)] = [\Psi^{G_3}_{G_0}(p_0) + \Psi^{G_3}_{G_0}(p_1) +  C_2 + B_2 ]$$ $$= [\Psi^{G_3}_{G_0}(p_0) + \Psi^{G_3}_{G_0}(p_1) +  C_2  ].$$
Comparing the above we get $ [\Psi^{G_3}_{G_0}(p_0)+ C_1  + \Psi^{G_3}_{G_0}(p_2)] = [\Psi^{G_3}_{G_0}(p_0) + \Psi^{G_3}_{G_0}(p_1) +  C_2  ].$  Rearranging and simplifying the terms in the above equality we obtain  $$ [ \Psi^{G_3}_{G_0}(p_0) 
+ C_1 + C_2] =  [\Psi^{G_3}_{G_0}(p_0+ p_1 + p_2) .]$$
Lastly, we appeal to the second assertion of Lemma \ref{lem:main_lemma} to conclude that $\Psi^{G_3}_{G_0}(p_0)=p_0$ and hence obtain
$$ [ p_0 
+ C_1 + C_2] =  [\Psi^{G_3}_{G_0}(p_0+ p_1 + p_2).]$$
The right hand side is clearly a representative for $[M]_{G_3}$ and thus so is $ [ p_0 
+ C_1 + C_2]$.

%  The same argument gives the equalities  $\Psi^{G_3}_{G_2}(p_1) = \Psi^{G_3}_{G_0}(p_1)$ and $\Psi^{G_3}_{G_1}(p_0) = \Psi^{G_3}_{G_2}(p_0)=\Psi^{G_3}_{G_0}(p_0)$.
%
%
%Noting that $ [M]_{G_3} =  [\Psi^{G_3}_{G_1}( p_0+C_1 + p_2)] = [ \Psi^{G_3}_{G_2}( p_0 + p_1 + C_2)]$ we conclude  that $$\Psi^{G_3}_{G_0}(p_0)+ C_1 + B_1 + \Psi^{G_3}_{G_0}(p_2)= \Psi^{G_3}_{G_0}(p_0) + \Psi^{G_3}_{G_0}(p_1) +  C_2 + B_2$$ 
%The second assertion of Lemma \ref{lem:main_lemma} then shows that $\Psi^{G_3}_{G_0}(p_0)=p_0$. 

  \end{proof}

\begin{proof}[Proof of Lemma \ref{lem:1}]
  Without loss of generality, we may assume that every 1--periodic orbit of $F_i$ which is contained in the interior of its support is non-degenerate.  Indeed, this can be achieved by making a $C^2$ small perturbation of $F_i$ in the interior of its support. This in particular implies that every 1--periodic orbit of $F_i$ with non-zero action is non-degenerate.
  
  Consider the the $1$--parameter family of Hamiltonians $sH + F_i$, where $s \in [0,1]$.  The Hamiltonians $sH$ and $F_i$ have disjoint supports and hence $$\spec(sH + F_i) = \spec(sH) \cup \spec(F_i).$$ Since $\spec(F_i)$ is a set of measure zero and $c(sH+F_i)$ is a continuous function of $s$, Lemma \ref{lem:1} follows immediately from the following claim.
  
  \begin{claim}\label{claim:3}
  $c(sH+ F_i) \in \spec(F_i)$ for all $s \in[0,1]$.
  \end{claim}
  \begin{proof}[Proof of Claim \ref{claim:3}]
  We will use Lemma 9 of \cite{HLS12} which states the following: Let $H$ denote a possibly degenerate Hamiltonian and let $A = \{z \in Per_c(H): c(H) = \m A_{H}(z) \}$.  Suppose that every 1--periodic orbit in $A$ is non-degenerate. Then, there exists $z \in A$ such that $\CZ(z) = 2n.$ 
    
   Observe that every 1--periodic orbit of $sH + F_i$ with non-zero action is non-degenerate: This is because 1--periodic orbits with non-zero action are contained either in the interior of the support of $sH$ or the interior of the support of $F_i$.  Both $sH$ and $F_i$ are non-degenerate in the interior of their supports.
  
  In order to obtain a contradiction, suppose that the claim does not hold and hence there exists $s_0 \in [0,1]$ such that $c(s_0 H+ F_i) \in \spec(s_0 H) \setminus \spec(F_i).$    Note that $c(s_0 H+ F_i) \neq 0$ because $ 0 \in \spec(F_i)$.  We see that $\{z \in \Crit(\m A_{s_0 H+ F_i}): \, c(s_0 H+ F_i) = \m A_{s_0 H+ F_i}(z)\}$ is a subset of the non-degenerate critical points of $H$.  Because $H$ is Morse and $C^2$-small in the interior of its support the Conley--Zehnder index of these points coincides with their Morse index and, by construction, non of these critical points have Morse index $2n$.  This contradicts Lemma 9 of \cite{HLS12}, since every 1--periodic orbit of $sH + F_i$ with non-zero action is non-degenerate.
  \end{proof}
 \end{proof}
\end{proof}

\subsection{Counter-example on the sphere}\label{sec:countr_exmpl}
  In this section, we construct an example showing that the max formula of Theorem \ref{theo:max-formula-aspherical} does not hold for the spectral invariant constructed by Oh, in \cite{Oh05b,Oh06a},  on the sphere. Since neither $\omega$, nor the first Chern class $c_1$, vanish on $\pi_2(\S^2)$ we must adjust our definitions of the action functional, the Conley--Zehnder index and the spectral invariant $c$.
  
 We denote by $\Omega_0$ the space of contractible loops in $\S^2$ and define
 $$\tilde{\Omega}_0 = \frac{ \{ [z,u]: z \in \Omega_0(\S^2) , u: D^2 \rightarrow \S^2 , u|_{\partial D^2} = z \}}{[z,u] = [z', u'] \text { if } z=z' \text{ and } \bar{u} \# u' = 0 \text{ in } \pi_2(\S^2)},$$
   where $\bar{u} \# u'$ is the sphere obtained by gluing $u$, with its orientation reversed, to $u'$ along their common boundary. The disk $u$ in $[z, u]$, is referred to as the capping disk of the orbit $z$.  We define the action functional $\mathcal{A}_H: \tilde{\Omega}_0 \rightarrow \mathbb{R}$, associated to a Hamiltonian $H$, by
   $$\mathcal{A}_H([z,u]) =  \int_{0}^{1} H(t,z(t))dt \text{ }- \int_{D^2} u^*\omega.$$
The set of critical points of $\mathcal{A}_H$ consists of equivalence classes,  $[z,u] \in \tilde{\Omega}_0$, such that $z$ is a 1--periodic orbit of the Hamiltonian flow $\phi^t_H$.   

When $H$ is non-degenerate the set of 1--periodic orbits of $H$ can be indexed by the well known Conley--Zehnder index  $\CZ$. Here, we will recall some facts about $\CZ$ without defining it.  Our convention for normalizing $\CZ$ is as follows: Suppose that $g$ is a $C^2$--small Morse function. We normalize the Conley--Zehnder index so that for every critical point $p$ of $g$, 
$$ \mu_\mathrm{CZ}([p, u_p]) = i_{\text{Morse}}(p),$$
where $i_{\text{Morse}}(p)$ is the Morse index of $p$ and $u_p$ is a trivial capping disk.  For every $A \in  \pi_{2}(\S^2)$, the Conley--Zehnder index satisfies the following identity
\begin{equation}\label{eq:CZ-index identity}
   \CZ([z,u\#A]) = \CZ([z,u]) - 2 c_1(A).
\end{equation}

Floer homology of a non-degenerate Hamiltonian $HF_*(H)$ can be defined as in Section \ref{sec:def_c_aspherical} and it coincides with the quantum homology $QH_*(\S^2).$
The spectral invariant $c(H)$ is once again defined as the action value at which the fundamental class $[\S^2]$ appears in $HF_*(H)$.  We will now list, without proof, those properties of $c: C^{\infty}([0,1] \times \S^2)  \rightarrow \mathbb{R}$ which will be used later on. %Recall that the composition of two Hamiltonian flows, $\phi^t_H \circ \phi^t_G$, and the inverse of a flow, $(\phi^t_H)^{-1},$ are Hamiltonian flows generated by $H\#G(t,x) = H(t,x) + G(t, (\phi^t_H)^{-1}(x))$ and $\bar{H}(t,x) = -H(t, \phi^t_H(x))$, respectively.
\begin{prop} \label{prop:properties_spec}
    The spectral invariant $c: C^{\infty}([0,1] \times \S^2)  \rightarrow \mathbb{R}$ has the following properties:
    \begin{enumerate}   
  %  \item  (Shift) If $r : [0,1] \rightarrow \mathbb{R}$ is smooth then $c(H+r) = c(H) + \int_{0}^{1}{r(t) dt}.$
  % \item \label{normalization}(Normalization) $c(0) =0$.
    %\item \label{Symplectic Invariance}(Symplectic Invariance) $c(\eta^*a, \eta^*H ) = c(a, H)$ for any symplectomorphism $\eta$.
    \item \label{monotonicity} (Monotonicity) If $H \leq G$ then $c(H) \leq c(G)$.
   % \item \label{triangle} (Triangle Inequality) $c(H\#G) \leq c(H) + c(G)$.
    \item \label{continuity}(Continuity)  $ \displaystyle \int_{0}^{1} \min_{x \in M } (H_t-G_t) \, dt \leq c(H) - c(G) \leq \int_{0}^{1} \max_{x \in M } (H_t-G_t) \, dt.$
    \item \label{spectrality} (Spectrality) $c(H) \in \spec(H)$, i.e. $ \exists [z,u]$ such that $z$ is a 1--periodic orbit of $\phi^t_H$ and $c(H) = \mathcal{A}_H([z,u])$. Moreover, if $H$ is non-degenerate then $[z,u]$ can be chosen so that $\CZ([z,u]) = 2$. 
    \item \label{energy-capacity} (Energy-Capacity inequality) Suppose that the support of $H$ is displaced by $\phi^1_K$, i.e. $\phi^1_K(supp(H)) \cap supp(H) = \emptyset.$  Then, $$|c(H)| \leq \int_{0}^{1} \max_{x \in M } K_t - \min_{x \in M} K_t \, dt.$$
    %\item (Homotopy Invariance) Suppose that $H$ and $G$ are normalized and generate the same element of $\widetilde{Ham}(M)$.  Then, $c(H) = c(G)$.  
    \end{enumerate}   
   \end{prop}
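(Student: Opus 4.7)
The plan is to follow Oh's standard Floer-theoretic arguments \cite{Oh05b, Oh06a}, suitably adapted to the non-aspherical setting of $\mathbb{S}^2$ where we work with the covering $\tilde{\Omega}_0$ of capped loops. The key observation is that although $\omega$ and $c_1$ do not vanish on $\pi_2(\mathbb{S}^2)$, the group of spherical periods is a lattice $\int_{\mathbb{S}^2}\omega\cdot\mathbb{Z}$, so $\spec(H)$ remains a closed, nowhere dense subset of $\mathbb{R}$ for each $H$. I would prove continuity (property 2) first, then deduce monotonicity (property 1) as an immediate corollary; the remaining two properties rely on more refined Floer/PSS arguments.

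For continuity, assume both $H$ and $G$ are non-degenerate, consider the linear homotopy $H_s = H + s(G-H)$, and recall the standard energy identity for a continuation trajectory $u$ from $[z_-,u_-]$ to $[z_+,u_+]$:
$$\m{A}_G([z_+,u_+]) - \m{A}_H([z_-,u_-]) \;=\; -E(u) + \int_0^1 \bigl(G_t(u(s,t)) - H_t(u(s,t))\bigr)\,ds\,dt \;\leq\; \int_0^1 \max_x(G_t - H_t)\,dt.$$
Hence the continuation morphism restricts to filtered maps $HF_*^a(H) \to HF_*^{a+E}(G)$ with $E = \int_0^1\max(G-H)\,dt$, and since it intertwines the PSS isomorphisms with $\mathrm{id}:QH_*\to QH_*$, we obtain $c(G) \leq c(H) + E$. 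Reversing the roles of $H,G$ yields the full two-sided estimate, which then extends to all smooth (and even continuous) Hamiltonians by density and the Lipschitz bound itself. Monotonicity is then the case $G \geq H$, for which $\max(G-H)$ can be replaced by $0$ on the negative side (or one simply takes $F=G-H\geq0$ and notes $c(H+F)\geq c(H)+\int\min F\,dt \geq c(H)$).

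For spectrality (property 3) in the non-degenerate case: $\spec(H)$ is a closed nowhere dense subset of $\R$, and $a\mapsto HF_*^a(H)$ is constant on each connected component of $\R\setminus\spec(H)$. The quantum fundamental class $[\mathbb{S}^2] \in QH_2(\mathbb{S}^2)$ has grading $2$, so its PSS image lies in $HF_2(H)$; for $a$ slightly above $c(H)$ the PSS image lies in $\mathrm{im}(i^a)$, while for $a$ slightly below it does not. This forces $c(H)\in\spec(H)$, and more precisely $c(H)$ is realized by a generator $[z,u]$ with $\mu_{CZ}([z,u]) = 2$, since only such generators contribute to $HF_2(H)$. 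For degenerate $H$, approximate by non-degenerate $H_n \to H$ in $C^\infty$, extract realizing cappings $[z_n, u_n]$ with $c(H_n) = \m{A}_{H_n}([z_n,u_n])$, and use Gromov compactness: the loops $z_n$ converge (after subsequence) to a $1$-periodic orbit $z$ of $\phi_H^1$, and the cappings converge up to bubbling of spheres whose symplectic areas lie in the lattice; hence the limit action lies in $\spec(H)$, and by continuity it equals $c(H)$.

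For the energy-capacity inequality (property 4): adopt the homotopy argument already used in Section~\ref{sec:formal-spec}. Set $F_s(t,x) = sH(st,x) + K(t,(\phi_H^{st})^{-1}(x))$, whose time-one map is $\phi_H^s\circ\phi_K^1$; since $\phi_K^1$ displaces $\supp(H)$, the fixed points of $\phi_H^s\circ\phi_K^1$ coincide with those of $\phi_K^1$, and for each such fixed point $x$ and any capping $u$ one verifies $\m{A}_{F_s}([x,u]) = \m{A}_K([x,u])$. Thus $\spec(F_s) = \spec(K)$ is independent of $s$, and since the discrete spectrum forces the continuous function $s\mapsto c(F_s)$ to be constant, we get $c(K) = c(H(t,x) + K(t,(\phi_H^t)^{-1}x))$. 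Combining with the Lipschitz bound from property 2 applied to this composite against $H$ yields $c(H) \leq c(K) - \int_0^1\min K_t\,dt \leq \int_0^1(\max K_t - \min K_t)\,dt$, and the lower bound $-\int(\max - \min)$ is symmetric.

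The main obstacle is the spectrality statement for degenerate $H$ in the non-aspherical case, where sphere bubbling can occur during the Gromov limit; one must verify that the bubble areas adjust the capping within a fixed equivalence class in $\tilde\Omega_0$ so that the limit action still lies in $\spec(H)$. All other steps are direct analogues of the aspherical arguments in \cite{schwarz} and Section~\ref{sec:def_c_aspherical}.
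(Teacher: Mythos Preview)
Your outline is correct and follows the standard Floer-theoretic arguments. In fact you have done more than the paper does: the paper offers no proof of this proposition at all, simply writing ``For the proofs of the first three of the above properties we refer the reader to \cite{Oh05b, Oh06a}. The fourth property can be deduced from Proposition 3.1 of \cite{usher10}.'' Your sketch of properties (1)--(3) is precisely the content of those references, and your identification of the degenerate spectrality step as the one place where the non-aspherical setting requires genuine care (sphere bubbling in the Gromov limit) is accurate.

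The only divergence worth noting is for the energy-capacity inequality. The paper defers to Usher's sharp energy-capacity inequality \cite{usher10}, whereas you reproduce the elementary displacement-homotopy argument that the paper itself uses in Section~\ref{sec:standard-prop} for \emph{formal} spectral invariants. Your argument is correct on $\mathbb{S}^2$ as well: the key step, that $s\mapsto c(F_s)$ is constant because $\spec(F_s)=\spec(K)$ is a fixed measure-zero set, goes through since on $\mathbb{S}^2$ the spectrum of any Hamiltonian is a countable union of translates (by multiples of $\mathrm{Area}(\mathbb{S}^2)$) of a measure-zero set, hence still measure zero. Usher's approach has the advantage of yielding sharper constants in more general settings, but for the statement as written your direct argument suffices.
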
 
   For the proofs of the first three of the above properties we refer the reader to \cite{Oh05b, Oh06a}.  The fourth property can be deduced from Proposition 3.1 of  \cite{usher10}.

\subsubsection{The Counter example.}
We will begin with a description of our set up.  We equip $\S^2$ with the standard area form normalized such that the total area of the sphere is $1$.  We let $S, N$ denote the South and the North pole of the sphere and we denote by $z: \S^2 \rightarrow [0,1] $ the standard height function normalized such that 
\begin{itemize}
\item $z(S) = 0$,
\item Area of the disk $\{x \in \S^2: z(x) \leq a\}$ is $a$.
\end{itemize}

We will say that a Hamiltonian $H:\S^2 \rightarrow \R$ is a function of height if there exists a function $h:[0,1] \rightarrow \R$ such that $H = h(z)$.  As in the case of radial Hamiltonians considered in Section \ref{sec:exampl-radi-hamilt},  the 1--periodic orbits of $H$ occur at values of $z$ where $h'(z)$ is an integer. Now, suppose that $h'(z_{\alpha})=1$ and denote by  $\alpha$ the corresponding  1--periodic orbit of $H$.  Let $u_{\alpha} : \D \rightarrow \S^2$ denote the embedded disk which caps $\alpha$ and contains the North pole $N$; it follows from our conventions that  
 $\alpha$ runs from East to West, thus $u_{\alpha}$ has negative area,
 %, [and I would put a minus sign in the following equality {\color{sobhan} Ok, I changed the sign.}]}
%$Area(u_\alpha ) = - (1 -z_\alpha)$
 and the  action of the capped orbit $[\alpha, u]$ is given by $\m A_H([\alpha, u_{\alpha}]) = h(z_{\alpha}) + (1 - z_{\alpha}).$

   We will use the following lemma in the construction of our counter example:
\begin{lemma}\label{lem:count_exmpl}
Let $H = h(z)$ be a smooth function of height as described above and  suppose that it has the following properties: 
\begin{enumerate}
\item $h' = 0$ on an interval of the form $[0, \delta]$,
\item $0 < h' <2$ on $(\delta, 1)$,
\item $h' = 1$ at precisely two points  $z_\beta, z_\alpha.$  Suppose that $z_\beta < z_\alpha,$
\item $h'(1)$ is small but non zero.

\end{enumerate}
Then, $c(H) = \min\{h(z_\beta ) + (1 - z_\beta ), h(1) \}.$
\end{lemma}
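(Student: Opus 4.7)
The plan is to compute $c(H)$ by a two-step reduction: first identify $\spec(H)$ explicitly, then use the Conley-Zehnder part of the spectrality axiom to cut the list of candidates down to three values, and finally pin down which of those values $c(H)$ equals using monotonicity and a short deformation argument.

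By the Shift property I may assume $h(0)=0$, so $0\le H\le h(1)$ everywhere. Beyond the highly degenerate family of fixed points filling $\{z\le\delta\}$, the only $1$-periodic orbits of $\phi_H^t$ are the constant orbit at the north pole $N$ together with the two (Morse-Bott) circle orbits at heights $z_\beta$ and $z_\alpha$. Using the same tangent-line interpretation of the action developed for radial Hamiltonians in Section~\ref{sec:exampl-radi-hamilt} (now reading the tangent at $z=1$ rather than at the boundary of a disk), I would show that these orbits with the ``top'' capping -- the disk containing $N$ -- have actions $h(1)$, $v_\beta:=h(z_\beta)+(1-z_\beta)$, and $v_\alpha:=h(z_\alpha)+(1-z_\alpha)$, with all other capped-orbit actions differing from these by integers. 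Since $h'>1$ on $(z_\beta,z_\alpha)$ and $h'<1$ on $(z_\alpha,1)$, both $v_\beta<v_\alpha$ and $h(1)<v_\alpha$.

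The second step is to use the strengthened spectrality of Proposition~\ref{prop:properties_spec}(3): for non-degenerate $H$, $c(H)$ is attained by a capped orbit of Conley-Zehnder index $2$. After a small non-degenerate perturbation splitting each circle into two non-degenerate orbits and regularising the $S$-disk into isolated minima, one computes the $\CZ$ indices from the rotation numbers (as in the remark on the relation between $\rho$ and $\CZ$) and finds: $\CZ([N,u_{\mathrm{triv}}])=2$, the ``upper'' perturbation of each circle at $z_\beta,z_\alpha$ gives a $\CZ=2$ orbit while the ``lower'' one gives $\CZ=1$, and the $S$-disk contributes only $\CZ=0$ minima. Sphere-cappings shift $\CZ$ by $-2c_1([\S^2])=-4$, so no other capping hits $\CZ=2$. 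Hence the $\CZ=2$ actions in the perturbed spectrum cluster around $h(1), v_\beta, v_\alpha$, and by continuity of $c$ along the perturbation and Proposition~\ref{prop:properties_spec}(3), $c(H)\in\{h(1),v_\beta,v_\alpha\}$.

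Monotonicity applied to $H\le h(1)$ gives $c(H)\le c(h(1))=h(1)<v_\alpha$, ruling out $c(H)=v_\alpha$. If $h(1)\le v_\beta$ the remaining possibility $c(H)=h(1)=\min(v_\beta,h(1))$ is forced. In the opposite regime $v_\beta<h(1)$, I still need to produce the sharper bound $c(H)\le v_\beta$. The strategy here is a deformation within the class of height functions satisfying hypotheses (1)-(4): contract the interval where $h'>1$ so that $z_\alpha$ approaches $z_\beta$, arriving at a limiting Hamiltonian $H_*$ with a single slope-$1$ crossing; along the deformation the three candidate values $h_s(1),v_\beta(s),v_\alpha(s)$ vary continuously and $v_\beta(s)$ stays strictly smallest among them, so by continuity of $c$ together with the CZ-index constraint $c(H_s)$ must remain on a single branch throughout. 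The value of $c(H_*)$ is then computed directly: after symplectically identifying the disk $\{z>z_\beta-\epsilon\}$ with a disk in $\R^2$, $H_*$ reduces to a ``single mountain'' of the form treated in Section~\ref{sec:exampl-radi-hamilt} and Lemma~\ref{lemma:c-for-simple-bumps}, giving $c(H_*)=v_\beta$. The main obstacle is precisely this last step: justifying that the branch followed by $c(H_s)$ is the $v_\beta$-branch rather than the $h(1)$-branch requires either a careful Floer-homological argument exhibiting an explicit cycle at action $v_\beta$ representing $[\S^2]$, or equivalently the reduction-to-Lemma~\ref{lemma:c-for-simple-bumps} sketched above; it is this asymmetry between the branches that ultimately produces the failure of the max formula on $\S^2$.
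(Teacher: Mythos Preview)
Your plan is the same as the paper's at the start --- reduce to a finite list of $\CZ=2$ candidates via Proposition~\ref{prop:properties_spec}(3), then use monotonicity and a deformation --- but there are two genuine gaps.

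\textbf{The Conley--Zehnder list is wrong.} The circles at $z_\beta$ and $z_\alpha$ do \emph{not} have the same index pattern: $h''(z_\beta)>0$ while $h''(z_\alpha)<0$, and this sign governs which side of the integer rotation number the elliptic perturbed orbit lands on. With the capping $u_\alpha$ containing $N$ one has $\rho(\alpha,u_\alpha)=-1$, and after perturbation the two non-degenerate orbits have indices $3$ and $4$, not $1$ and $2$. So the index-$2$ representative for $\alpha$ is $[\alpha,u_\alpha\#A]$, whose action is $h(z_\alpha)-z_\alpha$, \emph{not} $v_\alpha$. You also lose the candidate $[S,-A]$ with action $1$. (Compare the paper's Claim~\ref{claim:last}, which is exactly this computation.) With the correct list $\{1,\,v_\beta,\,h(z_\alpha)-z_\alpha,\,h(1)\}$ your ``$c(H)\le h(1)<v_\alpha$'' no longer eliminates the $\alpha$-candidate, and the $S$-candidate must be dealt with separately.

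\textbf{The reduction to Lemma~\ref{lemma:c-for-simple-bumps} is not valid.} That lemma computes a \emph{formal} spectral invariant on $\R^2$. The whole point of Section~\ref{sec:countr_exmpl} is that Oh's $c$ on $\S^2$ is \emph{not} formal, so there is no reason the $\S^2$ invariant of a disk-supported Hamiltonian should agree with the $\R^{2}$ invariant after a symplectic identification of the disk. Hence your end-point computation ``$c(H_*)=v_\beta$'' is unsupported, and without it the deformation argument cannot distinguish the $v_\beta$-branch from the $h(1)$-branch. You correctly flag this as the main obstacle, but neither alternative you offer resolves it.

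The paper's argument for the crucial inequality $c(H)\le v_\beta$ is quite different and uses the Energy--Capacity inequality rather than a branch-tracking deformation. First, in \emph{Step~I} one treats the special case $\delta>\tfrac12$ (support in the northern hemisphere): there Energy--Capacity gives $c(H)<\tfrac12$, killing $[S,-A]$, and $c(H)\ge 0$ kills the negative value $h(z_\alpha)-z_\alpha$. For $c(H)\le v_\beta$ one flattens $h$ on a tiny interval around $z_\beta$, writes $\tilde H=H_1+H_2$ with $H_1\le h(z_\beta)$ constant on $\{z\ge z_\beta\}$ and $H_2$ supported in the disk $\{z\ge z_\beta\}$ of area $1-z_\beta$, and uses
\[
c(\tilde H)\le \max H_1 + c(H_2)\le h(z_\beta)+(1-z_\beta),
\]
the last step being Energy--Capacity for the displaceable disk supporting $H_2$. \emph{Step~II} then handles general $\delta$ by symplectically contracting $H$ towards $N$ (so eventually the support lies in the northern hemisphere and Step~I applies) and checking from the explicit bifurcation diagram of the four candidate actions that the branch carrying $c$ never switches. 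This use of Energy--Capacity is the missing idea in your proposal.
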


Postponing the proof of this lemma to the end of this section, we now proceed with the construction of our counter example.  We pick a Hamiltonian $H = h(z)$ as in Lemma \ref{lem:count_exmpl} which satisfies the following additional properties:
\begin{enumerate}
\item $h(0) = - \frac{1}{2}, \, h(\frac{1}{2}) = 0, \, h(1) > \frac{1}{2},$
\item $z_\beta < \frac{1}{2}$ and $h(z_\beta) \approx - \frac{1}{2},$
\item $z_\alpha > \frac{1}{2}.$
\end{enumerate}  

\begin{figure}[h!]
\centering
\def\svgwidth{0.7\textwidth}
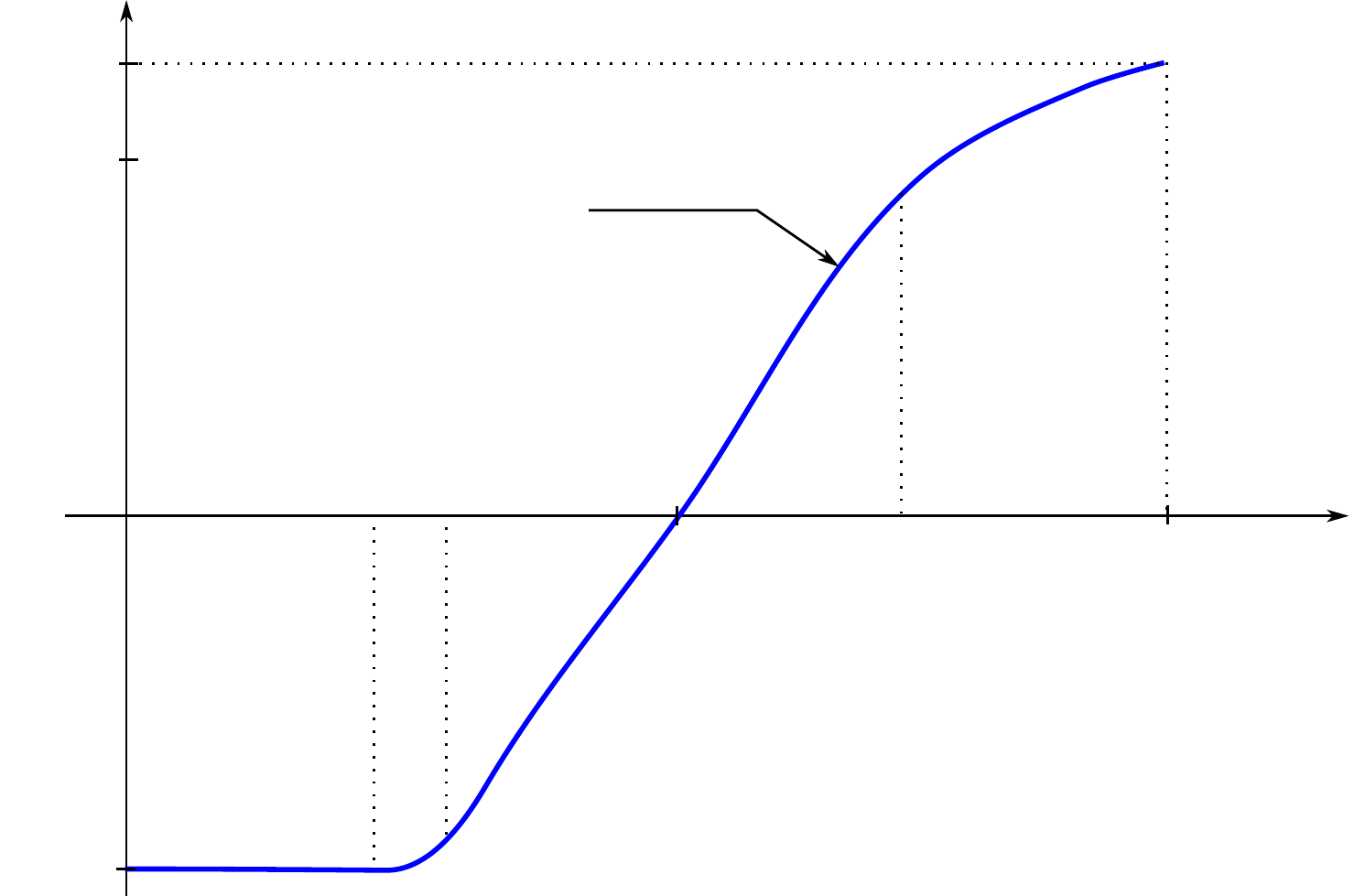
\caption{Graph of $h$.}
\label{fig:h_graph}
\end{figure}

See Figure \ref{fig:h_graph} for a graph of $h$.  The condition that $h(z_\beta) \approx - \frac{1}{2}$ implies that $h(z_\beta ) + (1 - z_\beta ) \approx  \frac{1}{2} - z_\beta <  \frac{1}{2} < h(1)$.  Hence, by Lemma \ref{lem:count_exmpl} we obtain 
\begin{align}\label{eq:c_H} 
c(H) = h(z_\beta ) + (1 - z_\beta ) \approx \frac{1}{2} - z_\beta.
\end{align}

Next, we perturb $H$ by a $C^0$--small amount in the following fashion: we modify $h$ on a small interval of the form $(\frac{1}{2} - 2 \delta',\frac{1}{2} + 2 \delta')$  so that $h$ becomes zero on the subinterval  $(\frac{1}{2} - \delta',\frac{1}{2} + \delta')$.  Call this new function $\tilde{h}$ and the corresponding Hamiltonian $\tilde{H} = \tilde{h}(z)$; see Figure \ref{fig:h_tilde_graph} for a graph of  $\tilde{h}$.  Observe that by picking $\delta'$ to be small enough we can ensure that  $\tilde{H}$ is $C^0$ close to $H$ and so  \begin{align}\label{eq:c_tildeH}
      c(\tilde{H}) \approx \frac{1}{2} - z_\beta.
    \end{align}
 
\begin{figure}[h!]
\centering
\def\svgwidth{0.7\textwidth}
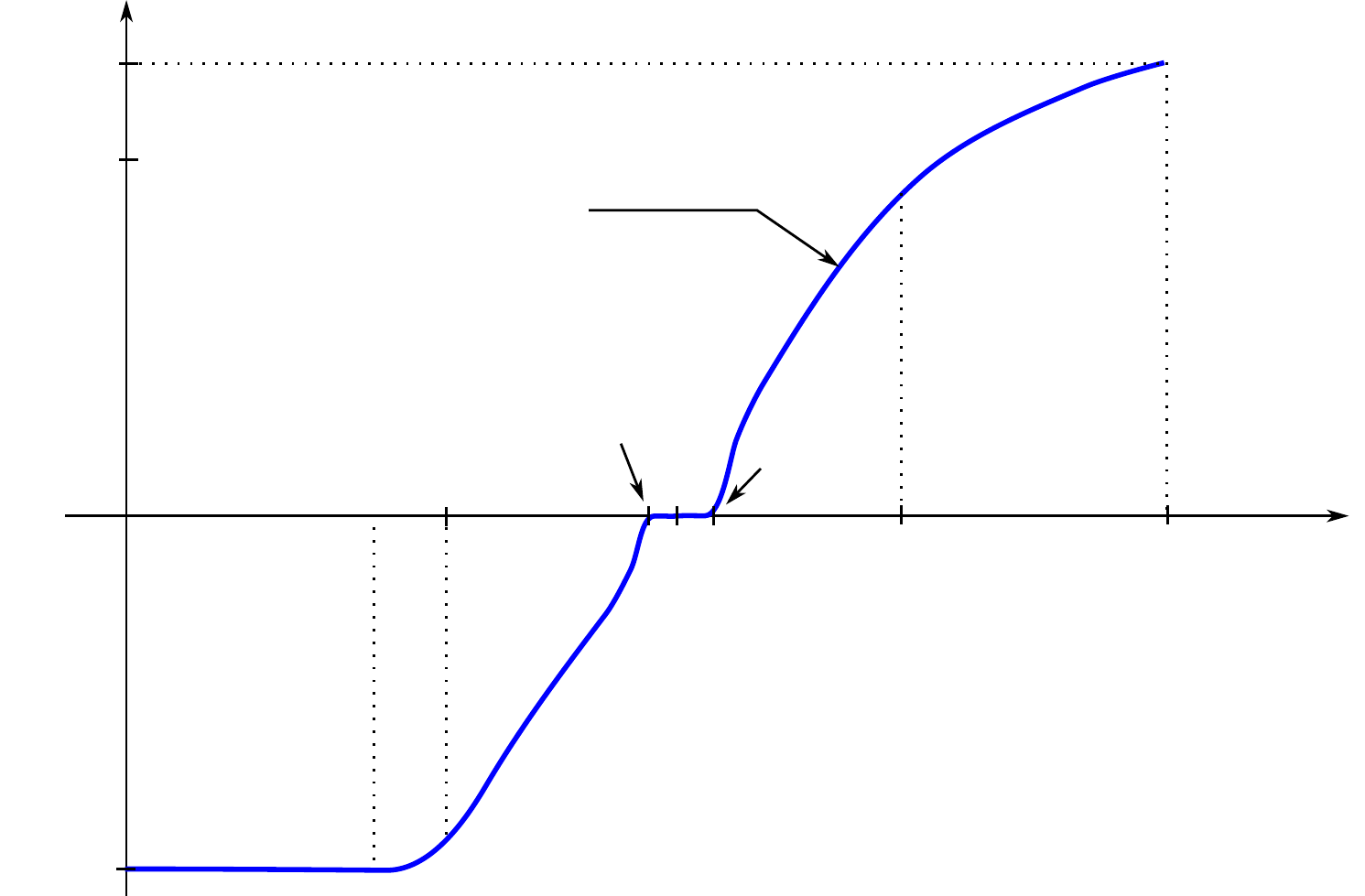
\caption{Graph of $\tilde{h}$.}
\label{fig:h_tilde_graph}
\end{figure}

Now, note that $\tilde{h} = h_1 + h_2$ where $h_1$ is supported in $[0, \frac{1}{2} - \delta']$ and $h_2$ is supported in $[\frac{1}{2} +\delta', 1]$.  Therefore, $\tilde{H} = H_1 + H_2$ where $H_i = h_i(z)$. The Hamiltonian $\tilde{H} = H_1 + H_2$ is precisely of the type appearing in the statement of the max formula in Theorem \ref{theo:max-formula-aspherical}: each of $H_1$ and $H_2$ is supported in a disk.  We will now prove that 
$\max\{c(H_1), c(H_2) \} \approx \frac{1}{2}$ which will imply by, Equation \ref{eq:c_tildeH}, that 

$$c(H_1 + H_2) \neq \max\{c(H_1), c(H_2) \}.$$

 Observe that the process of flattening $h$ to obtain $\tilde{h}= h_1 + h_2$ creates precisely two new points, $z_{\gamma_1} \in  (\frac{1}{2} - 2 \delta',\frac{1}{2} - \delta')$  and $z_{\gamma_2} \in  (\frac{1}{2} + \delta', \frac{1}{2} + 2 \delta')$, such that $h_1'(z_{\gamma_1}) = 1$ and $h_2'(z_{\gamma_2}) = 1$.  This implies that the Hamiltonian $H_2$ satisfies all the conditions of Lemma \ref{lem:count_exmpl} and hence $c(H_2) = \min\{h_2(z_{\gamma_2}) + (1 - z_{\gamma_2}), h_2(1) \}$.  Note that picking $\delta'$ to be sufficiently small forces $z_{\gamma_2}$ to be very close to $\frac{1}{2},$ which in turn forces $h_2(z_{\gamma_2}) + (1 - z_{\gamma_2}) \approx \frac{1}{2}.$ Since, $h_2(1) = h(1) > \frac{1}{2}$, we conclude that $$c(H_2) \approx \frac{1}{2}.$$  The Hamiltonian $H_1$ is negative and so $c(H_1) \leq 0$.  We conclude that $\max\{c(H_1), c(H_2) \} = c(H_2) \approx \frac{1}{2}.$  Next, we  prove Lemma \ref{lem:count_exmpl}.

 \begin{proof}[Proof of Lemma \ref{lem:count_exmpl}]
The spectrality property implies that $c(H+ r) =c(H) + r$, for any constant $r$,  and therefore we may assume without loss of generality that $h=0$ on the interval $[0, \delta]$.  Note that $H$ is supported in the disk $D = \{p \in \S^2: z(p) \geq \delta \}.$
 
 The Hamiltonian $H$ has two families of 1--periodic orbits corresponding to the heights $z_\beta, z_\alpha$.  Let $\beta, \alpha$ denote two 1--periodic orbits corresponding to the heights $z_\beta, z_\alpha$,  respectively.  We will denote by $u_\beta, u_\alpha$ the embedded disks which cap these orbits and contain the north pole $N$.  Let $A$ denote the generator of $\pi_2(\S^2)$ whose area is $1$.
 
 \begin{claim}\label{claim:last}
 For any Hamiltonian $H$ satisfying the conditions of Lemma \ref{lem:count_exmpl} $c(H)$ is attained by the action of one of the four capped 1--periodic orbits $$\{[S, -A], [\beta, u_\beta], [\alpha, u_{\alpha}\# A], N \},$$ 
 where $[S, -A]$ is the South pole with the capping $-A$ and  $N$ is  the North pole with its trivial capping.% and $[\beta, u_\beta], \; [\alpha, u_{\alpha}\# A]$   denote $\beta, \; \alpha$ with the cappings $u_\beta, \; u_{\alpha}\# A$, respectively. 
 \end{claim}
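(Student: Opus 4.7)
The plan is to apply the spectrality property (Proposition~\ref{prop:properties_spec}(\ref{spectrality})) combined with continuity (Proposition~\ref{prop:properties_spec}(\ref{continuity})). I would approximate $H$ by non-degenerate Hamiltonians $H_\epsilon$, invoke the refined spectrality ``$c(H_\epsilon)$ is the action of some capped $1$-periodic orbit of $H_\epsilon$ of Conley--Zehnder index $2$'', and then pass to the limit via continuity to identify the capped orbits of $H$ that can arise as limits of $\CZ=2$ orbits under $C^\infty$-small perturbations.

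First I would enumerate the contractible $1$-periodic orbits of $\phi^1_H$, which fall into four families: the north pole $N$, a non-degenerate local maximum since $h'(1)>0$; the two Morse--Bott circles $\alpha$ and $\beta$ at heights $z_\alpha>z_\beta$ where $h'=1$ (with $h''(z_\alpha)<0$ and $h''(z_\beta)>0$, since $h'$ decreases through $1$ at $z_\alpha$ and increases through $1$ at $z_\beta$); and the entire fixed closed disk $\{z\le\delta\}$ on which $H\equiv 0$. Using the shift formula $\CZ([z,u\#kA])=\CZ([z,u])-4k$ (valid on $\S^2$ since $c_1(A)=2$) together with the rotation-number/CZ formulas recalled in Section~\ref{sec:rot_number}, I would isolate, for each family, exactly one capping whose perturbed orbit has $\CZ=2$: the trivial capping of $N$ (action $h(1)$); the upper capping $u_\beta$ (since $h''(z_\beta)>0$ makes the perturbed rotation number land in $(-1,0)$ for that capping, action $h(z_\beta)+(1-z_\beta)$); the lower capping $u_\alpha\#A$ (the opposite sign of $h''$ at $z_\alpha$ forces the capping class to be shifted by $A$ in order to push $\rho$ into $(-1,0)$, action $h(z_\alpha)-z_\alpha$); and the capping $-A$ at $S$ (after a perturbation whose flow near $S$ produces a rotation number just above $-1$ relative to the $-A$-trivialization, action $1$).

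To exclude orbits differing from these four by non-trivial integer multiples of $A$ (which shift the action by integers and the CZ index by $\pm 4k$), I would use the a priori action bounds $0\le c(H)\le h(1)$. The lower bound follows from monotonicity applied to $H\ge 0$ and $0$, noting $c(0)=0$ by the shift property. The upper bound follows from monotonicity and shift: $c(H)\le c(h(1)\cdot\mathbf{1})=h(1)$. Together with the elementary spectral observation that the four listed action values already fall within $[0,h(1)]$ (in the regime of the lemma), these bounds eliminate all integer shifts and reduce the candidates for $c(H)$ to exactly the four capped orbits $\{[S,-A],[\beta,u_\beta],[\alpha,u_\alpha\#A],N\}$.

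The principal obstacle is the Conley--Zehnder computation for the perturbed Morse--Bott circles, where the sign of $h''$ must be tracked explicitly to determine which capping yields $\rho\in(-1,0)$, together with the analogous analysis at $S$ ensuring that $-A$ (rather than the trivial capping, which after a naive $C^2$-small perturbation would give a competing $\CZ=2$ orbit of action $0$) is the capping that survives as the minimizing representative of the Floer cycle for $[\S^2]$ under the chosen perturbation.
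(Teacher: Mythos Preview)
Your strategy is the paper's: perturb to a non-degenerate Hamiltonian, invoke Proposition~\ref{prop:properties_spec}(\ref{spectrality}) to get a capped orbit of index~$2$, compute indices family by family via rotation numbers and the sign of~$h''$, and pass to the limit by continuity. The paper carries this out exactly, treating $N$, the two Morse--Bott circles, and the plateau near $S$ in turn.

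Two parts of your outline do not hold up, however. First, the action-bound argument is both unnecessary and wrong as stated: while $0\le c(H)\le h(1)$ is valid, the four listed action values do \emph{not} all lie in this window under the hypotheses of Lemma~\ref{lem:count_exmpl}. One has $\m A_H([\alpha,u_\alpha\#A])=h(z_\alpha)-z_\alpha$, which is negative whenever the support of $H$ lies in the northern hemisphere (the paper exploits exactly this inequality in Step~I of the proof of the lemma to eliminate that orbit), and $\m A_H([S,-A])=1$, which need not be~$\le h(1)$. So the action window cannot pin down the cappings; drop this and let the index computation stand alone. Second, your index arithmetic does not close: with the shift $\CZ\mapsto\CZ-4k$ you (correctly) quote, the indices of a fixed orbit over all cappings form a single residue class modulo~$4$, and correspondingly the rotation number shifts by the even integer~$2$ under $u\mapsto u\#A$. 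A Morse minimum at $S$ has $\CZ\equiv 0\pmod 4$, so no capping of $S$ yields index~$2$; likewise the even-index perturbed orbit near $\alpha$ (where $h''<0$ pushes $\rho$ below $-1$ rather than above it) has $\CZ\equiv 0\pmod 4$. Your heuristic of ``choosing a capping to push $\rho$ into $(-1,0)$'' is therefore inconsistent with a $-4k$ shift. The paper's own computation effectively uses a shift of~$-2$ per generator (it writes $\CZ([N,kA])=2-2k$), which is what lands $[\alpha,u_\alpha\#A]$ and $[S,-A]$ on index~$2$; you should resolve this discrepancy by an honest computation of the rotation numbers in the trivializations induced by the specific cappings, rather than by analogy with~$\beta$.
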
 
We will first explain how Lemma \ref{lem:count_exmpl} follows from the above claim.    Note that the actions of the above four orbits are as follows: $$\m A_H(N) = h(1), \ \ \m A_H([\beta, u_\beta]) = h(z_\beta) + 1 - z_\beta,$$
 $$\m A_H([\alpha, u_\alpha \#A]) = h(z_\alpha) - z_\alpha, \ \ \m A_H([S, -A]) = 1.$$

We will prove Lemma \ref{lem:count_exmpl} in two steps.

\noindent \textbf{Step I:}  We treat the case when  $H$ is supported in the interior of the Northern hemisphere, i.e. $ \delta > \frac{1}{2}.$

The Hamiltonian $H$ is supported in the Northern hemisphere which can be displaced with energy less than $\frac{1}{2}$.   Hence, the Energy-Capacity inequality implies that $c(H) < \frac{1}{2}$:  this rules out the possibility that $c(H) = \m A_H([S, -A]) $.

Next, note that $H \geq 0$ and so, by the continuity property in Proposition \ref{prop:properties_spec}, $c(H) \geq 0$. Since $\delta>\frac{1}{2}$ and $h' <2$ we get $h(z) <z$ for every $z>0$, and in particular $h(z_{\alpha}) - z_{\alpha}$ is negative. We deduce that $c(H) \neq h(z_{\alpha}) - z_{\alpha}$.

%We will use this fact to show that $c(H)$ is not attained by the the action of $[\alpha, u_{\alpha}\# A]$:  \,
%We leave it to the reader to check that the hypotheses of Lemma \ref{lem:count_exmpl} imply that $h(z_{\alpha}) - z_{\alpha} \leq z_{\alpha} - z_{\beta} - \delta.$     The right hand side of this inequality is negative when $\delta > \frac{1}{2}.$  We deduce that $c(H) \neq h(z_{\alpha}) - z_{\alpha}$.

We conclude from the previous two paragraphs that $c(H)$ must be attained by the action of $[\beta, u_\beta]$ or $N$ and therefore $c(H) \in \{h(z_\beta ) + (1 - z_\beta ), h(1) \}$.  To prove  Lemma \ref{lem:count_exmpl} it is sufficient to show that $c(H) \leq \min \{h(z_\beta ) + (1 - z_\beta ), h(1) \}$.

The continuity property in Proposition \ref{prop:properties_spec} implies that $c(H) \leq \max(H)$ and since $\max(H) = H(N) = h(1)$ we see that $c(H) \leq h(1)$. 
It remains to prove that $c(H) \leq h(z_\beta) + 1 - z_\beta $.  To that end, we perturb $H$ by a $C^0$--small amount in the following fashion: we flatten $h$ near the point $z_\beta$ so that it becomes constant  on a small interval of the form $(z_\beta - \delta', z_\beta + \delta')$.  Call this new function $\tilde{h}$ and the corresponding Hamiltonian $\tilde{H} = \tilde{h}(z)$.  Observe that by picking $\delta'$ to be small enough we can ensure that  $\tilde{H}$ is $C^0$ close to $H$ and so $c(\tilde{H}) \approx c(H).$

Now, write $\tilde{h} = h_1 + h_2$ where $$h_1 = 
\begin{cases}
 \tilde{h} &\mbox{ on }  [0, z_\beta - \delta'] ,\\
 \tilde{h}(z_\beta - \delta') &\mbox{ on } [ z_\beta - \delta', 1].
\end{cases} $$

Let $H_1 = h_1(z), H_2 = h_2(z)$ denote the corresponding Hamiltonians on the sphere and note that $\tilde{H} = H_1 +H _2$.  It follows from the continuity property in Proposition \ref{prop:properties_spec} that $c(H_1 + H_2) \leq \max(H_1) + c(H_2)$.  Now $\max(H_1) = h_1(z_\beta - \delta') \leq h(z_\beta).$  Also, the Energy-Capacity inequality implies that $c(H_2) \leq (1- z_\beta):$ this is because $H_2$ is supported in the disk $\{p \in \S^2: z(p) \geq 1-z_\beta \}$ which can be displaced with energy $(1-z_\beta)$.   It follows that $$c(\tilde{H}) = c(H_1 + H_2) \leq h(z_\beta) + 1 - z_\beta.$$  Since $\tilde{H}$ can be picked to be arbitrarily close to $H$ we conclude that $c(H) \leq h(z_\beta) + 1 - z_\beta.$   This finishes Step I.
\medskip

\noindent \textbf{Step II:} We treat the case when the support of $H$ is not contained in the Northern hemisphere.  

A series of elementary computations, which we have omitted, reveal that the  restrictions imposed on $h'$ in the statement of Lemma \ref{lem:count_exmpl}, imply the following about the actions of the four orbits of Claim \ref{claim:last}:
\begin{enumerate}
\item $\m A_H([\alpha, u_\alpha \#A]) = h(z_\alpha) - z_\alpha$ is  strictly the smallest value among the actions of these four orbits,
\item $\m A_H([\beta, u_\beta]) = h(z_\beta) + 1 - z_\beta <  \m A_H([S, -A]) = 1.$
\end{enumerate}

These properties hold regardless of whether the support of $H$ is contained in the Northern hemisphere or not.
To complete Step II, we will rely on the symplectic contraction principle of Section \ref{sec:symp-cont}  The support of $H$ is contained in the Liouville domain $D = \{p \in \S^2: z(p) \geq \delta\}$ and hence we can symplectically contract $H$: For each fixed $s \leq 0$ denote by $A_s: D \rightarrow D$ the time $s$ map of the Liouville flow and let $$H_s(x):=  \begin{cases}
e^s H( A_s^{-1}(x)) &\mbox{ if } x \in A_s(D),\\
0 &\mbox{ if } x\notin A_s(D).
\end{cases} $$

We can write $H_s = h_s(z)$ and in fact the Hamiltonian $H_s$ will continue to satisfy the conditions of Lemma \ref{lem:count_exmpl}. Denote by $\beta_s, \alpha_s$ the orbits of $H_s$ corresponding to $\beta, \alpha$. Since $H_s$ satisfies the conditions of Lemma \ref{lem:count_exmpl}, $c(H_s)$ is attained by one of the four orbits  $[S, -A], [\beta_s, u_{\beta_s}], [\alpha_s , u_{\alpha_s} \# A],N$  and the actions of these four orbits will continue to satisfy the following relations:

\begin{enumerate}
\item $\m A_{H_s}([\alpha_s, u_{\alpha_s} \#A]) =  h_s(z_{\alpha_s}) - z_{\alpha_s}$ is the smallest value among the actions of these four orbits,
\item $ \m A_{H_s}([\beta_s, u_{\beta_s}])= h_s(z_{\beta_s}) + 1 - z_{\beta_s} < \m A_{H_s}([S, -A]) = 1.$
\end{enumerate}

Furthermore, as was done in Section \ref{sec:symp-cont}, one can easily see that 

\begin{enumerate}
\item $\m A_{H_s}(N) = h_s(1) = e^s h(1)$,
\item $ \m A_{H_s}([\beta_s, u_{\beta_s}]) = h_s(z_{\beta_s}) + 1 - z_{\beta_s} = e^s(  h(z_{\beta}) + 1 - z_{\beta}),$
%\item $\m A_{H_s}([\alpha_s, u_{\alpha_s} \#A]) = h(z_{\alpha_s}) - z_{\alpha_s} = e^s h(z_\alpha) -( 1- e^{s} (1 - z_\alpha) ),$
%\item $ \m A_{H_s}([S, -A]) = 1.$
\end{enumerate}

Below, we will use the above information to deduce that the actions of these orbits have a very simple bifurcation diagram. We will finish the proof,  using these bifurcation diagrams, by considering two cases.

\noindent \textbf{Case 1:} $\min \{h(z_{\beta}) + 1 - z_{\beta}, h(1) \} = h(1).$
   
    Since the Liouville flow $A_s$ contracts the disk $D$ towards the Northern hemisphere, there exists $s_0$ such that $H_s$ is supported in the Northern hemisphere for every $s \leq s_{0}$, and thus by Step I $$ c(H_s) = \min \{h_s(z_{\beta_s}) + 1 - z_{\beta_s}, h_s(1) \} = h_s(1), \;\; \forall s \leq s_0.$$
    
    The above listed relations among the actions of the four orbits in consideration imply that the curve $s \mapsto h_s(1) = e^sh(1)$ never intersects any of the other three curves in the bifurcation diagram; see Figure \ref{fig:bif-sphere} (In fact, in this case the four curves in the bifurcation diagram are mutually disjoint for all values of $s$.) It follows from the continuity of the spectral invariant $c$ that $c(H) = h(1).$
    
\begin{figure}[h!]
\centering
\def\svgwidth{1\textwidth}
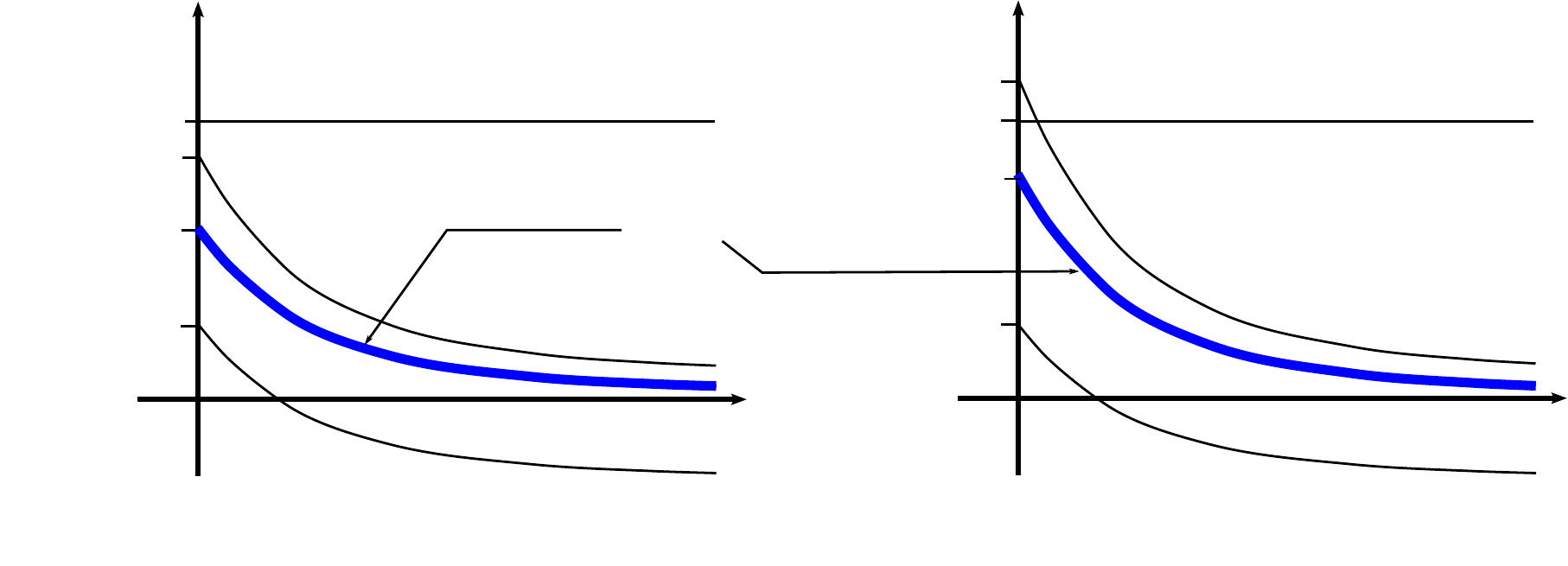
\caption{The bifurcation diagrams of $H_s$. Each curve represents the value of the action $\m A_{H_s}$ corresponding to the indicated point.}
\label{fig:bif-sphere}
\end{figure}

\noindent \textbf{Case 2:} $\min \{h(z_{\beta}) + 1 - z_{\beta}, h(1) \} = h(z_{\beta}) + 1 - z_{\beta} .$

Once again, by Step I, we know that $$ c(H_s) = \min \{h_s(z_{\beta_s}) + 1 - z_{\beta_s}, h_s(1) \} =  h_s(z_{\beta_s}) + 1 - z_{\beta_s}, \;\; \forall s \leq s_0.$$
As in the previous case, the relations among the actions of the four orbits in consideration imply that the curve $s \mapsto h_s(z_{\beta_s}) + 1 - z_{\beta_s} $ never intersects any of the other three curves in the bifurcation diagram; see Figure \ref{fig:bif-sphere}. It follows from the continuity of the spectral invariant $c$ that $c(H) = h(z_{\beta}) + 1 - z_{\beta}.$
\end{proof}
 
  We have now finished the proof of Lemma \ref{lem:count_exmpl} and it remains to prove Claim \ref{claim:last}. The Hamiltonian $H$ has four families of capped 1--periodic orbits.  Below, we examine these families and prove that the only orbits that can carry the action of $c(H)$ are the four listed in Claim \ref{claim:last}. The proof relies on computation of the Conley--Zehnder index and Property 3 in Proposition \ref{prop:properties_spec}.

In Section \ref{sec:rot_number} we define rotation numbers for fixed points on an aspherical surface. When the surface is the sphere, a rotation number $\rho(x,u_{x})$ for a $1$--periodic point $x$ with capping $u_{x}$ may still be defined, and depends on (the homotopy class of) the capping $u_{x}$. Furthermore, if $x$ is a non degenerate fixed point then the relation which was given at the end of Section \ref{sec:rot_number} between the rotation number and the Conley--Zehnder index still holds. We will use this relation to compute the Conley--Zehnder indices. Alternatively, one could use the computations of Section 3.3 of \cite{oancea}.
However, Oancea's conventions are different from ours.  A summary of Oancea's computations is listed in Section 3 of \cite{sey_killer}.

 \noindent \textbf{1. Capped orbits of the form $[N, kA]$:}  Since $h'(1)$ is small and non-zero, the Hamiltonian $H$ is $C^2$--small and Morse near $N$.  It is well-known that this implies $\CZ(N) = 2$ from which we conclude $\CZ([N, kA]) = 2 -2k.$ We see that, of the orbits in this family, the only one with the right Conley--Zehnder index is $N$.

 \noindent \textbf{2.  Capped orbits of the form $[\beta, u_\beta \# k A] $:}  
 We will give the outline of an argument proving that for non-zero values of $k$ the capped  orbit $[\beta, u_\beta \# kA] $ does not have the right Conley--Zehnder index.   The orbit $\beta$ is degenerate.  An appropriate $C^2$--small (time-dependent) perturbation of $H$ near this family of orbits yields two distinct 1--periodic orbits $\beta_1, \beta_2$, with corresponding cappings $u_1, u_2$.  The rotation number $\rho(\beta,u_{\beta})$ is $-1$.  Since $h'$ is increasing at $z_{\beta}$, after perturbation one of the orbit, say $\beta_{1}$, has rotation number 
 $\rho(\beta_{1},u_1) \in (-1,0)$ and the other one is a saddle satisfying  $\rho(\beta_{2},u_{2}) = -1$. From the above paragraph we see that the corresponding Conley--Zehnder indices are respectively $2$ and $3$.
 Using Equation \eqref{eq:CZ-index identity}, we see that of the capped orbits $[\beta_i, u_i \# kA]$ the only one with Conley--Zehnder index $2$ is $[\beta_1, u_1]$.  From this we deduce that the only orbit among the orbits $[\beta, u_\beta \# kA]$ which can attain the value of $c(H)$ is $[\beta, u_\beta]. $

 \noindent \textbf{3. Capped orbits of the form $[\alpha, u_\alpha\# k A] $:}
   The orbit $\alpha$ is degenerate.  An appropriate $C^2$--small(time-dependent) perturbation of $H$ near this family of orbits yields two distinct 1--periodic orbits $\alpha_1, \alpha_2$, with corresponding cappings $u_1, u_2$.  
   Similarly to the previous case we get that one of these two orbits has Conley--Zehnder index  $3$ and the other has Conley--Zehnder index $4$. Suppose that $[\alpha_1, u_1]$ is the one with Conley--Zehnder index $4$.  Then, using Equation \eqref{eq:CZ-index identity}, we see that the only capped orbit with Conley--Zehnder index $2$ is $[\alpha_1, u_1 \# A]$. We conclude that the only orbit among the orbits $[\alpha, u_\alpha \# kA]$ which could possibly attain the value of $c(H)$ is $[\alpha, u_\alpha \#A]. $

 \noindent \textbf{4. Capped orbits of the form $[p, kA]$ where $p$ is a point outside the support of $H$:}  A $C^2$--small perturbation of $H$ would yield a Hamiltonian with a single minimum point at the South pole $S$, whose Conley--Zehnder index with the trivial capping is $0$.  We see that, using Equation \eqref{eq:CZ-index identity}, of these orbits the only one which can carry the action of $c(H)$ is $[S, -A]$.
 
 This finishes the proof of our last claim.
 
\appendix

\section{Existence of maximal unlinked sets}

In this appendix, we prove Theorem~\ref{theo.unlinked-criteria1} and Proposition~\ref{prop.unlinked-criteria2} about unlinked sets. Throughout the appendix we consider  a compactly supported isotopy  $(\phi^t)_{t \in [0,1]}$  on an orientable surface $\Sigma$, and denote its time-one $\phi^1$ by $\phi$.
%%%
\subsection{Finite unlinked sets}

This section contains the proof of Proposition~\ref{prop.unlinked-criteria2} which characterizes finite unlinked sets as finite sets whose associated geometric braid represents the trivial braid. The crucial point in the proof is the following Lemma.
\begin{lemma}\label{lemma.fibration}
Let $X$ be a finite subset of $\Sigma$ of cardinality $n$. Then  the map $f \mapsto f(X)$ from $\mathrm{Diff}_{0}(\Sigma)$ to the space $\Xi(\Sigma,n)$ of $n$-tuples of distinct points in $\Sigma$, is a fiber bundle.
\end{lemma}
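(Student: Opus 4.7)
The claim is a standard ``evaluation is a fibration'' statement. My plan is to establish local triviality by hand: exhibit, for every point of $\Xi(\Sigma,n)$, an open neighborhood on which the evaluation map $\mathrm{ev}_X \colon f \mapsto f(X)$ admits a continuous section with values in $\mathrm{Diff}_0(\Sigma)$. Once we have this, standard arguments show that $\mathrm{ev}_X$ is a locally trivial fiber bundle with fiber equal to the subgroup $\mathrm{Stab}(X) \subset \mathrm{Diff}_0(\Sigma)$ of diffeomorphisms fixing every point of $X$.

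The construction of the section goes as follows. Fix a base point $Y_0 = (y_1^0,\dots,y_n^0) \in \Xi(\Sigma,n)$ and choose pairwise disjoint open disks $D_i \ni y_i^0$, each contained in a coordinate chart of $\Sigma$. Let $U$ be the open neighborhood of $Y_0$ in $\Xi(\Sigma,n)$ consisting of those tuples $(y_1,\dots,y_n)$ with $y_i \in D_i$ for every $i$. For each $i$, pick a smooth bump function $\chi_i$ compactly supported in $D_i$ and identically $1$ on a smaller disk $D_i' \ni y_i^0$. Given a point $y_i \in D_i'$, read the vector $v_i := y_i - y_i^0$ in the local chart, and let $Z_i(y_i)$ be the time-one map of the compactly supported vector field $\chi_i \cdot v_i$ on $\Sigma$ (shrink $U$ so that the target falls inside $D_i'$ as well). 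Then $Z_i(y_i)$ is a compactly supported diffeomorphism supported in $D_i$, it sends $y_i^0$ to $y_i$, and the assignment $y_i \mapsto Z_i(y_i)$ is smooth in $y_i$. Since the disks $D_i$ are disjoint, the product
\[
s(y_1,\dots,y_n) := Z_1(y_1) \circ \cdots \circ Z_n(y_n)
\]
defines an element of $\mathrm{Diff}_0(\Sigma)$ (it is isotopic to the identity via the obvious compactly supported linear isotopy), it satisfies $s(Y_0) = \mathrm{Id}$ and $\mathrm{ev}_{Y_0}\bigl(s(Y)\bigr) = Y$ for every $Y \in U$, and the map $s \colon U \to \mathrm{Diff}_0(\Sigma)$ is continuous (in fact smooth) with respect to the usual $C^\infty$ topology on diffeomorphism groups.

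Given such a section, local triviality of $\mathrm{ev}_X$ over any $U$ in the image of $\mathrm{ev}_X$ is obtained in the standard way. Pick $f_0 \in \mathrm{ev}_X^{-1}(Y_0)$ and define
\[
\Phi \colon U \times \mathrm{Stab}(X) \longrightarrow \mathrm{ev}_X^{-1}(U),
\qquad \Phi(Y,g) := s(Y) \circ f_0 \circ g,
\]
with continuous inverse $f \mapsto \bigl(\mathrm{ev}_X(f),\; f_0^{-1} \circ s(\mathrm{ev}_X(f))^{-1} \circ f\bigr)$. The only subtlety is to check that $\mathrm{ev}_X$ is surjective onto its connected image; but every $Y \in \Xi(\Sigma,n)$ in the same connected component as $X$ can be reached by applying the local sections along a path, and when $\Sigma$ is connected $\Xi(\Sigma,n)$ is connected for $n \geq 1$, so surjectivity holds. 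The one genuine step requiring attention is the continuity of $s$ in the $C^\infty$ topology, and the key observation is that the time-one flow of a vector field depends smoothly on parameters when the field itself does; beyond that, the proof is purely a local chart computation.
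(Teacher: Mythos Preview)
Your proof is correct and follows essentially the same approach as the paper's: both reduce the local triviality to the construction of a continuous family $y \mapsto Z(y)$ of compactly supported diffeomorphisms of a disk sending a fixed base point to $y$, then compose such families over disjoint disks to get a local section of the evaluation map. The paper's proof is simply terser---it states the key local fact (existence of $x \mapsto \gamma_x$ on the disk with $\gamma_x(0)=x$) and remarks that one can build it with Hamiltonian flows, whereas you spell out the section and the trivialization explicitly using the time-one flow of a bump-truncated translation field; these are the same idea.
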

\begin{proof}
Using local charts the proof of the lemma reduces to the following easy fact. Let $\D^2$ denote the open unit disk, and $0$ be some point in $\D^2$. There exists a continuous map $x \mapsto \gamma_{x}$ from  $\D^2$ to the space $\mathrm{Diff}(\D^2)$ of diffeomorphisms of $\D^2$ with compact support, such that 
$\gamma_{0}$ is the identity, and for every point $x$ of $\D^2$, $\gamma_{x} (0) = x$.
%\begin{itemize}
%\item $\gamma_{0}$ is the identity,
%\item for every point $x$ of $\D^2$, $\gamma_{x} (0) = x$.
%\end{itemize}
There are many ways to construct $\gamma$, one possibility is to use Hamiltonian functions: then $\gamma_{x}$ is a Hamiltonian diffeomorphism of the disk (in particular, the lemma also holds when $\mathrm{Diff}_{0}(\Sigma)$ is replaced by $\mathrm{Ham}(\Sigma)$).
\end{proof}

\begin{proof}[Proof of Proposition~\ref{prop.unlinked-criteria2}]
The direct implication is straightforward.
For the reverse one,  consider a finite set $X$ of contractible fixed points for $(\phi^t)_{t \in [0,1]}$.
The geometric braid $b_{X,(\phi^t)}$ is a loop based at $X$ in the space $\Xi(\Sigma,n)$. Assume that $b_{X,(\phi^t)}$ represents the trivial braid. This means that it is a loop homotopic to the constant loop. A fiber bundle is a Serre fibration, that is, it has the ``homotopy lifting property'' for disks. Thus, according to the lemma, the homotopy from $b_{X,(\phi^{t})}$ to the constant loop may be lifted to a homotopy, with end-points fixed, between the isotopy $(\phi^t)_{t \in [0,1]}$ and an isotopy $I$ which lifts the constant loop in $\Xi(\Sigma,n)$, \emph{i.e.} which fixes every point of $X$. In other words, the set $X$ is unlinked.
\end{proof}

An unlinked set $X$ is \emph{maximal} if there is no unlinked set $X'$ strictly containing $X$.
\begin{corol}
 Let $X$ be an unlinked set, and $I$ an isotopy that fixes every point of $X$ and whose time one is $\phi$. 
If $\Sigma$ is the sphere, assume furthermore that $X$ does not contain exactly two elements.
 Then  $X$ is maximal if and only if  for every fixed point $x$ of $\phi$ which is not in $X$,
the trajectory of $x$ under $I$  is not contractible in $\Sigma \setminus X$.
\end{corol}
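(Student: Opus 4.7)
My approach will use Proposition~\ref{prop.unlinked-criteria2} to convert the question of whether $X \cup \{x\}$ is unlinked into one about the triviality of the associated geometric pure braid, and then read off this triviality from the loop $\gamma(t) := I_t(x)$ in $\Sigma \setminus X$ via a Fadell--Neuwirth type argument. Set $n = |X|$. Because $X$ is unlinked via $I$, the two isotopies $I$ and $(\phi^t)_{t \in [0,1]}$ are homotopic as paths in $\mathrm{Diff}_0(\Sigma)$ with fixed endpoints, so evaluating at the finite set $X \cup \{x\}$ (which gives a Serre fibration by Lemma~\ref{lemma.fibration}) yields that $b_{X \cup \{x\}, I}$ and $b_{X \cup \{x\}, (\phi^t)}$ represent the same element of $\pi_1(\Xi(\Sigma, n+1), X \cup \{x\})$. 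Moreover, since $I$ fixes each point of $X$, the strands of $b_{X \cup \{x\}, I}$ based at points of $X$ are constant and the remaining strand is exactly $\gamma$.

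For the direct implication I argue by contrapositive. If some fixed point $x \notin X$ of $\phi$ has $\gamma$ contractible in $\Sigma \setminus X$ based at $x$, then such a contraction keeps the $x$-strand disjoint from the constant $X$-strands at every stage and therefore provides a homotopy in $\Xi(\Sigma, n+1)$ from $b_{X \cup \{x\}, I}$ to the constant braid. Combined with the preceding observation, $b_{X \cup \{x\}, (\phi^t)}$ represents the trivial braid, so Proposition~\ref{prop.unlinked-criteria2} yields that $X \cup \{x\}$ is unlinked, contradicting maximality of $X$.

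For the converse, I assume $X$ is not maximal: there exists $x \notin X$, necessarily a fixed point of $\phi$, such that $X \cup \{x\}$ is unlinked. Proposition~\ref{prop.unlinked-criteria2} together with the homotopy observation of the first paragraph then gives that $b_{X \cup \{x\}, I}$ is trivial in $\pi_1(\Xi(\Sigma, n+1))$. I next invoke the Fadell--Neuwirth fibration obtained by forgetting the last coordinate,
\[
\Sigma \setminus X \hookrightarrow \Xi(\Sigma, n+1) \longrightarrow \Xi(\Sigma, n),
\]
whose fiber over $X$ is identified with $\Sigma \setminus X$. Because the first $n$ strands of $b_{X \cup \{x\}, I}$ are constant, the class of this braid is precisely the image of $[\gamma] \in \pi_1(\Sigma \setminus X, x)$ under the fiber inclusion. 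To conclude that $\gamma$ is contractible in $\Sigma \setminus X$, it thus suffices that the map $\pi_1(\Sigma \setminus X) \to \pi_1(\Xi(\Sigma, n+1))$ be injective, equivalently that the connecting homomorphism $\partial \colon \pi_2(\Xi(\Sigma, n)) \to \pi_1(\Sigma \setminus X)$ vanish.

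The principal obstacle is this final vanishing, and it is exactly where the exception $\Sigma = \sph^2, |X| = 2$ enters. When $\Sigma$ is an orientable surface different from $\sph^2$, every configuration space $\Xi(\Sigma, n)$ is aspherical, since one iterates the Fadell--Neuwirth fibrations whose fibers are $\Sigma$ with finitely many points removed, all of which are aspherical; hence $\pi_2(\Xi(\Sigma, n)) = 0$. For $\Sigma = \sph^2$ and $n \leq 1$ the target $\pi_1(\sph^2 \setminus X)$ is already trivial, so the map is zero for free. For $\Sigma = \sph^2$ and $n \geq 3$, the classical identification $\Xi(\sph^2, 3) \simeq \mathrm{PSL}(2, \mathbb{C}) \simeq \mathrm{SO}(3)$ has $\pi_2 = 0$, and iterating Fadell--Neuwirth with aspherical fibers $\sph^2$ minus finitely many points gives $\pi_2(\Xi(\sph^2, n)) = 0$ for all $n \geq 3$. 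Only in the excluded case $\Sigma = \sph^2, n = 2$ does $\Xi(\sph^2, 2) \simeq \sph^2$ carry a nontrivial $\pi_2$ for which the connecting map actually fails to vanish, matching the hypothesis of the corollary.
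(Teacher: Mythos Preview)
Your proof is correct once one reads $n = |X|$ as a tacit finiteness assumption on $X$; the corollary is stated without that restriction, but it sits in the section on finite unlinked sets and this is the natural scope for your configuration-space methods. (For infinite $X$ the direct implication still goes through after reducing to finite subsets via Theorem~\ref{theo.unlinked-criteria1}, but your Fadell--Neuwirth argument for the converse does not extend.)

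For the converse your route differs genuinely from the paper's. Rather than invoking the long exact sequence of the Fadell--Neuwirth fibration and the asphericity of configuration spaces, the paper shows directly that $\alpha = [\gamma]$ is \emph{central} in $\pi_1(\Sigma \setminus X)$: if $J$ is an isotopy witnessing that $X \cup \{x\}$ is unlinked, then the concatenation $I * J^{-1}$ is a loop of diffeomorphisms fixing $X$, and applying it to any loop $b$ in $\Sigma \setminus X$ representing $\beta$ yields a map $[0,1]^2 \to \Sigma \setminus X$ whose boundary reads as $\alpha\beta\alpha^{-1}\beta^{-1}$. One then uses that $\pi_1(\Sigma \setminus X)$, being a free group once $X \neq \emptyset$, has trivial center except when it is infinite cyclic, i.e.\ in the excluded sphere-with-two-points case and in the case $\Sigma = \R^2$, $|X| = 1$, which the paper handles separately via contractibility of the relevant diffeomorphism group. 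Your argument is tidier in treating all non-excluded finite cases uniformly (no special pleading for the plane with one marked point), at the cost of importing the asphericity of configuration spaces; the paper's commutator argument is more elementary and works verbatim for infinite $X$.
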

\begin{proof}
The direct implication may be proved by an argument similar to the proof of Proposition \ref{prop.unlinked-criteria2}.
The proof of the converse goes as follows. The case when $X$ is empty directly follows from the proposition.
Let $X$ be a non empty unlinked set, $I$ be an isotopy fixing every point of $X$, $x$ a point outside $X$, and $J$ an isotopy fixing $X \cup \{x\}$ and such that $I$ and $J$ are homotopic as paths in $\mathrm{Diff}_{0}(\Sigma)$. We want to prove that the trajectory  of $x$ under $I$ is contractible in $\Sigma \setminus X$. Let $\alpha$ be the class of this trajectory in $\pi_{1}( \Sigma \setminus X, x)$. Then $\alpha$ commutes with every element $\beta$ in $\pi_{1}( \Sigma \setminus X,x)$: indeed, the map $(s,t) \mapsto f_{t}(b(s))$, where $(f_{t})$ is the concatenation of $I$ with $J^{-1}$ and $b$ is a loop in the class $\beta$, may be seen as a homotopy between $\alpha\beta\alpha^{-1}\beta^{-1}$ and the trivial loop. 
We conclude that $\alpha$ is the trivial loop when the center of $\pi_{1}( \Sigma \setminus X,x)$ is trivial. Since $X$ is non empty and we have excluded the case when $\Sigma$ is the sphere and $X$ contains exactly two elements, this covers every case except when $\Sigma$ is the plane and $X$ is a single element. This last case may be solved by using the following fact: the space of compactly supported diffeomorphisms of the plane fixing a given point is contractible.
\end{proof}

%%%%%
\subsection{Infinite unlinked sets}
This section contains the proof of Theorem~\ref{theo.unlinked-criteria1}: a set $X$ of contractible fixed points of $(\phi^t)_{t \in [0,1]}$ is unlinked if and only if every finite subset of $X$ is unlinked. Note that the direct implication is immediate. For the converse, the key to the proof will be an argument, due to Michael Handel, showing that any surface diffeomorphism is isotopic to the identity in some neighborhood of its fixed point set.

Let $A(\phi)$ denote the set of accumulation points of the set of fixed points of $\phi$. 
%Choose a Riemanian metric on $\Sigma$, and let $\varepsilon>0$ be such that for every points $x,y$ on $\Sigma$ such that $d(x,y) < \varepsilon$, there exists a unique geodesic segment of length $d(x,y)$ joining $x$ to $y$. We denote by $\gamma_{x,y} : [0,1] \to \Sigma$ the parametrization of this segment with constant speed.
If $A(\phi)=\Sigma$ the theorem is obvious, thus we may assume $A(\phi) \neq \Sigma$. We consider an open neighborhood $V$ of $A(\phi)$ which is not $\Sigma$. The surface $V$, being non-compact, may be endowed with a flat Riemannian metric, i.e. a metric which is locally isometric to the euclidean plane.
Let $\varepsilon>0$ be such that for every points $x,y$ on $\Sigma$ such that $d(x,y) < \varepsilon$, there exists a unique geodesic segment of length $d(x,y)$ joining $x$ to $y$. We denote by $\gamma_{x,y} : [0,1] \to \Sigma$ the parametrization of this segment with constant speed.
\begin{lemma}[Handel, Lemma 4.1 in \cite{handel1992}]
For every open set $V$ containing $A(\phi)$, there exists some open set $V'$, $A(\phi) \subset V' \subset V$, such that the "straight" homotopy
$$
f_t (x)= \gamma_{x,\phi(x)} (t) 
$$
is one-to-one on $V' \cup \mathrm{Fix}(\phi)$.
\end{lemma}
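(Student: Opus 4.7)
The plan is to construct $V'$ through a local-to-global injectivity argument based on the structure of $D\phi$ near the accumulation set $A(\phi)$.

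The first step is a linear-algebraic observation. Since $\mathrm{Fix}(\phi)$ is closed, every $p \in A(\phi)$ is itself a non-isolated fixed point, and the inverse function theorem applied to $\phi - \mathrm{id}$ at $p$ forces $I - D_{p}\phi$ to be singular: so $D_{p}\phi$ has $1$ as an eigenvalue. Because $\phi \in \mathrm{Diff}_{0}(\Sigma)$ is orientation-preserving, $\det D_{p}\phi > 0$, and the remaining eigenvalue is therefore strictly positive. Consequently $D_{p}f_{t} = (1-t)I + tD_{p}\phi$ has positive eigenvalues for every $t \in [0,1]$, so it is invertible with a positive lower bound on the norm of its inverse that is uniform in $t$.

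By continuity of $D\phi$ and an exhaustion by compacts, this uniform invertibility propagates to an open neighborhood $W \subset V$ of $A(\phi)$: one finds constants $c, \rho > 0$ such that $\|D_{x}f_{t}(v)\| \geq c\|v\|$ and $d(x, \phi(x)) < \varepsilon/10$ for all $x \in W$ and $t \in [0,1]$, and such that $f_{t}$ is injective on any ball of radius $\rho$ contained in $W$ (via a quantitative inverse function theorem). The second estimate ensures that the geodesic $\gamma_{x,\phi(x)}$ is well-defined and that $f_{t}(x)$ stays within distance $\varepsilon/10$ of $x$. I would then cover $A(\phi)$ by a locally finite family of balls $B_{i} = B(p_{i}, \rho/4)$ with $p_{i} \in A(\phi)$, and define $V'$ to be a slightly shrunken union of these balls, chosen so that the $\varepsilon/10$-neighborhood of $V'$ lies in $W$.

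Injectivity of $f_{t}$ on $V' \cup \mathrm{Fix}(\phi)$ is then verified case by case. Two points of $\mathrm{Fix}(\phi)$ are each fixed by $f_{t}$, so their images remain distinct. For $x \in V'$ and $y \in \mathrm{Fix}(\phi) \setminus V'$, $f_{t}(x)$ lies within $\varepsilon/10$ of $x \in V'$ while $y$ lies outside the $\varepsilon/10$-thickening of $V'$ by construction, so $f_{t}(x) \neq y = f_{t}(y)$. For $x \in V'$ and $y \in V' \cap \mathrm{Fix}(\phi)$ in a common ball $B_i$, local injectivity of $f_{t}$ on $B_i$ rules out $f_{t}(x) = y = f_{t}(y)$. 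Finally, two non-fixed points $x, y \in V'$ either lie in a common ball (handled by local injectivity) or in distant balls (handled by the $\varepsilon/10$-displacement bound, which separates the images of sufficiently far balls).

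The main obstacle I foresee is the separation step in the last paragraph: ensuring that the covering balls $B_i$ are simultaneously fine enough for local injectivity, coarsely separated enough that $f_{t}$-images of distinct balls remain disjoint, and arranged so that $V'$ is an open neighborhood of the possibly intricate set $A(\phi)$ (which may, for instance, be a Cantor set, or a hybrid of curves and isolated points). This requires a careful multi-scale construction along an exhaustion of $\Sigma$ using continuity moduli of $\phi$, following the spirit of Handel's original argument.
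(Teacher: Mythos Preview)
Your proposal is correct and follows essentially the same route as the paper. The key eigenvalue observation---that at each $p\in A(\phi)$ the differential $D_p\phi$ has $1$ as an eigenvalue (since $p$ is a non-isolated fixed point) and hence, by orientation-preservation, no negative real eigenvalue, so that $(1-t)I+tD_p\phi$ is invertible for all $t$---is exactly what the paper records; the paper then simply invokes the inverse function theorem and refers to Handel for the passage from local to global injectivity, whereas you spell out a displacement/covering argument for that step and rightly flag it as the delicate part.
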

\begin{proof}
In local charts isometric to the plane, $\gamma_{x,\phi(x)} (t) $ reads $(1-t)x + t \phi(x)$.
%We may choose $V$ close enough to $A(f)$ so that for every $x$ in $V$, $\gamma_{x,f(x)}$ and all the points $y$ such that $\gamma_{y,f(y)}$ meets $\gamma_{x,f(x)$ are included in a single chart.}
%If $f_{t}(x) = f_{t}(y)$ for some $x\neq y$, then the vectors $xy$ and $f(x) f(y)$ are collinear with opposite direction.
The differential of $f_t$ at the point $x$ is 
$(1-t)\mathrm{Id} + tD\phi(x)$. At every point of $A(\phi)$ the differential  of $\phi$  has a fixed vector, and thus (since $\phi$ is orientation preserving) no negative eigenvalue. If $V'$ is close enough to $A(\phi)$ then $D\phi$ has no negative eigenvalue on $V$' either. The inverse function theorem implies that $\phi$ is one-to one on $V'$. For more details we refer to~\cite{handel1992}.
\end{proof}
Note that every germ of an orientation preserving diffeomorphism at some fixed point $x$ is locally isotopic to the identity (up to composing with a rotation $D\phi_{x}$ has no negative eigenvalue, and then one can use again the straight line isotopy). By extension of isotopies (see for example~\cite{hirsch}, Chapter 8), 
we get the following corollary.
\begin{corol}\label{coro.locally-isotopic-identity}
 There exists an isotopy $\Delta = (f_{t})_{t \in [0,1]}$ from $f_{0} = \phi$ to a diffeomorphism $f_1$, and an open  neighborhood $U$ of the set of contractible fixed points  of $(\phi^{t})_{t \in [0,1]}$, such that
 \begin{itemize}
 \item the isotopy $\Delta$ fixes every contractible fixed point of $(\phi^{t})_{t \in [0,1]}$,
 \item $f_1$ is the identity on $U$.
 \end{itemize}
\end{corol}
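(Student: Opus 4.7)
The plan is to combine Handel's lemma (stated just above) with the elementary fact that an orientation-preserving diffeomorphism germ fixing a point is locally isotopic to the identity, and then to globalize via the isotopy extension theorem from Hirsch.

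First, I would invoke Handel's lemma to obtain an open set $V' \supseteq A(\phi)$ on which the straight-line family $s_t(x) = \gamma_{x,\phi(x)}(t)$ is an isotopy from $\mathrm{Id}|_{V'}$ to $\phi|_{V'}$ through injective smooth maps. Since $\gamma_{x,x}$ is the constant geodesic whenever $\phi(x)=x$, the isotopy $(s_t)$ fixes every fixed point of $\phi$ lying in $V'$; in particular it fixes every contractible fixed point in $V'$. Next, the complement $F_c \setminus V'$ (where $F_c$ denotes the set of contractible fixed points) is discrete, since it avoids $A(\phi)$. For each isolated $x \in F_c \setminus V'$, working in a chart centered at $x$ I would construct a local isotopy from $\phi$ to $\mathrm{Id}$ fixing $x$: as $D_x\phi \in \mathrm{GL}^+_2(\mathbb{R})$ and this group is path-connected to $\mathrm{Id}$, one first straight-line isotopes the linearization to $\mathrm{Id}$ (or does so up to a rotation whose isotopy to $\mathrm{Id}$ exists by connectedness of $SO(2)$), then uses a radial cutoff to push this linear model into a local diffeotopy from $\phi$ to $\mathrm{Id}$ on a small disk $W_x$ around $x$.

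Concatenating and time-rescaling, these pieces produce, on an open neighborhood $U_0 := V' \cup \bigcup_{x \in F_c \setminus V'} W_x$ of $F_c$, an isotopy $(h_t)_{t \in [0,1]}$ of smooth embeddings $U_0 \hookrightarrow \Sigma$ with $h_0 = \phi|_{U_0}$, $h_1 = \mathrm{Id}|_{U_0}$, and $h_t(y) = y$ for every $y \in F_c$ and every $t$. The final task is to convert this into a global isotopy of $\Sigma$ ending at a diffeomorphism equal to the identity on a whole neighborhood of $F_c$. To do so I would choose a smaller open neighborhood $U$ of $F_c$ with $\overline U \subseteq U_0$ and apply the isotopy extension theorem (Hirsch, \emph{Differential Topology}, Chapter 8) to the family $\tilde h_t := h_t \circ \phi^{-1}$ restricted to the compact set $\phi(\overline U)$, which is an isotopy of embeddings starting at the inclusion. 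The theorem yields an ambient diffeotopy $(H_t)$ of $\Sigma$ with $H_0 = \mathrm{Id}$ and $H_t|_{\phi(\overline U)} = \tilde h_t|_{\phi(\overline U)}$; setting $f_t := H_t \circ \phi$ gives $f_0 = \phi$, $f_1 \equiv \mathrm{Id}$ on $U$, and $f_t(y) = H_t(\phi(y)) = \tilde h_t(y)\cdot \phi$-conjugated-ly-the-identity-at-$y$, i.e.\ $f_t(y) = y$ for every $y \in F_c$ and every $t$.

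The main obstacle is the globalization step: one must verify both that the pieces $(s_t)$ and the local isotopies on the $W_x$'s can be glued consistently on $U_0$, and that the ambient extension preserves pointwise fixing of $F_c$. The first is a standard concatenation and reparametrization (using that the $W_x$'s are pairwise disjoint and can be taken disjoint from $V'$'s portion handling $A(\phi)$). The second is automatic from Hirsch's construction, because on $\phi(F_c) = F_c \subseteq \phi(\overline U)$ the isotopy $\tilde h_t$ is already the identity, so the ambient extension $H_t$ fixes every point of $F_c$ throughout.
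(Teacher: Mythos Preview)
Your approach is the same as the paper's: Handel's lemma near the accumulation set, local germ isotopies at the remaining isolated contractible fixed points, then isotopy extension \`a la Hirsch, Chapter~8. The paper's own argument is two sentences and leaves the gluing to the reader, so you have in fact supplied more detail than the original.

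One point needs repair. You call $\phi(\overline U)$ ``the compact set,'' but when $\Sigma$ is non-compact the entire complement of $\mathrm{supp}(\phi)$ consists of contractible fixed points, so any neighborhood $U$ of $F_c$ has non-compact closure and the isotopy extension theorem does not apply as stated. The fix is immediate: your isotopy $\tilde h_t$ is already the identity outside the compact support of $\phi$, so it suffices to apply isotopy extension on the compact piece $\overline U \cap \mathrm{supp}(\phi)$ and extend by the identity elsewhere. A second, smaller wrinkle: the disks $W_x$ around points of $F_c\setminus V'$ need not be disjoint from $V'$ if such a point lies on $\partial V'$; since these points avoid $A(\phi)$ and are finitely many (being discrete in the compact support), you may first shrink $V'$ to a smaller neighborhood of $A(\phi)$ avoiding them before choosing the $W_x$.
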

Let  $I$ denote  the isotopy which is the concatenation of   $(\phi^{t})_{t \in [0,1]}$ and $\Delta$, whose time one is $f_{1}$.
Note that a family $X$ of contractible fixed points of $(\phi^{t})_{t \in [0,1]}$ is unlinked if and only if it is unlinked for $I$. 
We may assume that the closure of $U$ is a compact subsurface of $\Sigma$ (with boundary). We denote by $\{U_{i}: i \in \pi_{0}(U)\}$ the connected components of $U$; the set $\pi_{0}(U)$ is finite, and we may assume that each $U_{i}$ contains some contractible fixed point of $(\phi^{t})_{t \in [0,1]}$; then every point in $U_{i}$ is a contractible fixed point for the isotopy $I$. Also note that since $f_{1}$ is the identity on $U_{i}$, all the points in $U_{i}$ have the same rotation number $\rho_{i}$  for  $I$, which is an integer. 
Let $x,y$ be two distinct points in the same $U_{i}$, and join them by an arc $\gamma$. In the universal cover $\tilde \Sigma \simeq \R^2$, let $\tilde x, \tilde y$ be two lifts of $x,y$ which are the endpoints of some lift $\tilde \gamma$ of $\gamma$. Then the linking number $\ell(\tilde x, \tilde y)$ for the isotopy that lifts $I$ is also equal to $\rho_{i}$. %If in addition $\tilde x, \tilde y$ are two contractible fixed points of $\phi$, their liking number $\ell(\tilde x, \tilde y)$ for $\phi$ is also equal to $\rho_{i}$. In particular, 
As a consequence, if $\rho_{i}$ is not zero, then every pair $\{x,y\}$ of contractible fixed points of $(\phi^{t})_{t \in [0,1]}$ included in some $U_{i}$ is linked. %{\color{blue} I first added details, then I decided to comment them; tell me what you think}

\bigskip

Let $X$ be a family of contractible fixed points of $(\phi^{t})_{t \in [0,1]}$. We choose a subset of $X$ by selecting at most two points of $X$ in each $U_{i}$. More precisely, we denote by $X_{U}$ any subset of $X$ with the following property: for every $i \in \pi_{0}(U)$,
\begin{itemize}
\item if the set $X \cap U_{i}$ has zero or one element, then $X_{U} \cap U_{i} = X \cap U_{i}$;
\item if the set $X \cap U_{i}$ contains more than one element, then $X_{U} \cap U_{i}$ has exactly two elements.
\end{itemize}

Note that the set $X_{U}$ is finite. Thus Theorem~\ref{theo.unlinked-criteria1} is an immediate consequence of the following lemma.
\begin{lemma}~
The set $X$ is unlinked for $(\phi^{t})_{t \in [0,1]}$ if and only if the set $X_{U}$ is unlinked for $(\phi^{t})_{t \in [0,1]}$.
\end{lemma}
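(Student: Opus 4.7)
The direct implication is immediate from the definitions: an isotopy fixing every point of $X$ fixes every point of $X_U \subseteq X$.

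For the converse, suppose $X_U$ is unlinked. Since $X_U$ is finite, the definition of unlinkedness yields an isotopy $J=(g_t)_{t\in[0,1]}$ from $\mathrm{Id}$ to $f_1$, homotopic to $I$ with fixed endpoints, that fixes every point of $X_U$. The plan is to deform $J$, within its homotopy class, into an isotopy fixing every point of $X$. Since $X\subseteq U$ and $\pi_0(U)$ is finite, this will be done by local modifications, one per connected component $U_i$ of $U$, with pairwise disjoint supports.

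The preliminary observation is that whenever $|X\cap U_i|\geq 2$, the rotation number $\rho_i$ vanishes: $X_U\cap U_i$ then consists of two unlinked contractible fixed points in $U_i$, and the discussion preceding the lemma shows that any two distinct contractible fixed points in $U_i$ have linking number equal to $\rho_i$, forcing $\rho_i=0$. For components with $|X\cap U_i|\leq 1$ we have $X\cap U_i=X_U\cap U_i$ and no modification is needed.

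The core step is then, for each $U_i$ with $|X\cap U_i|\geq 2$ and each $y\in (X\cap U_i)\setminus X_U$, to construct a compactly supported modification of $J$ in a small neighborhood of the trajectory $t\mapsto g_t(y)$ that produces an isotopy still homotopic to $J$ but now fixing $X_U\cup\{y\}$. The two inputs needed are: (i) the loop $t\mapsto g_t(y)$ is contractible in $\Sigma$, because it is homotopic rel endpoints to the trajectory of $y$ under $I$ and the latter is contractible since $y\in X$; and (ii) the rotation number of $y$ under $J$ is $\rho_i=0$, so that (using that $f_1$ is the identity near $y$) the linearization loop $t\mapsto D_yg_t$ in $GL^+_2(\mathbb{R})$ is null-homotopic. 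Together these are exactly the obstructions whose vanishing allows one to ``unfreeze'' a contractible fixed point of an isotopy by a compactly supported isotopy-level homotopy concentrated near its trajectory, in the spirit of the extension-of-isotopies machinery used in Corollary~\ref{coro.locally-isotopic-identity}.

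Enumerating the countable set $(X\cap U_i)\setminus X_U$ and performing these local modifications on pairwise disjoint compact supports yields the desired isotopy on $U_i$; combining across the finitely many components produces an isotopy $J'$ fixing all of $X$ and homotopic to $I$. The main technical obstacle is ensuring that the supports of the successive local modifications can be chosen to be pairwise disjoint and shrink sufficiently near points of $A(\phi)$, where $X$ may accumulate, so that the resulting isotopy remains smooth. This is handled by a Handel-style argument paralleling the proof of the lemma preceding Corollary~\ref{coro.locally-isotopic-identity}: because $g_1$ is already the identity on a neighborhood of each accumulation point in $A(\phi)$, the trajectories $t\mapsto g_t(y)$ are uniformly short for $y$ close to $A(\phi)$, allowing the supports of the local modifications to be taken arbitrarily small there.
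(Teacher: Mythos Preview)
Your point-by-point approach has genuine gaps that prevent it from going through, and it diverges substantially from the paper's argument.

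First, $(X\cap U_i)\setminus X_U$ need not be countable: $X$ is an arbitrary subset of the contractible fixed points, and nothing prevents it from containing, say, an arc of fixed points inside $U_i$. So ``enumerating the countable set'' is not generally possible, and an inductive point-by-point construction cannot even begin.

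Second, even where $X$ is countable, your control on the supports fails. You claim the trajectories $t\mapsto g_t(y)$ are uniformly short for $y$ near $A(\phi)$ because $g_1=f_1$ is the identity there. But $J=(g_t)$ is an \emph{arbitrary} isotopy from the identity to $f_1$ fixing $X_U$; nothing forces $g_t$ to be close to the identity near $A(\phi)$ for intermediate $t$. The Handel argument you invoke concerns the straight-line homotopy between $\mathrm{Id}$ and $\phi$, not an arbitrary isotopy $J$. Without short trajectories, the supports of your successive local modifications need not be disjoint, each new modification may undo the effect of the previous ones, and there is no reason the limit is a smooth isotopy.

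The paper's proof avoids all of this by a different and more robust idea: rather than fixing the points of $X\cap U_i$ one by one, it deforms the isotopy so that it fixes \emph{every} point of each arotational $U_i$ at once. After arranging (using $\rho_i=0$) that the differential of the isotopy is the identity at a chosen base point $x_1\in X_U\cap U_{i}$, the restriction $t\mapsto h_t|_{U_i}$ becomes a loop in the embedding space $E(U_i,\Sigma;x_1)$. A separate proposition shows that connected components of this embedding space are simply connected and that the restriction map $\mathrm{Diff}_c(\Sigma;x_1)\to E(U_i,\Sigma;x_1)$ is a fiber bundle; the homotopy lifting property then produces an isotopy that is the identity on all of $U_i$. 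Since $\pi_0(U)$ is finite, only finitely many such modifications are needed and no limiting process is required.
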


%\bigskip

Before proving the lemma, we will deduce the existence of maximal unlinked sets. The following corollary is not used in this text.
\begin{corol}\label{coro.maximal-unlinked-sets}
Every unlinked set is included in a maximal unlinked set.
\end{corol}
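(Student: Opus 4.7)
The plan is a straightforward application of Zorn's lemma, with Theorem~\ref{theo.unlinked-criteria1} providing exactly the finite-character property that makes Zorn go through. Let $X_0$ be the given unlinked set, and consider the collection $\mathcal{F}$ of all unlinked sets containing $X_0$, partially ordered by inclusion. This set is non-empty since $X_0 \in \mathcal{F}$.

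To apply Zorn's lemma, I would verify that every totally ordered chain $\mathcal{C} \subset \mathcal{F}$ admits an upper bound. The candidate is of course $\widehat{X} = \bigcup_{Y \in \mathcal{C}} Y$, which clearly contains $X_0$ and dominates every element of $\mathcal{C}$. The only thing to check is that $\widehat{X}$ is itself unlinked. Here is where Theorem~\ref{theo.unlinked-criteria1} does all the work: by that theorem, it suffices to prove that every finite subset $F \subset \widehat{X}$ is unlinked. Since $F$ is finite and $\mathcal{C}$ is totally ordered, there exists a single $Y \in \mathcal{C}$ containing $F$; but $Y$ is unlinked, so $F$ is unlinked (the direct implication of Theorem~\ref{theo.unlinked-criteria1} being immediate). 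Therefore $\widehat{X}$ is unlinked and belongs to $\mathcal{F}$.

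Zorn's lemma then yields a maximal element of $\mathcal{F}$, which is a maximal unlinked set containing $X_0$. There is no real obstacle here — the entire content of the corollary is packaged inside Theorem~\ref{theo.unlinked-criteria1}, which reduces the infinite statement to the already-established finite one. I would simply note that the finite-character reformulation is precisely what allows the union of a chain to remain unlinked, and that without Theorem~\ref{theo.unlinked-criteria1} one would not be able to pass to the union directly.
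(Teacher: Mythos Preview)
Your proof is correct and matches exactly the Zorn's lemma argument that the paper itself first indicates: the paper writes that the corollary follows from Theorem~\ref{theo.unlinked-criteria1} by Zorn's Lemma, just as in the proof of Corollary~\ref{coro.existence-mnus}, and your write-up is precisely that argument spelled out.

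It is worth knowing, though, that the paper then gives a second, more constructive proof which avoids Zorn entirely. Using the open set $U$ from Corollary~\ref{coro.locally-isotopic-identity} and the lemma that $X$ is unlinked if and only if the finite selection $X_U$ is unlinked, one can build a maximal unlinked set by hand: starting from $X_U$, enlarge it to a finite set $Y$ that is maximal among unlinked sets meeting each $U_i$ in at most two points (this is a finite search), and then take $X'$ to be $Y$ together with all contractible fixed points lying in those $U_i$ that already contain two points of $Y$. The lemma gives that $X'$ is unlinked, and maximality of $Y$ forces maximality of $X'$. This buys an explicit description of the maximal set and avoids the axiom of choice; your Zorn argument buys brevity and works without any of the structural input from the appendix beyond Theorem~\ref{theo.unlinked-criteria1}.
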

\begin{proof}[Proof of the corollary]
The corollary follows from Theorem~\ref{theo.unlinked-criteria1} by using Zorn's Lemma, as in the proof of Corollary~\ref{coro.existence-mnus}. Here is a more constructive argument.  Let $X$ be an unlinked set, and $X_{U}$ be as before. Note that $X_U$ is unlinked since it is a subset of $X$. Let $Y$ be a set which contains $X_{U}$, which is unlinked, which contains at most two points in each $U_{i} \in \pi_{0}(U)$, and which is maximal for inclusion among such sets. The cardinality of such a set is clearly less than twice the cardinality of $\pi_{0}(U)$ which is finite: thus the existence of $Y$ is immediate. Let $X'$ be obtained from $Y$ by adding to it, for every $U_{i} \in \pi_{0}(U)$ which contains two points of $Y$, all the contractible fixed points of $(\phi^{t})_{t \in [0,1]}$ which are included in $U_{i}$. Note that $Y$ satisfies the required properties for the set $X'_{U}$. Thus according to the above lemma, since $X'_{U}= Y$ is unlinked, the set $X'$ is unlinked. If $x$ is a contractible fixed point which is not in $X'$, then it follows from the lemma and the maximality of $Y$ that $X'\cup\{x\}$ is not unlinked. Thus $X'$ is a maximally unlinked set containing $X$, as wanted.
\end{proof}

\begin{proof}[Proof of the lemma]
The direct implication is immediate. Let $X,X_{U}$ be as before and assume that $X_{U}$ is unlinked for 
$(\phi^{t})_{t \in [0,1]}$. Then it is also unlinked for $I$.
Let $J_{0}=(g_t)_{t \in [0,1]}$ be an isotopy from the identity to $f_1$ fixing every point of $X_{U}$. We want to modify $J_{0}$ to an isotopy from the identity to $f_{1}$ that fixes every point of $X$. This will prove that $X$ is unlinked for $I$, and thus also for $(\phi^{t})_{t \in [0,1]}$.

Call \emph{arotational} the values of $i$ for which $U_{i}$ contains two elements of $X_{U}$. Since $X_{U}$ is unlinked, by the considerations following Corollary~\ref{coro.locally-isotopic-identity},  the rotation number $\rho_{i}$ vanishes for arotational $i$'s. 
Using this property, it is not difficult to modify the isotopy $J_{0}$ to an isotopy $J_{1} = (h_{t})_{t \in [0,1]}$ which still fixes every point of $X_{U}$, and such that, for the arotational indices $i$, the differential $Dh_t(x)$  is the identity for every $t$ at both points $x$ of $X_{U} \cap U_i$.

We want to further modify $J_{1}$ so that it fixes every point in the arotational $U_i$'s. 
For this we will use the following basic results on embeddings, which are proved below.  Let $S$ be a surface with boundary, $S'$ a connected subsurface in the interior of $S$, and $x_{0}$ a point in the interior of $S'$.
We denote by  $\mathrm{Diff}_{c}(S;x_{0})$  the space of diffeomorphisms of $S$ which are compactly supported in the interior of $S$, that fixes $x_{0}$, and whose differential at $x_{0}$ is the identity. Likewise, let 
 $E(S',S;x_{0})$ be the space of embeddings of $S'$ into the interior of $S$ that fixes $x_{0}$ and whose differential at $x_{0}$ is the identity.
\begin{prop}\label{prop.fibration}~

 \begin{enumerate}
 \item Every connected component of $E(S',S;x_{0})$ is simply connected.
 \item  The restriction map from $\mathrm{Diff}_{c}(S;x_{0})$ to $E(S',S;x_{0})$ is a fiber bundle.
 \end{enumerate}
\end{prop}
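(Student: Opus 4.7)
Both parts reduce to classical isotopy-extension arguments in the style of Palais and Cerf, suitably adapted to enforce the constraints $j(x_{0})=x_{0}$ and $Dj(x_{0})=\mathrm{Id}$. For Part 2, the plan is to construct, near any given $j_{0}\in E(S',S;x_{0})$, a continuous local section $\sigma$ of the restriction map on a $C^{\infty}$-neighbourhood $\mathcal{U}$ of $j_{0}$. Fix an auxiliary Riemannian metric on $S$. For $j\in\mathcal{U}$ sufficiently close to $j_{0}$, write $j=\exp_{\xi_{j}}\!\circ\, j_{0}$ for a small vector field $\xi_{j}$ along $j_{0}(S')$ depending continuously on $j$, extend $\xi_{j}$ to a compactly supported vector field $\widetilde{\xi}_{j}$ on $\mathrm{int}(S)$ via a once-for-all cutoff function, and set $\sigma(j):=\exp_{\widetilde{\xi}_{j}}$. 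Since $j$ and $j_{0}$ agree to first order at $x_{0}$, the vector field $\xi_{j}$ vanishes together with its first derivative at $x_{0}$; the cutoff is chosen to preserve this, so $\sigma(j)\in \mathrm{Diff}_{c}(S;x_{0})$. By construction $\sigma(j_{0})=\mathrm{Id}$ and $\sigma(j)\circ j_{0}=j$, yielding the required local triviality.

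For Part 1, consider a loop $\gamma\colon[0,1]\to E(S',S;x_{0})$ based at some $j_{0}$. By Part 2, $\gamma$ lifts to a path $\widetilde{\gamma}\colon[0,1]\to \mathrm{Diff}_{c}(S;x_{0})$ with $\widetilde{\gamma}(0)=\mathrm{Id}$ and $\widetilde{\gamma}(t)\circ j_{0}=\gamma(t)$. The endpoint $\phi:=\widetilde{\gamma}(1)$ belongs to the fibre $F_{j_{0}}$ of the restriction map, i.e.\ it is a compactly supported diffeomorphism of $S$ fixing $j_{0}(S')$ pointwise and with identity differential at $x_{0}$. The plan is then to exhibit an isotopy from $\phi$ to $\mathrm{Id}$ internal to $F_{j_{0}}$; projecting the concatenation of $\widetilde{\gamma}$ with such an isotopy provides a null-homotopy of $\gamma$ in $E(S',S;x_{0})$.

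The main obstacle is the construction of this isotopy inside the fibre. The fibre $F_{j_{0}}$ is naturally identified (up to the innocuous constraint at $x_{0}$) with a product of compactly supported diffeomorphism groups rel boundary of the connected components of $S\setminus \mathrm{int}(j_{0}(S'))$, each a surface with boundary. The classical results of Smale, Earle--Eells, and Gramain ensure that the identity component of each such group is contractible, reducing the problem to a $\pi_{0}$-question: one must show that $\phi$ lies in the identity component of $F_{j_{0}}$. This follows from the fact that the path $\widetilde{\gamma}$ already connects $\phi$ to $\mathrm{Id}$ in the ambient group $\mathrm{Diff}_{c}(S;x_{0})$, combined with a fragmentation argument (based on a collar of $j_{0}(\partial S')$) which localises this path to the complement of $j_{0}(S')$. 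The essential surface-theoretic input is exactly Gramain's simple connectivity of $\mathrm{Diff}_{0}(\Sigma)$ already invoked earlier in the paper, applied now to the subsurfaces cut out by $j_{0}(\partial S')$.
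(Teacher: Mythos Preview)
Your treatment of Part~2 is fine and essentially the same as the paper's: both amount to the Palais--Cerf local section construction, and your exponential/cutoff description is a perfectly good way to implement it.

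For Part~1 there is a genuine gap. Your argument correctly reduces, via the long exact sequence of the fibration from Part~2 together with the contractibility of the identity component of $\mathrm{Diff}_{c}(S;x_{0})$, to the claim that the endpoint $\phi$ of the lifted path lies in the identity component of the fibre $F_{j_{0}}$. Equivalently, you need the map $\pi_{0}(F_{j_{0}})\to\pi_{0}(\mathrm{Diff}_{c}(S;x_{0}))$ to be injective. But this is exactly the substantive content of the proposition, and your justification---that $\widetilde{\gamma}$ connects $\phi$ to $\mathrm{Id}$ in the ambient group, ``combined with a fragmentation argument''---is circular. Knowing that $\phi$ is isotopic to the identity in $\mathrm{Diff}_{c}(S;x_{0})$ only says that $[\phi]$ lies in the kernel of $\pi_{0}(F_{j_{0}})\to\pi_{0}(\mathrm{Diff}_{c}(S;x_{0}))$; by exactness this kernel equals the image of $\pi_{1}(E)$, which is precisely what you are trying to show is trivial. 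A collar-based ``fragmentation'' does not, on its own, turn an ambient isotopy into one supported off $j_{0}(S')$: the obstruction is a mapping-class-group statement (injectivity of the subsurface inclusion on $\pi_{0}$), and Gramain's simple connectivity of $\mathrm{Diff}_{0}$ is a $\pi_{1}$ statement that does not address it.

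The paper avoids this circularity by building the null-homotopy of $\gamma$ directly, in three steps. The crucial first step exploits the constraint $Df_{t}(x_{0})=\mathrm{Id}$ in an essential way: because the differential is the identity at $x_{0}$, points close to $x_{0}$ have nearly constant trajectories under the loop, and one uses this to deform $\gamma$ so that it fixes one framed point on each component of $\partial S'$. A second fibration argument then pins down all of $\partial S'$, and only then does one invoke contractibility of components of $\mathrm{Diff}(S';\partial S',x_{0})$. If you want to salvage your approach, you would need to actually prove the injectivity of $\pi_{0}(F_{j_{0}})\to\pi_{0}(\mathrm{Diff}_{c}(S;x_{0}))$, and doing so will almost certainly force you through arguments equivalent to the paper's Ingredients~1 and~2.
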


Let $i_{1}$ be some arotational index, and choose some $x_{1}$ in $X_{U_{i_{1}}}$. Since $h_{1} = f_{1}$ fixes every point in $U_{i_{1}}$, the family $\ell = (h_{t \mid U_{i_{1}}})_{t \in [0,1]}$ is a loop in  $E(U_{i_{1}},\Sigma;x_{1})$ based at the inclusion map $e : U_{i_{1}} \subset \Sigma$.
The first assertion of the proposition tells us that this loop $\ell$ is contractible in  $E(U_{i_{1}},\Sigma;x_{1})$: let $(\ell_{s})_{s \in [0,1]}$ be a deformation of loops with fixed base-point $e$ from $\ell_{1} = \ell$ to the trivial loop.
According to second assertion of the proposition, this deformation may be lifted to a deformation, with fixed end-points $\mathrm{Id}$ and $h_{1}$, from the isotopy $J_{1}=(h_t)$ to a new isotopy $J_{2}$ which is a lift of the trivial loop in $E(U_{i_{1}},\Sigma;x_{1})$, which means that $J_{2}$ fixes every point of $U_{i_{1}}$. We now consider a second arotational index $i_{2}$, and apply the proposition with $S = \Sigma \setminus U_{i_{1}}$, and $S'$ equal to the closure of $U_{i_{2}}$. This yields a new isotopy $J_{3}$ that fixes every point of $U_{i_{1}} \cup U_{i_{2}}$. We go on until we get an isotopy $J$ from $\mathrm{Id}$ to $f_{1}$ which fixes every point of every arotational  $U_{i}$. This isotopy fixes every point of $X$, as wanted.
\end{proof}

\begin{proof}[Proof of Proposition~\ref{prop.fibration}]
We begin with the second assertion. When no base point is given, the fact that the restriction map is a fiber bundle is due to Palais.
The base point case that we need follows immediately from the following result, due to Cerf: \emph{For every  embedding $f$ from some compact manifold $V$ into some manifold $M$, there is a neighborhood $U$ of $f$ in the space of embeddings, and a continuous map $\xi$ from $U$ to the space of diffeomorphisms of $M$ with compact support, such that for every $g$ in $U$, $g = \xi(g) \circ f$}. For references and details we refer to the very short paper of Lima (\cite{lima1963}). 

Now to prove the first assertion,  let $\gamma$ be a loop in the space $E(S',S;x_{0})$. We deform $\gamma$ into a trivial loop by successively using  the following three ingredients. Details are left to the reader (again, \cite{gramain1973} is a good reference).
The first ingredient allows us to deform $\gamma$ into a loop $\gamma_{1}$ that fixes one vector tangent to each boundary component of $S'$. The second ingredient allows us to further deform $\gamma_{1}$ into a loop $\gamma_{2}$ that fixes each point of the boundary of $S'$. The last ingredient shows that $\gamma_{2}$ is contractible in $E(S',S;x_{0})$.

\bigskip

\noindent \emph{Ingredient 1.} Let $x_{1}, \dots , x_{k}$ be distinct points of $S'$, distinct from $x_{0}$, and choose for each $i$ a non zero vector $v_{i}$ tangent to $S'$ at $x_{i}$ (if the points are on the boundary of $S'$ then the vectors are tangent to the boundary). Consider the natural map $\Psi_{(x_{i}, v_{i})}$  from $E(S',S;x_{0})$ to the space $\Xi_{k}$ of $k$-tuples of non zero vectors over distinct points in $S' \setminus \{x_{0} \}$, obtained by taking the images of the $(x_{i},v_{i})$'s. This map is a fiber bundle. Furthermore, the image of every loop $\gamma = (f_{t})$  in $E(S',S;x_{0})$ is contractible in $\Xi_{k}$. 
 
The key observation for this last property is the following. By definition we have $f_{t}(x_{0}) = x_{0}$ and $Df_{t}(x_{0}) = \mathrm{Id}$ for every $t$. Thus if $x'_{1}$ is some point  close enough to $x_{0}$ then the loop $t \mapsto f_{t}(x'_{1})$ will be included in a small disk $D_{1}$ not containing $x_{0}$, and furthermore for any non zero vector $v'_{1}$ at $x'_{1}$ the loop $\Psi_{(x'_{1}, v'_{1})}\gamma :  t \mapsto Df_{t}(x'_{1})\cdot v'_{1}$ will be close to the constant vector, and thus contractible in the complement of the zero section in the tangent bundle of $D_{1}$. To make use of this observation, we move the points $x_{1}, \dots , x_{k}$ into points $x'_{1}, \dots, x'_{k}$ with $x'_{1}$ close to $x_{0}$ as above, $x'_{2}$ much closer to $x_{0}$ than $x'_{1}$ so that it is included in a disk $D_{2}$ disjoint from $D_{1}$ and $x_{0}$, and so on. These moves induce a deformation of the loop $\Psi_{(x_{i}, v_{i})} \gamma$ into the loop $\Psi_{(x'_{i}, v'_{i})} \gamma$. According to the observation, this new loop is contractible.

\bigskip

\noindent \emph{Ingredient 2.} Choose one point $x_{i}$ on each boundary component of $S'$, and a vector $v_{i}$ tangent to $\partial S'$ at $x_{i}$. Let $E(S',S ; x_{0}, (x_{1}, v_{1}), \dots ,(x_{k}, v_{k}))$ denote the subspace of $E(S',S; x_{0})$ that fixes all the $x_{i}$'s and $v_{i}$'s. 

With obvious notations, the restriction map 
$$
E(S',S ; x_{0}, (x_{1}, v_{1}), \dots , (x_{k}, v_{k})) \longmapsto E(\partial S',S ; x_{0}, (x_{1}, v_{1}), \dots , (x_{k}, v_{k}))
$$
 is a  fiber bundle. %locally trivial fibration.
 Furthermore, each connected component of the base of the fibration is contractible. This last property follows from Th\'eor\`eme~4 of~\cite{gramain1973}, which consider the case of the embedding of a single circle in $S$, by induction on the number of boundary components.

\bigskip

\noindent \emph{Ingredient 3.} Every connected component of the space $\mathrm{Diff}(S';\partial S',x_{0})$ of diffeomorphisms of $S'$ that are the identity on the boundary and tangent to the identity at $x_{0}$ has trivial homotopy groups. Indeed, this space is the fiber of the restriction map from  $\mathrm{Diff}(S';x_{0})$ to $\mathrm{Diff}(\partial S')$. This is a fiber bundle. %locally trivial fibration.
 On the one hand, the space of orientation preserving diffeomorphisms of the circle has the homotopy type of $SO(1)$, and thus the connected component of the identity in the base of the fibration has trivial homotopy groups of order $\geq 2$. On the other hand, by Th\'eor\`eme~2 of~\cite{gramain1973}, the total space of the fibration is contractible. The triviality of the homotopy group of the fiber is now a consequence of the exact sequence of the fibration.
\end{proof}

% #############################################
%
%   BIBLIO
%
% #############################################

\bibliographystyle{abbrv}
\bibliography{biblio}

\end{document}